\newcommand{\LoadPackagesNow}{}
\newcommand{\LoadPackageLater}[2][]{%
   \g@addto@macro{\LoadPackagesNow}{%
      \usepackage[#1]{#2}%
   }%
}
\newcolumntype{Z}{>{\raggedright\let\newline\\\arraybackslash\hspace{0pt}}p}
\g@addto@macro\bfseries{\boldmath}
\numberwithin{equation}{section}
\definecolor{pdfurlcolor}{rgb}{0,0,0.6}
\definecolor{pdffilecolor}{rgb}{0.7,0,0}
\definecolor{pdflinkcolor}{rgb}{0,0,0.6}
\definecolor{pdfcitecolor}{rgb}{0,0,0.6}
\newcommand{\ifargdef}[3][{}]{\ifthenelse{\equal{#2}{}}{#1}{#3}}
\newlength{\hangind}
\newcommand{\myhangindent}[1]{\settowidth{\hangind}{\widthof{#1}}\hangindent=\the\hangind}
\newenvironment{highlight}{\begin{quote}\itshape}{\end{quote}}
\newenvironment{properties}[2][2em]
{\begin{enumerate}[label={\textsc{(#2\arabic*)}},leftmargin=#1]}
{\end{enumerate}}
\newenvironment{listing}
{\begin{itemize}[itemindent=0em,leftmargin=1.2em]}
{\end{itemize}}
\newenvironment{rmklist}
{\begin{enumerate}[label={(\arabic*)},itemindent=2em,leftmargin=0em]}
{\end{enumerate}}
\newenvironment{thmproperties}
{\begin{enumerate}[label={(\roman*)}]}
{\end{enumerate}}
\renewcommand{\qedsymbol}{$_\blacksquare$}
\providecommand{\qedhere}{\hfill\qedsymbol}
\newtheoremstyle{claim}
	{\topsep}{\topsep}%
	{\itshape}%         Body font
	{}%         Indent amount (empty = no indent, \parindent = para indent)
	{}% Thm head font
	{}%        Punctuation after thm head
	{.5em}%     Space after thm head (\newline = linebreak)
	{{\bfseries\boldmath\thmname{#1} \thmnumber{#2}} \thmnote{(#3)}}%         Thm head spec
\newtheoremstyle{definition}
	{\topsep}{\topsep}%
	{}%         Body font
	{}%         Indent amount (empty = no indent, \parindent = para indent)
	{}% Thm head font
	{}%        Punctuation after thm head
	{.5em}%     Space after thm head (\newline = linebreak)
	{\textbf{\thmname{#1} \thmnumber{#2}} \thmnote{(#3)}}%         Thm head spec
\newtheoremstyle{algorithm}
	{\topsep}{\topsep}%
	{}%         Body font
	{}%         Indent amount (empty = no indent, \parindent = para indent)
	{\bfseries\boldmath}% Thm head font
	{}%        Punctuation after thm head
	{.5em}%     Space after thm head (\newline = linebreak)
	{\thmname{#1} \thmnumber{#2} \thmnote{(#3)}}%         Thm head spec
\declaretheorem[style=claim,numberwithin=section]{theorem}
\declaretheorem[style=claim,sibling=theorem]{lemma}
\declaretheorem[style=claim,sibling=theorem]{corollary}
\declaretheorem[style=claim,sibling=theorem]{proposition}
\declaretheorem[style=claim,sibling=theorem]{fact}
\declaretheorem[style=definition,sibling=theorem]{definition}
\declaretheorem[style=definition,sibling=theorem]{assumption}
\declaretheorem[style=definition,sibling=theorem,qed=$\Diamond$]{remark}
\declaretheorem[style=definition,sibling=theorem,qed=$\Diamond$]{example}
\newcommand{\opleft}[1]{\mathopen{}\left#1}
\newcommand{\opright}[1]{\right#1\mathclose{}}
\newcommandx{\braces}[4]{%
\ifstrequal{#3}{normal}{#1#4#2}{%
\ifstrequal{#3}{auto}{\left#1#4\right#2}{%
\ifstrequal{#3}{opauto}{\opleft#1#4\opright#2}{%
#3#1#4#3#2}}}%
}
\newcommandx{\opannot}[3][3=\downarrow]{\stackrel{\mathclap{\substack{#1 \\ #3 \vspace{2pt}}}}{#2}}
\newcommandx{\lineannot}[3][3=\rightarrow]{\mathllap{\boxed{\text{\textsmaller{#1}}} #3} #2}
\newcommandx{\multilineannot}[4][4=\rightarrow]{\mathllap{\boxed{\parbox{#1}{\RaggedRight\textsmaller{#2}}} #4} #3}
\newcommand{\N}{\mathbb{N}} % natural numbers
\newcommand{\R}{\mathbb{R}} % real numbers
\newcommand{\eps}{\varepsilon} % shortcut for epsilon
\renewcommand{\implies}{\Rightarrow} % implies
\newcommand{\suchthat}[1][normal]{\ifstrequal{#1}{normal}{\mid}{#1|}} % seperator in sets (#1op = size)
\newcommand{\setcompl}[1]{#1^c} % complement of a set
\newcommand{\cardinality}[1]{\abs{#1}} % cardinality of a set
\newcommand{\union}{\cup} % union
\newcommand{\intersec}{\cap} % intersection
\newcommand{\boundary}[1]{\partial#1} % boundary of a set
\newcommand{\diam}{\operatorname{diam}}
\newcommandx{\intvcl}[3][1=normal]{\braces{[}{]}{#1}{#2, #3}} % closed interval (#1op=size, #2=left bound, #3=right bound)
\newcommandx{\intvop}[3][1=normal]{\braces{(}{)}{#1}{#2, #3}} % open interval
\newcommandx{\intvclop}[3][1=normal]{\braces{[}{)}{#1}{#2, #3}} % half-open interval (right)
\newcommandx{\intvopcl}[3][1=normal]{\braces{(}{]}{#1}{#2, #3}} % half-open interval (left)
\DeclareMathOperator*{\argmin}{argmin} % argmin
\DeclareMathOperator{\sign}{sign}
\newcommandx{\abs}[2][1=normal]{\braces{\lvert}{\rvert}{#1}{#2}} % absolute value
\newcommandx{\ceil}[2][1=normal]{\braces{\lceil}{\rceil}{#1}{#2}} % ceil
\newcommandx{\floor}[2][1=normal]{\braces{\lfloor}{\rfloor}{#1}{#2}} % floor
\newcommandx{\round}[2][1=normal]{\braces{[}{]}{#1}{#2}} % round
\newcommandx{\der}[1]{D^{#1}} % differential operator (#1 = multiindex)
\newcommandx{\gradient}{\nabla} % gradient
\newcommandx{\partder}[4][1={},4={}]{\frac{\partial^{#4} #2}{\partial #3^{#4}}\ifargdef{#1}{\Big|_{#1}}} % partial derivative (#1=point of evaluation, #2=function, #3=variable, #4=order)
\newcommandx{\integ}[4][1={},2={}]{\int_{#1}^{#2} #3 \, #4} % integral (#1op=lower bound, #2op=upper bound, #3=integrand, #4=differential form)
\newcommandx{\asympffaster}[2][1=normal]{o\braces{(}{)}{#1}{#2}} % asymptotically faster (proper) (#1op=size)
\newcommandx{\asympfaster}[2][1=normal]{O\braces{(}{)}{#1}{#2}} % asymptotically faster
\newcommandx{\asympeq}[2][1=normal]{\Theta\braces{(}{)}{#1}{#2}} % asymptotically equal
\newcommandx{\asympsslower}[2][1=normal]{\omega\braces{(}{)}{#1}{#2}} % asymptotically slower (proper)
\newcommandx{\asympslower}[2][1=normal]{\Omega\braces{(}{)}{#1}{#2}} % asymptotically slower
\newcommandx{\norm}[2][1=normal]{\braces{\|}{\|}{#1}{#2}} % norm
\renewcommandx{\sp}[3][1=normal]{\braces{\langle}{\rangle}{#1}{#2, #3}} % inner product (#1op=size, #2=left, #3=right)
\newcommandx{\End}[2][2={}]{\mathcal{L}\opleft( #1 \ifargdef{#2}{, #2} \opright)} % endomorphism (#1=from, #2op=to)
\newcommand{\orthcompl}[1]{{#1}^\perp} % orthogonal complement
\DeclareMathOperator{\spann}{\operatorname{span}} % span
\renewcommand{\vec}[1]{\boldsymbol{#1}} % vectors in boldface
\newcommandx{\measure}[2][1=normal]{\operatorname{vol}\braces{(}{)}{#1}{#2}} % Lebesgue-measure/volume of a set
\newcommand{\indset}[1]{\chi_{#1}} % indicator function for sets
\DeclareMathOperator{\supp}{supp} % support
\newcommandx{\Leb}[3][1={},3=normal]{L^{#2}\ifargdef{#1}{\braces{(}{)}{#3}{#1}}{}} % Lebesgue spaces (#1op=set, #2=exponent)
\newcommandx{\Lebnorm}[4][1=normal,3={2},4={}]{\norm[#1]{#2}_{#3}} % Lebesgue norm (#1op=size, #2=content, #3op=exponent, #4op=set)
\renewcommandx{\l}[3][1={},3=normal]{\ell^{#2}\ifargdef{#1}{\braces{(}{)}{#3}{#1}}} % lp sequence spaces (#1op=set, #2=exponent)
\newcommandx{\lnorm}[4][1=normal,3={2},4={}]{\norm[#1]{#2}_{#3}} % lp norm (#1op=size, #2=content, #3op=exponent, #4op=set)
\newcommandx{\Smooth}[4][1={},3={},4=normal]{C_{#3}^{#2}\ifargdef{#1}{\braces{(}{)}{#4}{#1}}} % space of differentiable functions (#1op=set, #2=order, #3op=modifier)
\newcommandx{\Schwartz}[2][1={},2=normal]{\mathscr{S}\ifargdef{#1}{\braces{(}{)}{#2}{#1}}} % space of Schwartz functions
\newcommandx{\Schwartzpoly}[2][1=normal]{\braces{\langle}{\rangle}{#1}{\abs[#1]{#2}} } % Schwartz polynomial
\newcommandx{\Tempdistr}[2][1={},2=normal]{\mathscr{S}'\ifargdef{#1}{\braces{(}{)}{#2}{#1}}} % tempered distributions
\newcommandx{\distrinp}[3][1=normal]{\braces{\langle}{\rangle}{#1}{#2, #3}} % evaluation of a tempered distribution (#1op=size, #2=distribution, #3=Schwartz function)
\newcommandx{\ft}[3][1=default,2=auto]{
\ifstrequal{#1}{default}{\widehat{#3}}{
\ifstrequal{#1}{long}{{\braces{(}{)}{#2}{#3}}^{\wedge}}{}}} % Fourier transform (hat-notation) (#1op=long expression mode, #2op=size, #3=content)
\newcommandx{\ift}[3][1=default,2=auto]{
\ifstrequal{#1}{default}{\check{#3}}{
\ifstrequal{#1}{long}{{\braces{(}{)}{#2}{#3}}^{\vee}}{}}} % inverse Fourier transform (hat-notation) (#1op=long expression mode, #2op=size, #3=content)
\newcommand{\erf}{\operatorname{erf}} % sinc function
\renewcommand{\vec}[1]{\bm{#1}}
\renewcommand{\a}{\vec{a}}
\newcommand{\x}{\vec{x}}
\newcommand{\grtr}{\vec{x}_0}
\newcommand{\grtrmu}{\scalfac\vec{x}_0}
\newcommand{\solu}{\vec{\hat{x}}}
\newcommand{\sset}{K}
\newcommand{\ssetmu}{\scalfac K}
\newcommand{\hypospace}{\mathcal{H}}
\newcommand{\h}{\vec{h}}
\newcommand{\proj}{\vec{P}}
\newcommand{\loss}{\mathcal{L}}
\newcommand{\losssq}{\mathcal{L}^{\text{sq}}}
\newcommand{\losshng}{\mathcal{L}^{\text{hng}}}
\newcommand{\lossexp}{\mathcal{R}}
\newcommand{\excessloss}{\mathcal{E}}
\newcommand{\lossemp}[1][{}]{\bar{\mathcal{R}}_{#1}}
\newcommand{\multiplterm}[2]{\mathcal{M}(#1,#2)}
\newcommand{\multipl}{\mathcal{M}}
\newcommand{\quadrterm}[2]{\mathcal{Q}(#1,#2)}
\newcommand{\quadr}{\mathcal{Q}}
\newcommand{\fobs}{f}
\newcommand{\scalfac}{\mu}
\newcommand{\vnull}{\vec{0}}
\newcommand{\I}[1]{\vec{I}_{#1}}
\newcommandx{\prob}[2][1={},2=normal]{\mathbb{P}\ifargdef{#1}{\braces{[}{]}{#2}{#1}}}
\newcommandx{\mean}[2][1={},2=normal]{\mathbb{E}\ifargdef{#1}{\braces{[}{]}{#2}{#1}}}
\newcommandx{\var}[2][1={},2=normal]{\mathbb{V}\ifargdef{#1}{\braces{[}{]}{#2}{#1}}}
\newcommand{\normsubg}[1]{\norm{#1}_{\psi_2}}
\newcommand{\indprob}[1]{\mathds{1}_{#1}} % indicator function for sequences
\newcommand{\cov}{\operatorname{Cov}}
\newcommand{\distributed}{\sim}
\newcommand{\Normdistr}[2]{\mathcal{N}(#1, #2)}
\newcommand{\gaussian}{\vec{g}}
\newcommand{\gaussianuniv}{g}
\newcommand{\empproc}{\mathsf{F}}
\newcommandx{\pospart}[2][1=auto]{\braces{[}{]_+}{#1}{#2}}
\newcommandx{\ball}[2][1={},2={}]{B_{#1}^{#2}}
\DeclareMathOperator{\convhull}{conv}
\renewcommand{\S}{\mathbb{S}}
\newcommand{\meanwidth}[2][{}]{w_{#1}(#2)}
\newcommand{\effdim}[2][{}]{d_{#1}(#2)}
\newcommand{\cone}[2]{\mathcal{C}(#1,#2)}
\newcommand{\conic}{0}
\newcommand{\pcloud}{\mathcal{C}}
\newcommand{\cyl}{\operatorname{Cyl}}
\newcommand{\rad}{\operatorname{rad}}
\begin{document}

%\listoftodos
%\newpage

\pagestyle{scrheadings}
% Footnotes for authors
\renewcommand*{\thefootnote}{\fnsymbol{footnote}}

% title
\begin{center}
	\huge\bfseries 
	Robust $1$-Bit Compressed Sensing \\ via Hinge Loss Minimization
% 	Beyond Local Strong Convexity: The Hinge Loss Estimator in $1$-Bit Compressed Sensing
% 	Beyond Local Strong Convexity: Robust Signal Recovery via Hinge Loss Minimization
% 	Beyond Local Strong Convexity: Robust Signal Recovery From $1$-Bit Observations Using the Hinge Loss
% 	Robust $1$-Bit Compressed Sensing via Hinge Loss Minimization
% 	Structured Signal Recovery From Noisy $1$-Bit Measurements via Hinge Loss Minimization
% 	\huge\bfseries Estimation of Structured Signals From Noisy $1$-bit Gaussian Measurements Using the Hinge Loss
\end{center}

\vspace{1\baselineskip}
\begin{addmargin}[2em]{2em}
\begin{center}
	\noindent{\normalsize\bfseries{Martin Genzel\footnote[1]{Technische Universit\"at Berlin, Department of Mathematics, 10623 Berlin, Germany} \qquad Alexander Stollenwerk\footnote[2]{RWTH Aachen, Department of Mathematics, 52062 Aachen, Germany\label{ftn:rwth}}}}
	
% 	\vspace{.5\baselineskip}
% 	{\smaller\noindent Technische Universit\"at Berlin, Department of Mathematics \\
% 	Straße des 17. Juni 136, 10623 Berlin, Germany
% 
% 	% \vspace{.5\baselineskip}
% 	\noindent E-Mail: \href{mailto:genzel@math.tu-berlin.de}{\texttt{[genzel,kutyniok,maerz]@math.tu-berlin.de}}}
\end{center}

\vspace{1.5\baselineskip}
{\smaller
\noindent\textbf{Abstract.} 
This work theoretically studies the problem of estimating a structured high-dimensional signal $\grtr \in \R^n$ from noisy $1$-bit Gaussian measurements.
Our recovery approach is based on a simple convex program which uses the hinge loss function as data fidelity term. While such a risk minimization strategy is very natural to learn binary output models, such as in classification, its capacity to estimate a specific signal vector is largely unexplored. A major difficulty is that the hinge loss is just piecewise linear, so that its ``curvature energy'' is concentrated in a single point. This is substantially different from other popular loss functions considered in signal estimation, e.g., the square or logistic loss, which are at least locally strongly convex. It is therefore somewhat unexpected that we can still prove very similar types of recovery guarantees for the hinge loss estimator, even in the presence of strong noise. More specifically, our non-asymptotic error bounds show that stable and robust reconstruction of $\grtr$ can be achieved with the optimal oversampling rate $\asympfaster{m^{-1/2}}$ in terms of the number of measurements $m$. Moreover, we permit a wide class of structural assumptions on the ground truth signal, in the sense that $\grtr$ can belong to an arbitrary bounded convex set $\sset \subset \R^n$. The proofs of our main results rely on some recent advances in statistical learning theory due to Mendelson. In particular, we invoke an adapted version of Mendelson's small ball method that allows us to establish a quadratic lower bound on the error of the first order Taylor approximation of the empirical hinge loss function.

\vspace{.5\baselineskip}
\noindent\textbf{Key words.}
$1$-bit compressed sensing, structured empirical risk minimization, hinge loss, Gaussian width, Mendelson's small ball method

}
\end{addmargin}
\newcommand{\shortauthor}{Genzel and Stollenwerk: Robust $1$-Bit Compressed Sensing via Hinge Loss Minimization
}

% Normal footnotes
\renewcommand*{\thefootnote}{\arabic{footnote}}
\setcounter{footnote}{0}

% \maketitle
\thispagestyle{plain}

%%% --> Main body <--

\section{Introduction}
\label{sec:intro}

This paper considers the problem of estimating an unknown \emph{signal vector} $\grtr \in \R^n$ from $1$-bit observations of the form
\begin{equation}\label{eq:onebitmodel}
	y_i = f_i(\sp{\a_i}{\grtr}) \in \{-1, +1\}, \quad i=1,\dots,m,
\end{equation}
where $\a_1, \dots, \a_m \in \R^n$ is a collection of known \emph{measurement vectors} and $f_i \colon \R \to \{-1,+1\}$, $i = 1, \dots, m$, are binary-valued \emph{output functions}. The number of samples $m$ is typically much smaller than the ambient dimension $n$, so that the equation system of \eqref{eq:onebitmodel} is highly underdetermined.
Such types of recovery tasks have recently caught increasing attention in various research areas, most importantly in the field of \emph{compressed sensing}, which has become a state-of-the-art approach in signal processing during the last decade. It builds upon a novel paradigm according to which many reconstruction problems can be efficiently solved by linear or convex programming, if the low-dimensional structure of the ground truth signal is explicitly taken into account; see \cite{foucart2013cs} for a comprehensive introduction.

In contrast to traditional compressed sensing, the setup of \eqref{eq:onebitmodel} does also involve a \emph{non-linear} component.
More specifically, each output function $f_i$ plays the role of a \emph{quantizer} that distorts the linear observation rule $\a_i \mapsto \sp{\a_i}{\grtr}$.  %$\a_i \mapsto y_i^{\text{lin}} \coloneqq \sp{\a_i}{\grtr}$.
Such a quantization step is of particular interest to real-world sensing schemes in which only a finite number of bits can be (digitally) processed during transmission. In fact, the model of \eqref{eq:onebitmodel} assumes the most extreme case of quantization where only $1$-bit information is available per measurement --- that is why we speak of \emph{$1$-bit compressed sensing} in this context \cite{boufounos2008onebit,jacques2013onebit,plan2013onebit}.
Let us emphasize that the quantizers $f_i$ can be completely deterministic, e.g., $f_i = \sign$,\footnote{Hereafter, we make the convention that $\sign(v) = +1$ for $v \geq 0$ and $\sign(v) = -1$ for $v < 0$.} but they could be also contaminated by noise in the form of random bit flips. In the latter case, the output functions $f_i$ are assumed to be independent copies of a ``prototypical'' random function $f \colon \R \to \{-1,+1\}$ that describes the underlying noise model; see Assumption~\ref{model:measurements} for more details.

A large class of signal estimation methods can be formulated as an optimization problem of the form
\begin{equation}\label{eq:generalestimator}\tag{$P_{\loss,\sset}$}
	\min_{\x \in \R^n} \tfrac{1}{m} \sum_{i = 1}^m \loss(\sp{\a_i}{\x},y_i) \quad \text{subject to \quad $\x \in \sset$,}
\end{equation}
where $\loss\colon \R \times \R \to \R$ is a convex \emph{loss function} and $\sset \subset \R^n$ defines a \emph{convex} constraint set, usually called the \emph{signal set}.
The purpose of $\loss$ is to assess how well the candidate model $\a_i \mapsto \sp{\a_i}{\x}$ matches with the true outputs $y_i$.
With regard to our initial recovery task, we may therefore hope that a minimizer $\solu \in \R^n$ of \eqref{eq:generalestimator} provides an accurate approximation of the signal vector $\grtr$. 
Furthermore, the signal set $\sset$ encodes certain structural hypotheses about $\grtr$. %\footnote{That means, only a very entries of $\grtr$ are non-zero.}
The prototypical example considered in compressed sensing is \emph{sparsity}, meaning that only very few entries of $\grtr$ are non-zero (cf. Subsection~\ref{subsec:intro:notation}\ref{subsec:intro:notation:lp}).
In this situation, it is very common to use an $\l{1}$-constraint, i.e., $\sset = R \ball[1][n]$ for some $R > 0$, which serves as a \emph{convex relaxation} of a sparse signal model \cite{Candes2005,candes:stablesignalrecovery,candes2006recovery,Donoho2006a}.

In this work, we will focus on a special instance of \eqref{eq:generalestimator} that is based on the so-called \emph{hinge loss} given by $\losshng(v) \coloneqq \pospart{1 - v} \coloneqq \max\{0, 1 - v\}$ for $v \in \R$.
Using $\loss(v,v') \coloneqq \losshng(v \cdot v')$ as loss function, the program of \eqref{eq:generalestimator} now reads as follows:
\begin{equation}\label{eq:hingeestimator}\tag{$P_{\losshng,\sset}$}
	\min_{\x \in \R^n} \tfrac{1}{m} \sum_{i = 1}^m \max\{0, 1 - y_i \sp{\a_i}{\x}\} \quad \text{subject to \quad $\x \in \sset$.}
\end{equation}
% \mgsay{Hier könnte man vielleicht noch erwähnen, dass das im $\l{1}$-Fall Linear Programming ist. Das hat natürlich Computational Advantages.}
This estimator is specifically tailored to deal with binary observations:
Intuitively, by minimizing the objective functional of \eqref{eq:hingeestimator}, one tries to select $\x \in \sset$ in such a way that the sign of $\sp{\a_i}{\x}$ equals $y_i \in \{-1,+1\}$ for most of the samples $i \in \{1, \dots, m\}$.
With other words, a solution $\solu$ to \eqref{eq:hingeestimator} yields a predictor $\a_i \mapsto \sign(\sp{\a_i}{\solu})$ of the true outputs $y_i$.
While this simple heuristic explains the success of hinge loss minimization in many classification tasks, it also indicates a certain capability to tackle the $1$-bit compressed sensing problem stated by \eqref{eq:onebitmodel}.
However, let us point out that the latter challenge does not just ask for finding \emph{any} good predictor but actually aims at retrieving the ground truth signal $\grtr$. Compared to reliable prediction, successful signal estimation usually relies on relatively strong model assumptions, and in fact, the performance of \eqref{eq:hingeestimator} is only poorly understood on this matter.
% usually relies on relatively strong model assumptions and the recovery performance of \eqref{eq:hingeestimator} is in fact only poorly understood so far.
The key concern of this paper is therefore to establish theoretical recovery guarantees for \eqref{eq:hingeestimator} under the hypothesis of \eqref{eq:onebitmodel} with \emph{Gaussian} measurement vectors. In particular, we intend to address the following issues:
\begin{highlight}
	How many sample pairs $\{(\a_i, y_i)\}_{i = 1, \dots, m} \subset \R^n \times \{-1,+1\}$ are required to accurately estimate the ground truth signal $\grtr$ via hinge loss minimization \eqref{eq:hingeestimator}?
	How is this related to the structural assumptions on $\grtr$ and the choice of the signal set $\sset$?
\end{highlight}

\subsection{Main Contributions and Overview}
\label{subsec:intro:overview}

In Section~\ref{sec:results}, we will make the above model setting more precise and provide several definitions of complexity measures for signal sets, in particular the concepts of (local) Gaussian width and effective dimension.
Our first main result (Theorem~\ref{thm:euclideanball}) is then presented in Subsection~\ref{subsec:results:unitball}, considering signal sets in the Euclidean unit ball, i.e., $\sset \subset \ball[2][n]$.
In contrast, our second main results in Subsection~\ref{subsec:results:generalset} (Theorem~\ref{thm:results:generalset:global} and Theorem~\ref{thm:results:generalset:local}) drop this condition and allow for arbitrary bounded convex signal sets, but they are merely restricted to the perfect $1$-bit case with $f_i = \sign$ for all $i = 1, \dots, m$.
To give a first glance at these theoretical findings, let us state the following informal recovery guarantee.
% While these theoretical findings cover somewhat different model regimes, their actual recovery statements do roughly read as follows:
\begin{theorem}[Informal]
	Let the measurement rule \eqref{eq:onebitmodel} hold true with i.i.d.\ standard Gaussian measurement vectors $\a_1, \dots, \a_m \distributed \Normdistr{\vnull}{\I{n}}$. If $\lnorm{\grtr} = 1$ and $\grtr \in \sset$, then with high probability, any minimizer $\solu$ of \eqref{eq:hingeestimator} satisfies
	\begin{equation}\label{eq:intro:errbound} 
		\lnorm[auto]{\grtr - \frac{\solu}{\lnorm{\solu}}} \leq C(f_i) \cdot \Big(\frac{\delta(\sset,\grtr)}{m}\Big)^{1/2},
	\end{equation}
	where $C(f_i) > 0$ only depends on the quantizers $f_1, \dots, f_m$, and $\delta(\sset,\grtr)$ is a measure of complexity for $\sset$ (with respect to $\grtr$) that may change in model situations of Subsection~\ref{subsec:results:unitball} and Subsection~\ref{subsec:results:generalset}.
\end{theorem}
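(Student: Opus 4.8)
The plan is to follow Mendelson's programme for structured empirical risk minimisation. Write $\lossemp[m](\x) = \tfrac1m\sum_{i=1}^m \losshng(y_i\sp{\a_i}{\x})$ for the empirical hinge risk, and let $\grtrmu$ be the signal rescaled by the factor $\scalfac = \scalfac(f_i) > 0$ at which $\scalfac\grtr$ minimises the population risk $\lossexp$ in the direction of $\grtr$ (under the model hypotheses $\grtrmu$ lies in $\sset$). Since $\solu$ minimises $\lossemp[m]$ over $\sset$, optimality gives $\lossemp[m](\solu) \le \lossemp[m](\grtrmu)$, and expanding $\lossemp[m]$ to first order around $\grtrmu$ with a subgradient $g_i \in \partial\losshng(y_i\sp{\a_i}{\grtrmu})$ turns this into
\[
  \losstaylor{\solu}{\grtrmu}{m} \;\le\; -\tfrac1m\sum_{i=1}^m g_i\, y_i\sp{\a_i}{\solu - \grtrmu}.
\]
First I would establish a \emph{quadratic lower bound} for the left-hand side — a restricted strong convexity estimate $\losstaylor{\x}{\grtrmu}{m} \ge c\lnorm{\x-\grtrmu}^2$ valid for all $\x \in \sset$ with $\lnorm{\x-\grtrmu}$ above a critical radius $r_m$ of order $\meanwidth{\cdot}/\sqrt m$ — and then bound the right-hand side, a multiplier empirical process, \emph{linearly} by $\lesssim \lnorm{\solu-\grtrmu}\cdot\meanwidth{\cdot}/\sqrt m$. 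Plugging in $\rho = \lnorm{\solu-\grtrmu}$ and dividing yields $\rho \lesssim \meanwidth{\cdot}/\sqrt m$ (and $\rho \le r_m$ is already the conclusion in the complementary case); identifying the relevant localised Gaussian width with $\delta(\sset,\grtr)^{1/2}$ and passing from $\lnorm{\solu-\grtrmu}$ to $\lnorm[auto]{\grtr - \solu/\lnorm{\solu}}$ by the triangle inequality — using that $\lnorm{\grtrmu} = \scalfac$ is bounded away from $0$ — gives \eqref{eq:intro:errbound}.

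The main obstacle is the quadratic lower bound, and this is precisely where the piecewise-linear nature of $\losshng$ bites: the pointwise Taylor remainder
\[
  \losshng(y_i\sp{\a_i}{\x}) - \losshng(y_i\sp{\a_i}{\grtrmu}) - g_i\, y_i\sp{\a_i}{\x-\grtrmu}
\]
is itself piecewise linear and vanishes on a half-line, so all of its ``curvature'' sits as a single Dirac mass at the kink $\{v=1\}$ of $\losshng$, and there is no local strong convexity to exploit. The key idea is that the Gaussian randomness activates this kink with constant probability: for fixed $\x$, the quantity $y_i\sp{\a_i}{\x-\grtrmu}$ straddles the gap between $y_i\sp{\a_i}{\grtrmu}$ and $1$ with probability bounded below, and on that event the remainder is of order $\min\{\abs{\sp{\a_i}{\x-\grtrmu}},\,\text{dist.\ to the kink}\}$, hence comparable to $\lnorm{\x-\grtrmu}$ on a further sub-event of constant probability. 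Quantifying this by a small-ball inequality for the Gaussian pair $(\sp{\a_i}{\x-\grtrmu},\,\sp{\a_i}{\grtr})$ and then making it uniform over $\x \in \sset$ through the adapted version of Mendelson's small ball method — a Paley--Zygmund-type anti-concentration step combined with a uniform concentration estimate for the associated indicator process, controlled by the local Gaussian width of $\sset - \grtrmu$ — delivers the desired restricted strong convexity bound with high probability, with constant $c = c(f_i) > 0$.

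The remaining step, controlling the multiplier process $\sup_{\x\in\sset,\,\lnorm{\x-\grtrmu}\le\rho} -\tfrac1m\sum_{i=1}^m g_i y_i\sp{\a_i}{\x-\grtrmu}$, is comparatively routine: splitting $g_i y_i\a_i$ into its mean and a centred part, the centred part is a sum of independent subgaussian vectors (note $\abs{g_i}\le 1$), so by a standard multiplier/Gaussian-width bound its contribution is $\lesssim \meanwidth{(\sset-\grtrmu)\cap\rho\ball[2][n]}/\sqrt m$, while the deterministic term $\sp{\mean[g_i y_i\a_i]}{\x-\grtrmu}$ has a sign consistent with the bound, by the variational characterisation of the population minimiser $\grtrmu$, or else contributes only at lower order. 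Evaluating or upper bounding this localised width in the two model regimes — $\sset \subset \ball[2][n]$ with general $1$-bit quantizers in Subsection~\ref{subsec:results:unitball}, versus arbitrary bounded convex $\sset$ with $f_i = \sign$ in Subsection~\ref{subsec:results:generalset} — is what produces the two different instantiations of $\delta(\sset,\grtr)$, a local Gaussian width respectively an effective dimension $\effdim{\cdot}$. In summary, everything but the quadratic lower bound is a fairly standard application of Gaussian concentration; extracting genuine curvature from a loss whose second ``derivative'' is supported on a null set is the one genuinely delicate point.
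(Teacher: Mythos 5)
Your decomposition into a multiplier term plus a quadratic remainder, the plan to lower-bound the remainder via Mendelson's small ball method exploiting that the kink of $\losshng$ is hit with constant probability, the use of a multiplier-process deviation bound for the centred linear part, and the final normalisation step are all exactly the moves the paper makes for the unit-ball case of Theorem~\ref{thm:euclideanball}. (One cosmetic point: the paper localises on a sphere $\sset\intersec(t\S^{n-1}+\grtrmu)$ and concludes via a convexity argument, rather than dividing a quadratic-versus-linear inequality by $\rho$, but these are two standard presentations of the same argument; also the uniformisation in Proposition~\ref{prop:quadprocesslowerbound} is done through the soft-indicator plus symmetrisation/contraction mechanism of \cite{mendelson2014learning,tropp2014recovery} rather than a Paley--Zygmund step, but that too is a detail.)

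The genuine gap is in the general convex-body regime of Subsection~\ref{subsec:results:generalset}. You treat the expected multiplier term $\sp{\mean[g_iy_i\a_i]}{\x-\grtrmu}$ by appeal to a variational characterisation of $\grtrmu$, falling back to ``contributes only at lower order.'' For $\sset\subset\ball[2][n]$ this is fine (Lemma~\ref{lem:expnoisecor}), but once $\sset$ escapes the unit ball the expected risk minimiser is no longer a scalar multiple of $\grtr$, so $\grtrmu=\scalfac\grtr$ is not a stationary point and $\mean[\multiplterm{\x}{\grtrmu}]$ is proportional to $-\sp{\x-\grtrmu}{\grtr}$, which can be strongly negative and grow linearly with $\lnorm{\x-\grtrmu}$. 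Your spherical localisation would then fail: on the part of the sphere where $\h$ is nearly parallel to $\grtr$, the quadratic term becomes too small to dominate this adversarial linear term, and $\excessloss$ is not positive there (the paper flags exactly this in Remark~\ref{rmk:proofs:unitball:comparison} and at the start of Subsection~\ref{subsec:results:generalset:scalable}). Fixing it requires two ideas you do not mention: replacing the Euclidean ball by the anisotropic cylindrical neighbourhood $\cyl(\grtr,\scalfac)$ of \eqref{eq:results:generalset:cylinder}, so that the ``bad'' directions parallel to $\grtr$ are admitted into the feasible region rather than excluded, together with the scalable estimator \eqref{eq:estimatortuned} in which $\scalfac$ is a free oversampling knob; only then does the adverse expected multiplier contribution become of order $\scalfac^{-2}$ against a quadratic lower bound of order $\scalfac^{-1}$, making it genuinely lower order. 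So your plan establishes Theorem~\ref{thm:euclideanball} but does not, as written, reach Theorems~\ref{thm:results:generalset:global} and~\ref{thm:results:generalset:local}.
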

The non-asymptotic error bound of \eqref{eq:intro:errbound} shows that $1$-bit compressed sensing via hinge loss minimization is feasible for a large class of measurement schemes and structural hypotheses.
In particular, $C(f_i)$ can be regarded as a model-dependent parameter that is well-behaved under very mild correlation conditions on the (noisy) quantizers.
Our guarantees significantly improve a recent result from Kolleck and Vyb\'{\i}ral \cite{kolleck2015l1svm}, whose analysis of the hinge loss estimator is just limited to $\l{1}$-constraints and a quite restrictive noise pattern. Apart from that, they do only achieve an oversampling rate of $\asympfaster{m^{-1/4}}$, which is clearly worse than the optimal rate of $\asympfaster{m^{-1/2}}$ promoted by \eqref{eq:intro:errbound}.

All proofs are postponed to Section~\ref{sec:proofs}. Our key arguments are based on tools from \emph{statistical learning theory}, more specifically, on some recent uniform lower and upper bounds for empirical stochastic processes established by Mendelson \cite{mendelson2014learning,mendelson2016upper}.
For this reason, we will also frequently use the common terminology of statistical learning throughout this paper --- for example, the estimators of \eqref{eq:generalestimator} and \eqref{eq:hingeestimator} are typically referred to as \emph{empirical risk minimizers}. 
While the proof strategy of Section~\ref{sec:proofs} loosely follows the learning framework of \cite{mendelson2014learninggeneral}, we wish to emphasize that our findings are by far not ``off the shelf.''
Indeed, to the best of our belief, the hinge loss function $\losshng$ does not meet the prerequisites of known signal estimation results. This in turn requires an extension of many arguments, especially in the case of general signal sets where the standard notion of local Gaussian width needs to be carefully adapted.
A major technical difficulty is that the second derivative of the hinge loss $\losshng$ does only exist in a distributional sense (due to its piecewise linearity), so that it is not even locally strongly convex.
Consequently, the proofs of this work are of independent interest because they may provide a template to establish important properties of loss functions, such as \emph{restricted strong convexity} \cite{negahban2009unified,negahban2012unified,genzel2016estimation}, under fairly mild regularity assumptions.

In order to illustrate the general recovery framework of Section~\ref{sec:results}, we present some specific applications of our main results in Section~\ref{sec:appl}.
On the one hand, this concerns the choice of the signal set $\sset$ in \eqref{eq:hingeestimator} --- and following the tenor of compressed sensing, we will particularly investigate the prototypical example of sparsity as a prior on $\grtr$.
On the other hand, two standard noise models are studied in Subsection~\ref{subsec:appl:examples}, demonstrating that signal recovery is still possible in the situation of very noisy $1$-bit measurements. 
Furthermore, we conduct three numerical experiments in Subsection~\ref{subsec:appl:numerics} to provide empirical evidence of our theoretical findings.

Section~\ref{sec:literature} is then devoted to a more extensive discussion of related literature.
Besides a comparison of our approach to previous works in ($1$-bit) compressed sensing, we will also outline similarities and differences to statistical learning theory.
But let us already emphasize at this point that the problem setup of this paper rather fits into the context of signal processing than into machine learning (see also Subsection~\ref{subsec:results:generalset:geometry} for a connection to \emph{support vector machines}).
Finally, some concluding remarks as well as several open issues can be found in Section~\ref{sec:conclusion}.

\subsection{Notation}
\label{subsec:intro:notation}

Let us fix some notations and conventions that will be frequently used in the remainder of this
paper:
\begin{rmklist}
\item
	For an integer $k\in \N$, we set $[k] \coloneqq \{1, \dots, k\}$.
	Vectors and matrices are denoted by lower- and uppercase boldface letters, respectively. Unless stated otherwise, their entries are indicated by subscript indices and lowercase letters, e.g., $\x = (x_1, \dots, x_n) \in \R^n$ for a vector and $\vec{A} = [a_{j,j'}] \in \R^{n\times n'}$ for a matrix.
	% The unit matrix in $\R^n$ is denoted by $\I{n}$.
\item\label{subsec:intro:notation:lp}
	The \emph{support} of $\x \in \R^n$ is the set of its non-zero components, $\supp(\x) \coloneqq \{ j \in [n] \suchthat x_j \neq 0\}$, and we set $\lnorm{\x}[0] \coloneqq \cardinality{\supp(\x)}$. In particular, $\x$ is called \emph{$s$-sparse} if $\lnorm{\x}[0] \leq s$.
	For $p\geq 1$, the \emph{$\l{p}$-norm} of $\x$ is given by 
	\begin{equation}
		\lnorm{\x}[p] \coloneqq\begin{cases} 
		(\sum_{j=1}^n \abs{x_j}^p)^{1/p}, &p < \infty,\\
		\max_{j\in [n]}\abs{x_j}, & p = \infty.
		\end{cases}  
	\end{equation}
	The associated \emph{unit ball} is denoted by $\ball[p][n]\coloneqq\{\x \in \R^n \suchthat \lnorm{\x}[p] \leq 1\}$ and the \emph{Euclidean unit sphere} is $\S^{n-1} \coloneqq \{\x\in \R^n \suchthat \lnorm{\x}=1\}$.
\item
	The positive part of a number $t\in \R$ is given by $\pospart{t} \coloneqq \max\{0, t\}$. For a subset $A \subset \R^n$, we denote the associated \emph{step function} (or \emph{characteristic function}) by
	\begin{equation}
		\indset{A}(\x)\coloneqq\begin{cases}
			1, &\x\in A,\\
			0, &\text{otherwise},
	\end{cases}\quad \x\in \R^n.
	\end{equation}
\item 
	Let $\sset, \sset' \subset \R^n$ and $\grtr \in \R^n$.
	We denote the \emph{linear hull} of $\sset$ by $\spann\sset$ and its \emph{convex hull} by $\convhull\sset$. 
	The \emph{Minkowski difference} between $\sset$ and $\sset'$ is defined as $\sset-\sset' \coloneqq \{ \x - \x' \suchthat \x\in \sset, \ \x' \in \sset'\}$ and we simply write $\sset - \grtr$ instead of $\sset - \{\grtr\}$. The \emph{descent cone} of $\sset$ at $\grtr$ is given by
	\begin{equation}
		\cone{\sset}{\grtr} \coloneqq \{\tau(\x-\grtr) \suchthat \x \in \sset,\ \tau \geq 0\}.
	\end{equation}
	Furthermore, $\rad(K) \coloneqq \sup_{\x \in \sset} \lnorm{\x}$ is the \emph{radius} of $\sset$ (around $\vnull$).
	
	If $E \subset \R^n$ is a linear subspace, the associated \emph{orthogonal projection} onto $E$ is denoted by $\proj_E \in \R^{n \times n}$.
	Then, we have $\proj_{\orthcompl{E}} = \I{n} - \proj_E$, where $\orthcompl{E} \subset \R^n$ is the orthogonal complement of $E$ and $\I{n} \in \R^{n \times n}$ is the identity matrix. Moreover, if $E = \spann\{\x\}$, we use the short notations $\proj_{\x} \coloneqq \proj_{E}$ and $\proj_{\orthcompl{\x}} \coloneqq \proj_{\orthcompl{E}}$.
\item 	
	For the expected value of a random variable $X$, we write $\mean[X]$. %and its \emph{variance} by $\var[X]$.
	The probability of an event $A$ is denoted by $\prob[A]$ and the corresponding indicator function is $\indprob{A}$. 
	We write $\gaussian\distributed \Normdistr{\vnull}{\I{n}}$ for an $n$-dimensional standard \emph{Gaussian random vector}, and similarly, $\gaussianuniv \distributed \Normdistr{0}{\nu^2}$ is a mean-zero Gaussian variable with variance $\nu^2$.
	We call a random variable $X$ \emph{sub-Gaussian} if its \emph{sub-Gaussian norm} $\normsubg{X} \coloneqq \inf\{t>0 \suchthat \mean[e^{X^2/t^2}]\leq 2\}$ is finite, see also \cite[Def.~5.22]{vershynin2012random}.
\item
% 	The letter $C$ is always reserved for a positive constant, and if necessary, an explicit dependence on a certain parameter is emphasized by a subscript. Moreover, we refer to $C$ as a \emph{numerical} constant (or \emph{universal} constant) if its value is independent from all involved parameters in the current setup. In this case, we sometimes simply write $A\lesssim B$ instead of $A\leq C\cdot B$, 
	The letter $C$ is always reserved for a (generic) constant, whose value could change from time to time.
	We refer to $C > 0$ as a \emph{numerical constant} if its value does not depend on any other involved parameter.
	If an inequality holds true up to a numerical constant $C$, we sometimes simply write $A \lesssim B$ instead of $A \leq C \cdot B$, and if $C_1\cdot A\leq B \leq C_2 \cdot A$ for numerical constants $C_1,C_2 > 0$, we use the abbreviation $A\asymp B$.
\end{rmklist}

\section{Main Results}
\label{sec:results}

This part presents our main theoretical findings on signal estimation via hinge loss minimization. In Subsection~\ref{subsec:results:modelsetup}, we start with a formal definition of the $1$-bit measurement model that was stated in \eqref{eq:onebitmodel}. Moreover, we introduce the notions of (local) Gaussian width and effective dimension, which will serve as measures of complexity for the signal set $\sset$ in \eqref{eq:hingeestimator}. Subsection~\ref{subsec:results:unitball} then deals with recovery in the Euclidean unit ball, i.e., $\sset \subset \ball[2][n]$, whereas our second main result in Subsection~\ref{subsec:results:generalset} considers general signal sets with perfect $1$-bit observations.
Note that the exposition below frequently uses the terminology of statistical learning theory, on which the proofs in Section~\ref{sec:proofs} are based. For a very brief overview of this field, see also Subsection~\ref{subsec:literature:statlearn}.

\subsection{Model Setup and Gaussian Width}
\label{subsec:results:modelsetup}
% In this subsection, we define the observation model and introduce the local and conic effective dimension of a subset $\sset\subset \R^n$, 
Let us first give a precise definition of the observation model that was informally introduced in \eqref{eq:onebitmodel}:
\begin{assumption}[Measurement Model]\label{model:measurements}
Let $f:\R \to \{-1,+1\}$ be a (random) quantization function and let $\a \distributed \Normdistr{\vnull}{\I{n}}$ be a standard Gaussian random vector which is independent of $f$.\footnote{Writing $f \colon \R \to \{-1,+1\}$ is of course a slight abuse of notation if $f$ is a random function. More formally, $f$ would be defined as follows in this case: Let $\tilde{f}\colon \R \times \R \to \{-1,+1\}$ and let $\tau$ be a random variable that is independent of $\a$. Then, we set $f(t) \coloneqq f_\tau(t) \coloneqq \tilde{f} (t, \tau)$ for $t \in \R$. If $f_i$ is an independent copy of $f$, we simply mean that $f_i(t) \coloneqq \tilde{f} (t, \tau_i)$ with $\tau_i$ being an independent copy of $\tau$.}
We consider a \emph{noisy $1$-bit Gaussian measurement model} of the form
\begin{equation}\label{eq:results:measurements:prototype}
	y \coloneqq f(\sp{\a}{\grtr}) \in \{-1, +1\}
\end{equation}
where $\grtr\in \R^n$ is the (unknown) \emph{ground truth signal}.
Each of the $m$ samples $\{(\a_i, y_i)\}_{i \in [m]} \subset \R^n \times \{-1,+1\}$ is then drawn as an independent copy from the random pair $(\a, y)$.
Consequently, the binary observations are given by
\begin{equation}\label{eq:measurements}
 y_i = f_i(\sp{\a_i}{\grtr}), \quad i = 1, \dots, m,
\end{equation}
where $f_i$ is an independent copy of $f$.
\end{assumption} 

As already mentioned in the introduction, a prototypical example of a $1$-bit quantizer is the sign-function, that is, $f=\sign$.
We refer to this (noiseless) observation scheme as the \emph{perfect $1$-bit model}. 
Note that all information on the magnitude of $\grtr$ is lost in this case, implying that it is impossible to determine $\lnorm{\grtr}$ from the measurements.
For that reason, we will always assume that the signal vector $\grtr$ is normalized, so that our actual goal is to recover its direction $\grtr / \lnorm{\grtr} \in \S^{n-1}$.
It is also worth emphasizing that Assumption~\ref{model:measurements} imposes almost no restrictions on the quantization function~$f$. This particularly allows us to study different sources of noise, such as \emph{random bit flips} or \emph{additive Gaussian noise}. See Subsection~\ref{subsec:appl:examples} and Subsection~\ref{subsec:appl:numerics} (Experiment~\hyperref[para:appl:numerics:exp2]{2}) for more details on these types of distortions.

% In order to achieve successful recovery in high-dimensional settings where $m \ll n$, it is inevitable to make additional assumptions on the unknown signal $\grtr$.
In many scenarios of interest, there is some prior knowledge about the unknown signal $\grtr$ available.
The hinge loss estimator \eqref{eq:hingeestimator} encodes such additional structural assumptions by means of a convex constraint set $\sset \subset \R^n$. 
Hence, we supplement Assumption~\ref{model:measurements} with the following signal model:
\begin{assumption}[Signal Model]\label{model:signal}
We assume that $\lnorm{\grtr}=1$ and $\grtr\in \sset$ for a certain subset $\sset\subset \R^n$, which is called the \emph{signal set}. Furthermore, we require that $\sset$ is convex, bounded, and $\vnull\in \sset$.
\end{assumption}
Perhaps the most prominent (low-dimensional) signal structure is \emph{sparsity}, for instance, if we have $\lnorm{\grtr}[0] \leq s$ for some $s \ll n$. The set of all
$s$-sparse vectors (in the unit ball) is however not convex, so that Assumption~\ref{model:signal} is not fulfilled. But the Cauchy-Schwarz inequality implies that both $\sset=\sqrt{s}\ball[1][n]$ and $\sset=\sqrt{s}\ball[1][n]\intersec \ball[2][n]$ may serve as admissible convex relaxations, which meet the conditions of Assumption~\ref{model:signal}.
For more examples of signal sets, see Subsection~\ref{subsec:appl:signalsets}. 

A key issue in signal estimation concerns the number of measurements $m$ that a certain recovery procedure requires to accurately approximate the target vector $\grtr$.
With regard to empirical risk minimization in \eqref{eq:generalestimator}, we may hope that, by restricting the search space to an appropriate signal set $\sset\subset \R^n$, reconstruction even succeeds in high-dimensional situations where $m \ll n$. 
This desirable behavior obviously relies on a good choice of $\sset$ that is supposed to capture low-dimensional ``features'' of $\grtr$.
In this context, the so-called \emph{Gaussian width} has turned out to be a very useful complexity parameter. Indeed, many recent results --- including ours below --- show that the square of the Gaussian width often determines the (minimal) number of samples to ensure recovery via convex optimization \cite{chandrasekaran2012geometry,amelunxen2014edge}.

In the following, we briefly introduce all notions of complexity that are required to formulate our main results. 
Let us emphasize that these parameters originate from the field of geometric functional analysis (e.g., see \cite{gordon1985gaussian,gordon1988escape,giannopoulos2004asymptotic}), but they also appear in equivalent forms as \emph{Talagrand’s $\gamma_2$-functional} in stochastic processes (cf. \cite{talagrand2014chaining}) or as \emph{Gaussian complexity} in learning theory (cf. \cite{bartlett2003complexity}). For a more extensive discussion on their role in (signal) estimation problems and learning theory, the reader is referred to \cite{mendelson2007subgaussian,rudelson2008sparse,chandrasekaran2012geometry,plan2013robust,amelunxen2014edge,tropp2014recovery,plan2014highdim,RomanHDP}. 
Apart from that, some basic properties and specific examples are presented in Subsection~\ref{subsec:appl:signalsets}.

\begin{definition}[Gaussian Width]\label{def:gaussianwidth}
Let $\gaussian \distributed \Normdistr{\vnull}{\I{n}}$ be a standard Gaussian random vector. 
The \emph{Gaussian width} of a bounded set $\sset\subset \R^n$ is defined as 
\begin{equation}
\meanwidth{\sset}\coloneqq\mean[\sup_{\x \in \sset}\sp{\gaussian}{\x}].
\end{equation}
\end{definition}
See Figure~\ref{fig:results:modelsetup:meanwidth:global} for an illustration of the Gaussian width and its geometric meaning.
Since our task is to estimate a fixed vector $\grtr$, it is natural to measure the complexity of the signal set $\sset$ in a small neighborhood of $\grtr$, rather than computing the ``global'' width $\meanwidth{\sset}$.
To this end, let us consider the following localized version: For $t>0$, the \emph{local Gaussian width} of a subset $\sset\subset \R^n$ at scale $t>0$ is given by
\begin{equation}
	\meanwidth[t]{\sset}\coloneqq\meanwidth{\sset\intersec t\ball[2][n]}.
\end{equation}
In particular, we call $\meanwidth[t]{\sset-\grtr}$ the \emph{local Gaussian width} of $\sset$ in $\grtr$ at scale $t>0$; see also Figure~\ref{fig:results:modelsetup:meanwidth:local}.
Note that the local Gaussian width is always bounded from above by its global counterpart:
\begin{equation}\label{eq:locmwlessglob}
	\meanwidth[t]{\sset}=\mean[\sup_{\x \in \sset\intersec t\ball[2][n]}\sp{\gaussian}{\x}] \leq \mean[\sup_{\x \in \sset}\sp{\gaussian}{\x}]=\meanwidth{\sset}.
\end{equation}

\begin{figure}[!t]
	\centering
	\begin{subfigure}[t]{0.4\textwidth}
		\centering
		\tikzstyle{blackdot}=[shape=circle,fill=black,minimum size=1mm,inner sep=0pt,outer sep=0pt]
		\begin{tikzpicture}[scale=2]
			\coordinate (K1) at (-.5,-.3);
			\coordinate (K2) at (.5,-1);
			\coordinate (K3) at (.3,-1.7);
			\coordinate (K4) at (-.2,-2.1);
			\coordinate (K5) at (-1,-1.2);
						
			\draw[fill=gray!20!white,thick] (K1) -- (K2) -- (K3) -- (K4) -- (K5) -- cycle;
			\node[blackdot,label=below :$\vnull$] (P0) at (barycentric cs:K1=0.9,K2=0.5,K3=0.5,K4=0.5,K5=0.8) {};
			\node[blackdot] at (K1) {};
			\node[below right=.6cm and .2cm of P0] {$\sset$};
			
			\path [name path=P0perp] ($(P0) + (110:1.2cm)$) -- ($(P0) + (290:1.2cm)$);
			\path [name path=xbarhyper] ($(K1) + (20:1.2cm)$) -- ($(K1) + (200:1.2cm)$);
			\path [name intersections={of=P0perp and xbarhyper,by={K1anchor}}];
			
			\draw[dashed] ($(P0) + (20:1.2cm)$) node (P0R) {} -- ($(P0) + (200:1.2cm)$) node (P0L) {};
			\draw[dashed] ($(K1anchor) + (20:1.2cm)$) node (K1R) {} -- ($(K1anchor) + (200:1.2cm)$) node (K1L) {};
			
			\draw[<->,>=stealth,thick,dashed] ($(P0L)!.1!(P0R)$) -- ($(K1L)!.1!(K1R)$) node[pos=.4,left] {$\tfrac{1}{\lnorm{\gaussian}} \displaystyle \sup_{\x \in \sset} \sp{\gaussian}{\x}$};
			\draw[->,>=stealth, thick] (P0) -- ($(P0)!.5!(K1anchor)$) node[midway,above right] {$\gaussian$};
		\end{tikzpicture}
		\caption{}
		\label{fig:results:modelsetup:meanwidth:global}
	\end{subfigure}%
	\qquad\qquad
	\begin{subfigure}[t]{0.4\textwidth}
		\centering
		\tikzstyle{blackdot}=[shape=circle,fill=black,minimum size=1mm,inner sep=0pt,outer sep=0pt]
		\begin{tikzpicture}[scale=2]
			\coordinate (K1) at (-.5,-.3);
			\coordinate (K2) at (.5,-1);
			\coordinate (K3) at (.3,-1.7);
			\coordinate (K4) at (-.2,-2.1);
			\coordinate (K5) at (-1,-1.2);
			\coordinate[below right=.2cm and .03 of K1] (P0);
			
			\draw[fill=gray!20!white, name path = K] (K1) -- (K2) -- (K3) -- (K4) -- (K5) -- cycle;
			\node[draw,circle,fill=gray!20!white,minimum size=1.2cm,label={[label distance=-2pt]30:$tB_2^n$}] at (P0) {}; 
			\begin{scope}
				\clip (K1) -- (K2) -- (K3) -- (K4) -- (K5) -- cycle;
				\node[draw,circle,fill=gray!60!white,minimum size=1.2cm] at (P0) {};
			\end{scope}
			\draw[thick] (K1) -- (K2) -- (K3) -- (K4) -- (K5) -- cycle;
			
			\node at (barycentric cs:K1=1,K2=1,K3=1,K4=1,K5=1) {$\sset - \grtr$};
			\node[blackdot,label=below :$\vnull$] at (P0) {};
		\end{tikzpicture}
		\caption{}
		\label{fig:results:modelsetup:meanwidth:local}
	\end{subfigure}%	
	\caption{\subref{fig:results:modelsetup:meanwidth:global}~Visualization of the Gaussian width. If $\gaussian$ is fixed, $\sup_{\x \in \sset}\sp{\gaussian}{\x}$ measures the (scaled) spatial extent of $\sset$ in the direction of $\gaussian$. The expected value therefore computes the ``average'' width of $\sset$. \subref{fig:results:modelsetup:meanwidth:local}~Visualization of the local Gaussian width $\meanwidth[t]{\sset-\grtr}$ in $\grtr$: The dark gray region corresponds to the intersection $(\sset - \grtr) \intersec t\ball[2][n]$ over which the Gaussian width is computed. The scale $t$ determines the size (``resolution'') of the considered neighborhood of $\grtr$.}
	\label{fig:results:modelsetup:meanwidth}
\end{figure}

In order to relate the (local) Gaussian width to the number of samples $m$, it is very convenient to work with a scaling invariant complexity parameter.
This property is achieved by the notion of \emph{effective dimension},
\begin{equation}
\effdim{\sset}\coloneqq\frac{\meanwidth{\sset}^2}{\diam(\sset)^2},
\end{equation} 
where $\diam(\sset)$ denotes the Euclidean diameter of the subset $\sset \subset \R^n$.
If Assumption~\ref{model:signal} holds true and $\sset \subset \ball[2][n]$ (as in Subsection~\ref{subsec:results:unitball}), it particularly follows that $\diam(\sset) \asymp 1$ due to $\vnull, \grtr \in \sset$, so that we have $\effdim{\sset}\asymp \meanwidth{\sset}^2$.

Combining the above concepts of localization and scaling invariance (cf. \cite{mendelson2002improving,bartlett2005local,mendelson2007subgaussian}), we now introduce the local effective dimension. The following definition is adopted from the more recent works of \cite{plan2014highdim,plan2015lasso,genzel2016estimation} and in fact forms a crucial ingredient of our first recovery result in Theorem~\ref{thm:euclideanball}.
% Before we define it formally, let us remark that this complexity measure has already been successfully employed in signal estimation, see for example.
\enlargethispage{-\baselineskip}
\begin{definition}[Local Effective Dimension]
The \emph{local effective dimension} of a subset $\sset\subset \R^n$ in $\grtr$ at scale $t>0$ is given by
\begin{equation}\label{eq:localeffecversion}
	\effdim[t]{\sset-\grtr} \coloneqq \frac{\meanwidth[t]{\sset-\grtr}^2}{t^2}.
\end{equation}
\end{definition}
Since $\meanwidth[t]{\sset-\grtr} = \meanwidth{(\sset-\grtr) \intersec t\ball[2][n]}$, the scaling parameter $t$ essentially captures the diameter of the local neighborhood of $\grtr$ and therefore normalizes the local Gaussian width. 
More precisely, if Assumption~\ref{model:signal} is satisfied and $t \leq 1$, we have $\diam((\sset-\grtr) \intersec t\ball[2][n]) \asymp t$, which in turn implies that
\begin{equation}\label{eq:localeffdimeqformulation}
	\effdim[t]{\sset-\grtr} \asymp \effdim{(\sset-\grtr)\intersec t\ball[2][n]}.
\end{equation}
Thus, the local effective dimension is equivalent to the effective dimension of the shifted and localized signal set $(\sset-\grtr) \intersec t\ball[2][n]$.

To better understand the role of the scale $t$ in $\effdim[t]{\sset-\grtr}$, let us consider the limit case $t \to 0$, or with other words, what happens if the neighborhood of $\grtr$ becomes infinitesimal small.
For this purpose, we first rewrite the local effective dimension as follows:
\begin{equation}
	\effdim[t]{\sset-\grtr} = \tfrac{1}{t^2}\meanwidth{(\sset-\grtr) \intersec t \ball[2][n]}^2 = \meanwidth{\tfrac{1}{t}(\sset-\grtr) \intersec \ball[2][n]}^2.
\end{equation}
Hence, if $\sset$ is convex, it is not hard to see that
\begin{equation}\label{eq:effdimtocondim}
\meanwidth{\tfrac{1}{t}(\sset-\grtr) \intersec \ball[2][n]}^2\to \meanwidth{\cone{\sset}{\grtr} \intersec \ball[2][n]}^2,\quad  \text{ as $t\to 0$},
\end{equation}
where $\cone{\sset}{\grtr}$ is the descent cone of $\sset$ at $\grtr$.
The limit on the right-hand side of \eqref{eq:effdimtocondim} is well-known as conic effective dimension or statistical dimensional in the literature \cite{amelunxen2014edge}. 
Let us provide a formal definition of this important quantity because it will reappear in the hypotheses of Theorem~\ref{thm:results:generalset:global} and Theorem~\ref{thm:results:generalset:local} below:
\begin{definition}[Conic Effective Dimension]
The \emph{conic effective dimension} of a subset $\sset\subset \R^n$ in $\grtr$ is defined as
\begin{equation}
\effdim[\conic]{\sset - \grtr}\coloneqq\meanwidth{\cone{\sset}{\grtr} \intersec \ball[2][n]}^2.
\end{equation}
\end{definition}
From a geometric viewpoint, $\effdim[\conic]{\sset - \grtr}$ measures the size (narrowness) of the cone generated by $\sset - \grtr$.
While this complexity parameter is conceptually simple, it leads to problems if $\grtr$ lies in the interior of the signal set $\sset$ and not exactly on its boundary.
Supposed that $\spann\sset = \R^n$, we would then have $\cone{\sset}{\grtr} = \R^n$, which in turn implies $\effdim[\conic]{\sset - \grtr} \asymp n$.
In contrast, the local effective dimension does not suffer from such an ``unstable'' behavior, since it avoids to take the conic hull of $\sset - \grtr$.
Indeed, due to 
\begin{equation}\label{eq:efflessconic}
	\effdim[t]{\sset - \grtr} \leq \effdim[t]{\cone{\sset}{\grtr}} = \frac{\meanwidth[t]{\cone{\sset}{\grtr}}^2}{t^2} =\meanwidth[1]{\cone{\sset}{\grtr}}^2=\effdim[\conic]{\sset - \grtr},
\end{equation}
we conclude that the local effective dimension can be (much) smaller than its conic counterpart, e.g., in the situation of Figure~\ref{fig:results:modelsetup:meanwidth:local}.
With regard to our specific recovery problem, Theorem~\ref{thm:euclideanball} below shows that, by tolerating a reconstruction error of order $t$, the sampling rate is actually determined by $\effdim[t]{\sset - \grtr}$, rather than $\effdim[\conic]{\sset - \grtr}$.
For a more extensive discussion of the local and conic effective dimension, see also \cite[Sec.~III.D]{genzel2016estimation}.

\subsection{Recovery in Subsets of the Unit Ball}
\label{subsec:results:unitball}

This subsection investigates signal recovery under Assumption~\ref{model:measurements}~and~\ref{model:signal} but with the restriction that the signal set belongs to the Euclidean unit ball, i.e., $\sset \subset \ball[2][n]$.
The proofs of all results stated below are postponed to Subsection~\ref{sec:proof:unitball}.
Let us first recall the hinge loss estimator:
% In this subsection, we consider the task of reconstructing the signal vector $\grtr\in \sset \subset \ball[2][n]$ from noisy $1$-bit Gaussian observations. Our main result is Theorem~\ref{thm:euclideanball} and all proofs are postponed to Subsection~\ref{sec:proof:unitball}. Following the strategy of statistical learning, we find our 
% estimator $\solu$ by solving a restricted empirical risk minimization program:
\begin{definition}[Hinge Loss Minimization]\label{def:results:unitball:estimator}
Let the model conditions of Assumption~\ref{model:measurements}~and~\ref{model:signal} hold true.
A \emph{hinge loss estimator} $\solu \in \R^n$ is defined as a solution of the convex program
\begin{equation}\label{eq:estimator}\tag{$P_{\losshng,\sset}$}
		\min_{\x \in \R^n} \tfrac{1}{m} \sum_{i = 1}^m \losshng(y_i\sp{\a_i}{\x}) \quad \text{subject to \quad $\x \in \sset$,}
\end{equation}
where $\losshng(v) \coloneqq \pospart{1 - v} = \max\{0, 1 - v\}$ denotes the \emph{hinge loss function}.
\end{definition}
Since \eqref{eq:estimator} is a particular instance of empirical risk minimization, we also introduce the notion of risk function, which is very common in statistical learning theory:
% The proof our main recovery result, Theorem~\ref{thm:euclideanball}, will be based on a local analysis 
% of the objective functional of \eqref{eq:estimator} using tools from empirical process and statistical learning theory. For the sake of readability, let us introduce the associated terminology below.
\begin{definition}[Empirical and Expected Risk Function]\label{def:results:unitball:risk}
The objective functional of \eqref{eq:estimator} is called \emph{empirical risk function} and denoted by 
\begin{equation}
\lossemp[](\x)\coloneqq\tfrac{1}{m} \sum_{i = 1}^m \losshng(y_i\sp{\a_i}{\x}).
\end{equation}
Its expectation (over all involved random variables) is given by
\begin{equation}
\lossexp(\x)\coloneqq\mean[\lossemp[](\x)]=\mean[\losshng(y \sp{\a}{\x})],
\end{equation}
which is referred to as the \emph{expected risk function}.
\end{definition}

For a better understanding of the formal arguments below, let us briefly outline the basic idea behind signal recovery via empirical risk minimization:
The first step is to verify that any minimizer of the expected risk,
\begin{equation}\label{eq:results:unitball:lossexp}
	\min_{\x\in \sset}\lossexp(\x) = \min_{\x\in \sset} \mean[\losshng(y \sp{\a}{\x})],
\end{equation} 
is well-behaved in the sense that it is sufficiently close to $\spann\{\grtr\}$. Indeed, Lemma~\ref{lem:minexpectedloss} below states that there exists an expected risk minimizer taking the form $\grtrmu$ for some $0 < \scalfac \leq 1$, which in turn is a consequence of the unit-ball assumption $\sset \subset \ball[2][n]$.
According to the law of large numbers, one can therefore expect that $\lossemp[](\grtrmu)\approx \lossemp[](\solu)$ as $m$ gets sufficiently large.
The second key step is then to exploit the convexity of $\lossemp(\cdot)$ to show that one even has $\grtrmu \approx \solu$.
More specifically, it will turn out that $\lossemp[](\cdot)$ satisfies \emph{restricted strong convexity} in a certain neighborhood of $\grtrmu$.
This finding is somewhat surprising, since the hinge loss function $\losshng(\cdot)$ is not even locally strongly convex. In fact, its second derivative just corresponds to a $\delta$-distribution shifted to $1$, so that intuitively speaking, $\losshng(\cdot)$ is locally strongly convex in an \emph{infinitesimal} neighborhood of $1$.
But nevertheless, this very mild form of convexity is still a sufficient criterion for restricted strong convexity of the associated empirical risk function $\lossemp[](\cdot)$, see Proposition~\ref{prop:quadprocesslowerbound} and Remark~\ref{rmk:proofs:unitball:rsc} for more technical details.
Finally, a simple normalization step leads to an error bound in the form of \eqref{eq:intro:errbound}.
The interested reader is referred to \cite{mendelson2014learninggeneral,genzel2016estimation} for a more detailed discussion of this strategy, which also applies to many other convex loss functions.

According to this roadmap, our first task is to study the expected risk minimizer of \eqref{eq:results:unitball:lossexp}:
\begin{lemma}\label{lem:minexpectedloss}
Let $\gaussianuniv \distributed \Normdistr{0}{1}$ be a standard Gaussian random variable. Moreover, let $\scalfac \in [0,1]$ be a minimizer of
\begin{equation}\label{eq:minexpectedloss:scalfac}
\min_{s\in [0,1]}\mean[\losshng(s f(\gaussianuniv)\gaussianuniv)], 
\end{equation}
where $f \colon \R \to \{-1,+1\}$ is the $1$-bit quantizer from Assumption~\ref{model:measurements}.
Assuming that $\mean[f(\gaussianuniv)\gaussianuniv]>0$, we have that $\scalfac>0$ and $\grtrmu\in \sset$ is a minimizer of \eqref{eq:results:unitball:lossexp}, i.e.,
\begin{equation}
\lossexp(\grtrmu)=\min_{\x\in \sset}\lossexp(\x).
\end{equation}
\end{lemma}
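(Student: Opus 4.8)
The strategy is to decompose an arbitrary candidate $\x \in \sset$ into its component along $\grtr$ and its orthogonal complement, and to show that the expected hinge risk only decreases when we project onto $\spann\{\grtr\}$ and then optimize the scalar. Concretely, write $\x = s\,\grtr + \vec{w}$ with $s = \sp{\x}{\grtr}$ and $\vec{w} \perp \grtr$. By rotational invariance of the standard Gaussian, $(\sp{\a}{\grtr},\sp{\a}{\vec{w}})$ is a centered Gaussian pair whose components are independent, with $\sp{\a}{\grtr} \sim \Normdistr{0}{1}$ (using $\lnorm{\grtr}=1$) and $\sp{\a}{\vec{w}} \sim \Normdistr{0}{\lnorm{\vec{w}}^2}$; moreover $y = f(\sp{\a}{\grtr})$ depends only on the first component. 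Since $\losshng$ is convex, Jensen's inequality applied to the conditional expectation over $\sp{\a}{\vec{w}}$ (given $\sp{\a}{\grtr}$, hence given $y$) gives
\begin{equation*}
	\lossexp(\x) = \mean\!\big[\losshng(y\,(s\sp{\a}{\grtr} + \sp{\a}{\vec{w}}))\big] \geq \mean\!\big[\losshng(y\,s\sp{\a}{\grtr})\big] = \mean[\losshng(s\,f(\gaussianuniv)\gaussianuniv)],
\end{equation*}
because $\mean[\sp{\a}{\vec{w}} \mid \sp{\a}{\grtr}] = 0$. This shows the expected risk over $\sset$ is bounded below by $\min_{s}\mean[\losshng(s f(\gaussianuniv)\gaussianuniv)]$ over the set of attainable values of $s = \sp{\x}{\grtr}$.

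Next I would identify the range of $s$. Since $\grtr \in \sset$ we can take $\x = \grtr$, giving $s = 1$; since $\vnull \in \sset$ and $\sset$ is convex, the segment $[\vnull,\grtr] \subset \sset$ gives all $s \in [0,1]$. The point $s\,\grtr$ for $s \in [0,1]$ lies in $\sset$ and (using $\sset \subset \ball[2][n]$, but this is not even needed here since $s\grtr$ has norm $s \le 1$) is admissible. So the infimum of $\lossexp$ over $\sset$ equals $\min_{s \in [0,1]} \mean[\losshng(s f(\gaussianuniv)\gaussianuniv)]$, and this minimum is attained at $\x = \scalfac\,\grtr = \grtrmu$ with $\scalfac$ as defined in \eqref{eq:minexpectedloss:scalfac}. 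One subtlety: the lower bound above only uses attainable values of $s$, and for general $\x \in \sset$ the value $s=\sp{\x}{\grtr}$ could lie outside $[0,1]$; but since we only need $\grtrmu$ to be \emph{a} minimizer, it suffices to check that no $\x \in \sset$ beats $\grtrmu$, which follows because $\mean[\losshng(s f(\gaussianuniv)\gaussianuniv)] \ge \mean[\losshng(\scalfac f(\gaussianuniv)\gaussianuniv)]$ for the relevant $s$ — and a short convexity argument shows that $s \mapsto \mean[\losshng(s f(\gaussianuniv)\gaussianuniv)]$ is a convex function of $s$ whose minimizer over all of $\R$ in fact lies in $[0,1]$ under the sign condition, so restricting $s$ to $[0,1]$ loses nothing.

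It remains to prove $\scalfac > 0$ under the hypothesis $\mean[f(\gaussianuniv)\gaussianuniv] > 0$. For this I would show the one-dimensional function $\phi(s) := \mean[\losshng(s f(\gaussianuniv)\gaussianuniv)]$ has a strictly negative right derivative at $s=0$. Since $\losshng(v) = \pospart{1-v}$ is differentiable away from $v=1$ with derivative $-1$ there, and the event $\{s f(\gaussianuniv)\gaussianuniv = 1\}$ has probability zero for $s$ in a neighborhood of $0$, dominated convergence yields $\phi'(0^+) = \mean[-f(\gaussianuniv)\gaussianuniv \cdot \indprob{\{f(\gaussianuniv)\gaussianuniv < 1\}}]$; for $s$ small enough this is $-\mean[f(\gaussianuniv)\gaussianuniv] < 0$ (since $\{f(\gaussianuniv)\gaussianuniv \ge 1\}$ contributes the full expectation correction, but more simply: $\phi(s) = \phi(0) - s\,\mean[f(\gaussianuniv)\gaussianuniv] + o(s)$ near $0$ because $1 - s f(\gaussianuniv)\gaussianuniv > 0$ almost surely is false — one must split on the tail). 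So $\phi(s) < \phi(0)$ for small $s > 0$, hence the minimizer $\scalfac$ is positive.

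The main obstacle is this last step: making the differentiation of $\phi$ at $0$ rigorous requires care because the hinge loss is only piecewise linear, so one should argue directly that $\phi(s) \le \phi(0) - c\,s$ for small $s$ by bounding $\mean[(\pospart{1 - s f(\gaussianuniv)\gaussianuniv}) - \pospart{1}]$ — and here one exploits that for $|s\,f(\gaussianuniv)\gaussianuniv| \le 1$ (an event of probability tending to $1$ as $s \to 0$) the integrand equals exactly $-s f(\gaussianuniv)\gaussianuniv$, while on the complementary small-probability event it is controlled by Gaussian tail bounds. The positivity of $\mean[f(\gaussianuniv)\gaussianuniv]$ is precisely what makes the leading term push $\phi$ below $\phi(0)$.
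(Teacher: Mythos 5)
Your overall strategy mirrors the paper's: decompose $\x$ along $\grtr$ and its orthogonal complement, use independence and Jensen's inequality to reduce the problem to the one-dimensional convex function $\phi(s) \coloneqq \mean[\losshng(s f(\gaussianuniv)\gaussianuniv)]$ (the paper's $R$), and use the sign of the derivative at $0$. The genuine gap is in the ``subtlety'' paragraph, where you dismiss the unit-ball assumption $\sset\subset\ball[2][n]$ as ``not even needed'' and assert that the minimizer of $\phi$ over all of $\R$ already lies in $[0,1]$. That assertion is false. Take $f=\sign$: then $\phi(s) = \mean[\pospart{1-s\abs{\gaussianuniv}}]$ is strictly decreasing on $(0,\infty)$ with infimum $0$ that is not attained, so the unconstrained minimizer is certainly not in $[0,1]$. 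More generally, whenever $R'(1)<0$ the convex function $\phi$ is still decreasing at $s=1$, so its unconstrained minimizer (if finite at all) exceeds $1$. What actually closes the argument is precisely the hypothesis you discard: for $\x\in\sset\subset\ball[2][n]$, Cauchy--Schwarz gives $\abs{\sp{\x}{\grtr}}\leq\lnorm{\x}\leq 1$, so the attainable values $s=\sp{\x}{\grtr}$ lie in $[-1,1]$. Combined with convexity and $\phi'(0)=-\mean[f(\gaussianuniv)\gaussianuniv]<0$ (hence $\phi$ non-increasing on $[-1,0]$), this yields $\min_{s\in[-1,1]}\phi(s)=\min_{s\in[0,1]}\phi(s)=\phi(\scalfac)$, which is exactly the paper's step (and is flagged in a footnote there). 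Without $\sset\subset\ball[2][n]$ the lemma genuinely fails, which is the whole reason for the separate treatment in Subsection~\ref{subsec:results:generalset}.

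A smaller point: your tentative expression $\phi'(0^+)=\mean[-f(\gaussianuniv)\gaussianuniv\cdot\indprob{\{f(\gaussianuniv)\gaussianuniv<1\}}]$ carries a spurious indicator. The pointwise derivative of $s\mapsto\pospart{1-sX}$ is $-X\indprob{\{1-sX>0\}}$, which at $s=0$ is simply $-X$, so $\phi'(0)=-\mean[f(\gaussianuniv)\gaussianuniv]$ with no extra truncation. Your fallback Taylor argument eventually recovers the right value, but the paper's route via the weak derivative $R'(s)=-\mean[\indset{\intvopcl{-\infty}{1}}(s f(\gaussianuniv)\gaussianuniv)\,f(\gaussianuniv)\gaussianuniv]$, evaluated at $s=0$, avoids the detour entirely.
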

Let us emphasize that $\mean[f(\gaussianuniv)\gaussianuniv]>0$ is a reasonable assumption because it ensures that the linear signal $\gaussianuniv = \sp{\a}{\grtr}$ and the output variable $y = f(\gaussianuniv)$ are positively correlated. With this in mind, the correlation parameter $\lambda_f \coloneqq \mean[f(\gaussianuniv)\gaussianuniv]$ essentially captures the signal-to-noise ratio of the measurement rule \eqref{eq:results:measurements:prototype}.
In particular, a large value of $\lambda_f$ implies that the expected risk minimizer $\grtrmu$ is not too close to $\vnull$ (see Lemma~\ref{lem:lowerboundmu}).
On the other hand, if $\lambda_f = 0$, it can happen that all information on $\grtr$ is completely buried in noise and recovery is impossible, e.g., if $p = \tfrac{1}{2}$ in Example~\ref{ex:appl:noisepatterns}\ref{ex:bitflip}. But we point out that $\lambda_f > 0$ is still not a necessary condition for recovery in general. For instance, one could imagine a binary even output function, similar to real-valued phaseless measurements generated by $f = \abs{\cdot}$. However, one would have to deal with a sign ambiguity in that case and a simple convex estimator such as \eqref{eq:estimator} becomes useless.

%, which in turn implies that $\scalfac>0$. 
% We therefore call $\lambda=\lambda_f\coloneqq\mean[f(\gaussianuniv)\gaussianuniv]$ the correlation parameter of the $1$-bit model.
% We will see that if the quantized and linear measurements are strongly correlated, i.e., $\lambda$ is large, the minimizer of the expected risk function $\grtrmu$ will be close to our ground truth signal $\grtr$. 
Besides the correlation assumption $\lambda_f > 0$, we need a second mild condition on the quantizer $f$ in order to formulate our main recovery result Theorem~\ref{thm:euclideanball}:
% Let us formally define the correlation parameter of the $1$-bit model, and add one more natural correlation condition on our measurement model as we will need it to prove our main recovery result, Theorem~\ref{thm:euclideanball}.
\begin{assumption}[Correlation Conditions]\label{model:correlation}
Let $\gaussianuniv \distributed \Normdistr{0}{1}$ be a standard Gaussian random variable and let $f \colon \R \to \{-1,+1\}$ be the $1$-bit quantizer from Assumption~\ref{model:measurements}. Assume that the following two model conditions hold true:
\begin{enumerate}[label=(C\arabic*)]
\item \label{enum:cond_c1} $\lambda \coloneqq\lambda_f\coloneqq\mean[f(\gaussianuniv)\gaussianuniv]>0$,
\item \label{enum:cond_c2} $\mean[f(\gaussianuniv)\sign(\gaussianuniv)\suchthat \abs{\gaussianuniv}]\geq 0\quad\quad \text{(a.s.)}.$
\end{enumerate}
We call $\lambda$ the \emph{correlation parameter} of the quantizer $f$.
\end{assumption}
The statistical meaning of \ref{enum:cond_c1} and~\ref{enum:cond_c2} is studied in greater detail in Subsection~\ref{subsec:appl:examples}.
In this context, we will also show that both conditions are easily satisfied for two prototypical sources of model corruptions, namely \emph{random bit flip noise} (after quantization) and \emph{additive Gaussian noise} (before quantization).
% In Subsection~\ref{subsec:appl:examples}, we will provide more insight on what it means for a $1$-bit quantization model to satisfy 
% Condition~\ref{enum:cond_c1} and~\ref{enum:cond_c2}, which will explain why one can consider those assumptions to be natural.
% Moreover, we will show that both conditions are satisfied by our two main examples of noisy $1$-bit Gaussian measurements, namely by the \emph{random bit flip model} and the \emph{additive Gaussian noise model}.

We are now ready to state the main result of this subsection:
\begin{theorem}[Signal Recovery in Unit Ball]\label{thm:euclideanball} 
	Let the model conditions of Assumption~\ref{model:measurements},~\ref{model:signal},~and~\ref{model:correlation} be satisfied,
	assume that $\sset\subset \ball[2][n]$, and let $\scalfac$ be defined according to \eqref{eq:minexpectedloss:scalfac}.
	For every $t \in \intvop{0}{\scalfac}$ and $\eta\in \intvop{0}{\tfrac{1}{2}}$, the following holds true with probability at least $1 - \eta$:
	If the number of samples obeys
	\begin{equation}\label{eq:number_of_measurements}
		m\gtrsim \lambda^{-2}\cdot t^{-2} \cdot \max\{\effdim[t]{\sset - \grtrmu}, \log(\eta^{-1}) \},
	\end{equation}
	then any minimizer $\solu$ of \eqref{eq:estimator} satisfies
	\begin{equation}\label{eq:errorboundunitball} 
		\lnorm[auto]{\grtr - \frac{\solu}{\lnorm{\solu}}} \leq \frac{t}{\scalfac}\lesssim t \cdot \sqrt{\log(\lambda^{-1})} \ .
	\end{equation}
\end{theorem}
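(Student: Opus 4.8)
The plan is to follow the standard empirical risk minimization scheme (cf.\ \cite{mendelson2014learninggeneral,genzel2016estimation}): first show that, with the claimed probability, every minimizer $\solu$ of \eqref{eq:estimator} satisfies $\lnorm{\solu - \grtrmu} \le t$, where $\grtrmu$ is the expected risk minimizer from Lemma~\ref{lem:minexpectedloss}, and then pass to the normalized bound \eqref{eq:errorboundunitball}. The first step is a localization: by Lemma~\ref{lem:minexpectedloss}, condition~\ref{enum:cond_c1} guarantees $\scalfac > 0$ and $\grtrmu \in \sset$ with $\lossexp(\grtrmu) = \min_{\x \in \sset}\lossexp(\x)$, so optimality of $\solu$ gives $\lossemp[](\solu) \le \lossemp[](\grtrmu)$; since $\sset$ and $\lossemp[]$ are convex, every point of the segment $[\grtrmu, \solu]$ has empirical risk at most $\lossemp[](\grtrmu)$, and hence, if $\lnorm{\solu - \grtrmu} > t$, the point $\grtrmu + t(\solu - \grtrmu)/\lnorm{\solu - \grtrmu}$ lies in $\sset$ at distance $t$ from $\grtrmu$ with empirical risk $\le \lossemp[](\grtrmu)$. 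It therefore suffices to prove that, with probability at least $1 - \eta$,
\[
	\lossemp[](\x) - \lossemp[](\grtrmu) > 0 \qquad \text{for every } \x \in \sset \text{ with } \lnorm{\x - \grtrmu} = t,
\]
which forces $\lnorm{\solu - \grtrmu} \le t$; a routine normalization estimate (using $\lnorm{\grtr} = 1$, $\sset \subset \ball[2][n]$, $t < \scalfac$, and the bound $\scalfac \gtrsim (\log(\lambda^{-1}))^{-1/2}$ of Lemma~\ref{lem:lowerboundmu} for the last inequality in \eqref{eq:errorboundunitball}) then finishes the argument, after running the display above at the slightly smaller radius $ct$ with a numerical constant $c \le 1$ if constants need to be absorbed.

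To prove the displayed inequality, fix $\x \in \sset$ with $\h \coloneqq \x - \grtrmu$ of length $t$ and linearize the empirical risk at $\grtrmu$. Since $\losshng$ is convex and $\lossemp[]$ is almost surely differentiable at $\grtrmu$ with $\bar{\vec{v}} \coloneqq \gradient\lossemp[](\grtrmu) = -\tfrac1m\sum_{i=1}^m y_i\a_i\indprob{y_i\sp{\a_i}{\grtrmu} < 1}$, convexity yields the exact splitting
\[
	\lossemp[](\x) - \lossemp[](\grtrmu) = \sp{\bar{\vec{v}}}{\h} + \losstaylor{\x}{\grtrmu}{}, \qquad \losstaylor{\x}{\grtrmu}{} \ge 0,
\]
where $\losstaylor{\x}{\grtrmu}{}$ denotes the first-order Taylor remainder of $\lossemp[]$ at $\grtrmu$. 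Because $\mean[\bar{\vec{v}}] = \gradient\lossexp(\grtrmu)$ and $\grtrmu$ minimizes the convex function $\lossexp$ over $\sset \ni \x$, the first-order optimality condition gives $\sp{\mean[\bar{\vec{v}}]}{\h} \ge 0$, so that
\[
	\lossemp[](\x) - \lossemp[](\grtrmu) \ \ge\ \losstaylor{\x}{\grtrmu}{} \ -\ \bigl|\sp{\bar{\vec{v}} - \mean[\bar{\vec{v}}]}{\h}\bigr|.
\]
It thus remains to produce, uniformly over $\{\x \in \sset : \lnorm{\x - \grtrmu} = t\}$, a lower bound on the Taylor remainder that strictly dominates an upper bound on the centered multiplier term $|\sp{\bar{\vec{v}} - \mean[\bar{\vec{v}}]}{\h}|$.

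For the multiplier term, write $\bar{\vec{v}} - \mean[\bar{\vec{v}}] = \tfrac1m\sum_{i=1}^m(\xi_i\a_i - \mean[\xi\a])$ with $\xi_i \coloneqq -y_i\indprob{y_i\sp{\a_i}{\grtrmu} < 1}$; since $|\xi_i| \le 1$, each summand is a centered sub-Gaussian vector with sub-Gaussian norm $\lesssim 1$, so $\h \mapsto \sp{\bar{\vec{v}} - \mean[\bar{\vec{v}}]}{\h}$ is a sub-Gaussian process on $(\sset - \grtrmu) \intersec t\ball[2][n]$ whose increments are controlled by $m^{-1/2}\lnorm{\cdot}$. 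Generic chaining together with a sub-Gaussian tail inequality then gives, with probability at least $1 - \tfrac{\eta}{2}$, a supremum $\lesssim \meanwidth[t]{\sset - \grtrmu}/\sqrt m + t\sqrt{\log(\eta^{-1})/m}$, and the sample-size hypothesis \eqref{eq:number_of_measurements} is precisely what makes this $\lesssim \lambda t^2$. For the Taylor remainder I would invoke Proposition~\ref{prop:quadprocesslowerbound} --- the restricted-strong-convexity statement, cf.\ Remark~\ref{rmk:proofs:unitball:rsc} --- which, under conditions~\ref{enum:cond_c1}--\ref{enum:cond_c2} and \eqref{eq:number_of_measurements}, asserts that with probability at least $1 - \tfrac{\eta}{2}$ one has $\inf_{\x}\losstaylor{\x}{\grtrmu}{} \gtrsim \lambda t^2$ over the sphere of radius $t$ around $\grtrmu$. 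Intersecting the two events (total probability $\ge 1 - \eta$) and choosing the implicit constant in \eqref{eq:number_of_measurements} large enough that the multiplier bound is at most half the remainder bound yields $\lossemp[](\x) - \lossemp[](\grtrmu) > 0$ on the whole sphere, which by the localization above completes the proof.

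The step I expect to be the main obstacle is Proposition~\ref{prop:quadprocesslowerbound}, the quadratic lower bound on
\[
	\losstaylor{\x}{\grtrmu}{} = \tfrac1m\sum_{i=1}^m\Bigl(\losshng(y_i\sp{\a_i}{\x}) - \losshng(y_i\sp{\a_i}{\grtrmu}) + y_i\sp{\a_i}{\h}\,\indprob{y_i\sp{\a_i}{\grtrmu} < 1}\Bigr).
\]
Because $\losshng$ is only piecewise linear, each summand is again piecewise linear in $\h$, with all of its ``curvature energy'' concentrated on the measure-zero hyperplane $\{y_i\sp{\a_i}{\x} = 1\}$; there is no pointwise lower bound of order $\lnorm{\h}^2$, so quadratic growth can only be recovered in an averaged sense. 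I expect the argument to go through an adapted version of Mendelson's small ball method: one first shows that, for each $\x$ on the sphere, the per-sample increment is of order $\lnorm{\h}$ on an event whose probability is of order at least $\lambda\lnorm{\h}$ --- so that the hyperplane, though null, is crossed often enough that the increments have mean $\gtrsim \lambda t^2$ --- and then transfers this to a uniform-in-$\x$ lower bound on the empirical average, at the price of a local Gaussian width term $\meanwidth[t]{\sset - \grtrmu}/\sqrt m$ that \eqref{eq:number_of_measurements} is exactly calibrated to absorb; condition~\ref{enum:cond_c2} is presumably what guarantees that the small-ball event has the required probability, by pinning down the sign of the relevant conditional increments. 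Making the small ball machinery work for a loss that is not even locally strongly convex --- and, for the general-set results of Subsection~\ref{subsec:results:generalset}, adapting the notion of local Gaussian width accordingly --- is the technical heart of Section~\ref{sec:proofs}.
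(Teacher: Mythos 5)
Your proposal is correct and follows essentially the same route as the paper's proof: the excess-risk decomposition into a linear multiplier term and a non-negative Taylor remainder, localization on the sphere $\sset_t=\sset\intersec(t\S^{n-1}+\grtrmu)$ via Fact~\ref{fact}, the multiplier deviation bound via Theorem~\ref{thm:multiplierprocess}, the small-ball lower bound on the quadratic term via Proposition~\ref{prop:quadprocesslowerbound}, and the closing normalization step using Lemma~\ref{lem:lowerboundmu}. The one small departure is that you derive the non-negativity of the expected multiplier term $\sp{\mean[\bar{\vec{v}}]}{\h}\geq 0$ abstractly from the first-order optimality condition for the constrained minimizer $\grtrmu$ of $\lossexp$ on $\sset$, whereas the paper's Lemma~\ref{lem:expnoisecor} computes $\mean[\multiplterm{\x}{\grtrmu}]=-R'(\scalfac)(\scalfac-\sp{\x}{\grtr})$ explicitly and runs a case split on $\sign(R'(1))$ --- an equivalent, if more hands-on, argument.
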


A remarkable feature of Theorem~\ref{thm:euclideanball} is that the impact of the underlying $1$-bit measurement model is completely controlled by the correlation parameter $\lambda$. 
Since $\lambda$ can be regarded as a constant scaling factor, recovery via \eqref{eq:estimator} is still possible when the specific output rule is unknown and the signal-to-noise ratio is very low.
With other words, under the hypothesis of Assumption~\ref{model:correlation}, $\solu / \lnorm{\solu}$ constitutes a consistent estimator of $\grtr$ even if $\lambda$ is relatively close to $0$.
Note that the above recovery statement strongly resembles the non-uniform results of \cite{plan2015lasso, genzel2016estimation}, where the uncertainty about the non-linear output function $f$ is also captured by a few model-dependent parameters.

We wish to emphasize that the error tolerance $t > 0$ needs to be fixed in advance. Thus, one may rewrite \eqref{eq:number_of_measurements} as a condition on $t$:
\begin{equation}\label{eq:minimalnumber}
	t\gtrsim \lambda^{-1}\cdot \Big(\frac{\max\{\effdim[t]{\sset - \grtrmu}, \log(\eta^{-1}) \}}{m}\Big)^{1/2},
\end{equation}
but note that this is still quite implicit, as the right-hand side also depends on $t$.
If $m$ is now adjusted such that \eqref{eq:minimalnumber} holds true with equality (up to numerical constants), the actual error bound \eqref{eq:errorboundunitball} can be directly related to the number of samples:
\begin{equation}\label{eq:effdimbound} 
	\lnorm[auto]{\grtr - \frac{\solu}{\lnorm{\solu}}} \lesssim \scalfac^{-1}  \cdot\lambda^{-1}\cdot \Big(\frac{\max\{\effdim[t]{\sset - \grtrmu}, \log(\eta^{-1}) \}}{m}\Big)^{1/2}.
\end{equation}
While this expression already exhibits the optimal oversampling rate of $\asympfaster{m^{-1/2}}$, the local effective dimension on the right-hand side still explicitly depends on the parameter $t$.
The following corollary shows that one can easily get rid of this dependence by applying \eqref{eq:locmwlessglob} or \eqref{eq:efflessconic}.
\begin{corollary}\label{coro:euclideanball}
The assertion of Theorem~\ref{thm:euclideanball} still holds true if the condition \eqref{eq:number_of_measurements} is replaced by 
\begin{equation}
m\gtrsim \lambda^{-2}\cdot t^{-2} \cdot \max\{\effdim[\conic]{\sset - \grtrmu}, \log(\eta^{-1})\},
\end{equation}
or by
\begin{equation}
m\gtrsim \lambda^{-2}\cdot t^{-4} \cdot \max\{\meanwidth{K}^2, \log(\eta^{-1})\}.
\end{equation}
\end{corollary}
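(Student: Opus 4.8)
The plan is to deduce both variants directly from Theorem~\ref{thm:euclideanball} by checking that each of the two new sampling conditions is at least as demanding as the original requirement \eqref{eq:number_of_measurements}. Once this is established, the very same high-probability event produced by Theorem~\ref{thm:euclideanball} yields the error bound \eqref{eq:errorboundunitball}, with no further work needed. The only ingredients are the monotonicity relations \eqref{eq:locmwlessglob} and \eqref{eq:efflessconic} already recorded in Subsection~\ref{subsec:results:modelsetup}, together with the fact that $\grtrmu \in \sset$ (Lemma~\ref{lem:minexpectedloss}), which makes the descent cone $\cone{\sset}{\grtrmu}$ and hence $\effdim[\conic]{\sset - \grtrmu}$ well-defined.

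For the conic version, I would apply \eqref{eq:efflessconic} with $\grtrmu$ in place of $\grtr$ to get $\effdim[t]{\sset - \grtrmu} \leq \effdim[\conic]{\sset - \grtrmu}$. Taking the maximum with $\log(\eta^{-1})$ and multiplying by $\lambda^{-2} t^{-2}$ preserves this inequality, so any $m$ satisfying $m \gtrsim \lambda^{-2} t^{-2}\max\{\effdim[\conic]{\sset - \grtrmu}, \log(\eta^{-1})\}$ also satisfies \eqref{eq:number_of_measurements}.

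For the global-width version, first note that the Gaussian width is invariant under translation, since $\mean[\sp{\gaussian}{\grtrmu}] = 0$ implies $\meanwidth{\sset - \grtrmu} = \meanwidth{\sset}$. Combining this with \eqref{eq:locmwlessglob} gives $\meanwidth[t]{\sset - \grtrmu} \leq \meanwidth{\sset}$, hence
\[
	\effdim[t]{\sset - \grtrmu} = \frac{\meanwidth[t]{\sset - \grtrmu}^2}{t^2} \leq \frac{\meanwidth{\sset}^2}{t^2}.
\]
Since $t \in \intvop{0}{\scalfac}$ and $\scalfac \leq 1$, we have $1 \leq t^{-2}$, so that $t^{-2}\log(\eta^{-1}) \leq t^{-4}\log(\eta^{-1})$, and therefore
\[
	t^{-2}\max\{\effdim[t]{\sset - \grtrmu}, \log(\eta^{-1})\} \leq \max\{t^{-4}\meanwidth{\sset}^2,\ t^{-4}\log(\eta^{-1})\} = t^{-4}\max\{\meanwidth{\sset}^2, \log(\eta^{-1})\}.
\]
Consequently, $m \gtrsim \lambda^{-2} t^{-4}\max\{\meanwidth{\sset}^2, \log(\eta^{-1})\}$ again implies \eqref{eq:number_of_measurements}, and Theorem~\ref{thm:euclideanball} applies verbatim.

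I do not anticipate any genuine obstacle here: the corollary is essentially a bookkeeping consequence of the elementary inequalities relating local, conic, and global complexity parameters. The only two points worth a word of justification are the translation invariance $\meanwidth{\sset - \grtrmu} = \meanwidth{\sset}$ (which rests on $\mean[\sp{\gaussian}{\grtrmu}] = 0$) and the use of $t \leq \scalfac \leq 1$ to absorb the $\log(\eta^{-1})$ term into the $t^{-4}$ scaling in the global-width estimate.
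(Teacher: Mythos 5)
Your proposal is correct and follows precisely the route the paper indicates: it invokes the monotonicity inequalities \eqref{eq:efflessconic} and \eqref{eq:locmwlessglob} (together with translation invariance of the Gaussian width and the observation $t<\scalfac\leq 1$) to show that each new sampling condition implies the original condition \eqref{eq:number_of_measurements}, whence Theorem~\ref{thm:euclideanball} applies directly. This is exactly the argument the authors sketch in the sentence preceding the corollary, so there is nothing to add.
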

Adjusting $m$ similarly to \eqref{eq:effdimbound}, we now obtain the non-asymptotic error bounds
\begin{equation}\label{eq:conicbound} 
\lnorm[auto]{\grtr - \frac{\solu}{\lnorm{\solu}}} \lesssim \Big(\frac{\max\{\effdim[\conic]{\sset - \grtrmu}, \log(\eta^{-1}) \}}{m}\Big)^{1/2},
\end{equation} 
and
\begin{equation}\label{eq:globalbound} 
\lnorm[auto]{\grtr - \frac{\solu}{\lnorm{\solu}}} \lesssim \Big(\frac{\max\{\meanwidth{K}^2, \log(\eta^{-1}) \}}{m}\Big)^{1/4},
\end{equation}
where the constants hide the dependence on the model parameters $\scalfac$ and $\lambda$.
The error estimate promoted by \eqref{eq:conicbound} is desirable due to the optimal approximation rate of $\asympfaster{m^{-1/2}}$, but as already pointed out at the end of Subsection~\ref{subsec:results:modelsetup}, using the conic effective dimension as complexity measure has several downsides.
This particularly concerns the issue of stable recovery in situations where $\grtr$ is only close to the boundary of $\sset$ or if the boundary of $\sset$ is (locally) smooth.
In either of these scenarios, the recovery statement of Theorem~\ref{thm:euclideanball} is significantly stronger because it relates the error accuracy to the local effective dimension at the right scale.

Compared to this, the bound of \eqref{eq:globalbound} does not even depend on the ground truth signal $\grtr$ and therefore, in principle, holds true for every $\grtr \in \sset$.\footnote{But note that our recovery results are non-uniform, i.e., the ground truth signal needs to be fixed in advance.}
Apart from that, there exist explicit upper bounds for the global Gaussian width in many cases of interest, see \cite{vershynin2014estimation} for example.
But these appealing features of \eqref{eq:globalbound} clearly come along with a much slower oversampling rate of $\asympfaster{m^{-1/4}}$.
% For a more detailed discussion about the Gaussian width and effective dimension, we refer to Subsection~\ref{subsec:appl:signalsets}.

\subsection{Recovery in General Convex Sets}
\label{subsec:results:generalset}

The crucial assumption of the previous subsection was that the signal set belongs to the Euclidean unit ball, meaning that $\sset \subset \ball[2][n]$. According to Lemma~\ref{lem:minexpectedloss}, we were able to ensure that the minimizer of the expected risk \eqref{eq:results:unitball:lossexp} takes the form $\grtrmu$ with $\scalfac \in \intvopcl{0}{1}$.
Consequently, Theorem~\ref{thm:euclideanball} states that the normalized empirical risk minimizer $\solu / \lnorm{\solu}$ of \eqref{eq:estimator} constitutes a consistent estimator of $\grtr$.
On the other hand, it can be computationally appealing to drop the unit-ball assumption and to allow for ``larger'' convex signal sets.
A very common example is an $\l{1}$-penalty, for which \eqref{eq:estimator} can be reformulated as a linear program (cf. \cite[Sec.~VI.A]{kolleck2015l1svm}).

This motivates us to investigate the recovery performance of hinge loss minimization under arbitrary convex constraints.
More precisely, we will make the following model assumptions throughout this subsection:\enlargethispage{-1.5\baselineskip}
\begin{assumption}[General Signal Sets]\label{model:results:generalset}
	Let $\grtr \in \S^{n-1}$ be a unit vector in $\R^n$ and let $\a \distributed \Normdistr{\vnull}{\I{n}}$ be a standard Gaussian.
	We consider \emph{perfect $1$-bit Gaussian measurements} 
	\begin{equation}
		y \coloneqq \sign(\sp{\a}{\grtr}) \in \{-1, +1\}.
	\end{equation}
	Each of the $m$ samples $\{(\a_i, y_i)\}_{i \in [m]}$ is then drawn as an independent copy from the random pair $(\a, y)$, implying that
	\begin{equation}\label{eq:results:generalset:meas}
		y_i = \sign(\sp{\a_i}{\grtr}), \quad i = 1, \dots, m.
	\end{equation}
	We assume that $\grtr \in \sset$ for a fixed signal set $\sset \subset \R^n$ which is convex, bounded, and closed.
\end{assumption}	
Note that this model includes a special case of Assumption~\ref{model:measurements} with $f = \sign$ and also revives the requirements of Assumption~\ref{model:signal}, except from $\vnull \in \sset$.
The restriction to perfect $1$-bit measurements in fact simplifies the argumentation significantly.
More precisely, the general noisy case would involve a very subtle case distinction depending on whether the expected risk minimizer lies in the interior or on the boundary of $\sset$. The former case actually corresponds to $\scalfac < 1$ in Lemma~\ref{lem:minexpectedloss} and one may in principle proceed similarly to the proof of Theorem~\ref{thm:euclideanball}. In the latter case --- which occurs for noiseless measurements --- the strategy of the next subsection could prove useful.
However, none of these extensions is straightforward and they would require a considerable technical effort.
Furthermore, the numerical analysis of Experiment~\hyperref[para:appl:numerics:exp2]{2} in Subsection~\ref{subsec:appl:numerics} indicates that this step would particularly require an adaption of the scalable estimator introduced below in Definition~\ref{def:estimatortuned}.
In order to highlight the geometric aspects of our recovery approach, we will therefore only investigate the noiseless case in this part.

\subsubsection{Recovery via Scalable Signal Sets}
\label{subsec:results:generalset:scalable}

One of the major difficulties in the general setup of Assumption~\ref{model:results:generalset} is that an expected risk minimizer of the hinge loss is not necessarily a scalar multiple of $\grtr$ anymore, i.e.,
\begin{equation}
	\argmin_{\x \in \sset} \lossexp(\x) \not\subset \spann\{\grtr\}.
\end{equation}
Indeed, by an orthogonal decomposition $\x = \proj_{\grtr}(\x) + \proj_{\orthcompl{\grtr}}(\x) = \sp{\x}{\grtr}\grtr + \proj_{\orthcompl{\grtr}}(\x)$, the expected risk at $\x \in \sset$ takes the form (cf. Definition~\ref{def:results:unitball:risk}):
\begin{align}
	\lossexp(\x) = \mean[\losshng(y \sp{\a}{\x})] &= \mean{}[\losshng(\sp{\x}{\grtr}\abs{\sp{\a}{\grtr}} + y\sp{\a}{\proj_{\orthcompl{\grtr}}(\x)})] \\ 
	&= \mean{}\big[\pospart[\big]{1 - \nu_{\x}\abs{\gaussianuniv} + \orthcompl{\nu}_{\x} \orthcompl{\gaussianuniv}} \big] \label{eq:results:generalset:exploss}
\end{align}
where $\nu_{\x} \coloneqq \sp{\x}{\grtr}$, $\orthcompl{\nu}_{\x} \coloneqq \lnorm{\proj_{\orthcompl{\grtr}}(\x)}$, and $\gaussianuniv, \orthcompl{\gaussianuniv} \distributed \Normdistr{0}{1}$ are independent.
Hence, we can conclude that, in order to minimize $\lossexp(\cdot)$, it is beneficial to select $\x \in \sset$ such that $\nu_{\x}$ is large and $\orthcompl{\nu}_{\x}$ is small at the same time.
As long as $\sset \subset \ball[2][n]$, Lemma~\ref{lem:minexpectedloss} simply states that this trade-off is satisfied for $\x = \grtr$, i.e., $\nu_{\x} = \sp{\grtr}{\grtr} = 1$ and $\orthcompl{\nu}_{\x} = 0$.\footnote{The proof of Lemma~\ref{lem:minexpectedloss} particularly makes use of the fact that $\abs{\sp{\x}{\grtr}} \leq 1$ for all $\x \in \sset \subset \ball[2][n]$.}
But without the unit-ball assumption, there might exist $\x \in \sset$ with $\nu_{\x} > 1$ and $\orthcompl{\nu}_{\x} > 0$ such that $\lossexp(\x) < \lossexp(\grtr)$. See Figure~\ref{fig:results:generalset:exprisk} for an illustration.
While this situation might appear somewhat artificial in two dimensions, it is actually characteristic for high-dimensional convex sets, which implies that a (normalized) minimizer $\solu$ of \eqref{eq:estimator} is not expected to be close to $\grtr$, even if $m \to \infty$. For numerical evidence of this claim, we refer to Experiment~\hyperref[para:appl:numerics:exp1]{1} in Subsection~\ref{subsec:appl:numerics}.

\begin{figure}
	\centering
		\tikzstyle{blackdot}=[shape=circle,fill=black,minimum size=1mm,inner sep=0pt,outer sep=0pt]
		\begin{tikzpicture}[scale=2,extended line/.style={shorten >=-#1,shorten <=-#1},extended line/.default=1cm]]
			\coordinate (K1) at (0,0);
			\coordinate (K2) at (.85,-1.13333);
			\coordinate (K3) at (.5,-1.75);
			\coordinate (K4) at (-.5,-1.5);
			\coordinate (K5) at (-.5,-.75);
			\coordinate (P0) at (0,-1);
			\coordinate (X0) at (.25,-.33333);
			\coordinate (X) at (.1,-.13333);
			
			\coordinate (Xproj) at ($(P0)!(X)!90:(X0)$);
						
			\draw[fill=gray!20!white,thick] (K1) -- (K2) -- (K3) -- (K4) -- (K5) -- cycle;
			\node at (barycentric cs:K1=1,K2=1,K3=4,K4=1,K5=1) {$\sset$};
			\node[draw,dashed,label={[label distance=-3pt]above left:$\S^{n-1}$}] at (P0) [circle through={(X0)}] {};
% 			\draw (P0) circle (1) ;
% 			\node[blackdot,label=below :$\vnull$] (P0) at (barycentric cs:K1=0.9,K2=0.5,K3=0.5,K4=0.5,K5=0.8) {};
			\node[blackdot, label={[label distance=-1pt]0:$\vnull$}] at (P0) {};
			\node[blackdot, label=right:$\grtr$] at (X0) {};
			\node[blackdot, label={[label distance=-3pt]80:$\x$}] at (X) {};
			
			\draw[<->,>=stealth,shorten >=1pt] (Xproj) -- (P0) node[pos=.4,yshift=1pt,below] {\smaller$\orthcompl{\nu}_{\x}$};
			\draw[<->,>=stealth,shorten >=1pt] (Xproj) -- (X) node[pos=.5,left,xshift=2pt] {\smaller$\nu_{\x}$};
						
			\draw[dashed] ($(X0)!-.75!(P0)$) -- ($(X0)!2.75!(P0)$);
			
			\node[label={[label distance=-3pt]left:$\spann\{\grtr\}$}] at ($(X0)!2.35!(P0)$) {};
% 			\coordinate (span) at ($(X0)!2.35!(P0)$) {};
% 			\node[left =.5cm of span] (spanlabel) {$\spann\{\grtr\}$};
% 			\path[<-,shorten <=3pt] (span) edge (spanlabel) ;
		\end{tikzpicture}
	\caption{If $\sset \not\subset \ball[2][n]$, there might exist $\x \in \sset$ such that $\nu_{\x} = \sp{\x}{\grtr} > 1$ and $\orthcompl{\nu}_{\x} \coloneqq \lnorm{\proj_{\orthcompl{\grtr}}(\x)} > 0$, as illustrated in the above figure. This particularly implies that $\x \not\in \spann\{\grtr\}$ and $\lnorm{\x} > 1$.}
% 	Moreover, if the ratio $\nu_{\x} / \orthcompl{\nu}_{\x}$ is large, it one could have $\lossexp(\x) < \lossexp(\grtr)$.}
	\label{fig:results:generalset:exprisk}
\end{figure}

In order to come up with an improved estimation strategy, let us first make an important observation about the expected risk minimizer if the signal set $\sset$ is upscaled by a factor $\scalfac \gtrsim 1$:\enlargethispage{-1.5\baselineskip}
\begin{proposition}\label{prop:asympexprisk}
	Let Assumption~\ref{model:results:generalset} be satisfied and assume that $\scalfac \gtrsim 1$.
	Then every expected risk minimizer $\x^\ast$ on $\scalfac\sset$ (i.e., $\lossexp(\x^\ast) = \min_{\x \in \scalfac\sset} \lossexp(\x)$) satisfies
	\begin{equation}\label{eq:asympexprisk:bound}
		\lnorm[auto]{\grtr - \frac{\x^\ast}{\lnorm{\x^\ast}}} \lesssim \frac{1}{\scalfac} \ .
	\end{equation}
\end{proposition}
See Subsection~\ref{subsec:proofs:asympexprisk} for a proof of this result.
The statement of Proposition~\ref{prop:asympexprisk} can be understood as a relaxation of Lemma~\ref{lem:minexpectedloss}: While the expected risk minimizer $\x^\ast$ on $\scalfac\sset$ may not exactly be a scalar multiple of $\grtr$, its direction $\x^\ast / \lnorm{\x^\ast}$ at least aligns well with $\grtr$, supposed that $\scalfac$ is sufficiently large --- or more geometrically speaking, the angle between $\x^\ast$ and $\grtr$ gets very small.
With this in mind, a key achievement of our main results below is that the assertion of Proposition~\ref{prop:asympexprisk} essentially remains valid for an \emph{empirical} risk minimizer on $\scalfac\sset$, leading to a consistent estimator of $\grtr$.
For this purpose, let us introduce an adapted version of \eqref{eq:estimator} that allows us to rescale the signal set:
\begin{definition}[Scalable Hinge Loss Minimization]\label{def:estimatortuned}
	Let Assumption~\ref{model:results:generalset} hold true and let $\scalfac > 0$ be a fixed \emph{scaling parameter}.\footnote{Note that the letter $\scalfac$ is already used in the course \eqref{eq:minexpectedloss:scalfac} in order to define an expected risk minimizer in subsets of the unit ball. It will be clear from the context which definition is meant. However, this ambiguous notation is no coincidence because we will analyze the excess risk functional with respect to $\grtrmu$ in both cases (see beginning of Section~\ref{sec:proofs}).} The estimator $\solu \in \R^n$ is defined as a solution of the convex program
	\begin{equation}\label{eq:estimatortuned}\tag{$P_{\losshng,\scalfac\sset}$}
		\min_{\x \in \R^n} \tfrac{1}{m} \sum_{i = 1}^m \losshng(y_i \sp{\a_i}{\x}) \quad \text{subject to \quad $\x \in \scalfac \sset$.}
	\end{equation}
	As before, the objective functional of \eqref{eq:estimatortuned} is denoted by $\lossemp[](\x)$ and called the \emph{empirical risk function} (see Definition~\ref{def:results:unitball:risk}).
\end{definition}
Note that the choice $\scalfac = 1$ exactly corresponds to the estimator \eqref{eq:estimator} considered in Subsection~\ref{subsec:results:unitball}.
By the law of large numbers, we may hope again that $\solu$ approximates an expected risk minimizer as $m$ grows. According to Proposition~\ref{prop:asympexprisk}, this would basically imply an error bound of the form
\begin{equation}
	\lnorm[auto]{\grtr - \frac{\solu}{\lnorm{\solu}}} \lesssim \frac{1}{\scalfac} \ ,
\end{equation}
which enables quantitative control over the accuracy of the hinge loss estimator \eqref{eq:estimatortuned} by means of $\scalfac$.
However, this approximation rate in $\scalfac$ cannot be independent of the sample count $m$:
As long as $m$ is fixed, one could simply enlarge $\scalfac$ such that $\lossemp(\grtrmu) = 0$. The solution set of \eqref{eq:estimatortuned} would then become highly non-unique and any $\solu \in \scalfac\sset$ with $\lossemp(\solu) = 0$ is a minimizer, no matter how distant it is from $\spann\{\grtr\}$. See also Subsection~\ref{subsec:results:generalset:geometry} for a geometric interpretation of this undesirable parameter configuration.
In conclusion, there must be a certain relationship between the values of $\scalfac$ and $m$ in order to turn our recovery task into a well-defined problem.
Such a condition is actually the key aspect of our next main result.
\begin{theorem}[Signal Recovery in Convex Sets]\label{thm:results:generalset:global}
	Let the model conditions of Assumption~\ref{model:results:generalset} be satisfied.
	For every fixed $\scalfac > 0$ and $\eta \in \intvop{0}{\tfrac{1}{2}}$, the following holds true with probability at least $1 - \eta$:
	If $\scalfac \gtrsim 1$ and the number of samples obeys
	\begin{equation}\label{eq:results:generalset:global:meas}
		m \gtrsim \scalfac^{4} \cdot \max\{\effdim[\conic]{\sset - \grtr}, \log(\eta^{-1})\},
	\end{equation}
	then any minimizer $\solu$ of \eqref{eq:estimatortuned} satisfies
	\begin{equation}\label{eq:results:generalset:global:bound}
		\lnorm[auto]{\grtr - \frac{\solu}{\lnorm{\solu}}} \lesssim \frac{1}{\scalfac}\ .
	\end{equation}	
	The same assertion holds true if \eqref{eq:results:generalset:global:meas} is replaced by
	\begin{equation}\label{eq:results:generalset:global:meas:global}
		m \gtrsim \scalfac^{4} \cdot \max\{\meanwidth{\sset}^2, \log(\eta^{-1})\}.
	\end{equation}
\end{theorem}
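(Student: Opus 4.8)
I would run the excess-risk argument sketched in Subsection~\ref{subsec:results:unitball}, but now centred at the reference point $\grtrmu \coloneqq \scalfac\grtr$, which lies in $\scalfac\sset$ because $\grtr\in\sset$ (this is precisely the ``$\grtrmu$'' anticipated in Definition~\ref{def:estimatortuned}). Using the orthogonal split $\x = \nu_{\x}\grtr + \proj_{\orthcompl{\grtr}}(\x)$ with $\orthcompl{\nu}_{\x} = \lnorm{\proj_{\orthcompl{\grtr}}(\x)}$, one has the elementary estimate $\lnorm[auto]{\grtr - \tfrac{\solu}{\lnorm{\solu}}} = \bigl(2 - 2\nu_{\solu}/\lnorm{\solu}\bigr)^{1/2} \le \orthcompl{\nu}_{\solu}/\nu_{\solu}$ whenever $\nu_{\solu}>0$. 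Hence the whole task reduces to showing that, on an event of probability at least $1-\eta$, every minimizer $\solu$ of \eqref{eq:estimatortuned} satisfies $\nu_{\solu}>0$ and $\orthcompl{\nu}_{\solu}\lesssim\nu_{\solu}/\scalfac$; equivalently, that the angle between $\solu$ and $\grtr$ is $\lesssim\scalfac^{-1}$.

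\textbf{Step 1: control of the reference value.} Since $y_i\sp{\a_i}{\grtrmu} = \scalfac\abs{\sp{\a_i}{\grtr}}\ge 0$, each summand of $\lossemp[](\grtrmu)$ equals $\pospart{1 - \scalfac\abs{\sp{\a_i}{\grtr}}}\in[0,1]$ with mean $\lossexp(\grtrmu) = \mean[\pospart{1-\scalfac\abs{\gaussianuniv}}]$, and a short estimate of this Gaussian integral (whose integrand is supported on $\abs{\gaussianuniv}\le\scalfac^{-1}$) gives $\lossexp(\grtrmu)\asymp\scalfac^{-1}$ for $\scalfac\gtrsim 1$. Each summand has variance $\lesssim\scalfac^{-1}$, so Bernstein's inequality yields $\lossemp[](\grtrmu)\lesssim\scalfac^{-1}$ with probability at least $1-\eta/2$ provided $m\gtrsim\scalfac^{2}\log(\eta^{-1})$, which is implied by \eqref{eq:results:generalset:global:meas}. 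By optimality of $\solu$ and $\grtrmu\in\scalfac\sset$ this gives $\lossemp[](\solu)\le\lossemp[](\grtrmu)\lesssim\scalfac^{-1}$.

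\textbf{Step 2: a uniform quadratic lower bound.} Next I would prove that, with probability at least $1-\eta/2$, every $\x\in\scalfac\sset$ satisfies $\lossemp[](\x)\gtrsim\lossexp(\x)-\scalfac^{-1}$, and that the population risk obeys $\lossexp(\x)\gtrsim\scalfac^{-1}$ whenever $\nu_{\x}\le 0$ or the angle between $\x$ and $\grtr$ exceeds $c\,\scalfac^{-1}$. The population bound starts from $\lossexp(\x)=\mean[\pospart{1-\nu_{\x}\abs{\gaussianuniv}+\orthcompl{\nu}_{\x}\orthcompl{\gaussianuniv}}]$ as in \eqref{eq:results:generalset:lossexpscaled}, by quantifying how the symmetric perturbation $\orthcompl{\nu}_{\x}\orthcompl{\gaussianuniv}$ inflates the integral across the single kink of $\losshng$ at $1$ --- this ``averaged curvature'' replaces strong convexity and produces a bound of order $\min\{\orthcompl{\nu}_{\x},\ \scalfac\,\orthcompl{\nu}_{\x}^{2}\}$ when $\nu_{\x}>0$. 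The empirical bound is where Mendelson's small ball method enters: I would split $\lossemp[](\x)$ into its expectation, a multiplier (linear) process $\tfrac1m\sum_i\xi_i\sp{\a_i}{\x-\grtrmu}$ with $\abs{\xi_i}\le 1$, and a nonnegative kink-crossing remainder, and bound the linear process over $\scalfac\sset$ by $\lesssim m^{-1/2}\scalfac\,\meanwidth{(\sset-\grtr)\intersec\ball[2][n]}$. Since $(\sset-\grtr)\intersec\ball[2][n]\subset\cone{\sset}{\grtr}\intersec\ball[2][n]$ and $(\sset-\grtr)\intersec\ball[2][n]\subset\sset-\grtr$ (whence $\meanwidth{(\sset-\grtr)\intersec\ball[2][n]}\le\meanwidth{\sset-\grtr}=\meanwidth{\sset}$), this deviation is $\lesssim m^{-1/2}\scalfac\,(\effdim[\conic]{\sset-\grtr})^{1/2}$, which is $\lesssim\scalfac^{-1}$ exactly under \eqref{eq:results:generalset:global:meas}; bounding it instead by $\lesssim m^{-1/2}\scalfac\,\meanwidth{\sset}$ yields the $\meanwidth{\sset}^{2}$ variant \eqref{eq:results:generalset:global:meas:global}.

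\textbf{Step 3 and the main difficulty.} On the intersection of the two events, combining Step 1 and Step 2 at $\x=\solu$ gives $\scalfac^{-1}\gtrsim\lossemp[](\solu)\gtrsim\lossexp(\solu)-\scalfac^{-1}$, hence $\lossexp(\solu)\lesssim\scalfac^{-1}$; the population lower bound then forces $\nu_{\solu}>0$ and an angle $\lesssim\scalfac^{-1}$ between $\solu$ and $\grtr$, i.e.\ $\orthcompl{\nu}_{\solu}\lesssim\nu_{\solu}/\scalfac$, which is \eqref{eq:results:generalset:global:bound}. I expect Step 2 to be the real obstacle: because $\losshng$ is piecewise linear it is nowhere strongly convex, so restricted strong convexity must be manufactured in an averaged sense from the probability that the random argument $y_i\sp{\a_i}{\x}$ crosses the kink at $1$, and this averaged curvature must then be preserved through the empirical process by a version of Mendelson's small ball method adapted to the \emph{scaled} set $\scalfac\sset$. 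A related subtlety is the bookkeeping of $\scalfac$: the factor $\scalfac^{4}$ in \eqref{eq:results:generalset:global:meas} arises precisely because one must simultaneously absorb a target accuracy of order $\scalfac^{-1}$, a reference value $\lossexp(\grtrmu)$ of the same order $\scalfac^{-1}$, and a multiplier process over the set $\scalfac\sset$, whose diameter (relative to $\grtrmu$) is of order $\scalfac$.
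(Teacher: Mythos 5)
Your route is genuinely different from the paper's. The paper shows that the excess risk $\excessloss(\x)=\lossemp(\x)-\lossemp(\grtrmu)$ is uniformly positive on the boundary of the cylindrical tube $\cyl(\grtr,\scalfac)$ intersected with $\ssetmu$, then uses the convexity argument of Fact~\ref{fact} to force every minimizer $\solu$ into $\cyl(\grtr,\scalfac)$, and finally reads off the error bound from Lemma~\ref{lem:proofs:generalset:cylinder}. You instead bound $\lossemp(\grtrmu)\lesssim\scalfac^{-1}$, prove a \emph{global} uniform concentration of $\lossemp$ around $\lossexp$ over all of $\scalfac\sset$, and close with a deterministic analysis of the population risk. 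That is a cleaner plan in principle, but it has a genuine gap.

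The gap is in the conic variant \eqref{eq:results:generalset:global:meas}. You bound the multiplier process $\tfrac1m\sum_i\xi_i\sp{\a_i}{\x-\grtrmu}$ ``over $\scalfac\sset$'' by $\lesssim m^{-1/2}\scalfac\,\meanwidth{(\sset-\grtr)\intersec\ball[2][n]}$ and then pass to $\effdim[\conic]{\sset-\grtr}$. But the index set of that process is $\scalfac\sset-\grtrmu=\scalfac(\sset-\grtr)$, and Theorem~\ref{thm:multiplierprocess} gives $\lesssim m^{-1/2}\meanwidth{\scalfac(\sset-\grtr)}=m^{-1/2}\scalfac\,\meanwidth{\sset}$; there is no free localization of the index set to $(\sset-\grtr)\intersec\ball[2][n]$. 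Since $\meanwidth{(\sset-\grtr)\intersec\ball[2][n]}$ (and a fortiori the conic width) can be much smaller than $\meanwidth{\sset}$, your approach as written only yields the global variant \eqref{eq:results:generalset:global:meas:global}. The localization to the conic complexity is exactly what the paper buys by restricting the empirical process to the cylinder boundary $\boundary(\ssetmu)\intersec\cyl(\grtr,\scalfac)$, whose radius around $\grtrmu$ is $\lesssim\scalfac$, so that after dividing by $t_0$ one lands in $\cone{\sset}{\grtr}\intersec\ball[2][n]$ via \eqref{eq:efflessconic}. Without some such a priori restriction on where $\solu$ can live, a \emph{uniform} concentration bound over $\scalfac\sset$ necessarily pays $\meanwidth{\sset}$.

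For the global variant your plan is plausible, but the population step also needs real work: from $\lossexp(\solu)\lesssim\scalfac^{-1}$ one must extract both $\nu_{\solu}\gtrsim\scalfac$ (via $\lossexp(\x)\geq\mean[\pospart{1-\nu_{\x}\abs{\gaussianuniv}}]\asymp\nu_{\x}^{-1}$) and $\orthcompl{\nu}_{\solu}\lesssim 1$, where the kink-crossing contribution scales as $\orthcompl{\nu}_{\x}^2/\nu_{\x}$ rather than the $\scalfac\,\orthcompl{\nu}_{\x}^2$ you wrote; the argument still closes, but only because $\nu_{\solu}\leq\lnorm{\solu}\leq\scalfac\rad(\sset)$ with $\rad(\sset)$ treated as a constant, and the implicit constants must be tracked so that the population lower bound strictly dominates the $\scalfac^{-1}$ budget accumulated in Steps 1 and 2.
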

The proof of Theorem~\ref{thm:results:generalset:global} is postponed to Subsection~\ref{subsec:proofs:generalset}.
It relies on similar statistical tools as the proof of Theorem~\ref{thm:euclideanball}, but the actual arguments are quite different.
In fact, the role of the underlying empirical processes may change significantly if $\sset$ is not contained in the unit ball anymore.

This also indicates why the above recovery statement is conceptually somewhat different from Theorem~\ref{thm:euclideanball}: The hypothesis of  Theorem~\ref{thm:results:generalset:global} merely relies on the ``coarser'' complexity measure of conic efficient dimension (or Gaussian width), while the scaling parameter $\scalfac^{-1}$ now mimics the role of the scale $t$ of the local effective dimension in Theorem~\ref{thm:euclideanball}.
Indeed, $\scalfac$ can be regarded as an oversampling factor that controls the recovery accuracy, even though it also affects the size of the constraint set in \eqref{eq:estimatortuned}. Adjusting $\scalfac$ such that \eqref{eq:results:generalset:global:meas} holds true with equality (up to numerical constants), the error bound of \eqref{eq:results:generalset:global:bound} can be rewritten in terms of the sample count $m$:
\begin{equation}\label{eq:results:generalset:global:boundsample} 
	\lnorm[auto]{\grtr - \frac{\solu}{\lnorm{\solu}}} \lesssim \Big(\frac{\max\{\effdim[\conic]{\sset - \grtr}, \log(\eta^{-1}) \}}{m}\Big)^{1/4}.
\end{equation}
Thus, by appropriately upscaling the signal set $\sset$, the normalized minimizer $\solu / \lnorm{\solu}$ turns into a consistent estimator of $\grtr$ with an approximation rate of $\asympfaster{m^{-1/4}}$.

We wish to emphasize that using the conic effective dimension as complexity parameter is not as problematic as in the setting of Subsection~\ref{subsec:results:unitball}, where the expected risk minimizer $\grtrmu$ could lie in the interior of $\sset$.
% \todocontent{Das liegt aber eigentlich nur daran, dass wir hier noiseless Measurements betrachten.}
% Due to the tunable scaling parameter of \eqref{eq:estimatortuned}, the condition $\grtr \in \boundary{\sset}$ in Assumption~\ref{model:results:generalset} is no restrictive at all: 
Due to the flexible scaling parameter of \eqref{eq:estimatortuned}, one can basically assume that $\grtr$ lies on the boundary of $\sset$: 
If $\grtr$ would belong to the interior of $\sset$, there exists a factor $\nu \in \intvop{0}{1}$ such that $\grtr \in \boundary{(\nu\sset)}$. Then the constraint of \eqref{eq:estimatortuned} takes the form
\begin{equation}
	\x \in \scalfac \sset = \underbrace{\tfrac{\scalfac}{\nu}}_{= \scalfac'} \cdot \underbrace{\nu\sset}_{= \sset'} = \scalfac' \sset' \ ,
\end{equation}
and we may apply Theorem~\ref{thm:results:generalset:global} with $\scalfac'$ and $\sset'$ instead of $\scalfac$ and $\sset$, respectively. 
In that way, the error accuracy is just affected by an additional (unknown) scalar factor $\nu$. This particularly explains why the illustrations of Figure~\ref{fig:results:generalset:exprisk} and Figure~\ref{fig:results:generalset:cylinder} assume that $\grtr$ lies on the boundary of $\sset$.

However, there are still cases in which the conic effective dimension is badly behaved, e.g., if $\grtr$ is just compressible and not exactly sparse. Then we may replace the condition  \eqref{eq:results:generalset:global:meas} by \eqref{eq:results:generalset:global:meas:global} to use the Gaussian width as complexity measure, which does only take global features of $\sset$ into account.
For more details, see the discussion subsequent to Corollary~\ref{coro:euclideanball} as well as Subsection~\ref{subsec:appl:signalsets}.

% The Gaussian width does only take global features of $\sset$ into account, which enables for \emph{stable} signal recovery.
% For example, if a scaled $\l{1}$-ball $\sset = \sqrt{s} \ball[1][n]$ does not only contain all normalized $s$-sparse vectors in $\R^n$ but also compressible signals. Hence, Theorem~\ref{thm:results:generalset:global} guarantees reconstruction of every $\grtr \in \sset$ with the same number of measurements as long as \eqref{eq:results:generalset:global:meas:global} is satisfied.\footnote{Note that Theorem~\ref{thm:results:generalset:global} is still a non-uniform recovery result, in the sense that $\grtr \in \sset$ needs to be fixed in advance.}
% See Subsection~\ref{subsec:appl:signalsets} for more details on the Gaussian width and (conic) effective dimension.

\begin{remark}
\begin{rmklist}
\item\label{rmk:results:generalset:kvl1svm}
	Theorem~\ref{thm:results:generalset:global} with \eqref{eq:results:generalset:global:meas:global} contains a recent result from Kolleck and Vyb\'{\i}ral \cite{kolleck2015l1svm} as special case. Their work was the first one that theoretically studied the recovery performance of hinge loss minimization under $1$-bit measurements. These results however do only focus on a global analysis of $\l{1}$-constraints, adapting concentration inequalities from Plan and Vershynin in \cite{plan2013robust}.
	In contrast, our approach is rather local and relies on refined bounds for empirical processes.
	This eventually allows us to extend the restrictive model setup of \cite{kolleck2015l1svm} into various directions and to improve their approximation rates (with respect to $m$), as we will see below in Theorem~\ref{thm:results:generalset:local}.
\item\label{rmk:results:generalset:variancebias}
	\emph{The variance-bias problem.} This issue is well-known from statistical learning theory.
	In general, the variance-bias problem states that there is a fundamental trade-off between the sample error (variance) and the approximation error (bias) in empirical risk minimization if the size of the hypothesis set is varied (and the sample count $m$ remains fixed); see also \cite[Sec.~1.5]{cucker2007learning}.
	Interestingly, the above estimation strategy fits very well into this situation:
	The ``hypothesis set'' $\sset$ in \eqref{eq:estimatortuned} is enlarged by increasing the scaling parameter $\scalfac$, and \eqref{eq:asympexprisk:bound} indicates that an expected risk minimizer $\x^\ast$ aligns better with $\spann\{\grtr\}$ as $\scalfac$ grows.
	With other words, the approximation error gets smaller. But on the other hand, an empirical risk minimizer $\solu$ of \eqref{eq:estimatortuned} might be still a poor approximation of $\x^\ast$ if $\scalfac$ is too large, meaning that the sample error is large.
	This issue of \emph{overfitting} can be fortunately resolved by increasing the sampling rate according to \eqref{eq:results:generalset:global:meas}.
	In this light, Theorem~\ref{thm:results:generalset:global} just claims that the variance-bias problem can be handled by carefully balancing the scaling parameter $\scalfac$ and the sample size $m$. % --- so that we eventually achieve a consistent estimator of $\grtr$.
	\qedhere
\end{rmklist} \label{rmk:results:generalset}
\end{remark}

\subsubsection{Localized Approximation Rates}

A downside of Theorem~\ref{thm:results:generalset:global} is the relatively slow error decay of $\asympfaster{m^{-1/4}}$ in \eqref{eq:results:generalset:global:boundsample}.
In fact, this approximation rate is obviously much worse than the decay of $\asympfaster{m^{-1/2}}$ achieved by Theorem~\ref{thm:euclideanball} in \eqref{eq:effdimbound}, which in turn is essentially optimal (see \cite[Sec.~4]{plan2014highdim}).
Our third main result, Theorem~\ref{thm:results:generalset:local}, shows that the factor of $\scalfac^4$ in condition \eqref{eq:results:generalset:global:meas} can be replaced by $\scalfac^2 \cdot t_0^2$, where $t_0$ is an additional geometric parameter that depends on $\grtr$ and $\sset$.
Formally, it is defined by
\begin{equation}\label{eq:results:generalset:t0}
	t_0 \coloneqq \max\{1 , \rad((\boundary(\ssetmu) \intersec \cyl(\grtr,\scalfac)) - \grtrmu)\}
\end{equation}
where $\cyl(\grtr,\scalfac)$ denotes the following cylindrical tube around $\spann\{\grtr\}$:
\begin{equation}\label{eq:results:generalset:cylinder}
	\cyl(\grtr, \scalfac) \coloneqq \{ \x \in \R^n \suchthat \lnorm{\x - \sp{\x}{\grtr}\grtr} \leq 1, \sp{\x}{\grtr} \geq \tfrac{\scalfac}{2} \},
\end{equation}
see Figure~\ref{fig:results:generalset:cylinder} for a visualization.
\begin{figure}
	\centering
	\tikzstyle{blackdot}=[shape=circle,fill=black,minimum size=1mm,inner sep=0pt,outer sep=0pt]
		\begin{tikzpicture}[scale=2,extended line/.style={shorten >=-#1,shorten <=-#1},extended line/.default=1cm]]
			\coordinate (K1) at (0,0);
			\coordinate (K2) at (.85,-1.13333);
			\coordinate (K3) at (.5,-1.75);
			\coordinate (K4) at (-.5,-1.5);
			\coordinate (K5) at (-.5,-.75);
			\coordinate (P0) at (0,-1);
			\path[name path=K1--K2] (K1) -- (K2);
			\path[name path=P0--X0] (P0) -- ++(65:1);
			\path[name intersections={of=P0--X0 and K1--K2,by=X0}];
% 			\coordinate (X0) at (.25,-.33333);
			\coordinate (X0muhalf) at ($(X0)!.5!(P0)$);
			\coordinate (SpanEnd) at ($(X0)!-1!(P0)$);
% 			\coordinate (tempCyl) at ($(X0muhalf) + (.3,.2)$);
			
			\path (X0muhalf) -- ++(-25:.2) coordinate (CylR);
			\coordinate (CylL) at ($(X0muhalf)!-1!(CylR)$);
			\coordinate (CylLtop) at ($(CylL)!(SpanEnd)!90:(CylR)$);
			\coordinate (CylRtop) at ($(CylR)!(SpanEnd)!90:(CylL)$);
						
			\draw[fill=gray!20!white,thick] (K1) -- (K2) -- (K3) -- (K4) -- (K5) -- cycle;
			\draw[fill=gray!20!white] (CylLtop) -- (CylL) -- (CylR) -- (CylRtop);
			\begin{scope}
				\clip (K1) -- (K2) -- (K3) -- (K4) -- (K5) -- cycle;
				\draw[fill=gray!60!white] (CylLtop) -- (CylL) -- (CylR) -- (CylRtop);
			\end{scope}
			\draw[thick] (K1) -- (K2) -- (K3) -- (K4) -- (K5) -- cycle;
			\begin{scope}
				\clip (CylLtop) -- (CylL) -- (CylR) -- (CylRtop) -- cycle;
				\draw[red,ultra thick] (K1) -- (K2) -- (K3) -- (K4) -- (K5) -- cycle;
			\end{scope}
			\draw[<->,>=stealth] ($(CylL)!1.05!(CylLtop)$) -- ($(CylR)!1.05!(CylRtop)$) node[pos=.6,above] {\smaller{$2$}};
			
			\draw[dashed] (SpanEnd) -- ($(X0)!2.75!(P0)$);
			\node at (barycentric cs:K1=1,K2=1,K3=8,K4=1,K5=1) {$\scalfac\sset$};
% 			\node[draw,dashed,label={[label distance=-3pt]above left:$\S^{n-1}$}] at (P0) [circle through={(X0)}] {};
% 			\draw[dashed] (P0) circle (.25) ;
% 			\node[blackdot,label=below :$\vnull$] (P0) at (barycentric cs:K1=0.9,K2=0.5,K3=0.5,K4=0.5,K5=0.8) {};
			\node[blackdot, label=0:$\vnull$] at (P0) {};
			\node[blackdot, label=right:$\grtrmu$] at (X0) {};
			\node[label={[label distance=-3pt]left:$\spann\{\grtr\}$}] at ($(X0)!2.35!(P0)$) {};
			\node[blackdot,label={[xshift=2pt,yshift=-2pt]180:\smaller{$\tfrac{\scalfac}{2}\grtr$}}] at (X0muhalf) {};
			\node[below right=.2 and .0 of CylRtop] {$\cyl(\grtr,\scalfac)$};
		\end{tikzpicture}
	\caption{Illustration of the cylindrical tube $\cyl(\grtr, \scalfac)$: It starts from the base of diameter $2$ located at $\tfrac{\scalfac}{2}\grtr$ and then stretches beyond infinity along $\spann\{\grtr\}$. The parameter $t_0$ measures the radial extent of the red intersection (around $\grtrmu$). The dark gray region corresponds to the cylindrical intersection $\ssetmu \intersec \cyl(\grtr,\scalfac)$.}
	\label{fig:results:generalset:cylinder}
\end{figure}
We wish to emphasize that the intersection $\boundary(\ssetmu) \intersec \cyl(\grtr,\scalfac)$ can be significantly smaller than $\ssetmu \intersec \cyl(\grtr,\scalfac)$, whose radius around $\grtrmu$ is basically of order $\scalfac$.
Intuitively, if $\scalfac$ is sufficiently large, the boundary $\boundary(\ssetmu)$ does only intersect with the side of $\cyl(\grtr,\scalfac)$, which has diameter $2$. The value of $t_0$ then becomes (almost) independent of $\scalfac$ and is determined by the ``local'' geometry of $\sset$ in a neighborhood of $\grtr$ (of size $1/\scalfac$).
In this situation, the sampling rate would only scale quadratically in $\scalfac$, leading to the desired rate of $\asympfaster{m^{-1/2}}$. %, see \eqref{eq:results:generalset:local:boundsample}.

Before we further discuss the impact of $t_0$, let us state the actual recovery guarantee:
\begin{theorem}[Signal Recovery in Convex Sets -- Local Version]\label{thm:results:generalset:local}
	Let the model conditions of Assumption~\ref{model:results:generalset} be satisfied and let $t_0$ be defined according to \eqref{eq:results:generalset:t0}.
	For every fixed $\scalfac > 0$ and $\eta \in \intvop{0}{\tfrac{1}{2}}$, the following holds true with probability at least $1 - \eta$:
	If $\scalfac \gtrsim t_0$ and the number of samples obeys
	\begin{equation}\label{eq:results:generalset:local:meas}
		m \gtrsim \scalfac^{2} \cdot t_0^2 \cdot \max\{\effdim[\conic]{\sset - \grtr}, \log(\eta^{-1})\},
	\end{equation}
	then any minimizer $\solu$ of \eqref{eq:estimatortuned} satisfies
	\begin{equation}\label{eq:results:generalset:local:bound}
		\lnorm[auto]{\grtr - \frac{\solu}{\lnorm{\solu}}} \lesssim \frac{1}{\scalfac} \ .
	\end{equation}	
\end{theorem}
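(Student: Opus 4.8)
The plan is to carry out the restricted-strong-convexity argument sketched after Definition~\ref{def:results:unitball:risk}, comparing the empirical risk $\lossemp[]$ at a minimizer $\solu$ of \eqref{eq:estimatortuned} with its value at the reference point $\grtrmu \in \ssetmu$ — exactly as in the proof of Theorem~\ref{thm:results:generalset:global} — but replacing the ``global'' localization used there by the sharper cylindrical one that defines $t_0$. Write $\h \coloneqq \solu - \grtrmu$, $\nu \coloneqq \sp{\solu}{\grtr}$ and $\nu^\perp \coloneqq \lnorm{\proj_{\orthcompl{\grtr}}(\solu)} = \lnorm{\proj_{\orthcompl{\grtr}}(\h)}$. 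The first observation is a purely geometric reduction: it is enough to prove that $\solu$ lies in a cylinder $\cyl(\grtr, c\scalfac)$ for some numerical constant $c$, since then $\nu^\perp \leq 1$, $\nu \geq c\scalfac/2$, and $\lnorm{\solu}^2 = \nu^2 + (\nu^\perp)^2$ give
\begin{equation*}
	\lnorm[auto]{\grtr - \tfrac{\solu}{\lnorm{\solu}}} = \Big(2 - \tfrac{2\nu}{\lnorm{\solu}}\Big)^{1/2} \leq \tfrac{\nu^\perp}{\nu} \lesssim \scalfac^{-1},
\end{equation*}
which is \eqref{eq:results:generalset:local:bound}.

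Next I reduce the claim ``$\solu \in \cyl(\grtr,c\scalfac)$'' to a uniform statement on a set of radius $t_0$. The data are linearly separable, so $\lossemp[]$ has unconstrained infimum $0$, attained far out along $\spann\{\grtr\}$, and the solution set of \eqref{eq:estimatortuned} can be non-unique; the parameter window $\scalfac \gtrsim t_0$ together with \eqref{eq:results:generalset:local:meas} is used to ensure that $\ssetmu$ admits no zero-risk point outside $\cyl(\grtr,\scalfac)$, so that it is harmless to assume the relevant comparison point lies on $\boundary(\ssetmu)$ and that $\lossemp[](\solu) \leq \lossemp[](\grtrmu)$. Using convexity of $\ssetmu$ and of $\lossemp[]$ along the segment $[\grtrmu,\solu]$, excluding $\solu \notin \cyl(\grtr,c\scalfac)$ then boils down to showing $\lossemp[](\x) > \lossemp[](\grtrmu)$ for every $\x \in \boundary(\ssetmu) \intersec \cyl(\grtr,\scalfac)$ with $\lnorm{\proj_{\orthcompl{\grtr}}(\x-\grtrmu)}$ larger than a numerical constant; equivalently, to a quadratic lower bound for the Taylor remainder $\losstaylor{\grtrmu+\h}{\grtrmu}{m}$ of the empirical hinge loss, uniform over $\h \in (\boundary(\ssetmu)\intersec\cyl(\grtr,\scalfac)) - \grtrmu$, which by \eqref{eq:results:generalset:t0} has Euclidean radius $t_0$. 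This is precisely where the gain over Theorem~\ref{thm:results:generalset:global} comes from: there the analogous set has radius of order $\scalfac$, not $t_0$.

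For the two empirical-process bounds I use Mendelson's small ball method (Proposition~\ref{prop:quadprocesslowerbound}). The pointwise Taylor remainder of $\losshng$ at $v \coloneqq y\sp{\a}{\grtrmu} = \scalfac\abs{\sp{\a}{\grtr}}$ equals $\abs{y\sp{\a}{\grtrmu+\h}-1}$ exactly when $v$ and $v+y\sp{\a}{\h}$ lie on opposite sides of the kink at $1$, and vanishes otherwise; hence only the $\asymp\scalfac^{-1}$-fraction of samples with $\scalfac\abs{\sp{\a_i}{\grtr}}$ within $O(1)$ of $1$ contributes. This thin layer is responsible both for the factor $\scalfac^{2}$ in \eqref{eq:results:generalset:local:meas} — a small-ball probability of order $\scalfac^{-1}$ enters an inequality of the shape $m\scalfac^{-1} \gtrsim \sqrt{m}\cdot\meanwidth[t_0]{\cone{\sset}{\grtr}}$, giving $m \gtrsim \scalfac^{2}t_0^{2}\effdim[\conic]{\sset-\grtr}$ — and for the intrinsic anisotropy: perturbing $\grtrmu$ along $\grtr$ barely crosses the kink, whereas an orthogonal perturbation crosses it with probability $\asymp\scalfac^{-1}$. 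Running the small ball estimate on the near-kink samples therefore yields, uniformly over the radius-$t_0$ set of the previous step,
\begin{equation*}
	\losstaylor{\grtrmu+\h}{\grtrmu}{m} \gtrsim \scalfac^{-1}\cdot\lnorm{\proj_{\orthcompl{\grtr}}(\h)}^{2}
\end{equation*}
provided $m \gtrsim \scalfac^{2}t_0^{2}\cdot\max\{\effdim[\conic]{\sset-\grtr},\log(\eta^{-1})\}$. The linear part of the excess risk is $-\sp[auto]{\tfrac1m\sum_i y_i\a_i\,\indprob{\{\scalfac\abs{\sp{\a_i}{\grtr}}<1\}}}{\h}$; its mean equals $-\beta\sp{\grtr}{\h}$ with $\beta = \mean[\abs{\gaussianuniv}\indprob{\{\scalfac\abs{\gaussianuniv}<1\}}] \asymp \scalfac^{-2}$, so the drift is bounded by $\scalfac^{-2}\abs{\sp{\grtr}{\h}} \leq \scalfac^{-2}t_0 \lesssim \scalfac^{-1}$ — here the hypothesis $\scalfac \gtrsim t_0$ enters — while the centered multiplier process is controlled by Gaussian concentration and the local Gaussian width and turns out to be of smaller order than the quadratic term on the relevant range of $\lnorm{\proj_{\orthcompl{\grtr}}(\h)}$.

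Combining these estimates with $\lossemp[](\solu) - \lossemp[](\grtrmu) \leq 0$ forces $\lnorm{\proj_{\orthcompl{\grtr}}(\solu)}$ below a numerical constant, hence $\solu \in \cyl(\grtr,c\scalfac)$, and the geometric reduction finishes the proof. I expect the quadratic lower bound to be the main obstacle: since $\losshng$ is only piecewise linear, its ``curvature energy'' sits at the single point $v=1$, so there is no (local) strong convexity to exploit, and the bound has to be extracted from the small ball method by (i) isolating the thin $\asymp\scalfac^{-1}$-fraction of near-kink samples that produces the $\scalfac$-scaling and (ii) untangling the anisotropy between the $\grtr$-direction and $\orthcompl{\grtr}$, all uniformly over a set whose size is governed by $t_0$ rather than by $\scalfac$. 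Arranging the convexity-based localization of the second step so that it genuinely reduces to this thin boundary set — and verifying that the zero-risk degeneracy is ruled out in the parameter window $\scalfac \gtrsim t_0$, $m \gtrsim \scalfac^{2}t_0^{2}(\cdots)$ — is the other delicate point.
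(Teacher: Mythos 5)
Your quantitative intuitions — the $\asymp\scalfac^{-1}$ fraction of near-kink samples, the small-ball rate $m\gtrsim\scalfac^2 t_0^2 \effdim[\conic]{\sset-\grtr}$, the $\scalfac^{-2}\abs{\sp{\grtr}{\h}}$ drift term and the role of $\scalfac\gtrsim t_0$ in taming it — are all correct and match the paper's computations. The gap is in the reduction step. You claim that excluding $\solu\notin\cyl(\grtr,c\scalfac)$ ``boils down to'' proving $\lossemp(\x)>\lossemp(\grtrmu)$ for $\x\in\boundary(\ssetmu)\intersec\cyl(\grtr,\scalfac)$ with large orthogonal component, via convexity along $[\grtrmu,\solu]$. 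This reduction does not go through: $\boundary(\ssetmu)\intersec\cyl(\grtr,\scalfac)$ is not a barrier that every segment from $\grtrmu$ to a point outside the target cylinder must cross. Such a segment can exit through the base of the cylinder, or the minimizer can sit in the interior of $\ssetmu$ behind the base, never meeting your test set. The standard Fact~\ref{fact} argument needs a genuine shell around $\grtrmu$, which $\boundary(\ssetmu)\intersec\cyl(\grtr,\scalfac)$ is not.

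The paper's actual structure is different in two respects you have not reproduced. First, it introduces the ray $\solu^\uparrow=\{\solu+\tau\grtr : \tau\ge 0\}$ and its exit point $\boundary_0\solu\in\boundary(\ssetmu)$, together with the observation (Fact~\ref{fact:proofs:generalset:local}) that because the data are separable by $\grtr$, any interior minimizer is connected along $\solu^\uparrow$ to a zero-loss point, so $\lossemp(\solu)>0$ forces $\solu=\boundary_0\solu$. This is what legitimately reduces the problem to $\boundary(\ssetmu)$. Second, this reduction requires \emph{two distinct} empirical-process bounds, not one: (i) if $\boundary_0\solu\in\cyl(\grtr,\scalfac)$, the contradiction comes from a lower bound on the \emph{total empirical risk} $\lossemp$ on $\boundary(\ssetmu)\intersec\cyl(\grtr,\scalfac)$ — proved by bounded differences, symmetrization, and the contraction principle (Lemma~\ref{lem:proofs:generalset:lossempconcentration}), not by the small ball method — and $t_0$ enters here through the radius of this set; (ii) if $\boundary_0\solu\notin\cyl(\grtr,\scalfac)$, one instead needs positivity of the \emph{excess risk} on $\boundary(\ssetmu)\intersec\boundary\cyl(\grtr,\scalfac)$ (Lemma~\ref{lem:proofs:generalset:excessloss}, the multiplier plus small-ball bound you sketch) followed by a planar geometric contradiction along $\solu^\uparrow$. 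Your proposal collapses both into a single ``Taylor remainder lower bound'' over $\boundary(\ssetmu)\intersec\cyl(\grtr,\scalfac)$, which is not the statement that Lemma~\ref{lem:proofs:generalset:lossempconcentration} proves (it is about $\lossemp>0$, not $\excessloss>0$, and does not use the small ball method) and which by itself does not control the behaviour of $\solu$ outside the cylinder. Also, the auxiliary claim ``$\ssetmu$ admits no zero-risk point outside $\cyl(\grtr,\scalfac)$'' is neither what the paper proves nor needed; zero-risk points outside the cylinder may well exist, and the argument avoids having to rule them out.
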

As before, we can adjust $\scalfac$ in \eqref{eq:results:generalset:local:meas} to obtain a convenient error bound depending on $m$:
\begin{equation}\label{eq:results:generalset:local:boundsample}
	\lnorm[auto]{\grtr - \frac{\solu}{\lnorm{\solu}}} \lesssim \Big(\frac{t_0^2 \cdot \max\{\effdim[\conic]{\sset - \grtr}, \log(\eta^{-1}) \}}{m}\Big)^{1/2}.
\end{equation}
While this oversampling rate certainly resembles the unit-ball case in \eqref{eq:conicbound}, let us point out an important difference:
Roughly speaking, the geometric conclusion of Theorem~\ref{thm:euclideanball} is that every minimizer of \eqref{eq:estimator} must lie within the spherical intersection $\sset \intersec (t\ball[2][n] + \grtrmu)$; see Figure~\ref{fig:proofs:fact}. In contrast, the proof of Theorem~\ref{thm:results:generalset:local} argues that every minimizer of \eqref{eq:estimatortuned} must belong to the cylindrical intersection $\ssetmu \intersec \cyl(\grtr,\scalfac)$; see Figure~\ref{fig:results:generalset:cylinder}.
By rescaling the latter set by a factor of $1/ \scalfac$ with $\scalfac = 1/t \gg 1$, it is not hard to see that the shapes of both intersections are fundamentally different.
More specifically, due to the anisotropy of $\cyl(\grtr,\scalfac)$, the conic effective dimension $\effdim[\conic]{\sset - \grtr}$ needs to be scaled by an extra factor of $t_0^2$ in \eqref{eq:results:generalset:local:boundsample}.

Without any further assumptions on the geometric arrangement of $\sset$ and $\grtr$, it is difficult to make precise statements about the order of $t_0$.
For example, if the boundary of $\sset$ is almost orthogonal to $\spann\{\grtr\}$ in a small neighborhood of $\grtr$, we can expect that $t_0 \approx 1$.
But as $\boundary{\sset}$ gets more ``tangent'' to $\spann\{\grtr\}$, as in Figure~\ref{fig:results:generalset:cylinder}, $t_0$ may become significantly larger.
If $t_0 \approx \scalfac$, the condition of \eqref{eq:results:generalset:local:meas} particularly degenerates to \eqref{eq:results:generalset:global:meas}. Moreover, the situation of $t_0 \gg \scalfac$ is excluded by Theorem~\ref{thm:results:generalset:local} in any case, whereas Theorem~\ref{thm:results:generalset:global} would still provide meaningful error bounds.
However, as long as $t_0$ is considered as a (possibly large) signal-dependent parameter, we can always achieve the optimal non-asymptotic rate of $\asympfaster{m^{-1/2}}$.
For a numerical simulation related to the impact of $t_0$ on the recovery performance of \eqref{eq:estimatortuned}, see Experiment~\hyperref[para:appl:numerics:exp3]{3} in Subsection~\ref{subsec:appl:numerics} below.

\subsubsection{Geometric Interpretation and Classification Margins}
\label{subsec:results:generalset:geometry}

We conclude this subsection with a brief discussion on the geometric aspects of the above recovery problem.
To this end, let us consider the following interpretation of the perfect $1$-bit model from Assumption~\ref{model:results:generalset}: The measurement vectors $\pcloud \coloneqq \{\a_1, \dots, \a_m\} \subset \R^n$ form a ``cloud'' of random points that is generated by the standard Gaussian distribution in $\R^n$. According to the observation rule \eqref{eq:results:generalset:meas}, each point is then endowed with a binary label depending on which side of the hyperplane $H(\grtr) \coloneqq \orthcompl{\{\grtr\}}$ the respective point resides. With other words, $H(\grtr)$ is a \emph{separating hyperplane} that divides $\pcloud$ into two classes, say $\pcloud^+$ and $\pcloud^-$; see also Figure~\ref{fig:results:generalset:geometry:sephyp:msmall}.

The actual recovery task can be now translated into finding the hyperplane $H(\grtr)$ --- or equivalently, its normal vector $\grtr$ --- where only the labeled point clouds $\pcloud^+$ and $\pcloud^-$ are available. It should be emphasized that this problem is somewhat different from traditional classification, where one is already satisfied with \emph{any} separating hyperplane $H(\x)$. If $m$ is sufficiently large, such as in Figure~\ref{fig:results:generalset:geometry:sephyp:mlarge}, one can expect that $\x$ and $\grtr$ are however close, in the sense that
\begin{equation}\label{eq:results:generalset:geometry:error}
	\lnorm[auto]{\frac{\x}{\lnorm{\x}} - \grtr} = \sqrt{2 - 2 \sp{\tfrac{\x}{\lnorm{\x}}}{\grtr}}
\end{equation}
is small.
From a statistical perspective, this conclusion essentially follows from the law of large numbers, which implies that the margin between $\pcloud^+$ and $\pcloud^-$ ``contracts'' to $H(\grtr)$ as $m \to \infty$.
On the other hand, if $m$ is too small, the margin between both classes is typically large, so that there are many separating hyperplanes and the error in \eqref{eq:results:generalset:geometry:error} is not necessarily small.

\begin{figure}[!t]
	\centering
	\begin{subfigure}[t]{0.3\textwidth}
		\centering
		\tikzstyle{blackdot}=[shape=circle,fill=black,minimum size=1mm,inner sep=0pt,outer sep=0pt]
		\begin{tikzpicture}[scale=2]
			\node[inner sep=0pt] (pic) at (0,0) {\includegraphics[width=\textwidth]{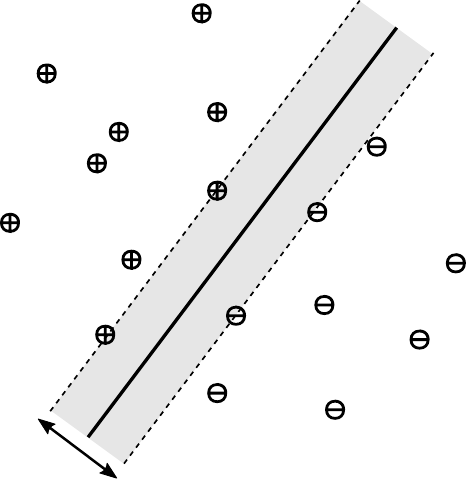}};
			\node[above right=-.35 and -0.9 of pic] {$H(\grtr)$};
		\end{tikzpicture}
		\caption{}
		\label{fig:results:generalset:geometry:sephyp:msmall}
	\end{subfigure}%
	\qquad\qquad\qquad\qquad
	\begin{subfigure}[t]{0.3\textwidth}
		\centering
		\begin{tikzpicture}[scale=2]
			\node[inner sep=0pt] (pic) at (0,0) {\includegraphics[width=\textwidth]{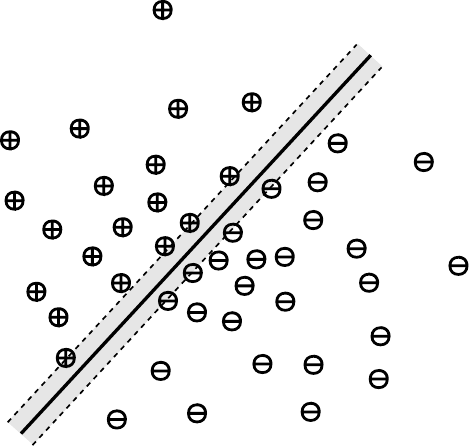}};
			\node[above right=-.65 and -1.1 of pic] {$H(\grtr)$};
		\end{tikzpicture}
		\caption{}
		\label{fig:results:generalset:geometry:sephyp:mlarge}
	\end{subfigure}%	
	\caption{Visualization of the labeled data $\pcloud = \pcloud^+ \union \pcloud^-$ generated by \protect{\eqref{eq:results:generalset:meas}}. The width of the gray region corresponds to the classification margin between $\pcloud^+$ and $\pcloud^-$ with respect to the separating hyperplane $H(\grtr) = \orthcompl{\{\grtr\}}$. \subref{fig:results:generalset:geometry:sephyp:msmall}~The sample count $m$ is rather small, implying that the margin is quite large. \subref{fig:results:generalset:geometry:sephyp:mlarge}~The margin becomes increasingly smaller as $m$ grows.}
	\label{fig:results:generalset:geometry:sephyp}
\end{figure}

The major concern of this work is to face this challenge by minimizing the empirical hinge loss $\lossemp(\cdot)$ on a certain signal set (see Definition~\ref{def:results:unitball:risk}). It is well-known from the literature that the associated estimator \eqref{eq:estimator} can be identified with a support vector machine (SVM), whose actual purpose is to maximize the classification margin between labeled data points,\footnote{In general, there does not necessarily exist a separating hyperplane. For this reason, one rather speaks of \emph{soft margin classification} where a certain amount of misclassified points is permitted. The purpose of SVMs is therefore to find a hyperplane that maximizes the soft margin. This perspective is also taken when dealing with noisy $1$-bit measurements in Subsection~\ref{subsec:results:unitball}.} see \cite[Sec.~II]{kolleck2015l1svm} for example.
The scenario of Figure~\ref{fig:results:generalset:geometry:sephyp:mlarge} is therefore somewhat undesirable because the margin between $\pcloud^+$ and $\pcloud^-$ is very narrow.
Consequently, hinge loss minimization via \eqref{eq:estimator} can be inappropriate if $m$ is too large, even though all separating hyperplanes do almost align with $H(\grtr)$. This precisely corresponds to the observations made at the beginning of Subsection~\ref{subsec:results:generalset:scalable}, where we investigated the expected risk minimizer. Indeed, if $\sset$ does not belong to the unit ball, a vector $\x^\ast$ could minimize $\lossexp(\cdot)$ on $\sset$ but still induce a hyperplane $H(\x^\ast)$ that is not separating as $m \to \infty$.

Interestingly, we were able to resolve this issue by introducing a scalable hinge loss estimator \eqref{eq:estimatortuned} in Definition~\ref{def:estimatortuned}.
Due to the identity
\begin{equation}\label{eq:results:generalset:geometry:hingeloss}
	\min_{\x \in \scalfac\sset} \tfrac{1}{m} \sum_{i = 1}^m \losshng(y_i \sp{\a_i}{\x}) = \min_{\x \in \sset} \tfrac{1}{m} \sum_{i = 1}^m \losshng(y_i \sp{\scalfac\a_i}{\x}),
\end{equation}
solving \eqref{eq:estimatortuned} is equivalent to solving \eqref{eq:estimator} with measurement vectors upscaled by a factor of $\scalfac$.
In the situation of Figure~\ref{fig:results:generalset:geometry:sephyp:mlarge}, this implies that  the dense point cloud and the separating margin are both enlarged. The geometric arrangement of the transformed data then resembles Figure~\ref{fig:results:generalset:geometry:sephyp:msmall} and hinge loss minimization becomes feasible.
Our main results, Theorem~\ref{thm:results:generalset:global} and Theorem~\ref{thm:results:generalset:local}, confirm this heuristic reasoning and particularly show how $m$ and $\scalfac$ need to scale (non-asymptotically) in order to obtain a consistent estimator of $\grtr$.

Finally, it is worth mentioning that the geometric viewpoint of SVMs was also the initial motivation of \cite{kolleck2015l1svm} to study the performance of the scalable hinge loss estimator as it is stated on the right-hand side of \eqref{eq:results:generalset:geometry:hingeloss}.
While the upscaling strategy indeed allows us to mimic a well-posed classification problem for SVMs, this perspective nevertheless appears a bit artificial in the light of $1$-bit observation models.
In real-world applications, the data $\pcloud$ is very unlikely to follow a standard Gaussian distribution, but rather exhibits anisotropic features, as it is the case for Gaussian mixture models. For that reason, the focus of this work is mainly on $1$-bit compressed sensing, where Gaussian measurements often serve as a benchmark to analyze recovery algorithms.

\section{Examples and Applications}
\label{sec:appl}

In this section, we illustrate the general framework of Section~\ref{sec:results} by a few applications and examples. 
Subsection~\ref{subsec:appl:signalsets} continues our discussion on signal sets from Subsection~\ref{subsec:results:modelsetup} and conveys more intuition regarding how their complexity is measured by means of the Gaussian width and the effective dimension. 
A particular emphasis is on the case of sparse vectors and related convex relaxations, which will also be applied to our main results.
Subsection~\ref{subsec:appl:examples} then focuses on two specific noisy $1$-bit observation models in the unit-ball setup of Subsection~\ref{subsec:results:unitball}, namely random bit flips and additive Gaussian noise. In this context, we will demonstrate that hinge loss minimization even yields a consistent estimator if the signal-to-noise ratio is very low.
Our theoretical considerations are finally complemented by some numerical experiments in Subsection~\ref{subsec:appl:numerics}. Apart from investigating the aforementioned noise models (see Experiment~\hyperref[para:appl:numerics:exp2]{2}), we do also evaluate the empirical performance of the two hinge loss programs from Subsection~\ref{subsec:results:unitball} and Subsection~\ref{subsec:results:generalset} in the situation of perfect $1$-bit observations.

\subsection{The Gaussian Width and Sparse Recovery}
\label{subsec:appl:signalsets}

Let us begin with a few examples of structured signal sets $\sset$ that are widely used in signal estimation theory and satisfy Assumption~\ref{model:signal}:
\begin{example}
\begin{rmklist}
\item
	\label{rmk:signalset:effsparse}\emph{(Effectively) Sparse signals.} 
	The prototypical example of low-dimensional structures studied in compressed sensing is \emph{sparsity}. We denote the set of all $s$-sparse vectors on the Euclidean unit sphere by
	\begin{equation}
		\Sigma^n_s \coloneqq \{\x\in \R^n \suchthat \lnorm{\x}[0]\leq s, \lnorm{\x}= 1\}.
	\end{equation}
	Since $\Sigma^n_s$ is obviously non-convex, it falls out of the scope of our main results. 
	However, by the Cauchy-Schwarz inequality, each vector $\grtr \in \Sigma^n_s$ satisfies
	\begin{equation}\label{eq:appl:signalsets:cauchyschwarz}
		\lnorm{\grtr}[1]\leq \sqrt{\lnorm{\grtr}[0]} \cdot \lnorm{\grtr} \leq \sqrt{s} \cdot \lnorm{\grtr}=\sqrt{s},
	\end{equation}
	which implies that $\sset_{n,s}\coloneqq\sqrt{s}\ball[1][n]\intersec \ball[2][n]$ is a convex signal set that contains all normalized $s$-sparse vectors. Interestingly, this set essentially equals the convex hull of $\Sigma^n_s$ in the sense that (cf. \cite[Lem.~3.1]{plan2013onebit}) 
	\begin{equation}\label{eq:approximatelysparse}
		\convhull(\Sigma^n_s)\subset \sset_{n,s}\subset 2\convhull(\Sigma^n_s).
	\end{equation}
	Consequently, $\sset_{n,s}$ forms a natural convex relaxation of $\Sigma^n_s$.
	Since $\sset_{n,s}$ particularly contains compressible vectors that are almost $s$-sparse, it is also referred to as the set of \emph{effectively $s$-sparse vectors}.
\item
	\label{rmk:signalset:l1ball}\emph{Scaled $\l{1}$-balls.} Another straightforward convex relaxation of sparsity are scaled $\l{1}$-balls, i.e., $\sset=R\ball[1][n]$ with an appropriately chosen scaling parameter $R>0$. Indeed, according to \eqref{eq:appl:signalsets:cauchyschwarz}, we can immediately conclude that $\sset=\sqrt{s}\ball[1][n]$ is a superset of $\Sigma^n_s$. Such types of $\l{1}$-constraints became very popular in practice because they often allow for sparse approximation and linear programming at the same time. Unfortunately, this example does not meet the unit-ball assumption of Theorem~\ref{thm:euclideanball}, so that it is only admissible for Theorem~\ref{thm:results:generalset:global} and Theorem~\ref{thm:results:generalset:local}.
\item
	\label{rmk:signalset:subspace}\emph{Subspaces.} Perhaps the simplest example of a structured signal set is a linear subspace.
	If $\grtr \in \S^{n-1}$ belongs to a known subspace $E\subset \R^n$ of low dimension, one may just consider the signal set $\sset=E\intersec \ball[2][n]$ as prior. 
\item
	\label{rmk:signalset:polytope}\emph{Polytopes.} If $\grtr$ is known to be a convex combination of finitely many vectors $\{\x_1,\dots,\x_k\}\subset \R^n$, one could simply choose the polytope $\sset=\convhull\{\vnull,\x_1,\dots,\x_k\}$ as signal set. \qedhere
\end{rmklist}
\label{ex:signalset}
\end{example}

% In order to apply our main recovery results to the case of sparse vectors, we need to estimate the 
% Gaussian width and several localized versions thereof. Before doing so, let us mention several properties of the Gaussian width and the effective dimension, albeit without proof. For proofs and further details, see \cite{plan2013robust,RomanHDP}.
Next, we collect some basic yet important properties of the Gaussian width and effective dimension. The proofs are omitted, since most statements are direct consequences of the definition. For a more extensive discussion, see also \cite{plan2013robust,vershynin2014estimation,RomanHDP}.
\begin{proposition}\label{prop:gaussianwidth} The Gaussian width $\meanwidth{\sset}$ of a bounded subset $\sset\subset\R^n$ (see Definition~\ref{def:gaussianwidth}) and the effective dimension
\begin{equation}
\effdim{\sset}=\frac{\meanwidth{\sset}^2}{\diam(\sset)^2} \ ,
\end{equation}
satisfy the following properties:
\begin{thmproperties}
\item\label{prop:property:monotonicity} If $\sset\subset \sset' \subset \R^n$, it follows that $\meanwidth{\sset}\leq \meanwidth{\sset'}$.
\item\label{prop:property:difference} $\meanwidth{\sset}=\tfrac{1}{2}\meanwidth{\sset-\sset}$.
\item\label{prop:property:shift} $\meanwidth{\sset + \x}=\meanwidth{\sset}$ for every $\x \in \R^n$.
\item\label{prop:property:convexhull} $\meanwidth{\sset}=\meanwidth{\convhull\sset}$.
\item\label{prop:property:algdim} $\effdim{\sset}\leq \dim(\spann\sset)$. 
\item\label{prop:property:euclideanball} If $\sset$ is the Euclidean ball of a $d$-dimensional subspace of $\R^n$, then $\effdim{\sset}\asymp d$.
\item\label{prop:property:finite} If $\sset$ is finite, we have $\effdim{\sset}\lesssim \log(\abs{\sset})$.
\end{thmproperties}
\end{proposition}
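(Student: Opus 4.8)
The plan is to dispatch the seven properties one at a time; each is a short consequence of Definition~\ref{def:gaussianwidth} together with a couple of elementary facts about Gaussian vectors, so I will keep the arguments terse. Throughout, let $\gaussian\distributed\Normdistr{\vnull}{\I{n}}$ be the standard Gaussian vector from the definition.

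The width-only properties (i)--(iv) follow by inspection of $\meanwidth{\sset}=\mean[\sup_{\x\in\sset}\sp{\gaussian}{\x}]$ and the linearity of $\sp{\cdot}{\cdot}$. For the monotonicity (i), for every fixed realization of $\gaussian$ the supremum over the larger set $\sset'$ dominates the one over $\sset$; take expectations. For the translation invariance (iii), for any fixed $\vec{v}\in\R^n$ we have $\sup_{\x\in\sset}\sp{\gaussian}{\x+\vec{v}}=\sup_{\x\in\sset}\sp{\gaussian}{\x}+\sp{\gaussian}{\vec{v}}$, and $\mean[\sp{\gaussian}{\vec{v}}]=0$. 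For (ii), write $\sup_{\x,\x'\in\sset}\sp{\gaussian}{\x-\x'}=\sup_{\x\in\sset}\sp{\gaussian}{\x}+\sup_{\x'\in\sset}\sp{-\gaussian}{\x'}$ and use that $-\gaussian$ has the same distribution as $\gaussian$, so that the expectation equals $2\meanwidth{\sset}$. For (iv), every point of $\convhull\sset$ is a convex combination of points of $\sset$, hence $\sp{\gaussian}{\cdot}$ evaluated there is bounded by $\sup_{\x\in\sset}\sp{\gaussian}{\x}$; combined with (i) this gives $\meanwidth{\convhull\sset}=\meanwidth{\sset}$.

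For the effective-dimension statements (v), (vi), and (vii), the one point deserving care is that the denominator of $\effdim{\cdot}$ is the \emph{diameter} of $\sset$, not its radius around $\vnull$; I would therefore recenter the set first. Fix any $\vec{u}\in\sset$; by (iii), $\meanwidth{\sset}=\meanwidth{\sset-\vec{u}}$, the set $\sset-\vec{u}$ has the same diameter as $\sset$, every $\x\in\sset$ obeys $\norm{\x-\vec{u}}\leq\diam(\sset)$, and $\sset-\vec{u}$ lies in $E\coloneqq\spann\sset$; write $d\coloneqq\dim E$. Since $\proj_E\gaussian$ is a standard Gaussian vector on the $d$-dimensional space $E$ and $\sp{\gaussian}{\vec{w}}=\sp{\proj_E\gaussian}{\vec{w}}$ for all $\vec{w}\in E$, Cauchy--Schwarz and then Jensen give
\[
\meanwidth{\sset}=\mean[\sup_{\x\in\sset}\sp{\proj_E\gaussian}{\x-\vec{u}}][auto]\leq\diam(\sset)\cdot\mean[\norm{\proj_E\gaussian}]\leq\diam(\sset)\cdot\sqrt{d},
\]
which rearranges to $\effdim{\sset}\leq d=\dim(\spann\sset)$ and proves (v). Property (vi) is the matching sharp bound: if $\sset$ is the Euclidean ball of radius $R>0$ of a $d$-dimensional subspace $E\subset\R^n$, then $\meanwidth{\sset}=R\cdot\mean[\norm{\proj_E\gaussian}]$ and $\diam(\sset)=2R$, and the standard two-sided estimate $\mean[\norm{\proj_E\gaussian}]\asymp\sqrt{d}$ for a chi-distributed variable with $d$ degrees of freedom gives $\effdim{\sset}\asymp d$. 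For (vii) (assuming $\cardinality{\sset}\geq2$, the statement being vacuous otherwise), recenter as above so that each of the at most $\cardinality{\sset}$ variables $\sp{\gaussian}{\x-\vec{u}}$, $\x\in\sset$, is a centered Gaussian of standard deviation $\norm{\x-\vec{u}}\leq\diam(\sset)$; the classical maximal inequality for finitely many Gaussians then gives $\meanwidth{\sset}\lesssim\diam(\sset)\sqrt{\log\cardinality{\sset}}$, that is, $\effdim{\sset}\lesssim\log\cardinality{\sset}$.

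In summary, nothing here is deep, so there is no genuine obstacle: besides the linearity of $\sp{\cdot}{\cdot}$ and the mean-zero and symmetry properties of $\gaussian$ (used for (iii) and (ii)), the only inputs are the rotational invariance of the Gaussian distribution --- so that $\proj_E\gaussian$ is a lower-dimensional standard Gaussian, used for (v) and (vi) --- together with the Gaussian maximal inequality for (vii). I expect the one step worth writing out explicitly to be the recentering via translation invariance, since working from the uncentered set would replace $\diam(\sset)$ by $\rad(\sset)$ and thereby weaken (v)--(vii); everything else is a one-line consequence of the definition.
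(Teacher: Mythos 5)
The paper explicitly omits the proof of this proposition (``The proofs are omitted, since most statements are direct consequences of the definition''), so there is no author argument to compare against. Your proof is correct: properties (i)--(iv) follow by pointwise inspection, linearity, and the symmetry $-\gaussian \distributed \gaussian$; the recentering step via translation invariance before invoking Cauchy--Schwarz, the chi-moment estimate, and the Gaussian maximal inequality is exactly the right precaution for (v)--(vii), and your note that it is needed because $\effdim{\cdot}$ is normalized by $\diam(\sset)$ rather than $\rad(\sset)$ is well taken.
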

% Since $\sup_{\x \in \sset-\sset}\sp{\gaussian}{\x}$ measures the spatial extend of $\sset$ in the direction of $\gaussian$ as visualized in Figure~\ref{fig:results:modelsetup:meanwidth:global}, and $\frac{\gaussian}{\lnorm{\gaussian}}$ is uniformly distributed on the Euclidean unit sphere, we obtain that $\meanwidth{\sset-\sset}$ measures the average width of $\sset$. From \eqref{prop:property:difference} it follows that the same assertion holds true for the
% Gaussian width. 

%Moreover, it follows from \eqref{prop:property:finite} that the effective dimension of a set can be much smaller than the algebraic dimension of its span. The standard example is the $\l{1}$-ball. Indeed, using \eqref{prop:property:convexhull} and \eqref{prop:property:finite} we obtain
%\begin{equation}
%\effdim{\ball[1][n]}=\effdim{\convhull(\{\pm e_1,...,\pm e_n\})}=\effdim{\{\pm e_1,...,\pm e_n\}}\lesssim\log(n)\ll n=\dim(\spann(\ball[1][n])).
%\end{equation}
%Lastly, let us remark that the effective dimension of a set $\sset$ is a much more robust complexity quantity compared to the algebraic dimension of its span: Slight modifications of $\sset$ only lead to slight changes of the Gaussian width and therefore the effective dimension.

Let us now focus on the important case of sparse recovery. More precisely, we intend to apply our main results from Section~\ref{sec:results} to the set of effectively $s$-sparse vectors as defined in Example~\ref{ex:signalset}\ref{rmk:signalset:effsparse} as well as to scaled $\l{1}$-balls as considered in Example~\ref{ex:signalset}\ref{rmk:signalset:l1ball}. 
In either of these cases, there exist (sharp) bounds on the Gaussian width in literature (see \cite[Sec.~2 and~3]{plan2013robust}):
\begin{align}
\meanwidth{\sqrt{s}\ball[1][n]} &\lesssim \sqrt{s\log(n)}, \label{eq:gaussianwidthscaled} \\ 
\meanwidth{\sqrt{s}\ball[1][n]\intersec \ball[2][n]} &\lesssim \sqrt{s\log(\tfrac{2n}{s})}. \label{eq:gaussianwidtheffsparse}
\end{align}
Using \eqref{eq:gaussianwidthscaled}, we now obtain the following sparse recovery result from Theorem~\ref{thm:results:generalset:global}:
Set $K=\sqrt{s}\ball[1][n]$ and assume that $\grtr \in \sqrt{s}\ball[1][n] \intersec \S^{n-1}$. Provided that $m\gtrsim \scalfac^4 \cdot s \log(n)$, any minimizer $\solu$ of \eqref{eq:estimatortuned} satisfies with high probability
\begin{equation}
\lnorm[auto]{\grtr - \frac{\solu}{\lnorm{\solu}}}\lesssim \frac{1}{\scalfac} \ . 
\end{equation}

Combined with \eqref{eq:gaussianwidtheffsparse}, Corollary~\ref{coro:euclideanball} yields a similar statement: Set $K=\sqrt{s}\ball[1][n] \intersec \ball[2][n]$ and assume that $\grtr \in \sqrt{s}\ball[1][n] \intersec \S^{n-1}$. If $m\gtrsim t^{-4} \cdot s  \log(2n/s)$,
then any minimizer $\solu$ of \eqref{eq:estimator} satisfies with high probability
\begin{equation}
\lnorm[auto]{\grtr - \frac{\solu}{\lnorm{\solu}}}\lesssim t. 
\end{equation}

We would like to point out that these assertions resemble the findings of \cite[Thm.~II.3, Thm.~IV.1]{kolleck2015l1svm}.
While the number of required measurements is essentially optimal with respect to the signal complexity in both cases (cf. \cite[Chap.~11]{foucart2013cs}), the dependence on the respective oversampling factor is clearly sub-optimal.
Indeed, the fourth power of $\scalfac^4$ and $t^{-4}$ leads to an error decay rate of $\asympfaster{m^{-1/4}}$.
In order to achieve the optimal oversampling rate of $\asympfaster{m^{-1/2}}$, we need to investigate the conic effective dimension of sparse vectors. 
Invoking a well-known bound on the conic effective dimension from \cite[Prop.~3.10]{chandrasekaran2012geometry}, it turns out that an $s$-sparse vector $\grtr$ that lies on the boundary of an $\l{1}$-ball satisfies
\begin{equation}\label{eq:coniceffdimssparse}
\effdim[\conic]{\lnorm{\grtr}[1]\ball[1][n] - \grtr}\lesssim s\log(\tfrac{2n}{s}).
\end{equation}

Together with Theorem~\ref{thm:results:generalset:local} (and \eqref{eq:results:generalset:local:boundsample}), this gives the following:
Set $\sset=R\ball[1][n]$ and assume that $\grtr$ is $s$-sparse with $\grtr \in \boundary{\sset} \intersec \S^{n-1}$. Then any minimizer $\solu$ of \eqref{eq:estimatortuned} satisfies with high probability at least $1- \eta$
\begin{equation}\label{eq:generalsparse}
	\lnorm[auto]{\grtr - \frac{\solu}{\lnorm{\solu}}} \lesssim \Big(\frac{t_0^2 \cdot \max\{ s\log(\tfrac{2n}{s}), \log(\eta^{-1}) \}}{m}\Big)^{1/2},
\end{equation}
where $t_0$ is the signal-dependent parameter defined in \eqref{eq:results:generalset:t0}. An analogous result follows from Corollary~\ref{coro:euclideanball} (and \eqref{eq:conicbound}) in the unit-ball case by considering the signal set $\sset = \lnorm{\grtrmu}[1]\ball[1][n] \intersec \ball[2][n]$.
In this situation, one can even remove the factor $t_0$ in \eqref{eq:generalsparse}, meaning that any solution $\solu$ to \eqref{eq:estimator} obeys
\begin{equation}\label{eq:appl:signalsets:generalsparseunitball}
\lnorm[auto]{\grtr - \frac{\solu}{\lnorm{\solu}}} \lesssim \Big(\frac{\max\{s\log(\tfrac{2n}{s}), \log(\eta^{-1}) \}}{m}\Big)^{1/2}.
\end{equation}

While the error bound of \eqref{eq:appl:signalsets:generalsparseunitball} is essentially optimal (see \cite[Sec.~4]{plan2014highdim}), it is however only of limited practical interest because the signal set $\sset = \lnorm{\grtrmu}[1]\ball[1][n] \intersec \ball[2][n]$ depends on the unknown vector $\grtr$.
Indeed, if we would just select $\sset = R\ball[1][n] \intersec \ball[2][n]$ for some $R > \lnorm{\grtrmu}[1]$, the conic effective dimension would drastically increase to $\effdim[\conic]{( R\ball[1][n] \intersec \ball[2][n] ) - \grtrmu} \asymp n$, which in turn leads to a very pessimistic sampling rate.
A similar problem would occur if $\grtr$ is just compressible rather than exactly $s$-sparse. In this situation, $\grtr$ may reside in a higher dimensional face of a scaled $\l{1}$-ball but is still relatively close to a low-dimensional face.

Fortunately, the refined concept of local effective dimension is able to resolve these issues and allows for \emph{stable recovery}.
Roughly speaking, the idea is as follows: If the ``anchor vector'' $\grtrmu \in K = \sqrt{s}\ball[1][n] \intersec \ball[2][n]$ in Theorem~\ref{thm:euclideanball} is not exactly $s$-sparse, select a nearby $s$-sparse vector $\x^{\ast} \in \boundary{\sset}$ and set $t^{\ast} \coloneqq \lnorm{\grtrmu - \x^{\ast}}$. Then, the local effective dimension $\effdim[t]{\sset - \grtrmu}$ at any scale $t \geq t^{\ast}$ behaves as if we would consider its conic counterpart in $\x^{\ast}$, i.e., $\effdim[\conic]{\sset - \x^{\ast}}$.
This geometric argument is formalized by the following proposition, which is a consequence of \cite[Lem.~A.2]{genzel2017cosparsity}:
\begin{proposition}\label{prop:effdimeffsparse}
	Let $\grtrmu \in  \sset \coloneqq \sqrt{s}\ball[1][n]\intersec \ball[2][n]$ and set $\tilde{\sset}_{n,s} \coloneqq \{\x \in \sset \suchthat \lnorm{\x}[0] \leq s, \,\lnorm{\x}[1]=\sqrt{s}\}$. Moreover, let $\x^{\ast}\in \R^n$ be a best $s$-term approximation of $\grtrmu$ in $\tilde{\sset}_{n,s}$, i.e., $\x^{\ast} = \argmin_{\x\in \tilde{\sset}_{n,s}}\lnorm{\grtrmu-\x}$. Then for every $t\geq t^{\ast}\coloneqq\lnorm{\grtrmu-\x^{\ast}}$, we have that
	\begin{equation}
		\effdim[t]{(\sqrt{s}\ball[1][n]\intersec \ball[2][n] ) - \grtrmu}\lesssim \effdim[\conic]{\sqrt{s}\ball[1][n] - \x^{\ast}} \stackrel{\eqref{eq:coniceffdimssparse}}{\lesssim} s\log(\tfrac{2n}{s}).
	\end{equation}
\end{proposition}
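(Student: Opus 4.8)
The second estimate requires nothing new: by construction of $\tilde{\sset}_{n,s}$, the vector $\x^\ast$ is $s$-sparse and obeys $\lnorm{\x^\ast}[1] = \sqrt{s}$, so it lies on the boundary of $\sqrt{s}\ball[1][n] = \lnorm{\x^\ast}[1]\ball[1][n]$, and \eqref{eq:coniceffdimssparse} applied with $\grtr = \x^\ast$ yields $\effdim[\conic]{\sqrt{s}\ball[1][n] - \x^\ast} \lesssim s\log(\tfrac{2n}{s})$. So the real task is the first estimate, and the plan is to localize, shift, and then compare with the descent cone at $\x^\ast$. Recalling that $\effdim[t]{\sset - \grtrmu} = \meanwidth{(\sset - \grtrmu)\intersec t\ball[2][n]}^2 / t^2$, I would fix $t \geq t^\ast$ and first prove the set inclusion
\[
	(\sset - \grtrmu)\intersec t\ball[2][n] \;\subseteq\; \bigl(\cone{\sqrt{s}\ball[1][n]}{\x^\ast}\intersec 2t\ball[2][n]\bigr) + (\x^\ast - \grtrmu),
\]
where the last summand is the \emph{fixed} vector $\x^\ast - \grtrmu$, which does not depend on the point of the left-hand set.

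To verify the inclusion, one takes $\x\in\sset$ with $\lnorm{\x - \grtrmu}\leq t$ and splits $\x - \grtrmu = (\x - \x^\ast) + (\x^\ast - \grtrmu)$. Since $\x\in\sset\subset\sqrt{s}\ball[1][n]$ and $\x^\ast\in\sqrt{s}\ball[1][n]$, the vector $\x - \x^\ast = 1\cdot(\x - \x^\ast)$ lies in the descent cone $\cone{\sqrt{s}\ball[1][n]}{\x^\ast}$ by its very definition, while the triangle inequality together with $t^\ast\leq t$ gives $\lnorm{\x - \x^\ast}\leq\lnorm{\x-\grtrmu}+\lnorm{\grtrmu-\x^\ast}\leq t + t^\ast\leq 2t$. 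I would then push this inclusion through the Gaussian width: by monotonicity and translation invariance of the width (see Proposition~\ref{prop:gaussianwidth}), followed by its positive homogeneity (immediate from Definition~\ref{def:gaussianwidth}) and the fact that $\cone{\sqrt{s}\ball[1][n]}{\x^\ast}$ is a cone, so that $\cone{\sqrt{s}\ball[1][n]}{\x^\ast}\intersec 2t\ball[2][n] = 2t\cdot(\cone{\sqrt{s}\ball[1][n]}{\x^\ast}\intersec\ball[2][n])$,
\[
	\meanwidth[t]{\sset - \grtrmu} \;\leq\; \meanwidth{\cone{\sqrt{s}\ball[1][n]}{\x^\ast}\intersec 2t\ball[2][n]} \;=\; 2t\cdot\meanwidth{\cone{\sqrt{s}\ball[1][n]}{\x^\ast}\intersec\ball[2][n]}.
\]
Squaring, dividing by $t^2$, and recalling the definition of the conic effective dimension then gives $\effdim[t]{\sset - \grtrmu}\leq 4\,\effdim[\conic]{\sqrt{s}\ball[1][n] - \x^\ast}$, which is exactly the remaining estimate. (This argument is the $\l{1}$-specialization of \cite[Lem.~A.2]{genzel2017cosparsity}; alternatively one may simply quote that lemma after checking its hypotheses.)

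The one genuinely delicate point — the step I would expect to demand the most care — is the treatment of the offset $\x^\ast - \grtrmu$. At first glance it introduces a perturbation of magnitude $t^\ast$, and estimating it naively, say by $\meanwidth{t^\ast\ball[2][n]}\asymp t^\ast\sqrt{n}$, would render the whole bound useless. The resolution is that $\x^\ast - \grtrmu$ is a \emph{single} translate of the entire localized set rather than a direction varying over it, and hence is absorbed at no cost by the shift invariance of the Gaussian width; the hypothesis $t\geq t^\ast$ is used only to keep $\x - \x^\ast$ inside a ball of radius comparable to $t$. The remaining care is purely bookkeeping: one should note that the minimization defining $\x^\ast$ is over a closed (indeed finite) set and hence attained, and observe that the proof uses only the facts ``$\x^\ast$ is $s$-sparse with $\lnorm{\x^\ast}[1] = \sqrt{s}$'' and ``$t^\ast = \lnorm{\grtrmu - \x^\ast}$'' — the optimality of $\x^\ast$ as a best $s$-term approximation merely serves to make the admissible range $t \geq t^\ast$ as large as possible.
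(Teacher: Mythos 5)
Your proof is correct. The paper gives no internal argument for this proposition — it simply invokes \cite[Lem.~A.2]{genzel2017cosparsity} — and your explicit derivation (inclusion into a translate of the localized descent cone at $\x^{\ast}$, followed by monotonicity, shift-invariance, and positive homogeneity of the Gaussian width) is exactly the $\l{1}$-specialization of that lemma, as you yourself note at the end.

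One small observation in support of a parenthetical you hedged on: $\tilde{\sset}_{n,s}$ really is finite. Cauchy--Schwarz forces $\lnorm{\x}[2]\geq\lnorm{\x}[1]/\sqrt{\lnorm{\x}[0]}\geq 1$ for any $\x$ with $\lnorm{\x}[0]\leq s$ and $\lnorm{\x}[1]=\sqrt s$, so together with $\lnorm{\x}[2]\leq 1$ we must have equality, which pins the nonzero entries to $\pm 1/\sqrt s$ on a support of size exactly $s$. In particular the minimizer $\x^{\ast}$ exists, and its membership on the relative boundary of the $s$-sparse face of $\sqrt{s}\ball[1][n]$ is automatic, which is precisely what \eqref{eq:coniceffdimssparse} requires.
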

An application of Theorem~\ref{thm:euclideanball} now leads to stable sparse recovery in the following sense: Let $\grtr\in \S^{n-1}$ be effectively $s$-sparse, i.e., $\grtr\in K=\sqrt{s}\ball[1][n]\intersec \ball[2][n]$, and let $t^{\ast}$ be defined as in Proposition~\ref{prop:effdimeffsparse}. Fix any $t\geq t^{\ast}$ and assume that the number of samples obeys $m\gtrsim t^{-2}\cdot s\log(\tfrac{2n}{s})$.
Then any minimizer $\solu$ of \eqref{eq:estimator} satisfies with high probability
\begin{equation}
\lnorm[auto]{\grtr - \frac{\solu}{\lnorm{\solu}}} \lesssim t.
\end{equation}
The best possible accuracy is achieved for $t = t^\ast$, which basically reflects the degree of compressibility of $\grtr$. In particular, $t^\ast$ is supposed to be very small as long as there exists a good $s$-term approximation for $\grtr$.
On the other hand, if $t$ exceeds this ``base level'', the sampling rate precisely behaves as if $\grtr$ would be $s$-sparse and $\sset$ is perfectly tuned such that $\grtrmu \in \boundary{\sset}$.
% Consequently, in the regime where the approximation error of an effectively $s$-sparse signal is larger than the distance to its best $s$-term approximation, the approximation error decays as if the signal would be exactly $s$-sparse, i.e., it decays as $\asympfaster{(s\log(2n/s)/m)^{1/2}}$.

\subsection{Correlation Conditions and Noisy Quantization Models}
\label{subsec:appl:examples}

Let us begin with a brief discussion on the correlation conditions of Assumption~\ref{model:correlation} that are required for Theorem~\ref{thm:euclideanball}.
For the remainder of this section, we assume that the hypotheses of Subsection~\ref{subsec:results:unitball} hold true, in particular, $\sset\subset \ball[2][n]$.
Moreover, we agree on the following terminology:
According to Assumption~\ref{model:measurements}, denote the linear and quantized sampling rules by $y^{\text{lin}} = \sp{\a}{\grtr}$ and $y=f(\sp{\a}{\grtr})$, respectively. Analogously, $y^{\sign} = \sign(\sp{\a}{\grtr})$ is associated with the perfect $1$-bit model, which arises from $f=\sign$.

As already mentioned in the course of Lemma~\ref{lem:minexpectedloss} in Subsection~\ref{subsec:results:unitball}, the parameter $\lambda$ in condition~\ref{enum:cond_c1} simply corresponds to the covariance between quantized and linear measurements:
\begin{align}
\cov(y,y^{\text{lin}})=\cov(f(\sp{\a}{\grtr}),\sp{\a}{\grtr}) =\mean[f(\sp{\a}{\grtr})\sp{\a}{\grtr}] =\mean[f(\gaussianuniv)\gaussianuniv]=\lambda,
\end{align}
where $g = \sp{\a}{\grtr} \distributed \Normdistr{0}{1}$ is due to $\lnorm{\grtr}=1$. Thus, demanding $\lambda > 0$ in \ref{enum:cond_c1}
means that the true output $y$ is positively correlated with the linear signal $y^{\text{lin}}$.

In contrast, the condition of \ref{enum:cond_c2} requires that the covariance between the noisy observation model $y$ and its ``perfect'' counterpart $y^{\sign}$ is non-negative while conditioning on the magnitude of the underlying linear signal $y^{\text{lin}}$. Indeed, it holds that
\begin{align}
\mean[y  y^{\sign}\suchthat \abs{y^{\text{lin}}}]
=\mean[f(\sp{\a}{\grtr})\sign(\sp{\a}{\grtr})\suchthat \abs{\sp{\a}{\grtr}}]
=\mean[f(\gaussianuniv)\sign(\gaussianuniv)\suchthat \abs{\gaussianuniv}],
\end{align}
and
\begin{equation}
\mean[y^{\sign}\suchthat \abs{y^{\text{lin}}}]=\mean[\sign(\gaussianuniv)\suchthat \abs{\gaussianuniv}]=\mean[\sign(\gaussianuniv)]=0\quad\quad \text{(a.s.)},
\end{equation}
where we have used that $\sign(\gaussianuniv)$ and $\abs{\gaussianuniv}$ are independent.
Consequently, \ref{enum:cond_c2} is equivalent to 
\begin{equation}
\cov(y, y^{\sign} \suchthat \abs{y^{\text{lin}}}) = \mean[y  y^{\sign}\suchthat \abs{y^{\text{lin}}}]-\mean[y\suchthat \abs{y^{\text{lin}}}]\,\mean[y^{\sign}\suchthat \abs{y^{\text{lin}}}]\geq 0\quad\quad \text{(a.s.)}.
\end{equation}
This indicates that both \ref{enum:cond_c1} and \ref{enum:cond_c2} are mild and natural conditions which are fulfilled for many types of noisy $1$-bit quantization models, e.g., random bit flips (after quantization) and additive Gaussian noise (before quantization).

Before studying these two prototypical examples in greater detail, let us make the following general observation:
An ideal scenario for $1$-bit compressed sensing is that the signal vector $\grtr$ is first linearly measured by $y^{\text{lin}} = \sp{\a}{\grtr}$ and then quantized in a noiseless fashion via $y^{\sign} = \sign(\sp{\a}{\grtr})$.
This heuristic is particularly reflected by the size of the correlation parameter $\lambda$: If $f \colon \R \to \{-1,+1\}$ is an arbitrary quantization function, we have $f(\gaussianuniv)\gaussianuniv\leq \abs{\gaussianuniv}$, implying that
\begin{equation}
	\lambda_f = \mean[f(\gaussianuniv)\gaussianuniv]\leq \mean[\sign(\gaussianuniv)\gaussianuniv]=\lambda_{\sign}=\sqrt{\tfrac{2}{\pi}} \ .
\end{equation}
Hence, $\lambda_f$ is maximized for $f = \sign$, which eventually leads to the best possible error bound and sampling rate in Theorem~\ref{thm:euclideanball}.
However, it is difficult to build perfect $1$-bit quantizers in practice, so that the measurement process is usually disturbed by \emph{noise}. 
In this context, one roughly distinguishes between two sources of measurement errors: firstly, noise that corrupts the linear signal $y^{\text{lin}} = \sp{\a}{\grtr}$ before quantization, and secondly, noise that affects the actual quantization step.
The following example considers each of these two scenarios:
\begin{example}
\begin{rmklist}
\item \label{ex:addGauss}
	\emph{Additive Gaussian noise.}
	A typical example of corruptions before quantization is additive Gaussian noise. More precisely, we consider Assumption~\ref{model:measurements} for a $1$-bit quantization function
	\begin{equation}
		f(v) = f_{\sigma}(v) \coloneqq \sign(v+ \tau)
	\end{equation}
	where $\tau\distributed \Normdistr{0}{\sigma^2}$ is independent from $\a$. Consequently, the samples in \eqref{eq:measurements} take the form
	\begin{equation}\label{eq:Gaussiannoiseobservations}
		y_i = \sign(\sp{\a_i}{\grtr} + \tau_i), \quad i = 1, \dots, m,
	\end{equation}
	with $\tau_i \distributed \Normdistr{0}{\sigma^2}$ being an independent copy of $\tau$. 
	If $\sigma=0$, we are in the situation of perfect $1$-bit measurements, whereas all information about the signal $\grtr$ is lost as $\sigma \to \infty$.
\item\label{ex:bitflip}
	\emph{Random bit flips.} An example of noise during quantization are independent sign flips of $y^{\sign} = \sign(\sp{\a}{\grtr})$. This can be easily modeled by setting 
	\begin{equation}
		f(v) = f_p(v) \coloneqq \eps \cdot\sign(v)
	\end{equation} 
	in Assumption~\ref{model:measurements} for an independent Bernoulli random variable  $\eps\in \{-1,+1\}$ with $\prob[\eps = 1] = p>\nobreak\tfrac{1}{2}$. 
	Then, \eqref{eq:measurements} looks as follows:
	\begin{equation}\label{eq:bitflipobservations}
		y_i = \eps_i \cdot \sign(\sp{\a_i}{\grtr}), \quad i = 1, \dots, m,
	\end{equation}
	where $\eps_i\in \{-1,+1\}$ is an independent copy of $\eps$.
	Note that $p=1$ again corresponds to perfect $1$-bit measurements, whereas $p \to \tfrac{1}{2}$ leads to a complete loss of information.
	\qedhere
\end{rmklist}\label{ex:appl:noisepatterns}
\end{example}

The next proposition verifies that both types of noise are compatible with Assumption~\ref{model:correlation}. Its proof is postponed to Subsection~\ref{subsec:proofs:noisemodel}. % and computes the respective correlation parameter:
% From the next proposition it follows that we may apply our main recovery results Theorem~\ref{thm:euclideanball}
% and Corollary~\ref{coro:euclideanball} to the noisy $1$-bit models defined in Example~\ref{ex:appl:noisepatterns}.
% The proof is postponed to Subsection~\ref{subsec:proofs:noisemodel}.
\begin{proposition}\label{prop:bitflipaddnoisecond}
The noisy $1$-bit models from Example~\ref{ex:appl:noisepatterns}\ref{ex:addGauss} and~\ref{ex:bitflip} do both satisfy Assumption~\ref{model:correlation}. Moreover, the associated correlation parameters are given by
\begin{alignat}{2}\label{eq:example_lambda}
\lambda_{f_{\sigma}} &\asymp \tfrac{1}{1+\sigma} \qquad && \text{ for additive Gaussian noise, and} \\
\lambda_{f_p} &= (2p-1) \sqrt{\tfrac{2}{\pi}} \qquad && \text{ for random bit flips.}
\end{alignat} 
\end{proposition}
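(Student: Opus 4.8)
The plan is to verify the two correlation conditions \ref{enum:cond_c1} and \ref{enum:cond_c2} of Assumption~\ref{model:correlation} for each of the two quantizers and, along the way, read off the correlation parameters; everything reduces to one-dimensional Gaussian computations. Writing $\gaussianuniv = \sp{\a}{\grtr} \distributed \Normdistr{0}{1}$ (using $\lnorm{\grtr} = 1$), I would repeatedly use that $\gaussianuniv$ is symmetric, so that $\sign(\gaussianuniv)$ is a Rademacher variable independent of $\abs{\gaussianuniv}$, that $\gaussianuniv\sign(\gaussianuniv) = \abs{\gaussianuniv}$ and $\sign(\gaussianuniv)^2 = 1$ almost surely, and that $\mean[\abs{\gaussianuniv}] = \sqrt{2/\pi}$.

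The random bit flip model from Example~\ref{ex:appl:noisepatterns}\ref{ex:bitflip} is essentially immediate. Since $\eps$ is independent of $\gaussianuniv$ and $f_p(\gaussianuniv)\gaussianuniv = \eps\,\gaussianuniv\sign(\gaussianuniv) = \eps\abs{\gaussianuniv}$, factorizing the expectation gives $\lambda_{f_p} = \mean[f_p(\gaussianuniv)\gaussianuniv] = \mean[\eps]\cdot\mean[\abs{\gaussianuniv}] = (2p-1)\sqrt{2/\pi}$, which is strictly positive because $p > \tfrac{1}{2}$; this settles \ref{enum:cond_c1} together with the claimed value of $\lambda_{f_p}$. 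For \ref{enum:cond_c2}, using $\sign(\gaussianuniv)^2 = 1$ and independence once more yields $\mean[f_p(\gaussianuniv)\sign(\gaussianuniv) \suchthat \abs{\gaussianuniv}] = \mean[\eps] = 2p - 1 \geq 0$ almost surely.

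For the additive Gaussian noise model of Example~\ref{ex:appl:noisepatterns}\ref{ex:addGauss}, condition \ref{enum:cond_c1} requires computing $\lambda_{f_{\sigma}} = \mean[\sign(\gaussianuniv + \tau)\,\gaussianuniv]$ with $\tau \distributed \Normdistr{0}{\sigma^2}$ independent of $\gaussianuniv$. Here $(\gaussianuniv + \tau, \gaussianuniv)$ is jointly centered Gaussian with $\mean[(\gaussianuniv + \tau)^2] = 1 + \sigma^2$ and $\mean[(\gaussianuniv + \tau)\gaussianuniv] = 1$. Decomposing $\gaussianuniv = \tfrac{1}{1+\sigma^2}(\gaussianuniv + \tau) + W$ with $W$ centered Gaussian and (being uncorrelated with $\gaussianuniv+\tau$ in the jointly Gaussian setting) independent of $\gaussianuniv + \tau$, and using $\mean[\sign(Z)\,Z] = \mean[\abs{Z}] = \sqrt{2/\pi}\,\sqrt{\mean[Z^2]}$ for a centered Gaussian $Z$, one obtains $\lambda_{f_{\sigma}} = \sqrt{2/\pi}\,(1 + \sigma^2)^{-1/2} > 0$. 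The equivalence $\lambda_{f_{\sigma}} \asymp (1+\sigma)^{-1}$ then follows from the elementary bound $\sqrt{1+\sigma^2} \leq 1 + \sigma \leq \sqrt{2}\,\sqrt{1+\sigma^2}$, valid for all $\sigma \geq 0$. For \ref{enum:cond_c2}, I would condition on $\abs{\gaussianuniv} = r$: by the symmetry of $\gaussianuniv$ and its independence from $\tau$, conditionally $\gaussianuniv$ equals $+r$ or $-r$ with probability $\tfrac{1}{2}$ each while $\tau$ keeps its law, so that $\mean[\sign(\gaussianuniv + \tau)\sign(\gaussianuniv) \suchthat \abs{\gaussianuniv} = r] = \tfrac{1}{2}\mean[\sign(r + \tau)] - \tfrac{1}{2}\mean[\sign(\tau - r)]$. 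Since $\tau$ is symmetric, $\mean[\sign(\tau - r)] = -\mean[\sign(\tau + r)]$ (up to the null event $\{\tau = \pm r\}$), so this collapses to $\mean[\sign(r + \tau)] = \prob[\tau \geq -r] - \prob[\tau < -r] = 2\prob[\tau \geq -r] - 1$, which is $\geq 0$ for every $r \geq 0$ because $\prob[\tau \geq -r] \geq \prob[\tau \geq 0] = \tfrac{1}{2}$.

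I expect the only genuinely nontrivial step to be the closed form for $\lambda_{f_{\sigma}}$: one has to recognize that $(\gaussianuniv + \tau, \gaussianuniv)$ is jointly Gaussian and then either re-derive the identity $\mean[\sign(X)\,Y] = \sqrt{2/\pi}\,\mean[XY]/\sqrt{\mean[X^2]}$ for centered jointly Gaussian $(X,Y)$ via the projection trick above, or evaluate the corresponding two-dimensional Gaussian integral by hand. The bit flip case, condition \ref{enum:cond_c2} in both models, and the passage from $(1+\sigma^2)^{-1/2}$ to the order $(1+\sigma)^{-1}$ are all routine.
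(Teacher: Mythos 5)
Your proposal is correct, and the bit-flip part as well as the two \ref{enum:cond_c2} verifications are essentially the same as in the paper (the paper phrases \ref{enum:cond_c2} for additive noise as a comparison of conditional sign-agreement and sign-disagreement probabilities rather than directly collapsing to $\mean[\sign(r+\tau)]$, but the symmetry argument is the same). Where you genuinely diverge from the paper is the computation of $\lambda_{f_\sigma}$. The paper conditions on $g$, expresses $\lambda_{f_\sigma}$ as
\begin{equation}
\lambda_{f_{\sigma}} = \tfrac{2}{\sqrt{2\pi}}\int_0^\infty x\,\erf\!\big(\tfrac{x}{\sqrt{2}\sigma}\big) e^{-x^2/2}\,dx,
\end{equation}
and then uses the two-sided bound $\erf(\tfrac{x}{\sqrt{2}\sigma}) \asymp \min\{\tfrac{x}{\sigma},1\}$ to obtain only the order $\lambda_{f_\sigma} \asymp \tfrac{1}{1+\sigma}$. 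You instead exploit that $(g+\tau, g)$ is jointly centered Gaussian and use the projection identity $\mean[\sign(X)Y] = \sqrt{2/\pi}\,\mean[XY]/\sqrt{\mean[X^2]}$ to get the exact closed form $\lambda_{f_\sigma} = \sqrt{2/\pi}\,(1+\sigma^2)^{-1/2}$, from which $\lambda_{f_\sigma} \asymp \tfrac{1}{1+\sigma}$ follows by the elementary inequality $\sqrt{1+\sigma^2} \leq 1+\sigma \leq \sqrt{2}\sqrt{1+\sigma^2}$. Your route is cleaner and strictly more informative: it avoids the one-dimensional integral and the asymptotics of $\erf$, and it produces the exact value of the correlation parameter rather than just its order. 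The trade-off is that the paper's integral manipulation is self-contained and does not require recalling (or re-deriving) the Gaussian projection identity, which is presumably why the authors chose it; but nothing in your argument is incomplete or incorrect.
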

Recalling the assertions of Theorem~\ref{thm:euclideanball} and Corollary~\ref{coro:euclideanball}, we obtain the following conditions on the number of required measurements:
\begin{alignat}{2}
m &\gtrsim (1+\sigma)^2 \cdot C_{t,\sset} \qquad &&\text{ for additive Gaussian noise in Example~\ref{ex:appl:noisepatterns}\ref{ex:addGauss}, and}\\ 
m &\gtrsim \tfrac{1}{(2p-1)^{2}} \cdot C_{t,\sset} \qquad &&\text{ for random bit flips in Example~\ref{ex:appl:noisepatterns}\ref{ex:bitflip},}
\end{alignat}
where the constant $C_{t,\sset} > 0$ hides the dependence on the oversampling factor $t$ and the signal complexity.
This indicates that signal recovery is still feasible in the presence of strong noise, but when approaching the limiting cases of $\sigma\to \infty$ or $p\to \tfrac{1}{2}$, we clearly need to take more and more samples to ensure accurate estimates of $\grtr$.
For a numerical simulation related to the noise models of Example~\ref{ex:appl:noisepatterns}, see Experiment~\hyperref[para:appl:numerics:exp2]{2} in the next subsection.

\subsection{Numerical Experiments}
\label{subsec:appl:numerics}

In this subsection, we provide some numerical evidence of our theoretical findings from Section~\ref{sec:results}.
The experiments below study the estimation performance of hinge loss minimization \eqref{eq:estimator} and its scalable version \eqref{eq:estimatortuned} within different parameter regimes, each of them investigating a different aspect of our statistical analysis.
The notational conventions of this part follow again those of Section~\ref{sec:results} and it is especially helpful to recall the observation model of Assumption~\ref{model:measurements}.

\paragraph{Implementation and General Setup.}

In all our experiments, we are solving the convex programs \eqref{eq:estimator} and \eqref{eq:estimatortuned} using the \texttt{Matlab} software package \texttt{cvx}~\cite{cvx1,cvx2} with the default settings in place. Moreover, Gaussian random
numbers are generated using the standard \texttt{Matlab} command
\texttt{randn}. Let us now provide more details about the general specifications that all numerical simulations below have in common:

\begin{listing}
\item
	\emph{Sparse signals.} Although there are certainly many other interesting examples of structural hypothesis, we are focusing on the benchmark case of (high-dimensional) \emph{sparse} signal vectors here, in conjunction with an $\l{1}$-relaxation.
	More specifically, an $s$-sparse signal $\grtr \in \R^n$ is drawn as follows: generate independent Gaussian random numbers in the first $s$ entries and set the rest to zero.\footnote{Since the measurement vectors $\a_1, \dots, \a_m$ are independent standard Gaussian random vectors and our hinge loss programs do not have any prior information about the support of $\grtr$, it is not necessary to draw a random support at this point.} Then normalize the resulting vector to obtain $\grtr \in \S^{n-1}$.
	Following the reasoning of Subsection~\ref{subsec:appl:signalsets}, we take a scaled $\l{1}$-ball of the form $R\ball[1][n]$ as basis to construct the signal set $\sset$, depending on which estimator is actually examined.
\item
	\emph{$1$-bit measurements.} For a selected output function $\fobs$, the measurements are generated according to Assumption~\ref{model:measurements}. Thus, the measurement vectors $\a_1, \dots, \a_m \in \R^n$ are independent instances of a standard Gaussian random vector.
	The specific choices of $n$, $m$, and $\fobs$ may be different in each case and we refer to the Figures~\ref{fig:appl:numerics:exp1}--\ref{fig:appl:numerics:exp3} for  the individual parameter configurations.
\item
	\emph{Recovery error.} The outcome of a hinge loss minimization is denoted by $\solu \in \R^n$ and the resulting estimation error is computed by $\lnorm[\big]{\grtr - \solu / \lnorm{\solu}}$, such as done in our main results in Section~\ref{sec:results}. In order to improve the statistical accuracy, each of the experiments is repeated for a certain number of trials and the reported recovery error then corresponds to the arithmetic mean over all iterations.
%	Let us emphasize that, for each instance of model parameters, we have verified that the empirical risk function fulfills $\lossemp[](\solu) > 0$ (cf. Definition~\ref{def:results:unitball:risk}), implying that the solution vector $\solu$ is unique almost surely.\todocontent{Is that really true?} Otherwise, there could be infinitely many minimizers and the results of our simulations would also depend on which one we select.
%	Such an undesirable situation may occur for example, if the scalable estimator \eqref{eq:estimatortuned} is invoked with $m$ being too small; see Subsection~\ref{subsec:results:generalset:geometry} for a geometric interpretation.
\end{listing}

\paragraph{Experiment~1 (Comparison in the noiseless case).} \label{para:appl:numerics:exp1}

This numerical experiment is motivated by the following issues:
\begin{highlight}
	How do the approaches from Subsection~\ref{subsec:results:unitball} and Subsection~\ref{subsec:results:generalset} compete with each other in the situation of perfect $1$-bit measurements, i.e., $\fobs = \sign$? Moreover, concerning the conclusion of Subsection~\ref{subsec:results:generalset:scalable}: Is upscaling the signal set really necessary in the non-unit-ball case?
\end{highlight}
For this purpose, we intend to compare the reconstruction performance of the following three estimators (as a function of the number of available samples $m$):
\begin{properties}[3em]{E}
\item\label{item:numerics:estimUBL1}
	Unit-ball + $\l{1}$: Solve \eqref{eq:estimator} from Definition~\ref{def:results:unitball:estimator} with $\sset = (\lnorm{\grtr}[1] \ball[1][n])\intersec\ball[2][n]$.
\item\label{item:numerics:estimScaledL1}
	Scaled $\l{1}$: Solve \eqref{eq:estimatortuned} from Definition~\ref{def:estimatortuned} with $\sset = \lnorm{\grtr}[1] \ball[1][n]$ and $\scalfac \coloneqq \sqrt{m} / 10$. 
\item\label{item:numerics:estimNonScaledL1}
	Non-scaled $\l{1}$: Solve \eqref{eq:estimatortuned} from Definition~\ref{def:estimatortuned} with $\sset = \lnorm{\grtr}[1] \ball[1][n]$ and $\scalfac \coloneqq 1$.
\end{properties}
The choice of the scaling parameter $\scalfac$ in \ref{item:numerics:estimScaledL1} is inspired by the condition \eqref{eq:results:generalset:local:meas} of Theorem~\ref{thm:results:generalset:local}. Since the sparsity of the ground truth signal is kept fixed ($s = 10$), the scaling parameter $\scalfac$ only needs to be adjusted with respect to $m$ at this point. The multiplicative factor of $1/10$, on the other hand, is heuristically selected and could be further optimized. Note that in Experiment~\hyperref[para:appl:numerics:exp3]{3}, we will investigate this relationship in a more systematic way.
Apart from that, we point out that the $\l{1}$-balls used in \ref{item:numerics:estimUBL1}--\ref{item:numerics:estimNonScaledL1} are always scaled such that $\grtr$ lies on the boundary of $\sset$. This ``perfect'' tuning allows for a fairer comparison of all methods and avoids the technical issue of stability, as discussed above in the course of Proposition~\ref{prop:effdimeffsparse}.

\begin{figure}[!t]
	\centering
	\begin{minipage}{.4\textwidth}
		\begin{tabularx}{\textwidth}{Z{.3\textwidth}|Z{.7\textwidth}}
		Parameter & Value \\ \hline\hline
		Dimension & $n = 512$ \\ \hline 
		Sparsity & $s = 10$ \\ \hline 
		Model & $y = \sign(\sp{\a}{\grtr})$ \\ \hline 
		Measurements & {$m = 100, \dots, 2000$ \newline\vspace{.25\baselineskip} {\smaller (in $30$ equidistant steps)}} \\ \hline 
		Iterations & $50$ 
		\end{tabularx}
	\end{minipage}
	\begin{minipage}{.55\textwidth}
		\includegraphics[width=\textwidth]{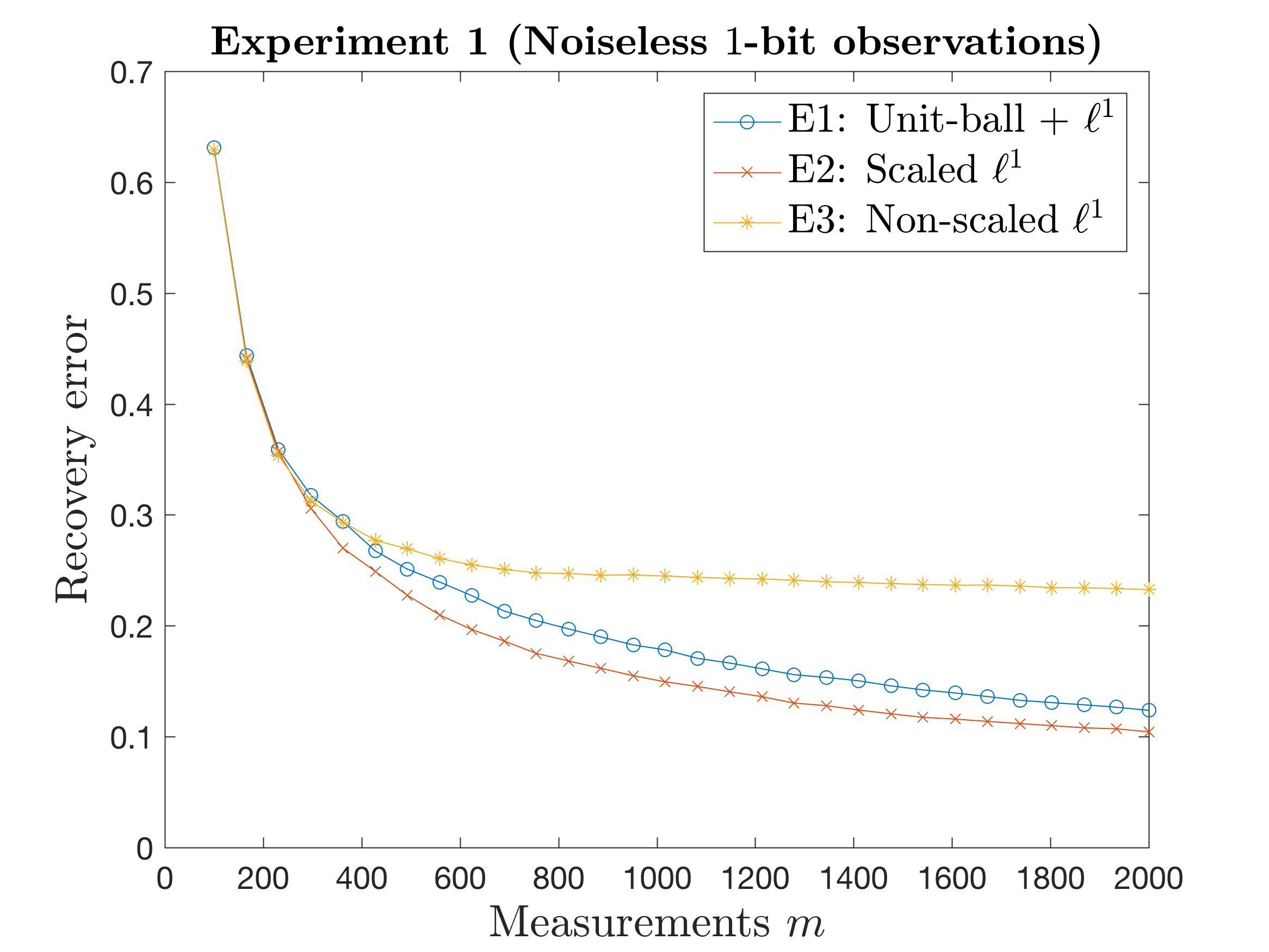}
	\end{minipage}
	\caption{Specifications and results of Experiment~\hyperref[para:appl:numerics:exp1]{1}.}
	\label{fig:appl:numerics:exp1}
\end{figure}

The parametric configuration and numerical results of Experiment~\hyperref[para:appl:numerics:exp1]{1} are reported in Figure~\ref{fig:appl:numerics:exp1}.
Here, we simply plot the recovery error of the estimators \ref{item:numerics:estimUBL1}--\ref{item:numerics:estimNonScaledL1} as a function of the sample size $m$.
A qualitative comparison of the behavior of the error curves allows us to draw the following conclusions:
\begin{listing}
\item
	The non-scalable estimator \ref{item:numerics:estimNonScaledL1} is clearly the worst, indicating that an accurate reconstruction of $\grtr$ is not possible in that way. This particularly confirms our initial motivation from Subsection~\ref{subsec:results:generalset:scalable}, namely that upscaling the signal set is crucial to obtain a consistent estimator.
\item
	The method of \ref{item:numerics:estimScaledL1} slightly outperforms \ref{item:numerics:estimUBL1}. While both approaches lead to a consistent estimator in theory, their ``success'' in fact relies on very different properties of the hinge loss.
	Informally speaking, the unit-ball estimator \ref{item:numerics:estimUBL1} benefits from a well-behaved multiplier term $\multiplterm{\cdot}{\grtrmu}$ in the noiseless case (cf. Lemma~\ref{lem:expnoisecor}), whereas the quadratic term $\quadrterm{\cdot}{\grtrmu}$ plays a fundamental role for the scalable estimator \ref{item:numerics:estimScaledL1}, to achieve a certain type of restricted strong convexity (cf. Subsection~\ref{subsec:proofs:generalset}).
	Consequently, both strategies work for quite different reasons, although the same loss function is used. This justifies once again our case distinction in Section~\ref{sec:results} between general convex signal sets and those contained in the unit ball.
\item
	All error curves behave very similarly for smaller values of $m$. This parameter region is however not of primary interest to us: The theoretical results of Section~\ref{sec:results} state that a reliable estimate of $\grtr$ requires considerable oversampling. Hence, the actual focus here is on the error decay rate when $m$ gets sufficiently large.
\end{listing}

\paragraph{Experiment~2 (Robustness against noise).} \label{para:appl:numerics:exp2}

Our second experiment takes again the estimators \ref{item:numerics:estimUBL1}--\ref{item:numerics:estimNonScaledL1} as a basis, but this time, we wish to address the following points:
\begin{highlight}
	How well does the unit-ball estimator \ref{item:numerics:estimUBL1} perform with noisy $1$-bit observations in practice?
	Since \ref{item:numerics:estimScaledL1} was only analyzed in the noiseless case in Subsection~\ref{subsec:results:generalset}: Is this estimator also robust against noise?
\end{highlight}
The parameter configuration and methodology of Experiment~\hyperref[para:appl:numerics:exp2]{2} is almost the same as the one of Experiment~\hyperref[para:appl:numerics:exp1]{1} in Figure~\ref{fig:appl:numerics:exp1}.
The only important difference is that we now examine noisy output functions: Experiment~\hyperref[para:appl:numerics:exp2]{2}a in Figure~\ref{fig:appl:numerics:exp2a} considers the \emph{random bit flip model} from Example~\ref{ex:appl:noisepatterns}\ref{ex:bitflip} with $p = 0.9$, i.e., there is a chance of $10$\% that the noiseless output $\sign(\sp{\a}{\grtr})$ is flipped.
Analogously, Experiment~\hyperref[para:appl:numerics:exp2]{2}b in Figure~\ref{fig:appl:numerics:exp2b} is based on the \emph{additive Gaussian noise model} studied in Example~\ref{ex:appl:noisepatterns}\ref{ex:addGauss} with $\sigma = 0.5$.

\begin{figure}[!t]
	\centering
	\begin{minipage}{.4\textwidth}
		\begin{tabularx}{\textwidth}{Z{.3\textwidth}|Z{.7\textwidth}}
		Parameter & Value \\ \hline\hline
		Dimension & $n = 512$ \\ \hline 
		Sparsity & $s = 10$ \\ \hline 
		Model & $y = \eps \cdot \sign(\sp{\a}{\grtr})$ \newline\vspace{.25\baselineskip} $p = \prob[\eps = 1] = 0.9$ \\ \hline 
		Measurements & {$m = 100, \dots, 2000$ \newline\vspace{.25\baselineskip} {\smaller (in $30$ equidistant steps)}} \\ \hline 
		Iterations & $50$ 
		\end{tabularx}
	\end{minipage}
	\begin{minipage}{.55\textwidth}
		\includegraphics[width=\textwidth]{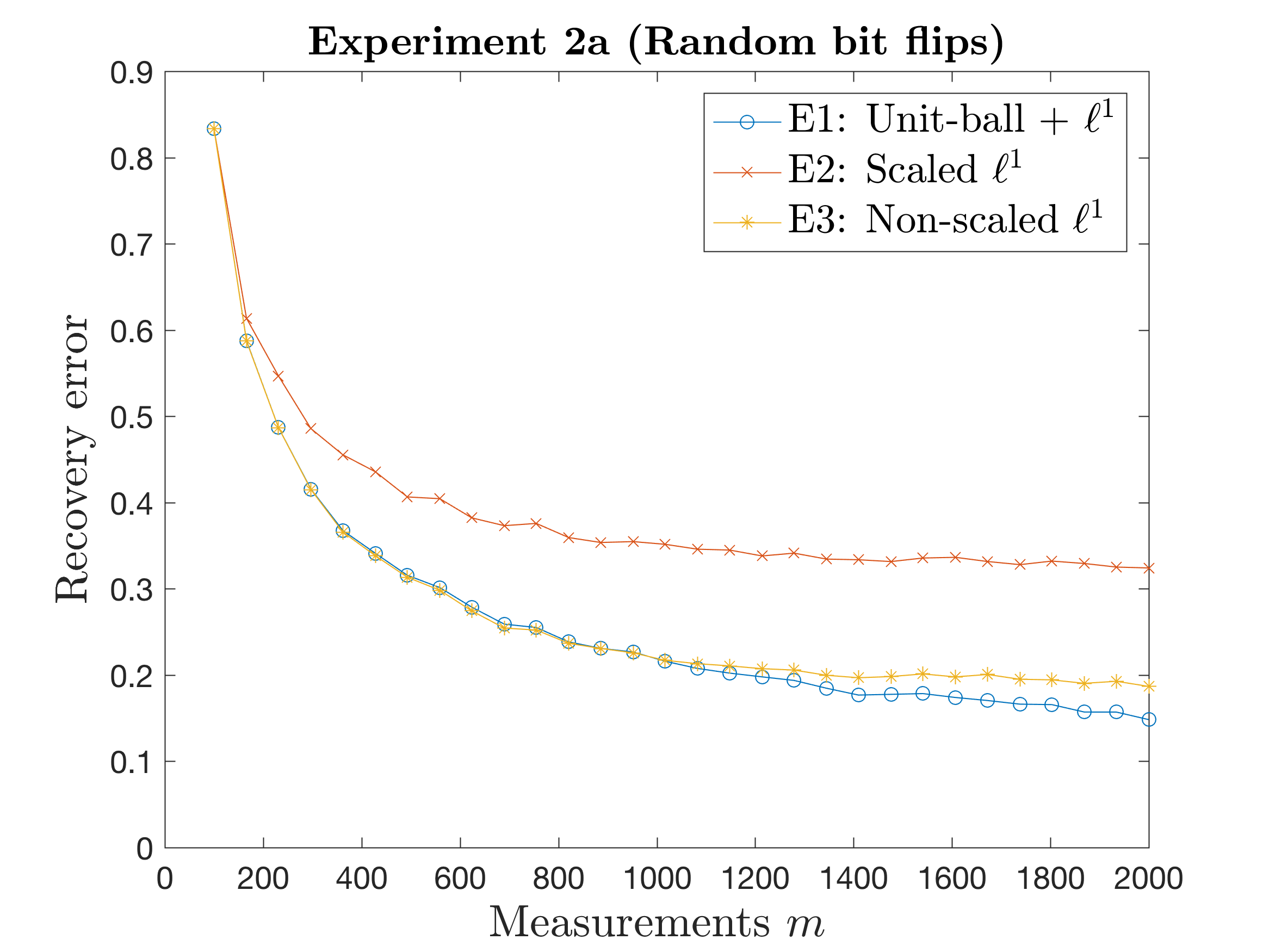}
	\end{minipage}
	\caption{Specifications and results of Experiment~\hyperref[para:appl:numerics:exp2]{2}a (Random bit flips, Example~\ref{ex:appl:noisepatterns}\ref{ex:bitflip} with $p = 0.9$).}
	\label{fig:appl:numerics:exp2a}
\end{figure}

\begin{figure}[!t]
	\centering
	\begin{minipage}{.4\textwidth}
		\begin{tabularx}{\textwidth}{Z{.3\textwidth}|Z{.7\textwidth}}
		Parameter & Value \\ \hline\hline
		Dimension & $n = 512$ \\ \hline 
		Sparsity & $s = 10$ \\ \hline 
		Model & $y = \sign(\sp{\a}{\grtr} + \tau)$ \newline\vspace{.25\baselineskip} $\tau\distributed \Normdistr{0}{(0.5)^2}$ \\ \hline 
		Measurements & {$m = 100, \dots, 2000$ \newline\vspace{.25\baselineskip} {\smaller (in $30$ equidistant steps)}} \\ \hline 
		Iterations & $50$ 
		\end{tabularx}
	\end{minipage}
	\begin{minipage}{.55\textwidth}
		\includegraphics[width=\textwidth]{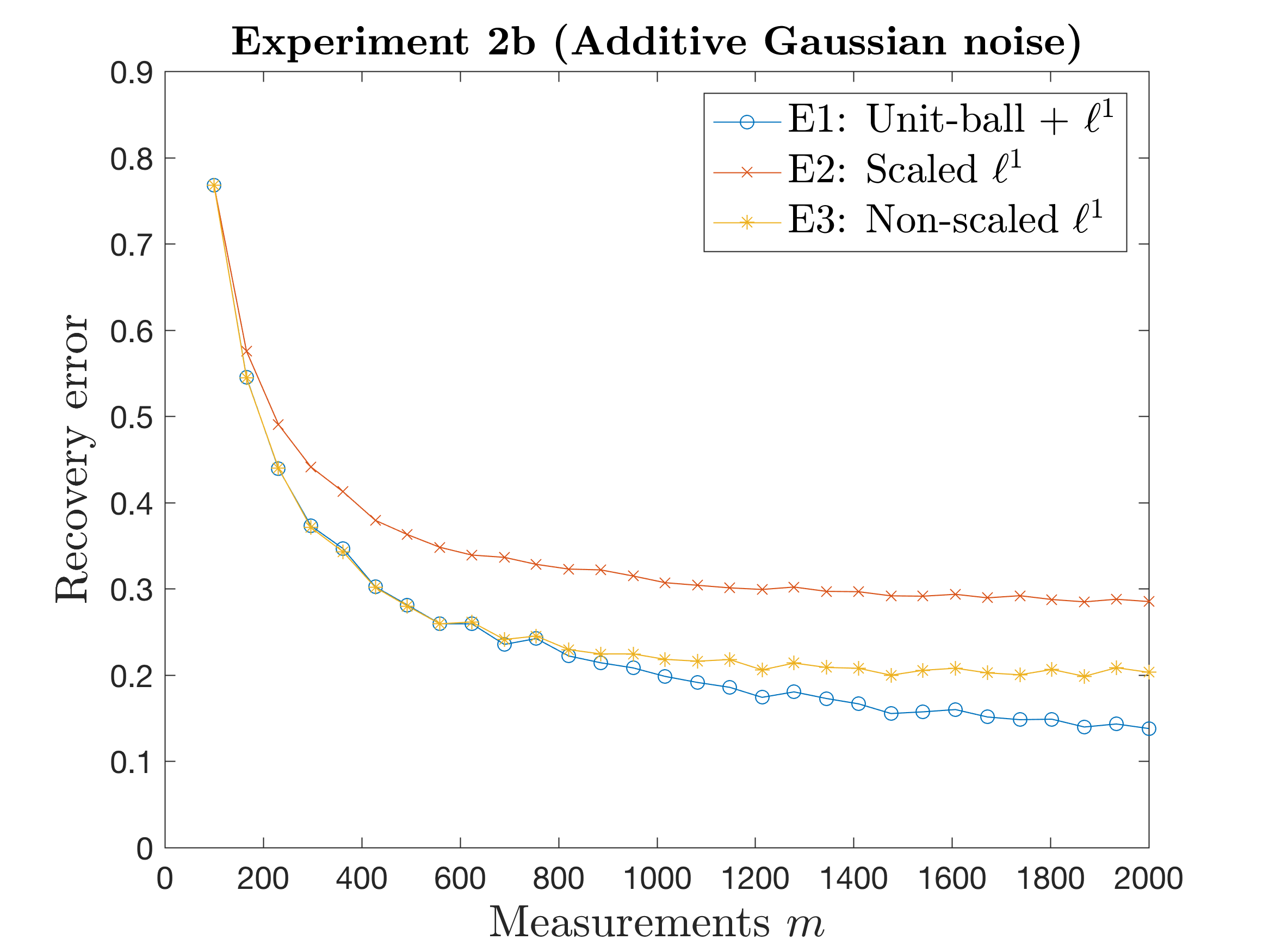}
	\end{minipage}
	\caption{Specifications and results of Experiment~\hyperref[para:appl:numerics:exp2]{2}b (Additive Gaussian noise, Example~\ref{ex:appl:noisepatterns}\ref{ex:addGauss} with $\sigma = 0.5$).}
	\label{fig:appl:numerics:exp2b}
\end{figure}

Since Figure~\ref{fig:appl:numerics:exp2a} and Figure~\ref{fig:appl:numerics:exp2b} exhibit error curves that are qualitatively very similar, let us discuss these numerical results jointly:
\begin{listing}
\item
	As predicted by the theoretical result of Theorem~\ref{thm:euclideanball}, the unit-ball estimator \ref{item:numerics:estimUBL1} is quite robust against noise, both before and after quantization.
\item
	The scalable estimator \ref{item:numerics:estimScaledL1} performs rather poorly. Indeed, our statistical analysis of this method is actually tailored to perfect $1$-bit observations and we have pointed out at the beginning of Subsection~\ref{subsec:results:generalset} that an extension to noisy observations is far from being obvious. This experiment provides numerical evidence of this claim.
\item
	At first sight, it is somewhat astonishing that the non-scaled estimator \ref{item:numerics:estimNonScaledL1} works much better than in the noiseless case considered in Figure~\ref{fig:appl:numerics:exp1} and that its recovery performance is comparable to \ref{item:numerics:estimUBL1}.
	In fact, both error curves would approach even closer as the noise level gets higher, i.e., $p$ smaller or $\sigma$ larger, respectively. An (informal) theoretical explanation of this phenomenon is that the expected risk minimizer would take the form $\scalfac\grtr$ with $\scalfac \ll 1$ in these cases, regardless of whether $\sset$ belongs to the unit ball or not.
	Thus, one would still obtain a consistent estimator, while the structure of the signal set $\sset$ is virtually not exploited anymore.
\end{listing}

\paragraph{Experiment~3 (The impact of sparsity).} \label{para:appl:numerics:exp3}

Our third experiment is inspired by the localized analysis of the scalable hinge loss estimator \eqref{eq:estimatortuned} from Subsection~\ref{subsec:results:generalset}. More specifically, we would like to focus on the following aspect of Theorem~\ref{thm:results:generalset:local}:
\begin{highlight}
	Is the presence of the additional geometric factor $t_0$ real or is it rather an artifact of the proof of Theorem~\ref{thm:results:generalset:local}?
	Can we expect that recovery of an $s$-sparse signal $\grtr \in \S^{n-1}$ via \eqref{eq:estimatortuned} already ``succeeds'' with $m \asymp \scalfac^{2} \cdot s\log(2n / s)$?
\end{highlight}
As pointed out in the discussion subsequent to Theorem~\ref{thm:results:generalset:local} as well as in Remark~\ref{rmk:proofs:generalset:complexity}, a general answer to these questions is unfortunately out of reach for us.
The following experiment is based on a simple numerical test that could at least allow for further insights into this problem.
For this purpose, let us pretend that the optimal sampling rate would be $m \asymp \scalfac^{2} \cdot s\log(2n / s)$.
In other words, we combine Theorem~\ref{thm:results:generalset:local} with the (sharp) upper bound on the conic effective dimension from \eqref{eq:coniceffdimssparse} and imagine that the factor $t_0$ can be omitted in \eqref{eq:results:generalset:local:meas}.
One possible indicator for the validity of such a hypothesis is now as follows: Fix the scaling parameter $\scalfac$ and verify if the recovery accuracy achieved by \eqref{eq:estimatortuned} remains (almost) constant as a function of the sparsity~$s$.\footnote{Note that in contrast to the case of linear observations (cf. \cite{amelunxen2014edge}), one cannot expect that the recovery error undergoes a sharp phase transition if $s$ or $m$ are increased.}
% Supposed that the factor $t_0$ could be omitted in \eqref{eq:results:generalset:local:meas}, Theorem~\ref{thm:results:generalset:local} would indeed such a condition on the sample size when combined with the (sharply) upper bound \eqref{eq:coniceffdimssparse} on the conic effective dimension.
This consideration suggests to evaluate the recovery performance of the following estimator:
\begin{properties}[3em]{E} \setcounter{enumi}{3}
\item\label{item:numerics:estimScaledL1-sparsity}
	For $\scalfac > 0$ and $s \in [n]$, set $m \coloneqq \scalfac^{2} \cdot s\log(\tfrac{2n}{s})$ and solve \eqref{eq:estimatortuned} with $\sset = \lnorm{\grtr}[1] \ball[1][n]$.
\end{properties}
One clearly has to exercise some caution here, since the choice of $m$ in \ref{item:numerics:estimScaledL1-sparsity} disregards any numerical constants and the probability parameter $\eta$ involved in Theorem~\ref{thm:results:generalset:local}.
However, an empirical validation has shown that these parameters do only marginally affect the (qualitative) outcome of our numerical simulation.

\begin{figure}[!t]
	\centering
	\begin{minipage}{.42\textwidth}
		\begin{tabularx}{\textwidth}{Z{.3\textwidth}|Z{.7\textwidth}}
		Parameter & Value \\ \hline\hline
		Dimension & $n = 128$ \\ \hline 
		Sparsity & $s = 1,2,3, \dots, 128$ \\ \hline 
		Model & $y = \sign(\sp{\a}{\grtr})$ \\ \hline 
		Measurements & {$m = \scalfac^{2} \cdot s \log(\tfrac{2n}{s})$ \newline\vspace{.25\baselineskip} with $\scalfac^{-1} = 0.07, \dots, 0.3$ \newline\vspace{.25\baselineskip} \smaller{(in $15$ equidistant steps)}} \\ \hline 
		Iterations & $20$ 
		\end{tabularx}
	\end{minipage}
	\begin{minipage}{.55\textwidth}
		\includegraphics[width=\textwidth]{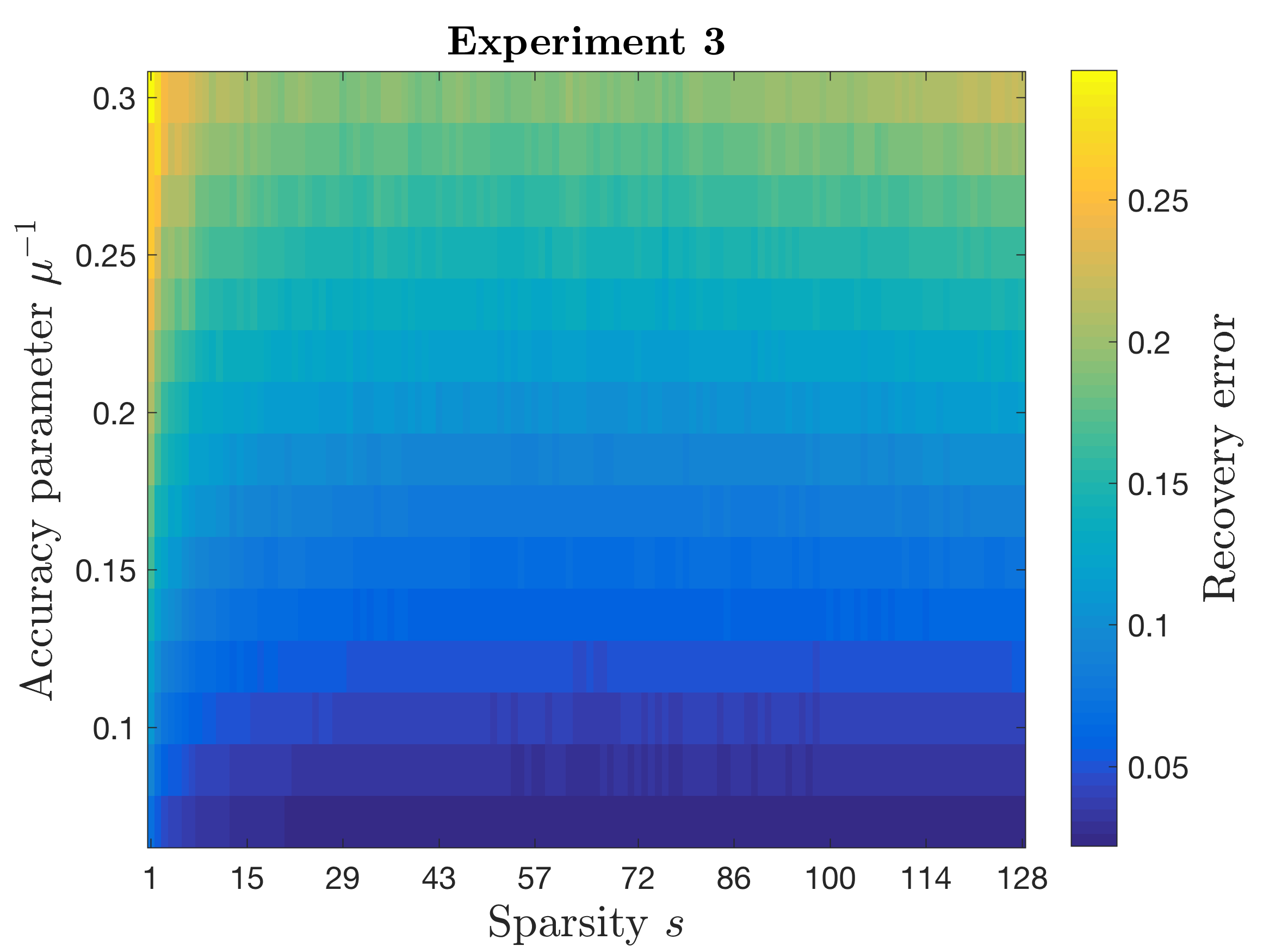}
	\end{minipage}
	\caption{Specifications and results of Experiment~\hyperref[para:appl:numerics:exp3]{3}. The recovery error refers to the scaled hinge loss estimator \ref{item:numerics:estimScaledL1-sparsity}.}
	\label{fig:appl:numerics:exp3}
\end{figure}

The parameter configuration and results of Experiment~\hyperref[para:appl:numerics:exp3]{3} are reported in Figure~\ref{fig:appl:numerics:exp3}, where the recovery error referred to \ref{item:numerics:estimScaledL1-sparsity} is plotted in color as a function of $s$ and $\scalfac$.
There are two interesting observations worth pointing out:
\begin{listing}
\item
	For any fixed $\scalfac$, the recovery error is almost constant in $s$, except for very small values of $s$ (corresponding to the vertical cross sections at the left end of the color plot in Figure~\ref{fig:appl:numerics:exp3}).
	In the latter case, the estimation behavior of \ref{item:numerics:estimScaledL1-sparsity} changes significantly, which provides evidence that a sampling rate of $m \asymp \scalfac^{2} \cdot s\log(2n / s)$ might be too optimistic in the very-sparse regime.
	One possible theoretical explanation of this phenomenon could be that the boundary of $\sset = \lnorm{\grtr}[1] \ball[1][n]$ is more ``tangent'' to $\spann\{\grtr\}$ for extremely small values of $s$. According to the discussion of Theorem~\ref{thm:results:generalset:local}, this implies a large value of $t_0$ and therefore also a higher sampling rate in \eqref{eq:results:generalset:local:meas}.
\item
	The true recovery error achieved by \ref{item:numerics:estimScaledL1-sparsity} in Figure~\ref{fig:appl:numerics:exp3} is not worse than the theoretical accuracy of $\scalfac^{-1}$ promoted by Theorem~\ref{thm:results:generalset:local}. While Theorem~\ref{thm:results:generalset:local} certainly involves several unspecified numerical constants, this empirical observation at least indicates that these constants are not too badly behaved in practice.
\end{listing}

% Didaktik:
% \begin{draft}
% \item
% 	In our theoretical discussion we observed the following ... (things we can't prove)
% 	
% 	Wir fokussieren uns auf Dinge, die wir NICHT formal ausgearbeitet haben
% 	
% 	Catchy phrase or question:
% 	
% 	1) In the noiseless case: which method works best?
% 	
% 	Robust $1$-bit vs. RSC
% 	
% 	2) Verify recovery under noise and in particular, does the scaling approach work here?
% 	
% 	3) Is the impact of the geometric parameter $t_0$ real?
% \item
% 	Numerical setup to provide evidence of this claim
% \item
% 	Results, figure
% \item
% 	Conclusion, explanation
% \end{draft}
% 
% Ideas:
% \begin{draft}
% \item
% 	Table with all specifications (rows -> parameters, columns -> different experiments)
% \item
% 	Highlight (as environment?) the key hypotheses that we would like to verify empirically.
% \item
% 	Three competing versions: HL with unit ball, HL with general set, rescaled HL with general set.
% \item
% 	Exp 1: Noiseless recovery
% 	
% 	Exp 2: random bitflips
% 	
% 	Exp 3: additive Gaussian noise
% \item
% 	More aspects:
% 	
% 	Visualizing the expected/empirical loss by a projection onto the plane spanned by $\grtr$ and $\solu$?
% 
% 	Comparision to other loss functions? Square loss? Logistic loss?
% 	
% 	Considering different growth rate for $\scalfac$ depending on $m$? If if $\scalfac$ grows too quickly or too slowly?
% 	
% 	Different signal sets and stuctures?
% \item
% 	Code:
% 	
% 	Average over many iterations.
% 	
% 	For each step: One vector containing all parameters ($m$, $\scalfac$, noise parameters, $s$?)
% \end{draft}

\section{Related Literature}
\label{sec:literature}

In this part, we give a brief overview of some recent approaches from the literature that are closely related to the problem setup considered in this work.
While our focus is clearly on a challenge in (non-linear) compressed sensing, as discussed in Subsection~\ref{subsec:literature:1bit}, we will also point out connections to statistical learning in Subsection~\ref{subsec:literature:statlearn}, which is particularly useful to understand the proof strategy of our main results.

\subsection{Signal Processing and Compressed Sensing}
\label{subsec:literature:1bit}

As already mentioned before, our measurement model in Assumption~\ref{model:measurements} fits well into the framework of $1$-bit compressed sensing, or more generally, \emph{non-linear} compressed sensing.
In fact, there is an increasing interest in this subject in the recent literature; we refer the interested reader to \cite{boufounos2015quantization,dirksen2017partialcirc} and the references therein for an overview.
Perhaps the most related branch of research is by Plan, Vershynin, and collaborators \cite{plan2013robust,plan2013onebit,plan2014highdim,plan2014tessellation,plan2015lasso,baraniuk2017onebitdict}, whose model assumptions are very similar to ours.
Indeed, \cite{plan2015lasso} deals with the estimation of a structured index vector $\grtr \in \sset \subset \R^n$ from \emph{single-index observations}
\begin{equation}\label{generalquantization}
	y_i=f_i(\sp{\a_i}{\grtr}), \quad  i = 1, \dots, m,
\end{equation}
where the $f_i$ are independent copies of an unknown function $f \colon \R \to \R$ that could be non-linear and random. In particular, if $f$ is binary-valued, we precisely end up with the sampling rule of \eqref{eq:measurements}.
Under the hypothesis of i.i.d.\ Gaussian measurement vectors, \cite{plan2015lasso} investigates the performance of the \emph{generalized Lasso}
\begin{equation}\label{eq:squarelossestimator}\tag{$P_{\losssq,\sset}$}
	\min_{\x \in \R^n} \tfrac{1}{m} \sum_{i = 1}^m (\sp{\a_i}{\x} - y_i)^2 \quad \text{subject to \quad $\x \in \sset$,}
\end{equation}
which simply corresponds to \eqref{eq:generalestimator} using the square loss $\loss(v, v') \coloneqq \losssq(v-v') \coloneqq (v-v')^2$.
While the Lasso was originally designed to solve \emph{linear} regression problems, the recovery results of \cite{plan2015lasso} reveal that \eqref{eq:squarelossestimator} is surprisingly robust against non-linear distortions, even if the output function $f$ is completely unknown.
More technically, it turned out that, for an appropriate scaling parameter $\scalfac \in \R$, the model mismatch $y_i - \sp{\a_i}{\grtrmu}$ is uncorrelated to the measurement vector $\a_i$. With other words, the Lasso with non-linear inputs essentially works as well as if the inputs would follow a noisy linear model.

Despite the universal applicability of the Lasso, practitioners however often choose different types of loss functions for \eqref{eq:generalestimator}, which are specifically tailored to their model hypotheses, e.g., if the output variables $y_i$ are discrete.
This issue particularly motivated the first author in \cite{genzel2016estimation} to extend the framework of Plan and Vershynin to other choices of $\loss$. A key finding of \cite{genzel2016estimation} is that, in many situations of interest, \emph{restricted strong convexity} (\emph{RSC}) is a crucial property of an empirical risk function to ensure successful signal recovery via \eqref{eq:generalestimator}.
The criterion of RSC is indeed satisfied for a large class of loss functions, for instance, all those $\loss \colon \R \times \R \to \R$ which are twice differentiable in the first variable and locally strongly convex in a neighborhood of the origin (cf. \cite[Thm.~2.5]{genzel2016estimation}).
While this includes popular choices of $\loss$, such as the logistic loss, the hinge loss $\loss(v,v') = \losshng(v \cdot v') = \max\{0, 1 - v \cdot v'\}$ unfortunately does not meet these sufficient conditions at all.
Therefore, it is a somewhat surprising observation of this work that the hinge loss function still satisfies RSC (see Proposition~\ref{prop:quadprocesslowerbound} and Remark~\ref{rmk:proofs:unitball:rsc}) and similar recovery statements as in \cite{plan2015lasso,genzel2016estimation} remain valid. 
Apart from that, we wish to emphasize that the proofs of our results improve some of the techniques used in \cite{plan2015lasso,genzel2016estimation}.
For example, the linear multiplier term of the hinge loss is now handled by a more sophisticated concentration inequality due to Mendelson (see Theorem~\ref{thm:multiplierprocess}), which eventually leads to an enhanced probability of success in our reconstruction guarantees.

At this point, it is again worth mentioning the recent work of Kolleck and Vyb\'{\i}ral \cite{kolleck2015l1svm} whose analysis of hinge loss minimization is directly related to ours. We have already presented several details of their approach in Subsection~\ref{subsec:intro:overview} and Remark~\ref{rmk:results:generalset}\ref{rmk:results:generalset:kvl1svm}, as well as a link to support vector machines, which served as their original motivation (see Subsection~\ref{subsec:results:generalset:geometry}).
Let us briefly recap to what extent our results improve the recovery guarantees of \cite{kolleck2015l1svm}:
Firstly, we go beyond $\l{1}$-based signal sets and allow for arbitrary convex bodies as structural constraints. Secondly, the error bounds in \cite{kolleck2015l1svm} do only achieve an oversampling rate of $\asympfaster{m^{-1/4}}$, while Theorem~\ref{thm:euclideanball} and Theorem~\ref{thm:results:generalset:local} exhibit the (optimal) rate of $\asympfaster{m^{-1/2}}$. And finally, the noise patterns considered in \cite{kolleck2015l1svm} are far more restrictive than what is permitted by Assumption~\ref{model:measurements} and Assumption~\ref{model:correlation}.
The latter issue is probably an artifact of the proof techniques adapted from \cite{plan2013robust}, where only a constrained \emph{linear} estimator is investigated.
This is substantially different from our statistical framework, in which the ``quadratic'' part of the hinge loss is explicitly taken into account (see Proposition~\ref{prop:quadprocesslowerbound}).

\subsection{Statistical Learning Theory}
\label{subsec:literature:statlearn}

Since our proof strategy heavily relies on tools from statistical learning theory, let us also briefly discuss how our approach relates to this field of research.
In that context, the model of Assumption~\ref{model:measurements} is regarded as a \emph{sampling procedure} according to which the sample set $\{(\a_i, y_i)\}_{i \in [m]}$ is independently drawn from a random pair $(\a,y)$ that obeys a (partially) unknown probability distribution on $\R^n \times \{-1,+1\}$.
The measurement vectors $\a_i$ do usually play the role of data (or feature) vectors, whereas $y_i$ denotes a class label that depends on these features in some way.

One of the key goals in statistical learning is then to specify a (deterministic) \emph{prediction function} $F \colon \R^n \to \R$ that minimizes the risk of wrongly predicting the true label $y$ by $F(\a)$. Since only a finite collection of samples is given instead of $(\a,y)$, this is in fact a challenging problem and one typically restricts the set of predictors to a convex \emph{hypothesis set} $\hypospace$ (a subset of measurable functions), encoding one's beliefs in the underlying observation model.
Due to the specific form of our output rule, i.e., $y = f(\sp{\a}{\grtr})$, it is quite natural to consider a linear hypothesis set
\begin{equation}
	\hypospace = \hypospace_\sset = \{ \vec{v} \mapsto \sp{\vec{v}}{\x} \suchthat \x \in \sset \},
\end{equation}
where $\sset \subset \R^n$ is convex. Identifying $\hypospace_\sset$ with $\sset$, this precisely reflects Assumption~\ref{model:signal}.

The purpose of our main results in Section~\ref{sec:results} is to study the capability of the associated empirical risk minimizer $\solu$ of \eqref{eq:estimator} to approximate the ground truth vector $\grtr$. In the literature, such types of statements are often referred to as \emph{estimation}, but note that, somewhat unusually, the expected risk minimizer does not necessarily belong to $\spann\{ \grtr \}$, see Subsection~\ref{subsec:results:generalset:scalable}.
This stands in contrast to the above problem of \emph{prediction} in which one is rather interested in controlling the so-called \emph{sample error}
\begin{equation}\label{eq:literature:statlearn:sampleerror}
	\mean[\losshng(y \sp{\a}{\solu})] - \min_{\x \in \sset} \mean[\losshng(y \sp{\a}{\x})].
\end{equation}
Indeed, a small sample error does not automatically imply that the normalized minimizer $\solu / \lnorm{\solu}$ is also close to $\grtr$.
For more details on estimation and prediction, we refer to \cite{mendelson2014learninggeneral} and the references therein; for a comprehensive overview of statistical learning theory, one may also consider \cite{vapnik1998learning,cucker2007learning,hastie2009elements,shalev2014understanding}.

Of particular relevance to our approach are the works of Mendelson \cite{mendelson2014learning,mendelson2014learninggeneral} on learning without concentration.
His estimation results for empirical risk minimization establish very general principles that relate geometric properties of the hypothesis set to the sampling rate.
While these statements bear a certain resemblance to ours, the actual goals of Mendelson are somewhat different: \cite{mendelson2014learning,mendelson2014learninggeneral} consider a very abstract model setting, where the output variable $y$ is left unspecified and the hypothesis set is not just restricted to linear functions.
The key concern of \emph{Mendelson's small ball method} developed in \cite{mendelson2014learning,mendelson2014learninggeneral} is to allow for heavy tailed feature variables, for which concentration inequalities fail to hold true. Following this strategy, it is still possible to prove powerful estimation guarantees under very mild assumptions on the underlying probability measure.

In contrast, we investigate a specific $1$-bit output rule with Gaussian data.
This enables us to prove much more explicit error bounds and to precisely quantify the recovery behavior of the hinge loss estimator.
But let us emphasize that the results of this work are not implicitly contained in the framework of \cite{mendelson2014learning,mendelson2014learninggeneral} because the hinge loss does by far not satisfy the required assumptions, especially local strong convexity (cf. Remark~\ref{rmk:proofs:unitball:rsc}). 
The very recent work of \cite{pierre2017estimation} follows an alternative path to tackle this issue: Based on regularized empirical risk minimization,\footnote{Compared to the program \eqref{eq:generalestimator}, a regularized estimator takes the form ${\displaystyle\min_{\x \in \sset}} \tfrac{1}{m} \sum_{i = 1}^m \loss(\sp{\a_i}{\x},y_i) + \lambda \norm{\x}$.} the authors prove estimation bounds for Lipschitz loss functions, which in principle also includes the hinge loss.
Their theoretical findings again hold true in a fairly general learning setting, but the actual statements rely on an abstract \emph{Bernstein condition} that the loss function needs to fulfill.
While the lower bound on the excess risk in \eqref{eq:excessriskpositiveunitball} actually resembles such a \emph{Bernstein condition}, it is still unclear whether the framework of \cite{pierre2017estimation} would apply to our setup. Indeed, the proof of \eqref{eq:excessriskpositiveunitball} turns out to be highly non-trivial, so that verifying the general assumptions of \cite{pierre2017estimation} might take a lot of effort in a specific model situation.
This observation manifests once more that, despite obvious overlaps, the fields of statistical learning and signal processing address different types of problems.

\section{Conclusion and Outlook}
\label{sec:conclusion}

Our main results show that $1$-bit compressed sensing via hinge loss minimization is indeed feasible under fairly general model conditions.
This particularly includes a wide class of noisy bit flip patterns (see Theorem~\ref{thm:euclideanball}) as well as arbitrary convex constraint sets that encode structural hypotheses, such as sparsity (see Theorem~\ref{thm:results:generalset:global} and Theorem~\ref{thm:results:generalset:local}).
While comparable recovery guarantees were recently established for different loss functions \cite{genzel2016estimation,mendelson2014learninggeneral}, it is somewhat astonishing that these assertions essentially remain valid for the hinge loss, since it is neither differentiable nor locally strongly convex.
The proofs of our results however strongly rely on the specific form of $1$-bit observations and require several sophisticated adaptions of previous arguments.
% The respective proofs however reveal that our achievement requires several sophisticated adaptions of the argumentation and also strongly relies on the considered observation model.
For this reason, we do not expect that empirical hinge loss minimization is as universally applicable as the Lasso (cf. \cite{plan2015lasso,genzel2017msense,mendelson2014learning}).
On the other hand, the special ability of the hinge loss to deal with binary outputs also implies computational advantages. For example, the estimator \eqref{eq:estimator} can be recasted as a linear program in the case of $\l{1}$-constraints (cf. \cite[Sec.~VI.A]{kolleck2015l1svm}), which in turn is appealing for practical purposes.

Let us conclude with some potential extensions and open issues that might be investigated in future works:
\begin{listing}
\item
	\emph{Relaxing the model assumptions.}
	Although the technical details are not elaborated here, we suppose that --- by adapting known proof strategies --- the following points are relatively straightforward generalizations of our model setup: adversarial bit flips (see \cite{genzel2016estimation}), unnormalized signal vectors, and anisotropic sub-Gaussian measurements (see \cite{genzel2017msense,mendelson2014learning,genzel2016fs}).
	
	A probably more challenging problem is to unify the respective hypotheses of our main results from Section~\ref{sec:results}, ultimately leading to a recovery guarantee that allows for general convex signal sets and noisy observations at the same time.
	In fact, there are several significant differences in the argumentation of Subsection~\ref{sec:proof:unitball} and Subsection~\ref{subsec:proofs:generalset}.
	This particularly concerns the slightly different role of signal complexity in both parts, which is not even fully understood in the situation of general convex constraints (see Remark~\ref{rmk:proofs:generalset:complexity}).
	Apart from that, the results of Experiment~\hyperref[para:appl:numerics:exp1]{1} and Experiment~\hyperref[para:appl:numerics:exp2]{2} in Subsection~\ref{subsec:appl:numerics} provide evidence that our two reconstruction methods do indeed perform quite differently in practice.
\item
	\emph{Different loss functions.}
	The hinge loss is actually a prototypical example of a piecewise linear loss function.
	Since our analysis shows that the associated empirical risk function satisfies restricted strong convexity under certain conditions (cf. Remark~\ref{rmk:proofs:unitball:rsc}), one could expect that this important property holds true for a larger class of convex piecewise linear losses.
	However, the proofs in Section~\ref{sec:proofs} do crucially rely on the specific form of the quadratic term $\quadrterm{\x}{\grtrmu}$ that is associated with the hinge loss (cf. \eqref{eq:quadrterm}). While this term would take a similar algebraic form for general piecewise linear loss functions (involving step functions for each component), an adaption of the individual proof steps would certainly require a lot of care and technical effort.
	Nevertheless, we expect that our techniques can at least serve as a template in this context.
% 	One motivation could be to establish ``easy to check'' criteria which guarantee that an empirical risk function satisfies restricted strong convexity by approximating the associated (smooth) loss function by piecewise-linear ones.
\item
	\emph{Optimal choice of the loss.}
	An issue that is closely related to the previous one is the following: Supposed we have (partial) knowledge of the true observation model, what is a good or even optimal choice of loss in empirical risk minimization \eqref{eq:generalestimator}? More specifically, when is a loss function, e.g., the hinge loss, superior to others? What practical rules-of-thumb can be derived from this study?
	These questions are of course quite vaguely formulated. One of the major difficulties is to come up with a quantitative measure to assess the recovery performance of a loss. Such a benchmark would also involve sharp lower bounds on the recovery error, which we consider as a very challenging problem on its own.
\item
	\emph{Regularized estimation.}
	From an algorithmic perspective, it can be very useful to solve a \emph{regularized} optimization problem of the form
	\begin{equation}
		\min_{\x \in \R^n} \tfrac{1}{m} \sum_{i = 1}^m \loss(\sp{\a_i}{\x},y_i) + \lambda \norm{\x}
	\end{equation}
	instead of \eqref{eq:generalestimator}. Here, the norm $\norm{\cdot}$ encourages structured solutions, similarly to the constraint $\x \in \sset$ in \eqref{eq:generalestimator}.
	An adaption of our results to such types of estimators is by far not obvious and might rely on rather different arguments in the proofs. See also \cite{pierre2017estimation,lecue2016regularization,lecue2017regularization} for recent achievements for regularized empirical risk minimization in statistical learning.
% \item
% 	Open: We wish to emphasize that the techniques we use to analyze the hinge loss may be applied in much more general model situations.
% 	The recent works of \cite{genzel2018mismatch,genzel2017msense} indicate that one may bridge the gap between problems from statistical learning and signal processing. But they only focus on the square loss. It would be natural to extend these results to more general loss functions like the hinge loss.
\end{listing}

\section{Proofs of the Main Results}
\label{sec:proofs}

Let us start with a brief roadmap of the proof strategy pursued in this section. The common recovery approach of Subsection~\ref{subsec:results:unitball} and Subsection~\ref{subsec:results:generalset} is to estimate the ground truth signal $\grtr \in \S^{n-1}$ via constrained empirical risk minimization. More specifically, we invoke the program of \eqref{eq:estimator} if the signal set $\sset \subset \R^n$ is contained in the Euclidean unit ball, whereas the scalable estimator \eqref{eq:estimatortuned} is used for general convex constraints.
In both cases, it will turn out that, with high probability, the respective minimizer $\solu$ resides in a certain (local) neighborhood of $\grtrmu$ for an appropriately chosen scaling parameter $\scalfac > 0$.
Such a localization argument is in fact widely used in estimation theory and dates back to classical works in geometric functional analysis and statistical learning (e.g., see \cite{milman1986banach,pajor1986gelfand,mendelson2002improving,bartlett2005local,mendelson2007subgaussian}). In order to make this idea more precise in our specific setup, let us introduce the \emph{excess risk functional}
% In order to make this localization argument more precise, let us introduce the so-called \emph{excess risk functional}
\begin{equation}
\excessloss(\x) \coloneqq \lossemp[](\x)-\lossemp[](\grtrmu), \quad \x \in \R^n,
\end{equation}
where $\lossemp[](\x)= \tfrac{1}{m} \sum_{i = 1}^m \losshng(y_i \sp{\a_i}{\x})$ denotes the empirical risk (cf. Definition~\ref{def:results:unitball:risk}), which serves as objective functional in both recovery programs. 
The following simple observation shows that positivity of the excess risk allows us to reduce the set of potential minimizers:
\begin{fact}\label{fact}
Let $\excessloss(\x) > 0$ for some $\x \in \R^n$ and consider $\operatorname{Ray}(\x) \coloneqq \{ \grtrmu + \tau(\x - \grtrmu) \suchthat \tau \geq 0 \}$, which is the ray starting at $\grtrmu$ and passing through $\x$. Then, we have $\excessloss(\grtrmu + \tau(\x - \grtrmu)) > 0$ for all $\tau \geq 1$.
Moreover, if a minimizer $\solu$ of \eqref{eq:estimator} or \eqref{eq:estimatortuned} belongs to $\operatorname{Ray}(\x)$, it must be contained in the line segment between $\grtrmu$ and $\x$, i.e., $\solu \in \convhull\{ \grtrmu, \x \}$. This particularly implies $\lnorm{\grtrmu - \solu} \leq \lnorm{\grtrmu - \x}$.
% Let $\excessloss(\x) > 0$ for some $\x \in \R^n$ and suppose that a minimizer $\solu$ of \eqref{eq:estimator} or \eqref{eq:estimatortuned} belongs to the ray $\{ \grtrmu + \tau(\x - \grtrmu) \suchthat \tau \geq 0 \}$, which starts at $\grtrmu$ and passes through $\x$. Then $\solu$ lies in the line segment between $\grtrmu$ and $\x$, i.e., $\solu \in \convhull\{ \grtrmu, \x \}$. In particular, it holds that $\lnorm{\grtrmu - \solu} \leq \lnorm{\grtrmu - \x}$.
% Moreover, it holds that $\excessloss(\grtrmu + \tau(\x - \grtrmu)) > 0$
\end{fact}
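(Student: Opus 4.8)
The plan is to reduce everything to the convexity of the empirical risk. Since $\losshng$ is convex and $\x \mapsto y_i\sp{\a_i}{\x}$ is linear, each summand $\losshng(y_i\sp{\a_i}{\cdot})$ is convex, hence so is the average $\lossemp[](\cdot)$ and therefore also $\excessloss(\cdot) = \lossemp[](\cdot) - \lossemp[](\grtrmu)$, which in addition vanishes at $\grtrmu$ by construction. For the first assertion I would restrict $\excessloss$ to $\operatorname{Ray}(\x)$, i.e. set $\phi(\tau) \coloneqq \excessloss(\grtrmu + \tau(\x - \grtrmu))$ for $\tau \geq 0$. As the restriction of a convex function to a line, $\phi$ is convex, with $\phi(0) = 0$ and $\phi(1) = \excessloss(\x) > 0$. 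A convex function vanishing at the origin is star-shaped, i.e. $\tau \mapsto \phi(\tau)/\tau$ is nondecreasing on $(0,\infty)$; concretely, for $\tau \geq 1$ one writes $1 = (1 - \tfrac{1}{\tau})\cdot 0 + \tfrac{1}{\tau}\cdot\tau$ and applies Jensen's inequality to get $\phi(1) \leq \tfrac{1}{\tau}\phi(\tau)$, hence $\phi(\tau) \geq \tau\,\phi(1) \geq \phi(1) > 0$. This is exactly $\excessloss(\grtrmu + \tau(\x-\grtrmu)) > 0$ for all $\tau \geq 1$.

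For the statement about minimizers, suppose $\solu$ solves \eqref{eq:estimator} or \eqref{eq:estimatortuned} and $\solu \in \operatorname{Ray}(\x)$, say $\solu = \grtrmu + \tau_0(\x - \grtrmu)$ with $\tau_0 \geq 0$. The auxiliary fact I need is that $\grtrmu$ is itself feasible: in the unit-ball setting ($\sset\subset\ball[2][n]$) this is precisely Lemma~\ref{lem:minexpectedloss}, which also yields $\scalfac \in \intvopcl{0}{1}$ and $\grtrmu \in \sset$; for the scalable program \eqref{eq:estimatortuned} it is immediate from $\grtr \in \sset$, since then $\grtrmu = \scalfac\grtr \in \scalfac\sset$. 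Consequently optimality of $\solu$ forces $\lossemp[](\solu) \leq \lossemp[](\grtrmu)$, that is $\excessloss(\solu) \leq 0$. If we had $\tau_0 > 1$, the first part would give $\excessloss(\solu) = \phi(\tau_0) > 0$, a contradiction; hence $\tau_0 \in [0,1]$, which means exactly $\solu \in \convhull\{\grtrmu, \x\}$, and finally $\lnorm{\grtrmu - \solu} = \tau_0\lnorm{\grtrmu - \x} \leq \lnorm{\grtrmu - \x}$.

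I do not expect any serious obstacle here: the whole statement is a soft consequence of convexity together with feasibility of $\grtrmu$. The only place that demands a moment's attention is the feasibility of $\grtrmu$ in the unit-ball program — this is genuinely where the hypothesis $\sset \subset \ball[2][n]$ enters, through Lemma~\ref{lem:minexpectedloss} — whereas for \eqref{eq:estimatortuned} it holds trivially. Everything else is the elementary one-dimensional star-shapedness argument above.
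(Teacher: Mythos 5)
Your argument is correct and is exactly the one the paper intends: the Fact is stated as a ``direct consequence'' of convexity of $\excessloss(\cdot)$ together with $\excessloss(\grtrmu)=0$ and $\excessloss(\solu)\leq 0$, the latter holding because $\grtrmu$ is feasible ($\grtrmu\in\sset$ via Lemma~\ref{lem:minexpectedloss} and $\scalfac\in(0,1]$ in the unit-ball case, $\grtrmu=\scalfac\grtr\in\scalfac\sset$ in the scalable case). You simply make the one-dimensional star-shapedness step explicit, which the paper leaves implicit.
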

% \begin{fact}\label{fact}
% If the excess risk functional is strictly positive uniformly on the boundary of a small Euclidean ball of radius $t$ around $\grtrmu$, then the estimator $\solu$ must already lie within that ball, i.e., it has distance less than $t$ from $\grtrmu$. 
% \end{fact}
This claim directly follows from the convexity of the excess risk and constraint set as well as the fact that $\excessloss(\grtrmu) = 0$ and $\excessloss(\solu)\leq 0$. Note that the last inequality holds true because $\solu$ minimizes the empirical risk on a certain signal set and $\grtrmu$ is a feasible vector. Figure~\ref{fig:proofs:fact} demonstrates how one can use Fact~\ref{fact} to derive an error bound for $\solu$ in the case of spherical intersections.

\begin{figure}
	\centering
	\tikzstyle{blackdot}=[shape=circle,fill=black,minimum size=1mm,inner sep=0pt,outer sep=0pt]
		\begin{tikzpicture}[scale=2]
			\coordinate (K1) at (-.5,-.3);
			\coordinate (K2) at (.5,-1);
			\coordinate (K3) at (.3,-1.7);
			\coordinate (K4) at (-.2,-2.1);
			\coordinate (K5) at (-1,-1.2);
			\coordinate[below right=.25cm and .05 of K1] (muX0);
			
			\draw[fill=gray!20!white, name path = K] (K1) -- (K2) -- (K3) -- (K4) -- (K5) -- cycle;
			\node[draw,circle,fill=gray!20!white,minimum size=1.4cm,label={[label distance=-2pt]30:$tB_2^n + \grtrmu$}] at (muX0) {}; 
			\begin{scope}
				\clip (K1) -- (K2) -- (K3) -- (K4) -- (K5) -- cycle;
				\node[draw=red,circle,fill=gray!60!white,minimum size=1.4cm,ultra thick] at (muX0) {};
			\end{scope}
			\draw[thick] (K1) -- (K2) -- (K3) -- (K4) -- (K5) -- cycle;
			
			\node at (barycentric cs:K1=1,K2=1,K3=1,K4=1,K5=1) {$\sset$};
			\node[blackdot,label=below :$\grtrmu$] at (muX0) {};
		\end{tikzpicture}
	\caption{If $\excessloss(\cdot)$ is positive on a spherical intersection $\sset \intersec (t\S^{n-1} + \grtrmu)$ (red arc), then Fact~\protect{\ref{fact}} implies that every minimizer $\solu$ of \protect{\eqref{eq:estimator}} must belong to $\sset \intersec (t\ball[2][n] + \grtrmu)$ (dark gray region), that means, we have $\lnorm{\grtrmu - \solu} \leq t$.}
	\label{fig:proofs:fact}
\end{figure}

Consequently, the actual key challenge is to verify that the excess risk is positive on the boundary of an appropriate neighborhood of $\grtrmu$, e.g., a small Euclidean ball.
For this purpose, it is useful to consider the first order Taylor expansion of $\x \mapsto \lossemp[](\x)$ at $\grtrmu$. The approximation error is then given by
\begin{equation}\label{eq:basicequality:hinge}
\quadrterm{\x}{\grtrmu} \coloneqq  \lossemp(\x) - \lossemp(\grtrmu) - \underbrace{\tfrac{1}{m} \sum_{i = 1}^m z_i \sp{\a_i}{\x - \grtrmu}}_{\eqqcolon\multiplterm{\x}{\grtrmu}}, 
\end{equation}
where $\multiplterm{\cdot}{\grtrmu}$ is the ``linearization'' of $\lossemp(\cdot)$ at $\grtrmu$ with
\begin{equation}
	z_i \coloneqq y_i \cdot [\losshng]'(y_i \sp{\a_i}{\grtrmu}) = - y_i \cdot \indset{\intvopcl{-\infty}{1}}(y_i \sp{\a_i}{\grtrmu}), \quad i = 1, \dots, m.
\end{equation}
We will call $\multiplterm{\cdot}{\grtrmu}$ the \emph{multiplier term} in the following because the mapping $\x \mapsto \multiplterm{\x}{\grtrmu}$ indeed forms a multiplier empirical process (cf. \cite{mendelson2014learning,mendelson2014learninggeneral,mendelson2016upper}). In contrast, $\quadrterm{\cdot}{\grtrmu}$ is referred to as the \emph{quadratic term} of the excess risk $\excessloss(\cdot)$.\footnote{This is a slight abuse of terminology because $\losshng$ is not twice differentiable, so that $\quadrterm{\x}{\grtrmu}$ is actually not a quadratic function. However, $\x \mapsto \quadrterm{\x}{\grtrmu}$ mimics the role of a quadratic empirical process, as considered in \cite{mendelson2014learninggeneral,genzel2016estimation} for example, and we will prove similar quadratic lower bounds for it.}
Note that the convexity of $\lossemp(\cdot)$ implies that the quadratic term $\quadrterm{\cdot}{\grtrmu}$ is always non-negative.
Hence, in order to achieve
\begin{equation}\label{eq:multquad}
	\excessloss(\x) = \lossemp[](\x)-\lossemp[](\grtrmu)=\multiplterm{\x}{\grtrmu}+\quadrterm{\x}{\grtrmu} > 0
\end{equation}
for all $\x$ in a fixed subset of $\R^n$, it suffices to show that $\quadrterm{\cdot}{\grtrmu}$ uniformly dominates $\multiplterm{\cdot}{\grtrmu}$ on that specific set. 
% for all $\x$ on the boundary of a neighborhood of $\grtr$ (e.g., a small Euclidean ball), it suffices to show that $\quadrterm{\cdot}{\grtrmu}$ uniformly dominates $\multiplterm{\cdot}{\grtrmu}$ on that specific set. 
%\mgsay{Ich habe das jetzt mal so formuliert, denn Mendelson spricht auch immer von ``domination''. Wenn der Multipler Term in Erwartung nicht-negativ ist, stört er ja sowieso nicht. Aber selbst im Unit Ball Fall wird er nicht größer als der quadratische Term.}
To this end, we will treat both terms independently and apply different tools from empirical process theory. The multiplier term can be easily handled by a recent result of Mendelson \cite{mendelson2016upper}, Theorem~\ref{thm:multiplierprocess}, which concerns the uniform deviation of multiplier empirical processes from their mean. For the quadratic process on the other hand, we will employ \emph{Mendelson's small ball method} as stated by Theorem~\ref{thm:msbm} below.

We conclude this overview part with deriving a lower bound for $\quadrterm{\cdot}{\grtrmu}$ that is more convenient to work with.
For this, let us rewrite the quadratic term as follows:
\begin{align}
	\quadrterm{\x}{\grtrmu} &= \begin{aligned}[t] 
		&\tfrac{1}{m} \sum_{i = 1}^m \Big[ (1 - y_i \sp{\a_i}{\x}) \indset{\intvopcl{-\infty}{1}}(y_i \sp{\a_i}{\x}) \\
		&  - (1 - y_i \sp{\a_i}{\grtrmu}) \indset{\intvopcl{-\infty}{1}}(y_i \sp{\a_i}{\grtrmu}) \\
		&  + (y_i\sp{\a_i}{\x} - y_i \sp{\a_i}{\grtrmu}) \indset{\intvopcl{-\infty}{1}}(y_i \sp{\a_i}{\grtrmu}) \Big]
	\end{aligned} \\
	&= \tfrac{1}{m} \sum_{i = 1}^m (1 - y_i \sp{\a_i}{\x}) [\indset{\intvopcl{-\infty}{1}}(y_i \sp{\a_i}{\x}) - \indset{\intvopcl{-\infty}{1}}(y_i \sp{\a_i}{\grtrmu})] \\
	&= \begin{aligned}[t] 
		& \tfrac{1}{m} \sum_{i = 1}^m \Big[ (1 - y_i \sp{\a_i}{\x}) \cdot \indset{\intvclop{1}{\infty}}(y_i \sp{\a_i}{\grtrmu}) \cdot \indset{\intvopcl{-\infty}{1}}(y_i \sp{\a_i}{\x}) \\
		& + (y_i \sp{\a_i}{\x} - 1) \cdot \indset{\intvopcl{-\infty}{1}}(y_i \sp{\a_i}{\grtrmu}) \cdot \indset{\intvclop{1}{\infty}}(y_i \sp{\a_i}{\x}) \Big].
	\end{aligned}\label{eq:quadrterm}
\end{align}
Setting $\h \coloneqq \x - \grtrmu$, the following estimates hold true for every $\xi>0$:
\begin{align}
	& (1 - y_i \sp{\a_i}{\x}) \cdot \indset{\intvclop{1}{\infty}}(y_i \sp{\a_i}{\grtrmu}) \cdot \indset{\intvopcl{-\infty}{1}}(y_i \sp{\a_i}{\x}) \\
	={} & (- y_i \sp{\a_i}{\h} -( y_i \sp{\a_i}{\grtrmu}-1)) \cdot \indset{\intvclop{1}{\infty}}(y_i \sp{\a_i}{\grtrmu}) \cdot \indset{\intvopcl{-\infty}{1}}(y_i \sp{\a_i}{\h} + y_i \sp{\a_i}{\grtrmu}) \\
	\geq{} & (- y_i \sp{\a_i}{\h} -( y_i \sp{\a_i}{\grtrmu}-1)) \cdot \indset{\intvcl{1}{1+2\xi}}(y_i \sp{\a_i}{\grtrmu}) \cdot \indset{\intvopcl{-\infty}{-2\xi}}(y_i \sp{\a_i}{\h}) \\
	\geq{} & (-y_i \sp{\a_i}{\h} - 2\xi)\cdot \indset{\intvcl{1}{1+2\xi}}(y_i \sp{\a_i}{\grtrmu}) \cdot \indset{\intvopcl{-\infty}{-2\xi}}(y_i \sp{\a_i}{\h}) \eqqcolon \empproc_-(\a_i, \h)\label{eq:empneg}
\end{align}
and
\begin{align}
	& (y_i \sp{\a_i}{\x} - 1) \cdot \indset{\intvopcl{-\infty}{1}}(y_i \sp{\a_i}{\grtrmu}) \cdot \indset{\intvclop{1}{\infty}}(y_i \sp{\a_i}{\x}) \\
	={} & (y_i \sp{\a_i}{\h} - ( 1 - y_i \sp{\a_i}{\grtrmu} )) \cdot \indset{\intvopcl{-\infty}{1}}(y_i \sp{\a_i}{\grtrmu}) \cdot \indset{\intvclop{1}{\infty}}(y_i \sp{\a_i}{\h} + y_i \sp{\a_i}{\grtrmu}) \\
	\geq{} & (y_i \sp{\a_i}{\h} - ( 1 - y_i \sp{\a_i}{\grtrmu} )) \cdot \indset{\intvcl{1-2\xi}{1}}(y_i \sp{\a_i}{\grtrmu}) \cdot \indset{\intvclop{2\xi}{\infty}}(y_i \sp{\a_i}{\h}) \\
	\geq{} & (y_i \sp{\a_i}{\h} - 2\xi) \cdot \indset{\intvcl{1-2\xi}{1}}(y_i \sp{\a_i}{\grtrmu}) \cdot \indset{\intvclop{2\xi}{\infty}}(y_i \sp{\a_i}{\h}) \eqqcolon \empproc_+(\a_i, \h).\label{eq:emppos}
\end{align}
Therefore, the resulting \emph{non-negative empirical process} 
\begin{equation}\label{eq:empproc}
\h\mapsto \tfrac{1}{m} \sum_{i = 1}^m \empproc(\a_i, \h) \quad \text{with} \quad \empproc(\a_i, \h)\coloneqq\empproc_+(\a_i, \h) + \empproc_-(\a_i, \h)
\end{equation}
satisfies 
\begin{equation}\label{eq:quadempproc}
	\quadrterm{\x}{\grtrmu}\geq\tfrac{1}{m} \sum_{i = 1}^m \empproc(\a_i, \h)
\end{equation}
for all $\h = \x - \grtrmu$ and $\xi > 0$.

%\begin{draft}
%\item
	%Sketch the main argument of studying the empirical loss, but first just informally (overview of what we will do).
	%But we can use this place to formally introduce the excess risk as well as the spitting into quadratic and multiplier process.
	
	%The role of $\grtrmu$ is different in both cases but the notation should be exactly the same!
	
	%The main idea is to exclude all vectors as potential minimizer that lie outside of a ``small'' ball around $\grtrmu$.
	%We will need uniform concentration to invoke these arguments.
%\item
	%The required tools are presented in the following subsection
%\end{draft}

\subsection{Tools From Empirical Process Theory}

This subsection provides two important tools from empirical process theory which we will apply to control the multiplier and the quadratic term of the excess risk in \eqref{eq:multquad}.
The following concentration inequality by Mendelson investigates the uniform deviation of multiplier processes.
Note that this result even holds true in a more general setting, see \cite[Thm.~4.4]{mendelson2016upper}.
\begin{theorem}[{\cite{mendelson2016upper}}]\label{thm:multiplierprocess}
	Let $L \subset t \ball[2][n]$. For every $i \in [m]$, assume that $\a_i$ is an independent copy of a standard Gaussian random vector $\a \distributed \Normdistr{\vnull}{\I{n}}$, and $z_i$ is an independent copy of a sub-Gaussian random variable $z$ which is not necessarily independent of $\a$.
	There exist numerical constants $C_1, C_2 > 0$ such that for every $u > 0$, the following holds true with probability at least $1 - 2 \cdot e^{-C_1 \cdot u^2} - 2 \cdot e^{-C_1 \cdot m}$:
	\begin{equation}
		\sup_{\h \in L} \abs[\Big]{\tfrac{1}{m}\sum_{i = 1}^m \big(z_i \sp{\a_i}{\h} - \mean[z_i \sp{\a_i}{\h}] \big)} \leq C_2 \cdot \normsubg{z} \cdot \frac{\meanwidth{L} + u \cdot t}{ \sqrt{m}} \ .
	\end{equation}
\end{theorem}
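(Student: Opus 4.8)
The plan is to reduce the statement to Mendelson's general upper bound for multiplier empirical processes \cite[Thm.~4.4]{mendelson2016upper}, and to sketch the route one would take to prove it directly in the present Gaussian setting. The starting observation is that the multiplier process is \emph{linear} in $\h$: with $\vec{v} \coloneqq \tfrac{1}{m}\sum_{i=1}^m z_i\a_i$ and $\vec{w} \coloneqq \vec{v} - \mean[\vec{v}]$, the quantity to be controlled is exactly $\sup_{\h\in L}\abs{\sp{\vec{w}}{\h}}$. I would split this into the expectation $\mean[\sup_{\h\in L}\abs{\sp{\vec{w}}{\h}}]$ and a deviation around the mean, and bound the two parts with different tools.

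For the expectation, the natural device is generic chaining. The increment $\sp{\vec{w}}{\h-\h'}$ is a normalized sum of $m$ i.i.d.\ centered variables $z_i\sp{\a_i}{\h-\h'}$, each a product of the sub-Gaussian variable $z_i$ with the mean-zero Gaussian $\sp{\a_i}{\h-\h'}$, hence sub-exponential with $\psi_1$-norm $\lesssim \normsubg{z}\lnorm{\h-\h'}$. Bernstein's inequality then yields a mixed sub-Gaussian/sub-exponential tail for this increment, and the majorizing-measure bound in mixed-tail form (e.g.\ Dirksen's refinement of Talagrand's theorem) gives
\[
	\mean[\sup_{\h\in L}\abs{\sp{\vec{w}}{\h}}] \;\lesssim\; \normsubg{z}\cdot\Big(\tfrac{\gamma_2(L,\lnorm{\cdot})}{\sqrt{m}} + \tfrac{\gamma_1(L,\lnorm{\cdot})}{m}\Big).
\]
Since $\a$ is Gaussian, the intrinsic metric on $L$ is the Euclidean one up to constants, so $\gamma_2(L,\lnorm{\cdot})\asymp \meanwidth{L}$ by Talagrand's theorem, producing the leading term $\normsubg{z}\,\meanwidth{L}/\sqrt{m}$; the $\gamma_1$-term is of lower order (it scales like $\normsubg{z}\diam(L)$ times a dimension-dependent factor, divided by $m$) and is dominated by the main term once $m$ is not too small, which is the structural reason for the $2\exp(-C_1 m)$ floor in the failure probability.

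For the fluctuation around the mean I would invoke a concentration inequality for suprema of \emph{unbounded} empirical processes (an Adamczak-type functional Bernstein bound), using once more that on $L\subset t\ball[2][n]$ each summand $z_i\sp{\a_i}{\h}$ is sub-exponential with $\psi_1$-norm at most $\normsubg{z}\,t$. This gives deviations of order $\normsubg{z}\cdot u\cdot t/\sqrt{m}$ with probability at least $1-2\exp(-C_1 u^2)$, together with the extra $2\exp(-C_1 m)$ that rules out the heavy sub-exponential regime; adding this to the expectation bound yields the claimed inequality.

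The step I expect to be the main obstacle is the \textbf{dependence} between $z_i$ and $\a_i$ that the statement allows --- and which is genuinely present in our application, where $z_i$ is a function of $\a_i$. If $z$ were independent of $\a$, the proof would be short: one conditions on $(z_1,\dots,z_m)$, observes that then $\vec{v} \distributed \Normdistr{\vnull}{(\lnorm{z}[2]^2/m^2)\I{n}}$, applies the definition of Gaussian width together with Gaussian concentration (the map $\vec{v}\mapsto\sup_{\h\in L}\sp{\vec{v}}{\h}$ is $\rad(L)$-Lipschitz in $\vec{v}$, and $\rad(L)\le t$), and finishes with $\lnorm{z}[2]\asymp\sqrt{m}\,\normsubg{z}$. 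The correlated case destroys this shortcut, and it is exactly to handle it that Mendelson's more delicate argument is needed: one splits the multiplier's contribution according to the size of $\abs{z_i}$ and controls the $L_{2+\epsilon}$-behavior of the coordinate functionals separately. Carrying that argument out in full is the technical heart, which is why we simply quote \cite{mendelson2016upper}.
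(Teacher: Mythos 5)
The paper does not prove this theorem at all --- it is quoted verbatim as a special case of \cite[Thm.~4.4]{mendelson2016upper}, and your proposal ultimately defers to the same citation, so in that sense the approaches agree. That said, two points in your supplementary sketch are off. First, the mixed-tail generic-chaining bound you invoke would produce a $\gamma_1(L,\lnorm{\cdot})/m$ term which is \emph{not} dominated by $\meanwidth{L}/\sqrt{m}$ in the regime of interest: for $L = t\ball[2][n]$ one has $\gamma_1(L) \asymp tn$ whereas $\gamma_2(L)\asymp t\sqrt{n}$, so the $\gamma_1$-term dominates unless $m\gtrsim n$, while the theorem holds uniformly in $m$ and is applied in this paper precisely when $m\ll n$. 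The $2\exp(-C_1 m)$ in the failure probability is not a device that absorbs this deficit --- in Mendelson's truncation-based argument it arises from an entirely different source (roughly, the event that too many of the $\abs{z_i}$ exceed a truncation threshold). Second, the obstruction to your conditioning shortcut is indeed dependence, but the obstruction to the chaining route is not: the $\psi_1$-bound on the increments $z_i\sp{\a_i}{\h-\h'}$ holds regardless of whether $z_i$ and $\a_i$ are independent, and what blocks naive chaining is the uncontrolled $\gamma_1$-term, not correlation. Your final remark (truncate by the size of $\abs{z_i}$ and track the $L_{2+\eps}$-behavior of the coordinate functionals separately) correctly identifies what Mendelson actually does and why it cannot be replaced by off-the-shelf chaining; given that, citing the result rather than reproving it, as both you and the paper do, is the right call.
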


Our second ingredient is \emph{Mendelson's small ball method}, which is a powerful concept to establish lower bounds for non-negative empirical processes.
We state an adaption of Tropp's version in \cite[Prop.~5.1]{tropp2014recovery} below, but it should be emphasized that the original idea is due to Mendelson \cite[Thm.~5.4]{mendelson2014learning}.
\begin{theorem}[Mendelson's small ball method]\label{thm:msbm}
Let $L\subset \R^n$ be a subset, $\a \distributed \Normdistr{\vnull}{\I{n}}$ be a standard Gaussian random vector, and
$F\colon\R \to \R$ be a non-negative (random) contraction that fixes the origin.\footnote{That means, $F$ is $1$-Lipschitz and $F(0) = 0$.}
For every $i \in [m]$, assume that $\a_i \distributed \Normdistr{\vnull}{\I{n}}$ is an independent copy of $\a$ and 
$F_i$ is an independent copy of $F$. Then, for every $\xi > 0$ and $u>0$, the following holds true with probability at least $1-e^{-\tfrac{u^2}{2}}$:
\begin{align}
	\inf_{\h \in L} \tfrac{1}{m} \sum_{i = 1}^m F_i(\sp{\a_i}{ \h})
	&\geq \xi \cdot \Big( Q_{2\xi}(L) - \frac{\tfrac{2}{\xi} \cdot \meanwidth{L}+ u}{\sqrt{m}} \Big), \label{eq:msbm:lowerbound}
\end{align}
where 
\begin{equation}
	Q_{2\xi}(L) \coloneqq \inf_{\h\in L}\prob[F(\sp{\a}{\h})\geq 2\xi]
\end{equation}
denotes the \emph{small ball function} that is associated with $F$.	
\end{theorem}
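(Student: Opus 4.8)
The plan is to run the ``soft indicator'' argument that underlies the small ball method. First I would fix $\xi>0$ and introduce the piecewise-linear surrogate $\psi_\xi(s)\coloneqq\min\{1,\pospart{s/\xi-1}\}$, which is non-negative, $(1/\xi)$-Lipschitz, vanishes on $\intvopcl{-\infty}{\xi}$, equals $1$ on $\intvclop{2\xi}{\infty}$, and satisfies the two elementary bounds $\indset{\intvclop{2\xi}{\infty}}(s)\leq\psi_\xi(s)$ for all $s\in\R$ and $\xi\,\psi_\xi(s)\leq s$ for all $s\geq 0$. Since each $F_i$ is a non-negative contraction fixing the origin, the second bound gives $\xi\,\psi_\xi(F_i(\sp{\a_i}{\h}))\leq F_i(\sp{\a_i}{\h})$, hence
\begin{equation*}
	\inf_{\h\in L}\tfrac1m\sum_{i=1}^m F_i(\sp{\a_i}{\h})\;\geq\;\xi\cdot\inf_{\h\in L}\tfrac1m\sum_{i=1}^m\psi_\xi(F_i(\sp{\a_i}{\h})),
\end{equation*}
so it suffices to bound the latter infimum below by $Q_{2\xi}(L)-\big(\tfrac2\xi\meanwidth{L}+u\big)/\sqrt m$.

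Next I would split the empirical average into an expectation and a fluctuation. Writing $G(\h)\coloneqq\mean[\psi_\xi(F(\sp{\a}{\h}))]-\tfrac1m\sum_{i=1}^m\psi_\xi(F_i(\sp{\a_i}{\h}))$, we have
\begin{equation*}
	\inf_{\h\in L}\tfrac1m\sum_{i=1}^m\psi_\xi(F_i(\sp{\a_i}{\h}))\;\geq\;\inf_{\h\in L}\mean[\psi_\xi(F(\sp{\a}{\h}))]-\sup_{\h\in L}G(\h).
\end{equation*}
The bound $\psi_\xi\geq\indset{\intvclop{2\xi}{\infty}}$ shows $\mean[\psi_\xi(F(\sp{\a}{\h}))]\geq\prob[F(\sp{\a}{\h})\geq 2\xi]\geq Q_{2\xi}(L)$ for every $\h\in L$, so the whole claim reduces to proving that $\sup_{\h\in L}G(\h)\leq\big(\tfrac2\xi\meanwidth{L}+u\big)/\sqrt m$ with probability at least $1-\exp(-u^2/2)$.

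This last estimate is handled by the standard machinery for suprema of empirical processes. Because $\psi_\xi\circ F_i$ takes values in $[0,1]$, replacing one of the independent pairs $(\a_i,F_i)$ perturbs $\sup_{\h\in L}G(\h)$ by at most $1/m$; the bounded differences inequality therefore gives $\sup_{\h\in L}G(\h)\leq\mean[\sup_{\h\in L}G(\h)]+u/\sqrt m$ on an event of probability at least $1-\exp(-2u^2)\geq 1-\exp(-u^2/2)$. For the expectation I would symmetrize (introducing i.i.d.\ Rademacher signs $\eps_1,\dots,\eps_m$) and then apply the Ledoux--Talagrand contraction principle, after conditioning on the $(\a_i,F_i)$, to the $(1/\xi)$-Lipschitz maps $t\mapsto\psi_\xi(F_i(t))$, which vanish at $t=0$; this strips off $\psi_\xi\circ F_i$ at the cost of a factor $1/\xi$ and leaves the Rademacher average $\mean[\sup_{\h\in L}\tfrac1m\sum_{i=1}^m\eps_i\sp{\a_i}{\h}]$. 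Since each $\eps_i\a_i$ is again standard Gaussian and $\tfrac1m\sum_{i=1}^m\eps_i\a_i$ is distributed as $m^{-1/2}\gaussian$ with $\gaussian\distributed\Normdistr{\vnull}{\I{n}}$, this average equals $m^{-1/2}\meanwidth{L}$; tracking the numerical constants along this chain as in the proof of \cite[Prop.~5.1]{tropp2014recovery} yields $\mean[\sup_{\h\in L}G(\h)]\leq\tfrac2\xi\cdot m^{-1/2}\meanwidth{L}$. Combining the three displayed bounds with the reductions from the first two paragraphs completes the argument.

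The step I expect to be the crux is the very first one. The naive choice, namely the hard cutoff $\indset{\intvclop{2\xi}{\infty}}\circ F_i$, already gives $2\xi\cdot\tfrac1m\sum_{i=1}^m\indset{\intvclop{2\xi}{\infty}}(F_i(\sp{\a_i}{\h}))\leq\tfrac1m\sum_{i=1}^m F_i(\sp{\a_i}{\h})$, but it is not Lipschitz, so no contraction argument is available; the whole point of the small ball method is to replace it by the Lipschitz surrogate $\psi_\xi$, which simultaneously preserves the small-ball lower bound (via $\psi_\xi\geq\indset{\intvclop{2\xi}{\infty}}$) and is compatible with contraction. Beyond that, the only mild subtlety is bookkeeping: feeding the randomness of the contractions $F_i$ through the symmetrization/contraction step by conditioning, and verifying that the Gaussian-width computation produces exactly the constant $2/\xi$ appearing in the statement.
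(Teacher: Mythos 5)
The paper does not prove Theorem~\ref{thm:msbm}; it is cited directly from Tropp \cite[Prop.~5.1]{tropp2014recovery} and Mendelson \cite[Thm.~5.4]{mendelson2014learning}, and your proof is a correct reconstruction of exactly the argument used there: the Lipschitz surrogate $\psi_\xi$ simultaneously lower-bounds $\xi^{-1}F_i$ and dominates $\indset{[2\xi,\infty)}$, bounded differences controls the deviation of the centered supremum, symmetrization and the Ledoux--Talagrand contraction principle (applied conditionally on the $(\a_i,F_i)$) strip off the $(1/\xi)$-Lipschitz maps, and the Gaussian width emerges from $\tfrac{1}{m}\sum_i\eps_i\a_i\distributed m^{-1/2}\gaussian$. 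All the numerical estimates check out, and your bounded differences step in fact yields the slightly sharper tail $\exp(-2u^2)$, which dominates the stated $\exp(-u^2/2)$.
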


\subsection{Proof of Theorem~\ref{thm:euclideanball} (Subsets of the Unit Ball)}
\label{sec:proof:unitball}
Throughout this subsection, we assume that the hypotheses of Theorem~\ref{thm:euclideanball} are satisfied, in particular $\sset\subset \ball[2][n]$ and the model conditions of Assumption~\ref{model:measurements},~\ref{model:signal},~and~\ref{model:correlation}.
Following our proof sketch from the beginning of Section~\ref{sec:proofs}, our goal is to show that the excess risk functional $\excessloss(\cdot)=\lossemp[](\cdot)-\lossemp[](\grtrmu)$ is uniformly positive on the boundary of a small Euclidean ball centered at $\grtrmu$, i.e.,
\begin{equation}\label{eq:toshow}
	\inf_{\x \in \sset\intersec (t\S^{n-1}+ \grtrmu)}\excessloss(\x)>0
\end{equation}
for $t > 0$ small enough.
For the sake of readability, we denote by
\begin{equation}\label{eq:localizedsignalset}
	\sset_t\coloneqq\sset\intersec (t\S^{n-1}+ \grtrmu)
\end{equation}
the set of all points in $\sset$ with distance $t$ to $\grtrmu$, and by
\begin{equation}\label{eq:shiftedlocalizedsignalset}
	L_t\coloneqq\sset_t-\grtrmu=(\sset-\grtrmu)\intersec t\S^{n-1}
\end{equation}
its counterpart that arises from a parallel shift of $\grtrmu$ to the origin.
Thus, every point $\x \in \sset_t$ is associated with a directional vector $\h \coloneqq \x-\grtrmu \in L_t$.

We start by proving Lemma~\ref{lem:minexpectedloss}. For this purpose, let us define the convex function
\begin{equation}\label{eq:L}
	R: \R\to \R, \quad s\mapsto R(s)=\mean[\losshng(s f(\gaussianuniv)\gaussianuniv)], \quad \gaussianuniv \distributed \Normdistr{0}{1},
\end{equation}
which corresponds to the expected risk function restricted to the span of $\grtr$. Indeed, for every $s\in \R$, it holds that (cf. Definition~\ref{def:results:unitball:risk})
\begin{equation}\label{eq:expriskonspan}
	\lossexp(s\grtr) = \mean[\losshng(y \sp{\a}{s\grtr})]=\mean{}[\losshng(s y \underbrace{\sp{\a}{\grtr}}_{\mathclap{= g \distributed \Normdistr{0}{1}}})]=\mean[\losshng(s f(\gaussianuniv)\gaussianuniv)]=R(s).
\end{equation} 
% where we have used that $\sp{\a}{\grtr}=g\distributed \Normdistr{0}{1}$.
\begin{proof}[Proof of Lemma~\ref{lem:minexpectedloss}]
We first show that, by convexity of the hinge loss, there exists an expected risk minimizer that belongs to the span of $\grtr$: For $\x\in \sset$, the orthogonal decomposition
\begin{equation}
	\x= \proj_{\grtr}(\x) + \proj_{\orthcompl{\grtr}}(\x)=\sp{\x}{\grtr}\grtr + \proj_{\orthcompl{\grtr}}(\x),
\end{equation}
allows us to rewrite the expected risk as follows:
\begin{equation}
	\lossexp(\x)=\mean[\losshng(y \sp{\a}{\x})] = \mean[\losshng(y \sp{\x}{\grtr}\sp{\a}{\grtr}+ y\sp{\a}{\proj_{\orthcompl{\grtr}}(\x)})].
\end{equation}
Since the projections of a standard Gaussian random vector onto orthogonal vectors are independent,
we conclude that $\sp{\a}{\proj_{\orthcompl{\grtr}}(\x)}$ is in fact independent of both $y$ and $\sp{\a}{\grtr}$. 
Therefore, Fubini's theorem and Jensen's inequality imply\footnote{A sub-index at the expected value means that the expectation is only taken with respect to this variable.}
\begin{align}
	\lossexp(\x)
		&= \mean_{y, \sp{\a}{\grtr}}\mean_{\sp{\a}{\proj_{\orthcompl{\grtr}}(\x)}}[\losshng(y \sp{\x}{\grtr}\sp{\a}{\grtr}+ y \sp{\a}{\proj_{\orthcompl{\grtr}}(\x)})]\\
		&\geq \mean_{y, \sp{\a}{\grtr}}[\losshng(y \sp{\x}{\grtr}\sp{\a}{\grtr}+ y
		\underbrace{\mean_{\sp{\a}{\proj_{\orthcompl{\grtr}}(\x)}}[\sp{\a}{\proj_{\orthcompl{\grtr}}(\x)}])}_{=0}]\\
		&= \mean[\losshng(y\sp{\x}{\grtr} \sp{\a}{\grtr})]=\mean[\losshng(y \sp{\a}{\sp{\x}{\grtr}\grtr})]=\lossexp(\sp{\x}{\grtr}\grtr).
\end{align}
Since $\sset\subset \ball[2][n]$, it holds that $\abs{\sp{\x}{\grtr}}\leq 1$ for all $\x\in \sset$. Consequently, the minimum of the expected risk function on $\sset$ is bounded from below by the minimum of $R$ on the compact interval $\intvcl{-1}{1}$, i.e.,
\begin{equation}
	\min_{\x\in \sset}\lossexp(\x)\geq \min_{s\in \intvcl{-1}{1}}\lossexp(s\grtr) \stackrel{\eqref{eq:expriskonspan}}{=} \min_{s\in \intvcl{-1}{1}}R(s).
\end{equation}
Computing the first (weak) derivative of $R$,
\begin{align}\label{eq:Lderiv}
	R'(s)=\mean[[\losshng]'(s f(\gaussianuniv)\gaussianuniv)  f(\gaussianuniv)\gaussianuniv]=-\mean[\indset{\intvopcl{-\infty}{1}}(s f(\gaussianuniv)\gaussianuniv) f(\gaussianuniv)\gaussianuniv ],
\end{align} 
and using the correlation assumption $\mean[f(\gaussianuniv)\gaussianuniv]>0$ in \ref{enum:cond_c1}, we observe that
\begin{equation}
	R'(0)=-\mean[f(\gaussianuniv)\gaussianuniv] < 0.
\end{equation}
The convexity of $R$ therefore implies that $R$ attains its minimum on the interval $\intvopcl{0}{1}$, which yields
\begin{equation}
	\min_{\x\in \sset}\lossexp(\x)\geq\min_{s\in \intvopcl{0}{1}}R(s)=\min_{s\in \intvopcl{0}{1}}\lossexp(s\grtr). 
\end{equation}
Since $\convhull\{\vnull, \grtr\} \subset \sset$ by Assumption~\ref{model:signal}, it even follows that
\begin{equation}
	\min_{\x\in \sset}\lossexp(\x)=\min_{s\in \intvopcl{0}{1}}\lossexp(s\x).
\end{equation}
Hence, if $\scalfac > 0$ is a minimizer of $R$ on $\intvopcl{0}{1}$, then we have
\begin{equation}
\min_{\x\in \sset}\lossexp(\x)=\lossexp(\grtrmu).
\end{equation}
\end{proof}

Our next auxiliary result states the relationship between $\scalfac$ and $\lambda = \mean[f(\gaussianuniv)\gaussianuniv]$ that was used in the error bound \eqref{eq:errorboundunitball} of Theorem~\ref{thm:euclideanball}.
\begin{lemma}\label{lem:lowerboundmu} 
We have the following upper bound on $\scalfac^{-1}$:
\begin{equation}
\scalfac^{-1}\lesssim \sqrt{\log(\lambda^{-1})}. 
\end{equation}
More specifically, if $R'(1)\geq 0$, it holds that
\begin{equation}
\scalfac^{-1}\leq \sqrt{2\log(\lambda^{-1})}.
\end{equation}
\end{lemma}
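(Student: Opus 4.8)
The plan is to reduce everything to the one–dimensional convex function $R(s)=\mean[\losshng(sf(\gaussianuniv)\gaussianuniv)]$ from \eqref{eq:L} and to exploit the first–order optimality condition for its minimizer $\scalfac$ on $\intvopcl{0}{1}$ (recall $\scalfac>0$ by Lemma~\ref{lem:minexpectedloss}). Write $X\coloneqq f(\gaussianuniv)\gaussianuniv$ with $\gaussianuniv\distributed\Normdistr{0}{1}$; then $X\leq\abs{\gaussianuniv}$ almost surely, $\mean[X]=\lambda$ by \ref{enum:cond_c1}, and formula \eqref{eq:Lderiv} reads $R'(s)=-\mean[X\,\indset{\intvopcl{-\infty}{1}}(sX)]=-\mean[X\,\indset{\intvopcl{-\infty}{1/s}}(X)]$ for $s>0$. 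Since $\abs{X}=\abs{\gaussianuniv}$ is atomless, $R'$ is continuous and non-decreasing with $R'(0)=-\lambda<0$, so $R$ is minimized on $\intvopcl{0}{1}$ either at an interior point with $R'(\scalfac)=0$ (which happens exactly when $R'(1)\geq 0$) or at the endpoint $\scalfac=1$ (when $R'(1)<0$); in particular $R'(\scalfac)=0$ whenever $R'(1)\geq 0$.

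The main step is the case $R'(\scalfac)=0$, which produces the sharp bound. From $\mean[X\,\indset{\intvopcl{-\infty}{1/\scalfac}}(X)]=0$ and $\mean[X]=\lambda$ we get $\mean[X\,\indset{\intvop{1/\scalfac}{\infty}}(X)]=\lambda$. On the event $\{X>1/\scalfac\}$ one has $0<X\leq\abs{\gaussianuniv}$, hence also $\abs{\gaussianuniv}>1/\scalfac$, so $X\,\indset{\intvop{1/\scalfac}{\infty}}(X)\leq\abs{\gaussianuniv}\,\indset{\intvop{1/\scalfac}{\infty}}(\abs{\gaussianuniv})$ pointwise. Taking expectations and using the elementary Gaussian tail integral $\mean[\abs{\gaussianuniv}\,\indset{\intvop{a}{\infty}}(\abs{\gaussianuniv})]=\sqrt{2/\pi}\,e^{-a^2/2}$ with $a=1/\scalfac$ gives $\lambda\leq\sqrt{2/\pi}\,e^{-1/(2\scalfac^2)}\leq e^{-1/(2\scalfac^2)}$. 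Taking logarithms and rearranging yields $\scalfac^{-2}\leq 2\log(\lambda^{-1})$, i.e.\ $\scalfac^{-1}\leq\sqrt{2\log(\lambda^{-1})}$, which is precisely the refined claim under the hypothesis $R'(1)\geq 0$.

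To obtain the general (non-sharp) inequality it remains to treat the case $R'(1)<0$: then $R$ is strictly decreasing on $[0,1]$, so $\scalfac=1$ and $\scalfac^{-1}=1$. Since $\lambda=\mean[X]\leq\mean[\abs{\gaussianuniv}]=\sqrt{2/\pi}$, we have $\log(\lambda^{-1})\geq\tfrac12\log(\pi/2)>0$, so $\scalfac^{-1}=1\lesssim\sqrt{\log(\lambda^{-1})}$ with a numerical constant. Combining the two cases gives $\scalfac^{-1}\leq\max\{1,\sqrt{2\log(\lambda^{-1})}\}\lesssim\sqrt{\log(\lambda^{-1})}$, as claimed. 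I do not expect a genuine obstacle; the only points requiring a little care are justifying the continuity of $R'$ (so that an interior minimizer really satisfies $R'(\scalfac)=0$ rather than a two-sided subgradient condition) and the bookkeeping on the event $\{X>1/\scalfac\}$, where positivity of $X$ is exactly what licenses the comparison $X\leq\abs{\gaussianuniv}$ together with the inclusion of events.
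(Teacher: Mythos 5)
Your proof is correct, and it is essentially the same as the paper's: both rest on the first-order condition $R'(\scalfac)=0$ (valid when $R'(1)\geq 0$), the pointwise bound $X=f(\gaussianuniv)\gaussianuniv\leq\abs{\gaussianuniv}$, the closed-form Gaussian tail integral $\sqrt{2/\pi}\,e^{-a^2/2}$, and the case split on the sign of $R'(1)$ with the endpoint $\scalfac=1$ and $\lambda\leq\sqrt{2/\pi}$ handling the second case. The only cosmetic difference is that you plug the optimality condition in directly and solve for $\scalfac$, whereas the paper pre-defines $s_\ast=\sqrt{2\log(\lambda^{-1})}$, shows $R'(1/s_\ast)<0$, and invokes monotonicity of $R'$; the arithmetic is identical.
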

\begin{proof}
Set $X \coloneqq f(\gaussianuniv)\gaussianuniv$ and $s_{\ast} \coloneqq \sqrt{2 \log(\lambda^{-1})}$. Since $\abs{X}\leq \abs{\gaussianuniv}$, we observe that
\begin{equation}\label{eq:lowerboundmu:lambdawelldef} 
	0 \stackrel{\ref{enum:cond_c1}}{<} \lambda=\mean[X]\leq \mean[\abs{\gaussianuniv}]= \sqrt{\tfrac{2}{\pi}}<1,
\end{equation}
and in particular, that $s_{\ast}$ is well-defined and positive.
Moreover, we have
\begin{align}
\mean[\indset{\intvop{s_{\ast}}{\infty}}(X)\cdot X] \leq\mean[\indset{\intvop{s_{\ast}}{\infty}}(\abs{\gaussianuniv})\cdot \abs{\gaussianuniv}]
=\integ[s_{\ast}][\infty]{\tfrac{2x}{\sqrt{2\pi}} e^{-x^2/2}}{dx}
=\sqrt{\tfrac{2}{\pi}}e^{-s_{\ast}^2/2}=\sqrt{\tfrac{2}{\pi}} \lambda.
\end{align}
Hence,
\begin{align}
	\mean[\indset{\intvopcl{-\infty}{s_{\ast}}}(X)\cdot X]=\mean[X]- \mean[\indset{\intvop{s_{\ast}}{\infty}}(X)\cdot X]\geq \lambda-\sqrt{\tfrac{2}{\pi}} \lambda> 0,
\end{align}
which gives
\begin{equation}\label{eq:Lstar}
	R'(\tfrac{1}{s_{\ast}})=-\mean[\indset{\intvopcl{-\infty}{s_{\ast}}}(X)\cdot X]<0.
\end{equation}
Let us make a case distinction for the sign of $R'(1)$:
If $R'(1) \geq 0$, we have $R'(\scalfac) = 0$ because $\scalfac$ is a minimizer of $R$ on $[0,1]$ and $R'(0) < 0$.
In particular, $R'(\scalfac) > R'(\tfrac{1}{s_{\ast}})$ due to \eqref{eq:Lstar}. Since $R'$ is non-decreasing, this implies $\scalfac\geq \tfrac{1}{s_{\ast}}$, which is the claim of Lemma~\ref{lem:lowerboundmu}. 
% This shows the result in the case where $R'(1)\geq 0$, because $\scalfac\geq \tfrac{1}{s_{\ast}}$ is equivalent to $\scalfac^{-1}\leq \sqrt{2\log(\lambda^{-1})}$.
Finally, if $R'(1)<0$, the convexity of $R$ implies that $\scalfac=1$ is the minimizer of $R$ on $[0,1]$. 
And from $\lambda\leq \sqrt{\frac{2}{\pi}}$, it follows that
\begin{equation}
	\sqrt{\log(\lambda^{-1})}\geq \sqrt{\log(\sqrt{\tfrac{\pi}{2}})} \gtrsim 1=\scalfac^{-1}.
\end{equation}
\end{proof}
The proof of Lemma~\ref{lem:lowerboundmu} reveals the following two important facts about the minimizer $\scalfac$, depending on the sign of $R'(1)$:
\begin{align}
\text{if } R'(1)< 0, &\text{ it follows that } \scalfac=1, \text{ and}\label{eq:valueLneg}\\
\text{if } R'(1)\geq 0, &\text{ it follows that } R'(\scalfac)=0.\label{eq:valueLpos}
\end{align}

The next lemma shows that, as long as $\sset \subset \ball[2][n]$, the multiplier term is always non-negative in expectation. This is very different from the case of general signal sets, where the expected value could become negative (see Remark~\ref{rmk:proofs:unitball}\ref{rmk:proofs:unitball:comparison}).
\begin{lemma}\label{lem:expnoisecor} 
For every $t>0$, it holds that
\begin{equation}
	\inf_{\x\in \sset_t}\mean[\multiplterm{\x}{\grtrmu}]\geq 0.
\end{equation}
According to \eqref{eq:valueLneg} and \eqref{eq:valueLpos}, we can distinguish between two cases:
\begin{itemize}
\item If $R'(1)<0$, we have
	\begin{equation}
		\inf_{\x\in \sset_t} \mean[\multiplterm{\x}{\grtrmu}]\geq -\frac{R'(1)}{2} \cdot t^2>0.
	\end{equation}
\item If $R'(1)\geq 0$, we have
	\begin{equation}
		\inf_{\x\in \sset_t} \mean[\multiplterm{\x}{\grtrmu}]=0.
	\end{equation}
\end{itemize}
\end{lemma}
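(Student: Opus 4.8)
The plan is to compute $\mean[\multiplterm{\x}{\grtrmu}]$ in closed form and reduce it to the one-dimensional derivative $R'(\scalfac)$ of the profile function from \eqref{eq:L}. Since the samples are i.i.d., we have $\mean[\multiplterm{\x}{\grtrmu}] = \mean[z\sp{\a}{\h}]$, where $\h \coloneqq \x - \grtrmu$ and $z \coloneqq -y\cdot\indset{\intvopcl{-\infty}{1}}(y\sp{\a}{\grtrmu})$ with $y = f(\sp{\a}{\grtr})$. First I would decompose $\h$ and $\a$ orthogonally with respect to $\grtr$ to obtain $\sp{\a}{\h} = \sp{\h}{\grtr}\sp{\a}{\grtr} + \sp{\a}{\proj_{\orthcompl{\grtr}}(\h)}$. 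Because $\grtrmu \in \spann\{\grtr\}$ by Lemma~\ref{lem:minexpectedloss}, the random variable $z$ depends only on $\sp{\a}{\grtr}$ and $f$; on the other hand $\sp{\a}{\proj_{\orthcompl{\grtr}}(\h)}$ is a linear functional of $\proj_{\orthcompl{\grtr}}(\a)$ and is therefore independent of both $\sp{\a}{\grtr}$ and $f$ (projections of a standard Gaussian onto orthogonal directions are independent, and $f$ is independent of $\a$ by Assumption~\ref{model:measurements}), hence independent of $z$. Consequently $\mean[z\sp{\a}{\proj_{\orthcompl{\grtr}}(\h)}] = \mean[z]\cdot\mean[\sp{\a}{\proj_{\orthcompl{\grtr}}(\h)}] = 0$ and
\[
    \mean[\multiplterm{\x}{\grtrmu}] = \sp{\h}{\grtr}\cdot\mean[z\sp{\a}{\grtr}].
\]

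The second step is to identify the remaining scalar. Setting $\gaussianuniv \coloneqq \sp{\a}{\grtr}\distributed\Normdistr{0}{1}$ (using $\lnorm{\grtr}=1$), one has $y = f(\gaussianuniv)$ and $y\sp{\a}{\grtrmu} = \scalfac f(\gaussianuniv)\gaussianuniv$, so that $z\sp{\a}{\grtr} = -f(\gaussianuniv)\gaussianuniv\cdot\indset{\intvopcl{-\infty}{1}}(\scalfac f(\gaussianuniv)\gaussianuniv)$. Comparing with the formula \eqref{eq:Lderiv} for the weak derivative of $R$, this gives $\mean[z\sp{\a}{\grtr}] = R'(\scalfac)$, and therefore
\[
    \mean[\multiplterm{\x}{\grtrmu}] = \sp{\h}{\grtr}\cdot R'(\scalfac) \qquad\text{for every } \x\in\sset .
\]

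Finally, I would conclude by the dichotomy \eqref{eq:valueLneg}--\eqref{eq:valueLpos} for the minimizer $\scalfac$. If $R'(1)\geq 0$, then $R'(\scalfac)=0$, so $\mean[\multiplterm{\x}{\grtrmu}]=0$ for all $\x\in\sset$ and in particular $\inf_{\x\in\sset_t}\mean[\multiplterm{\x}{\grtrmu}]=0$. If $R'(1)<0$, then $\scalfac=1$ and $\grtrmu=\grtr$; for $\x\in\sset_t$ the unit-ball assumption $\sset\subset\ball[2][n]$ gives $\lnorm{\x}\leq 1$, and expanding
\[
    1 \geq \lnorm{\x}^2 = \lnorm{\grtr+\h}^2 = 1 + 2\sp{\h}{\grtr} + \lnorm{\h}^2 = 1 + 2\sp{\h}{\grtr} + t^2
\]
forces $\sp{\h}{\grtr}\leq -t^2/2$. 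Multiplying by $R'(1)<0$ yields $\mean[\multiplterm{\x}{\grtrmu}] = \sp{\h}{\grtr}R'(1) \geq -\tfrac{R'(1)}{2}\cdot t^2 > 0$, and taking the infimum over $\x\in\sset_t$ gives the stated bound. Combining both cases proves $\inf_{\x\in\sset_t}\mean[\multiplterm{\x}{\grtrmu}]\geq 0$ in general.

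The one point that needs genuine care is the factorization of the expectation in the first step: one must verify that $z$ is measurable with respect to $\sigma(\sp{\a}{\grtr},f)$ — which is exactly where $\grtrmu\in\spann\{\grtr\}$, i.e.\ Lemma~\ref{lem:minexpectedloss}, enters — and that $\proj_{\orthcompl{\grtr}}(\a)$ is genuinely independent of this $\sigma$-algebra, which rests on rotational invariance of the Gaussian together with the independence of $\a$ and $f$ from Assumption~\ref{model:measurements}. The rest is the elementary norm expansion above and the identification with $R'$ via \eqref{eq:Lderiv}.
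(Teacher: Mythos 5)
Your proof is correct and follows essentially the same route as the paper's: orthogonal decomposition with respect to $\grtr$, independence of $\proj_{\orthcompl{\grtr}}(\a)$ from $\sigma(\sp{\a}{\grtr},f)$ to isolate the scalar factor $R'(\scalfac)$ via \eqref{eq:Lderiv}, and the dichotomy \eqref{eq:valueLneg}--\eqref{eq:valueLpos}. The only cosmetic difference is that in the $R'(1)<0$ case you derive $1-\sp{\x}{\grtr}\geq t^2/2$ by expanding $\lnorm{\x}^2$, whereas the paper writes the equivalent bound $\lnorm{\x-\grtr}^2\leq 2(1-\sp{\x}{\grtr})$; both use $\lnorm{\grtr}=1$ and $\lnorm{\x}\leq 1$.
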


\begin{proof} 
Let $\x \in \sset_t$. By the orthogonal decomposition
\begin{equation}
\x= \proj_{\grtr}(\x) + \proj_{\orthcompl{\grtr}}(\x)=\sp{\x}{\grtr}\grtr + \proj_{\orthcompl{\grtr}}(\x),
\end{equation}
we compute
\begin{align}
\mean[\multiplterm{\x}{\grtrmu}] &=\mean[\indset{\intvopcl{-\infty}{1}}(y \sp{\a}{\grtrmu}) \cdot y\sp{\a}{\grtrmu-\x} ]\\
								   &=\mean[\indset{\intvopcl{-\infty}{1}}(y \sp{\a}{\grtrmu})\cdot y\sp{\a}{\grtrmu-\sp{\x}{\grtr}\grtr-\proj_{\orthcompl{\grtr}}(\x)}]\\
     								   &=(\scalfac-\sp{\x}{\grtr}) \cdot \mean[\indset{\intvopcl{-\infty}{1}}(y \sp{\a}{\grtrmu}) \cdot y\sp{\a}{\grtr}]\\
								   &\stackrel{\eqref{eq:Lderiv}}{=} -R'(\scalfac) \cdot (\scalfac-\sp{\x}{\grtr})\label{eq:meanofmultiplierterm},
\end{align}
where we have again used the fact that $\sp{\a}{\proj_{\orthcompl{\grtr}}(\x)}$ is independent from $\sp{\a}{\grtrmu}$ and $y$.
As before, we now make a case distinction for the sign of $R'(1)$:
\begin{itemize}
\item 
	$R'(1)<0$: By \eqref{eq:valueLneg}, it holds that $\scalfac=1$, and therefore
	\begin{equation}\label{eq:proofs:unitball:expnoisecor:valueLneg}
		\mean[\multiplterm{\x}{\grtr}]= -R'(1) \cdot (1-\sp{\x}{\grtr}) \geq -\frac{R'(1)}{2} \cdot \underbrace{\lnorm{\x- \grtr}^2}_{= t^2} = -\frac{R'(1)}{2} \cdot t^2> 0
	\end{equation}
	for all $\x \in \sset_t$, where the first inequality is due to $\lnorm{\grtr} = 1$ and $\lnorm{\x} \leq 1$. 
\item 
	$R'(1)\geq 0$: Combining \eqref{eq:valueLpos} and \eqref{eq:meanofmultiplierterm}, we immediately obtain that $\mean[\multiplterm{\x}{\grtrmu}]=0$ for all $\x \in \sset_t$.
\end{itemize}
\end{proof}

The following proposition shows that the quadratic term $\quadrterm{\x}{\grtrmu}$ is not only non-negative but can be uniformly bounded from below on $\sset_t \subset t\S^{n-1}+ \grtrmu$.
\begin{proposition}\label{prop:quadprocesslowerbound} There exist numerical constants $C,C'>0$ such that for every $t\leq \scalfac$ and $\eta \in \intvop{0}{\tfrac{1}{2}}$, the following holds true with probability at least $1-\eta$:
\begin{equation}\label{eq:RSChinge}
\inf_{\x \in \sset_t}\quadrterm{\x}{\grtrmu}\geq 
C' \cdot \lambda \cdot t^2 -  C \cdot t \cdot \frac{\sqrt{\effdim[t]{\sset - \grtrmu}}+ \sqrt{\log(\eta^{-1})}}{\sqrt{m}} \ . 
\end{equation}
\end{proposition}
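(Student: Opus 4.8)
The plan is to derive \eqref{eq:RSChinge} from the pointwise estimate \eqref{eq:quadempproc}, which reduces everything to a uniform lower bound on the non-negative empirical process $\h \mapsto \tfrac{1}{m}\sum_{i=1}^{m}\empproc(\a_i,\h)$ over $\h \in L_t = (\sset-\grtrmu)\intersec t\S^{n-1}$. The structural point that makes Theorem~\ref{thm:msbm} applicable is that each summand is a \emph{random contraction of a linear functional}: since $\empproc_+(\a_i,\h)$ and $\empproc_-(\a_i,\h)$ can never be simultaneously nonzero, one checks from \eqref{eq:empneg}--\eqref{eq:emppos} that $\empproc(\a_i,\h) = \empproc_i(\sp{\a_i}{\h})$, where $\empproc_i$ equals $v\mapsto\pospart{y_i v-2\xi}$ when $y_i\sp{\a_i}{\grtrmu}\in\intvcl{1-2\xi}{1}$, equals $v\mapsto\pospart{-y_i v-2\xi}$ when $y_i\sp{\a_i}{\grtrmu}\in\intvcl{1}{1+2\xi}$, and is $0$ otherwise; in every case $\empproc_i$ is a non-negative $1$-Lipschitz function vanishing at the origin (recall $\xi>0$) and a measurable function of $(\a_i,f_i)$, so the $(\a_i,\empproc_i)$ are i.i.d.\ copies of $(\a,\empproc)$, as required.

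Invoking Theorem~\ref{thm:msbm} with $L=L_t$ then gives, with probability at least $1-\exp(-u^2/2)$,
\[
	\inf_{\h\in L_t}\tfrac{1}{m}\sum_{i=1}^{m}\empproc(\a_i,\h)\;\geq\;\xi\Big(Q_{2\xi}(L_t)-\tfrac{(2/\xi)\,\meanwidth{L_t}+u}{\sqrt m}\Big),
\]
and two terms remain to be controlled. For the Gaussian width, monotonicity (Proposition~\ref{prop:gaussianwidth}\ref{prop:property:monotonicity}) and the definition \eqref{eq:localeffecversion} yield $\meanwidth{L_t}\leq\meanwidth{(\sset-\grtrmu)\intersec t\ball[2][n]}=\meanwidth[t]{\sset-\grtrmu}=t\sqrt{\effdim[t]{\sset-\grtrmu}}$. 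Choosing $\xi\asymp t$ and $u\asymp\sqrt{\log(\eta^{-1})}$ (so that $\exp(-u^2/2)\leq\eta$), the subtracted term becomes $\tfrac{2\meanwidth{L_t}+\xi u}{\sqrt m}\lesssim t\cdot\tfrac{\sqrt{\effdim[t]{\sset-\grtrmu}}+\sqrt{\log(\eta^{-1})}}{\sqrt m}$, exactly the negative contribution in \eqref{eq:RSChinge}. It therefore remains to establish the uniform small ball bound $Q_{2\xi}(L_t)=\inf_{\h\in L_t}\prob[\empproc(\a,\h)\geq 2\xi]\gtrsim\lambda t$, since then $\xi Q_{2\xi}(L_t)\gtrsim\lambda t^2$ supplies the leading term.

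This last bound is the technical heart and the step I expect to be the main obstacle. Fix $\h\in L_t$ and split $\h=\nu_\h\grtr+\proj_{\orthcompl{\grtr}}(\h)$ with $\nu_\h=\sp{\h}{\grtr}$, $\orthcompl{\nu}_\h=\lnorm{\proj_{\orthcompl{\grtr}}(\h)}$, so $\nu_\h^2+(\orthcompl{\nu}_\h)^2=t^2$. Conditioning on $\gaussianuniv=\sp{\a}{\grtr}$ (hence on $y=f(\gaussianuniv)$ and on $\sp{\a}{\grtrmu}=\scalfac\gaussianuniv$), the variable $\orthcompl{\gaussianuniv}=\sp{\a}{\proj_{\orthcompl{\grtr}}(\h)}\distributed\Normdistr{0}{(\orthcompl{\nu}_\h)^2}$ is independent and symmetric. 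One first lower bounds the probability of the ``good slab'', namely that $\scalfac f(\gaussianuniv)\gaussianuniv$ falls into the length-$2\xi$ interval adjacent to $1$ on the side selected by $\sign(\nu_\h)$: here condition~\ref{enum:cond_c2} ensures $f(\gaussianuniv)\gaussianuniv=f(\gaussianuniv)\sign(\gaussianuniv)\abs{\gaussianuniv}>0$ with conditional probability $\geq\tfrac12$ given $\abs{\gaussianuniv}$, while condition~\ref{enum:cond_c1} together with the first-order characterization of $\scalfac$ from Lemma~\ref{lem:minexpectedloss} (cf.\ \eqref{eq:Lderiv}, \eqref{eq:valueLpos}) controls the mass of $f(\gaussianuniv)\gaussianuniv$ near $1/\scalfac$; as the interval has width of order $t/\scalfac\geq t$, its probability is at least of order $\lambda t$. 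On the good slab one has $y\gaussianuniv\geq\tfrac{1}{2\scalfac}>0$, so $\nu_\h y\gaussianuniv$ carries the sign of $\nu_\h$, and a short case distinction — according to whether $\abs{\nu_\h}\gtrsim\scalfac t$ (then the deterministic part $\abs{\nu_\h}y\gaussianuniv$ already exceeds $8\xi$) or $\abs{\nu_\h}\lesssim\scalfac t$ (then $\orthcompl{\nu}_\h\geq t/2$, so the symmetric part $\pm y\orthcompl{\gaussianuniv}$ exceeds $4\xi$ with constant probability) — shows $\prob[\empproc(\a,\h)\geq 2\xi\suchthat \gaussianuniv\in\text{good slab}]\gtrsim 1$. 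Multiplying the two estimates gives $Q_{2\xi}(L_t)\gtrsim\lambda t$ uniformly in $\h$; the hypothesis $t\leq\scalfac$ is used here to keep the relevant slabs inside $\Rpos$ and to make the numerical constant in $\xi\asymp t$ admissible.

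Combining the displayed small ball inequality with the width estimate and $Q_{2\xi}(L_t)\gtrsim\lambda t$, and recalling \eqref{eq:quadempproc}, yields $\inf_{\x\in\sset_t}\quadrterm{\x}{\grtrmu}\geq C'\lambda t^2-Ct\cdot\tfrac{\sqrt{\effdim[t]{\sset-\grtrmu}}+\sqrt{\log(\eta^{-1})}}{\sqrt m}$ with probability at least $1-\eta$, which is \eqref{eq:RSChinge}. The delicate point throughout is the mass estimate feeding into $Q_{2\xi}(L_t)$: confirming that the mass of $f(\gaussianuniv)\gaussianuniv$ within distance $\asymp t$ of $1/\scalfac$ is genuinely of order $\lambda t$ — on both sides of $1/\scalfac$ and for \emph{every} admissible quantizer $f$ — is where conditions~\ref{enum:cond_c1}--\ref{enum:cond_c2} and Lemma~\ref{lem:minexpectedloss} must be exploited most carefully.
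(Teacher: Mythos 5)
Your overall plan is the paper's plan: pass from $\quadrterm{\cdot}{\grtrmu}$ to the nonnegative empirical process via \eqref{eq:quadempproc}, invoke Theorem~\ref{thm:msbm} with $\xi\asymp t$ and $u\asymp\sqrt{\log(\eta^{-1})}$, dominate $\meanwidth{L_t}$ by $t\sqrt{\effdim[t]{\sset-\grtrmu}}$, and bound $Q_{2\xi}(L_t)$ by a slab probability ($\gtrsim\lambda t$, using \ref{enum:cond_c2} to halve and $\abs{\gaussianuniv}$-density estimates, plus $e^{-1/(2\scalfac^2)}\gtrsim\lambda$ coming essentially from Lemma~\ref{lem:lowerboundmu}) times a conditional probability ($\gtrsim 1$). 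The observation that $\empproc(\a_i,\cdot)$ is a random contraction of $\sp{\a_i}{\cdot}$, because $\empproc_+$ and $\empproc_-$ are supported on disjoint events, is the same verification the paper makes when it specifies its contraction $F$.

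The genuine gap is in the case distinction meant to prove $\prob[\empproc(\a,\h)\geq 2\xi\mid\text{good slab}]\gtrsim 1$. Write $\h=d\grtr+e\x'$ with $d,e\geq 0$, $d^2+e^2=t^2$, $\x'\perp\grtr$, $\lnorm{\x'}=1$. Your first alternative demands $d\gtrsim\scalfac t$ with an implicit constant large enough that $d y\gaussianuniv\geq d\tfrac{1-2\xi}{\scalfac}\geq 8\xi$; since $8\xi\asymp t$ and $y\gaussianuniv\asymp 1/\scalfac$ on the slab, this needs the constant to be at least about $2$, so for $\scalfac>1/2$ this regime is empty (because $d\leq t$). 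Your second alternative asserts $d\lesssim\scalfac t\Rightarrow e\geq t/2$, but $e^2=t^2-d^2$, and for $\scalfac$ near $1$ one can have $d$ near $t$ (hence $e\ll t/2$) while still $d\lesssim\scalfac t$. So neither branch covers, say, $\scalfac=1$ and $d\in(\tfrac{\sqrt 3}{2}t,t]$. The paper avoids this by not trying to let either term carry the whole burden: with $\xi=t/6$ one has the chain
\begin{equation}
	4\xi - d\,\frac{1-2\xi}{\scalfac}\;=\;\frac{2t}{3}-d\Big(\frac{1}{\scalfac}-\frac{t}{3\scalfac}\Big)\;\leq\;t-d\;\leq\;\sqrt{t^2-d^2}\;=\;e,
\end{equation}
where the first inequality uses $\tfrac{2\scalfac}{3}+\tfrac{t}{3}\leq 1$ (valid for $t\leq\scalfac\leq 1$) and the second is $(t-d)^2\leq t^2-d^2$ for $d\in[0,t]$. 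Hence the residual threshold left to the Gaussian component $ey\sp{\a}{\x'}$ never exceeds its standard deviation $e$, giving the uniform lower bound $\prob[ey\sp{\a}{\x'}\geq 4\xi-dy\gaussianuniv\mid\text{slab}]\geq\prob[\gaussianuniv\geq 1]$, with no case distinction and no blow-up as $\scalfac\to 1$. If you patch your Case~2 with this computation (or simply replace both cases by it), the rest of your proposal matches the paper's proof step for step.
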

\begin{remark}\label{rmk:proofs:unitball:rsc}
	Combining \eqref{eq:RSChinge} with the condition \eqref{eq:number_of_measurements}, we obtain a lower bound of the form
	\begin{equation}
		\quadrterm{\x}{\grtrmu} \gtrsim t^2 = \lnorm{\x - \grtrmu}^2 \quad \text{ for all $\x \in \sset_t = \sset\intersec (t\S^{n-1}+ \grtrmu)$.}
	\end{equation}
	According to \cite[Def.~2.2]{genzel2016estimation}, this means that the empirical risk function $\lossemp(\cdot)$ satisfies \emph{restricted strong convexity} (\emph{RSC}) at scale $t$ with respect to $\grtrmu$ and $\sset$.
	This observation is quite remarkable because the hinge loss $\losshng$ does by far not fulfill sufficient criteria known from the literature, e.g., in \cite[Sec.~II.C]{genzel2016estimation} or \cite[Sec.~4]{mendelson2014learninggeneral}.
	More specifically, since $\losshng$ is piecewise linear, its second derivative does only exist in a distributional sense. Proposition~\ref{prop:quadprocesslowerbound} therefore indicates that one can even expect RSC (at certain scales $t$) if the ``curvature energy'' of the loss function is concentrated in a single point.
\end{remark}
\begin{proof}
According to \eqref{eq:quadempproc}, the quadratic term can be uniformly bounded from below by a simplified non-negative empirical process \eqref{eq:empproc}, i.e.,
\begin{equation}
\inf_{\x \in \sset_t}\quadrterm{\x}{\grtrmu}\geq \inf_{\h \in L_{t}} \tfrac{1}{m} \sum_{i = 1}^m \empproc(\a_i, \h),
\end{equation}
where $L_t=\sset_t-\grtrmu$.
We now apply Theorem~\ref{thm:msbm} with $\xi=\tfrac{t}{6}$, $L=L_t$, $u=\sqrt{2\log(\eta^{-1})}$, and
\begin{align}
F(s)= {} &(-y s - \tfrac{t}{3})\cdot \indset{\intvcl{1}{1+\tfrac{t}{3}}}(y \sp{\a}{\grtrmu}) \cdot \indset{\intvopcl{-\infty}{-\tfrac{t}{3}}}(y s) \\
& \quad + (y s - \tfrac{t}{3}) \cdot \indset{\intvcl{1-\tfrac{t}{3}}{1}}(y \sp{\a}{\grtrmu}) \cdot \indset{\intvclop{\tfrac{t}{3}}{\infty}}(y s).
\end{align}
It is not hard to see that $F$ is indeed a non-negative contraction with $F(0)=0$. Since $t^{-1}\meanwidth{L_t}\leq \sqrt{\effdim[t]{\sset - \grtrmu}}$ and 
$F_i(\sp{\a_i}{\h})=\empproc(\a_i, \h)$, the assertion of Theorem~\ref{thm:msbm} implies that, with probability at least $1-\eta$, it holds that
\begin{equation}
\inf_{\h \in L_{t}} \tfrac{1}{m} \sum_{i = 1}^m \empproc(\a_i, \h)
\geq \tfrac{t}{6} \cdot \Big( Q_{t/3}(L_{t}) - \frac{12\sqrt{\effdim[t]{\sset - \grtrmu}}+ \sqrt{2\log(\eta^{-1})}}{\sqrt{m}} \Big).
\end{equation}

It remains to show that the small ball function associated with $F$ satisfies the lower bound
\begin{equation}
Q_{t/3}(L_{t}) = \inf_{\h\in L_{t}} \prob[\empproc(\a, \h)\geq \tfrac{t}{3}] \gtrsim t\cdot \lambda.
\end{equation}
For this purpose, we divide $L_t$ into two disjoint subsets in the following way:
\begin{equation}
L_t =( \sset -\grtrmu) \intersec t \S^{n-1} = L_{t}^{+}\union  L_{t}^{-},
\end{equation}
where $L_{t}^{+} \coloneqq \{\h\in L_{t} \suchthat \sp{\h}{\grtr}\geq 0 \}$ and $L_{t}^{-} \coloneqq \{\h\in L_{t} \suchthat \sp{\h}{\grtr}< 0 \}$. 
% In order to bound $Q_{t/3}(L_{t})$ from below, we separately consider the cases where $\h \in L_{t}^{+}$ and $\h \in L_{t}^{-}$.
Let us first consider the case of $\h \in L_{t}^{+}$ and bound $\prob[\empproc(\a, \h)\geq \tfrac{t}{3}]$ from below:
There exist $d,e\in \intvcl{0}{t}$ and $\x' \in \R^n$ such that
\begin{equation}
	\h=\sp{\h}{\grtr}\grtr + \proj_{\orthcompl{\grtr}}(\h)=d\grtr+e\x'
\end{equation}
with $d^2+e^2=t^2$, $\sp{\grtr}{\x'}=0$, and $\lnorm{\grtr}=\lnorm{\x'}=1$. 
Hence, we obtain 
\begin{align}
	\prob[\empproc_+(\a, \h) \geq \tfrac{t}{3}]&=\prob[y\sp{\a}{\h}\geq \tfrac{2t}{3}, y\sp{\a}{\grtrmu}\in \intvcl{1-\tfrac{t}{3}}{1}]\\
		&=\prob[d y\sp{\a}{\grtr}+ ey\sp{\a}{\x'}\geq \tfrac{2t}{3}, y\sp{\a}{\grtr}\in \intvcl{\tfrac{1}{\scalfac}-\tfrac{t}{3\scalfac}}{\tfrac{1}{\scalfac}}]\\
		&\geq \prob[d (\tfrac{1}{\scalfac}-\tfrac{t}{3\scalfac})+ ey\sp{\a}{\x'}\geq \tfrac{2t}{3}, y\sp{\a}{\grtr}\in \intvcl{\tfrac{1}{\scalfac}-\tfrac{t}{3\scalfac}}{\tfrac{1}{\scalfac}}] \\
		&= \prob[ey\sp{\a}{\x'}\geq \tfrac{2t}{3}- d (\tfrac{1}{\scalfac}-\tfrac{t}{3\scalfac})] \cdot \prob[y\sp{\a}{\grtr}\in \intvcl{\tfrac{1}{\scalfac}-\tfrac{t}{3\scalfac}}{\tfrac{1}{\scalfac}}],
\end{align}
where we have again used that the components of an orthogonal decomposition of a standard Gaussian vector are independent.
% , we obtain a product
% \begin{align}
% &\prob[d (\tfrac{1}{\scalfac}-\tfrac{t}{3\scalfac})+ ey\sp{\a}{\x'}\geq \tfrac{2t}{3}, y\sp{\a}{\grtr}\in [\tfrac{1}{\scalfac}-\tfrac{t}{3\scalfac}, \tfrac{1}{\scalfac}]]
% \end{align}
% Since the projections of a standard Gaussian random vector onto orthogonal vectors are independent, we obtain a product
Next, we show that
\begin{align}
\prob[ey\sp{\a}{\x'}\geq \tfrac{2t}{3}- d (\tfrac{1}{\scalfac}-\tfrac{t}{3\scalfac})]\geq \prob[\gaussianuniv \geq 1] \quad \text{for $\gaussianuniv\distributed \Normdistr{0}{1}$.}
\label{eq:constpart}
\end{align}
From $t\leq \scalfac\leq 1$, it follows that $\tfrac{2\scalfac}{3}+ \tfrac{t}{3}\leq 1$, which is equivalent to $1-\tfrac{1}{\scalfac}+ \tfrac{t}{3\scalfac}\leq \tfrac{1}{3}$. Since $0 \leq d \leq t$, this leads to
$d-d(\tfrac{1}{\scalfac} - \tfrac{t}{3\scalfac})\leq \tfrac{t}{3}$, or equivalently, $\tfrac{2t}{3}- d (\tfrac{1}{\scalfac}-\tfrac{t}{3\scalfac})\leq t-d$. Due to $t-d\leq \sqrt{t^2-d^2}=e$, we therefore obtain
$\tfrac{2t}{3}- d (\tfrac{1}{\scalfac}-\tfrac{t}{3\scalfac})\leq e$, so that
\begin{align}
\prob[ey\sp{\a}{\x'} \geq \tfrac{2t}{3}- d (\tfrac{1}{\scalfac}-\tfrac{t}{3\scalfac})]&\geq
\prob[ey\sp{\a}{\x'}\geq e]\geq \prob[y\sp{\a}{\x'}\geq 1]\\
&=\prob[\sp{\a}{\x'}\geq 1]=\prob[\gaussianuniv \geq 1],
 \end{align}
where we have particularly used that the binary variable $y \in \{-1,+1\}$ is independent of $\gaussianuniv = \sp{\a}{\x'} \distributed \Normdistr{0}{1}$.
In conclusion, we have
\begin{equation}
	\prob[\empproc(\a, \h)\geq \tfrac{t}{3}]\geq \prob[\empproc_+(\a, \h) \geq \tfrac{t}{3}] \geq \prob[\gaussianuniv \geq 1] \cdot \prob[y\sp{\a}{\grtr}\in \intvcl{\tfrac{1}{\scalfac}-\tfrac{t}{3\scalfac}}{\tfrac{1}{\scalfac}}]
\end{equation}
for all $\h \in L_{t}^{+}$.

To handle the case of $\h \in L_{t}^{-}$, let us consider the decomposition
\begin{equation}
-\h=\sp{-\h}{\grtr}\grtr + \proj_{\orthcompl{\grtr}}(-\h)=d\grtr+e\x'
\end{equation} 
with $d,e\in \intvcl{0}{t}$ such that $d^2+e^2=t^2$, $\sp{\grtr}{\x'}=0$, and $\lnorm{\grtr}=\lnorm{\x'}=1$. We proceed as before:
\begin{align}
\prob[\empproc_-(\a, \h) \geq \tfrac{t}{3}]&=\prob[-y\sp{\a}{\h}\geq \tfrac{2t}{3}, y\sp{\a}{\grtrmu}\in \intvcl{1}{1+ \tfrac{t}{3}}]\\
&=\prob[d y\sp{\a}{\grtr}+ ey\sp{\a}{\x'}\geq \tfrac{2t}{3}, y\sp{\a}{\grtr}\in \intvcl{\tfrac{1}{\scalfac}}{\tfrac{1}{\scalfac}+ \tfrac{t}{3\scalfac}}]\\
&\geq \prob[ \tfrac{d}{\scalfac}+ ey\sp{\a}{\x'}\geq \tfrac{2t}{3}, y\sp{\a}{\grtr}\in \intvcl{\tfrac{1}{\scalfac}}{\tfrac{1}{\scalfac}+ \tfrac{t}{3\scalfac}}]\\
&=\prob[ ey\sp{\a}{\x'}\geq \tfrac{2t}{3}- \tfrac{d}{\scalfac}] \cdot \prob[y\sp{\a}{\grtr}\in \intvcl{\tfrac{1}{\scalfac}}{\tfrac{1}{\scalfac}+ \tfrac{t}{3\scalfac}}].
\end{align}
By \eqref{eq:constpart} again, it holds that
\begin{align}
\prob[ ey\sp{\a}{\x'}\geq \tfrac{2t}{3}- \tfrac{d}{\scalfac}]\geq  
\prob[ey\sp{\a}{\x'}\geq \tfrac{2t}{3}- d (\tfrac{1}{\scalfac}-\tfrac{t}{3\scalfac})]\geq \prob[\gaussianuniv \geq 1],
\end{align}
and consequently
\begin{equation}
\prob[\empproc(\a, \h)\geq \tfrac{t}{3}]\geq \prob[\empproc_-(\a, \h) \geq \tfrac{t}{3}]
\geq \prob[\gaussianuniv \geq 1] \cdot \prob[y\sp{\a}{\grtr}\in \intvcl{\tfrac{1}{\scalfac}}{\tfrac{1}{\scalfac}+ \tfrac{t}{3\scalfac}}]
\end{equation} 
for every $\h \in L_{t}^{-}$.

To this point, we have shown that
\begin{equation}
	Q_{t/3}(L_{t}) \geq\prob[\gaussianuniv \geq 1] \cdot \min \Big\{  \prob[f(g)g\in \intvcl{\tfrac{1}{\scalfac}-\tfrac{t}{3\scalfac}}{\tfrac{1}{\scalfac}}], \prob[f(g)g\in \intvcl{\tfrac{1}{\scalfac}}{\tfrac{1}{\scalfac}+ \tfrac{t}{3\scalfac}}]  \Big\},
\end{equation}
where $\gaussianuniv = \sp{\a}{\grtr} \distributed \Normdistr{0}{1}$.
Note that the correlation condition \ref{enum:cond_c2} of Assumption~\ref{model:correlation} is equivalent to 
\begin{equation}
\prob[f(\gaussianuniv)\gaussianuniv\geq 0\suchthat \abs{\gaussianuniv}]\geq \prob[f(\gaussianuniv)\gaussianuniv\leq 0\suchthat \abs{\gaussianuniv}]\quad\text{ (a.s.).}
\end{equation}
In particular, this means that
\begin{equation}
\prob[f(\gaussianuniv)\gaussianuniv\geq 0, \abs{\gaussianuniv}\in \mathcal{I}]\geq \prob[f(\gaussianuniv)\gaussianuniv\leq 0, \abs{\gaussianuniv}\in \mathcal{I}]
\end{equation}
for every interval $\mathcal{I}\subset \R$.
Selecting $\mathcal{I}=\intvcl{\tfrac{1}{\scalfac}-\tfrac{t}{3\scalfac}}{\tfrac{1}{\scalfac}}$, this implies
\begin{align}
\prob[f(g)g\in \intvcl{\tfrac{1}{\scalfac}-\tfrac{t}{3\scalfac}}{\tfrac{1}{\scalfac}}] &= \prob[f(\gaussianuniv)\gaussianuniv\geq 0, \abs{\gaussianuniv}\in \intvcl{\tfrac{1}{\scalfac}-\tfrac{t}{3\scalfac}}{\tfrac{1}{\scalfac}}] \\
&\geq \prob[f(\gaussianuniv)\gaussianuniv\leq 0, \abs{\gaussianuniv}\in \intvcl{\tfrac{1}{\scalfac}-\tfrac{t}{3\scalfac}}{\tfrac{1}{\scalfac}}] = \prob[-f(g)g\in \intvcl{\tfrac{1}{\scalfac}-\tfrac{t}{3\scalfac}}{\tfrac{1}{\scalfac}}],
\end{align}
and therefore
\begin{equation}
\prob[ \abs{\gaussianuniv} \in \intvcl{\tfrac{1}{\scalfac}-\tfrac{t}{3\scalfac}}{\tfrac{1}{\scalfac}}]=
\prob[ \abs{f(\gaussianuniv)\gaussianuniv} \in \intvcl{\tfrac{1}{\scalfac}-\tfrac{t}{3\scalfac}}{\tfrac{1}{\scalfac}}]
\leq 2 \prob[f(g)g\in \intvcl{\tfrac{1}{\scalfac}-\tfrac{t}{3\scalfac}}{\tfrac{1}{\scalfac}}].
\end{equation}
Similarly, for $\mathcal{I}=\intvcl{\tfrac{1}{\scalfac}}{\tfrac{1}{\scalfac}+ \tfrac{t}{3\scalfac}}$, we obtain
\begin{equation}
\prob[ \abs{\gaussianuniv} \in \intvcl{\tfrac{1}{\scalfac}}{\tfrac{1}{\scalfac}+ \tfrac{t}{3\scalfac}}]
\leq 2 \prob[f(g)g\in \intvcl{\tfrac{1}{\scalfac}}{\tfrac{1}{\scalfac}+ \tfrac{t}{3\scalfac}}].
\end{equation}
Elementary estimates now lead to
\begin{align}
\prob[f(g)g\in \intvcl{\tfrac{1}{\scalfac}-\tfrac{t}{3\scalfac}}{\tfrac{1}{\scalfac}}]&\geq \tfrac{1}{2} \prob[\abs{\gaussianuniv}\in \intvcl{\tfrac{1}{\scalfac}-\tfrac{t}{3\scalfac}}{\tfrac{1}{\scalfac}}]= \integ[\tfrac{1}{\scalfac}-\tfrac{t}{3\scalfac}][\tfrac{1}{\scalfac}] {\tfrac{1}{\sqrt{2\pi}} e^{-s^2/2}}{ds}\\
&\geq \tfrac{1}{\sqrt{2\pi}} \cdot \frac{t}{3\scalfac} \cdot e^{-1/(2\scalfac^2)}\geq  \tfrac{1}{\sqrt{2\pi}}  \cdot \frac{t}{3} \cdot e^{-1/(2\scalfac^2)}
\end{align}
and
\begin{align}
\prob[f(\gaussianuniv)\gaussianuniv\in \intvcl{\tfrac{1}{\scalfac}}{\tfrac{1}{\scalfac}+ \tfrac{t}{3\scalfac}}]
&\geq \tfrac{1}{2}\prob[ \abs{\gaussianuniv} \in \intvcl{\tfrac{1}{\scalfac}}{\tfrac{1}{\scalfac}+ \tfrac{t}{3\scalfac}}]=\integ[\tfrac{1}{\scalfac}][\tfrac{1}{\scalfac}+\tfrac{t}{3\scalfac}] {\tfrac{1}{\sqrt{2\pi}} e^{-s^2/2}}{ds}\\
\geq \integ[\tfrac{1}{\scalfac}][\tfrac{1}{\scalfac}+\tfrac{t}{3}] {\tfrac{1}{\sqrt{2\pi}} e^{-s^2/2}}{ds}&\geq \tfrac{1}{\sqrt{2\pi}}  \cdot \frac{t}{3} \cdot e^{-\left(\tfrac{1}{\scalfac}+\tfrac{t}{3}\right)^2/2}\geq \tfrac{1}{\sqrt{2\pi}} \cdot \frac{t}{3} \cdot e^{-1/(2\scalfac^2)} \cdot e^{-1/2},
\end{align}
where the assumption $t\leq \scalfac$ was used in the last step. 
Finally, we show that $e^{-1/(2\scalfac^2)}\gtrsim \lambda$, which would imply the claim $Q_{t/3}(L_{t})\gtrsim t\cdot \lambda$. 
If $R'(1)\geq 0$, this bound directly follows from Lemma~\ref{lem:lowerboundmu}. On the other hand, if $R'(1)<0$, \eqref{eq:valueLneg} states that $\scalfac=1$. Combined with the estimate of \eqref{eq:lowerboundmu:lambdawelldef}, it again follows that $e^{-1/(2\scalfac^2)}\gtrsim \sqrt{\tfrac{2}{\pi}}\geq \lambda$.
\end{proof}

We are ready to prove Theorem~\ref{thm:euclideanball}.
\begin{proof}[Proof of Theorem~\ref{thm:euclideanball}]
% Fix $\eta\in (0,1)$ and let $t<\scalfac$. 
Using the triangle inequality and the decomposition of \eqref{eq:multquad}, we can derive the following lower bound for the excess risk:
\begin{align}
	\excessloss(\x)\geq \mean[\multiplterm{\x}{\grtrmu}] - \abs[\big]{\multiplterm{\x}{\grtrmu}- \mean[\multiplterm{\x}{\grtrmu}]} + \quadrterm{\x}{\grtrmu}.
\end{align}
Next, we take the infimum over the localized signal set $\sset_t=\sset\intersec (t\S^{n-1}+ \grtrmu)$ and obtain a uniform lower bound: 
\begin{equation}\label{eq:threesummandsunitball}
\inf_{\x \in \sset_t}\excessloss(\x)\geq \inf_{\x \in \sset_t}\mean[\multiplterm{\x}{\grtrmu}]
- \sup_{\x \in \sset_t} \abs[\big]{\multiplterm{\x}{\grtrmu}- \mean[\multiplterm{\x}{\grtrmu}]}
+\inf_{\x \in \sset_t}\quadrterm{\x}{\grtrmu}.
\end{equation}
Let us treat each of the three summands in \eqref{eq:threesummandsunitball} separately. According to Lemma~\ref{lem:expnoisecor}, the first term in \eqref{eq:threesummandsunitball} is non-negative.
Recalling that $z_i=-y_i \cdot \indset{\intvopcl{-\infty}{1}}(y_i \sp{\a_i}{\grtrmu})$ and $L_t=\sset_t-\grtrmu \subset t \ball[2][n]$, the second term takes the form
\begin{align}
\sup_{\x \in \sset_t} \abs[\big]{\multiplterm{\x}{\grtrmu}- \mean[\multiplterm{\x}{\grtrmu}]}
=\sup_{\h \in L_t} \abs[\Big]{ \tfrac{1}{m} \sum_{i = 1}^m \big[z_i \sp{\a_i}{\h} - \mean[z_i \sp{\a_i}{\h}]\big]}.
\end{align}
Since $\abs{z_i} \leq 1$, the multipliers $z_i$ are sub-Gaussian random variables with $\normsubg{z_i}\leq (\log(2))^{-1/2}$.
Hence, Theorem~\ref{thm:multiplierprocess} can be applied with $L = L_t$, and due to $m\gtrsim \log(\eta^{-1})$, it states that
\begin{align}
	\sup_{\h \in L_t} \abs[\Big]{ \tfrac{1}{m} \sum_{i = 1}^m \big[z_i \sp{\a_i}{\h} - \mean[z_i \sp{\a_i}{\h}]\big] }&\leq \tilde{C} \cdot
	\frac{\meanwidth{L_t} + t \cdot \sqrt{\log(\eta^{-1})}}{ \sqrt{m}}\\
	&\leq \tilde{C} \cdot t \cdot \frac{ \sqrt{d_t(\sset-\grtrmu)} + \sqrt{\log(\eta^{-1})}}{ \sqrt{m}}
\end{align}
with probability at least $1-\tfrac{\eta}{2}$, where $\tilde{C}>0$ is an appropriate numerical constant.
In order to bound the third term in \eqref{eq:threesummandsunitball}, we simply invoke Proposition~\ref{prop:quadprocesslowerbound}, after which
\begin{equation}
\inf_{\x \in \sset_t}\quadrterm{\x}{\grtrmu}\geq 
C' \cdot \lambda \cdot t^2 -  C'' \cdot t \cdot \frac{\sqrt{\effdim[t]{\sset - \grtrmu}}+ \sqrt{\log(\eta^{-1})}}{\sqrt{m}}
\end{equation}
with probability at least $1-\tfrac{\eta}{2}$ for appropriate numerical constants $C', C'' > 0$.

In total, the following holds true with probability at least $1-\eta$:
\begin{equation}\label{eq:excessriskpositiveunitball}
\inf_{\x \in \sset_t}\excessloss(\x)\geq C' \cdot \lambda \cdot t^2    -  (C''+\tilde{C}) \cdot t \cdot \frac{\sqrt{\effdim[t]{\sset - \grtrmu}}+ \sqrt{\log(\eta^{-1})}}{\sqrt{m}}>0,
\end{equation}
where the positivity directly follows from the initial assumption \eqref{eq:number_of_measurements}, that is,
\begin{equation}
	m\gtrsim \lambda^{-2}\cdot t^{-2} \cdot \max\{\effdim[t]{\sset - \grtrmu}, \log(\eta^{-1}) \}.
\end{equation}
The lower bound of \eqref{eq:excessriskpositiveunitball} now implies that every minimizer $\solu$ of \eqref{eq:estimator} satisfies $\lnorm{\grtrmu - \solu} \leq t$.
Indeed, if we would have $\lnorm{\grtrmu - \solu} > t$, the line segment between $\grtrmu$ and $\solu$ must intersect $\sset_t$, since the signal set $\sset$ is convex.
But this would contradict the conclusion of Fact~\ref{fact}.
% From Fact~\ref{fact} it follows that on this event, our estimator $\solu$ has Euclidean distance less than $t$ from $\grtrmu$. For completeness, let us repeat the argument. Trivially, we have $\excessloss(\grtrmu)=0$.
% Moreover, it holds that $\excessloss(\solu)\leq 0$, because $\solu$ is a minimizer of the empirical risk function on $\sset$, and $\grtrmu\in \sset$. From \eqref{eq:excessriskpositiveunitball} we know that $\excessloss(\x)>0$
% for all $\x\in \sset\intersec (t\S^{n-1}+ \grtrmu)$. Since $\sset$ and $\x\to\excessloss(\x)$ are convex, it follows
% $\lnorm{\solu-\grtrmu}\leq t$. 

Finally, we observe that $\abs{\lnorm{\solu}-\scalfac}=\abs{\lnorm{\solu}-\lnorm{\grtrmu}}\leq \lnorm{\solu-\grtrmu}\leq t$. In particular, $0<\scalfac-t\leq\lnorm{\solu}$, so that we can estimate
\begin{align}
\lnorm[auto]{\grtr - \frac{\solu}{\lnorm{\solu}}}\leq \lnorm[auto]{\grtr-\frac{\solu}{\scalfac}}+
\lnorm[auto]{\frac{\solu}{\scalfac}-\frac{\solu}{\lnorm{\solu}}}\leq \frac{t}{\scalfac}+\frac{\abs{\lnorm{\solu}-\scalfac}}{\scalfac \lnorm{\solu}} \cdot \lnorm{\solu}\leq \frac{2t}{\scalfac} \ .
\end{align}
Rescaling in $t$ precisely yields the claim of Theorem~\ref{thm:euclideanball}.
\end{proof}

\begin{remark}
\begin{rmklist}
\item \label{rmk:proofs:unitball:comparison}
	A careful study of the above proofs reveals that we have used the unit-ball assumption $\sset \subset \ball[2][n]$ at two points:
	firstly, in the proof of Lemma~\ref{lem:minexpectedloss} to show that the expected risk minimizer lies on the span of $\grtr$, and secondly, in \eqref{eq:proofs:unitball:expnoisecor:valueLneg} to verify the non-negativity of the multiplier term on $\sset_t$ in expectation.
	Especially the latter conclusion was crucial to ensure that the excess risk is positive on $\sset_t$, since the first term of \eqref{eq:threesummandsunitball} can be ignored in this way.
	
	Interestingly, as long as $\scalfac = 1$, one could even apply the quadratic lower bound of \eqref{eq:proofs:unitball:expnoisecor:valueLneg} to establish positivity of the excess risk on $\sset_t$, while completely disregarding the quadratic term in \eqref{eq:threesummandsunitball}.
	With other words, the linear part of the hinge loss is ``sufficient'' for signal recovery and RSC is not needed at all (cf. Remark~\ref{rmk:proofs:unitball:rsc}).
	A similar observation was already made by Plan and Vershynin in \cite{plan2013robust}, where they analyzed the performance of a simple linear estimator in $1$-bit compressed sensing.
	
	Such a simple argumentation however does not work out below in the case of general convex signal sets.
	Indeed, $\mean[\multiplterm{\x}{\grtrmu}]$ may become negative at certain points $\x$. %, which makes it more difficult to ensure a positive excess risk.
	Consequently, in order to still ensure that $\excessloss(\x) > 0$, the adverse impact of the multiplier process needs to be ``compensated'' by the size of the quadratic term $\quadrterm{\x}{\grtrmu}$, which in turn requires a refined error analysis.
% 	This is a particular reason why spherical intersections, such as in $\sset_t$, are replaced by a cylindrical counterparts.
% 	 What is different compared to Mendelson?
% 	
% 	We still have something like RSC. Explain the relationship to the analysis of \cite{genzel2016estimation}.
% 	The sufficient criteria do not apply anymore...
\item \label{rmk:proofs:unitball:starshaped}
	The above proof only requires that $\sset$ contains every line segment from $\grtrmu$ to any other point in $\sset$. Hence, similar to the results of Plan and Vershynin in \cite{plan2015lasso}, Theorem~\ref{thm:euclideanball} remains valid if $\sset - \grtrmu$ is just a \emph{star-shaped} set, i.e., $\lambda(\sset - \grtrmu) \subset \sset - \grtrmu$ for all $0 \leq \lambda \leq 1$.
	While this allows us to drop the convexity of $\sset$ in Assumption~\ref{model:signal}, such a relaxation is only of limited practical interest, as the ``anchor point'' $\grtrmu$ is still unknown.
	\qedhere
\end{rmklist} \label{rmk:proofs:unitball}
\end{remark}

\subsection{Proofs of Theorem~\ref{thm:results:generalset:global} and Theorem~\ref{thm:results:generalset:local} (General Convex Sets)}
\label{subsec:proofs:generalset}

Throughout this part, we assume that the model hypotheses of Subsection~\ref{subsec:results:generalset} hold true, in particular Assumption~\ref{model:results:generalset}.
According to our roadmap from the beginning of Section~\ref{sec:proofs}, our proof builds upon analyzing the excess risk functional
\begin{equation}\label{eq:proofs:generalset:excesslossdecomp}
	\x \mapsto \excessloss(\x) = \lossemp[](\x)-\lossemp[](\grtrmu)=\multiplterm{\x}{\grtrmu}+\quadrterm{\x}{\grtrmu}
\end{equation}
in a certain local neighborhood of $\grtrmu$.
In contrast to the strategy of Subsection~\ref{sec:proof:unitball}, an exclusion of a spherical intersection $\tilde{\sset} \coloneqq \ssetmu \intersec (\S^{n-1} + \grtrmu)$ as potential minimizers of \eqref{eq:estimatortuned} turns out to be too restrictive.
More specifically, if $\x = \grtrmu + \h \in \tilde{\sset}$ with $\abs{\sp{\grtr}{\h}} \approx 1$, the quadratic term $\quadrterm{\x}{\grtrmu}$ would become too small compared to $\abs{\multiplterm{\x}{\grtrmu}}$ (cf. Remark~\ref{rmk:proofs:unitball}\ref{rmk:proofs:unitball:comparison}). 
Therefore, we cannot expect that $\excessloss(\x)$ is strictly positive in these cases.
% Similar the multiplier term $\multiplterm{\x}{\grtrmu}$ is negative in expectation if $\sp{\grtr}{\h} > 0$. In any of

This issue can be resolved by carefully enlarging the set of ``feasible'' points in $\ssetmu$, i.e., those points at which the excess risk is not guaranteed to be positive.
More precisely, we will show that, with high probability, every minimizer $\solu$ of \eqref{eq:estimatortuned} belongs to the cylindrical tube defined in \eqref{eq:results:generalset:cylinder}, namely
\begin{equation}\label{eq:proofs:generalset:cylinder}
	\cyl(\grtr, \scalfac) = \{ \x \in \R^n \suchthat \lnorm{\x - \sp{\x}{\grtr}\grtr} \leq 1, \sp{\x}{\grtr} \geq \tfrac{\scalfac}{2} \}.
\end{equation}
See Figure~\ref{fig:results:generalset:cylinder} for more details on the geometric shape of $\cyl(\grtr, \scalfac)$.
The following lemma collects some basic properties of $\cyl(\grtr, \scalfac)$.
\begin{lemma}
	\begin{rmklist}
% 	\item\label{lem:proofs:generalset:cylinder:errorbounddecomp}
% 		Let $\x \in \R^n \setminus \{\vnull\}$ and set $\h = \x - \grtrmu$.
% 		Then, we have
% 		\begin{equation}
% 			\lnorm[auto]{\frac{\x}{\lnorm{\x}} - \grtr} = \left[2 \bigg(1 - \frac{\scalfac + \sp{\h}{\grtr}}{\sqrt{\scalfac^2 + 2\scalfac\sp{\h}{\grtr} + \lnorm{\h}^2}} \bigg)\right]^{1/2}.
% 		\end{equation}
	\item\label{lem:proofs:generalset:cylinder:cylconvex}
		$\cyl(\grtr, \scalfac)$ is a convex set and $\grtrmu$ is an interior point.
	\item\label{lem:proofs:generalset:cylinder:errorbound}
		Every $\x \in \cyl(\grtr, \scalfac)$ satisfies
		\begin{equation}
			\lnorm[auto]{\grtr - \frac{\x}{\lnorm{\x}}} \leq \frac{3}{\scalfac} \ .
		\end{equation}
	\item\label{lem:proofs:generalset:cylinder:cylboundary}
		The boundary of $\cyl(\grtr, \scalfac)$ can be written as the union of its side and base:
		\begin{align}
		\boundary \cyl(\grtr, \scalfac) = {} & \underbrace{\{ \x \in \R^n \suchthat \lnorm{\x - \sp{\x}{\grtr}\grtr} = 1, \sp{\x}{\grtr} \geq \tfrac{\scalfac}{2} \}}_{\eqqcolon \cyl_{1}(\grtr, \scalfac) \quad \text{``side''}} \\
 		{} & \union \underbrace{\{ \x \in \R^n \suchthat \lnorm{\x - \sp{\x}{\grtr}\grtr} \leq 1, \sp{\x}{\grtr} = \tfrac{\scalfac}{2} \}}_{\eqqcolon \cyl_2(\grtr, \scalfac) \quad \text{``base''}}.
	\end{align}
	\end{rmklist}
	\label{lem:proofs:generalset:cylinder}
\end{lemma}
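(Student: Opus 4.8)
The plan is to verify each of the three items in Lemma~\ref{lem:proofs:generalset:cylinder} by direct computation, treating the cylinder as a product-type intersection of a ``transverse'' unit ball and a half-space. Throughout, it is convenient to use the orthogonal decomposition $\x = \nu_{\x}\grtr + \proj_{\orthcompl{\grtr}}(\x)$ with $\nu_{\x} = \sp{\x}{\grtr}$ and $\orthcompl{\nu}_{\x} = \lnorm{\proj_{\orthcompl{\grtr}}(\x)}$, exactly as in \eqref{eq:results:generalset:exploss}. In this notation $\cyl(\grtr,\scalfac) = \{\x \suchthat \orthcompl{\nu}_{\x} \leq 1,\ \nu_{\x} \geq \tfrac{\scalfac}{2}\}$, which immediately makes the structure transparent.

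For \ref{lem:proofs:generalset:cylinder:cylconvex}: convexity follows because $\cyl(\grtr,\scalfac)$ is the intersection of two convex sets — the half-space $\{\nu_{\x} \geq \tfrac{\scalfac}{2}\}$ and the ``slab cylinder'' $\{\orthcompl{\nu}_{\x} \leq 1\}$, the latter being the preimage of the closed unit ball under the linear map $\proj_{\orthcompl{\grtr}}$ and hence convex. To see that $\grtrmu$ is an interior point, note $\nu_{\grtrmu} = \scalfac > \tfrac{\scalfac}{2}$ (using $\scalfac \gtrsim 1 > 0$) and $\orthcompl{\nu}_{\grtrmu} = 0 < 1$; both defining inequalities are strict, so a small Euclidean ball around $\grtrmu$ stays inside. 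For \ref{lem:proofs:generalset:cylinder:errorbound}: for $\x \in \cyl(\grtr,\scalfac)$ we have $\lnorm{\x}^2 = \nu_{\x}^2 + (\orthcompl{\nu}_{\x})^2 \geq \nu_{\x}^2$, so $\lnorm{\x} \geq \nu_{\x} \geq \tfrac{\scalfac}{2} > 0$ and the normalization is well-defined. Then
\begin{equation}
	\lnorm[auto]{\grtr - \frac{\x}{\lnorm{\x}}}^2 = 2 - 2\frac{\sp{\x}{\grtr}}{\lnorm{\x}} = 2 - 2\frac{\nu_{\x}}{\sqrt{\nu_{\x}^2 + (\orthcompl{\nu}_{\x})^2}} \leq 2 - 2\frac{\nu_{\x}}{\sqrt{\nu_{\x}^2 + 1}} \ ,
\end{equation}
and a short estimate — writing $\nu_{\x} \geq \tfrac{\scalfac}{2}$ and using $1 - \tfrac{a}{\sqrt{a^2+1}} = \tfrac{1}{\sqrt{a^2+1}(\sqrt{a^2+1}+a)} \leq \tfrac{1}{2a^2}$ for $a > 0$ — gives $2 - 2\tfrac{\nu_{\x}}{\sqrt{\nu_{\x}^2+1}} \leq \tfrac{2}{\nu_{\x}^2} \leq \tfrac{8}{\scalfac^2} \leq \tfrac{9}{\scalfac^2}$, so $\lnorm{\grtr - \x/\lnorm{\x}} \leq 3/\scalfac$.

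For \ref{lem:proofs:generalset:cylinder:cylboundary}: since $\cyl(\grtr,\scalfac)$ is closed and convex with nonempty interior, its boundary consists of all points where at least one of the two defining inequalities is an equality, \emph{and} that point still lies in $\cyl(\grtr,\scalfac)$. A point with $\orthcompl{\nu}_{\x} = 1$ and $\nu_{\x} \geq \tfrac{\scalfac}{2}$ is exactly a point of $\cyl_1$; a point with $\nu_{\x} = \tfrac{\scalfac}{2}$ and $\orthcompl{\nu}_{\x} \leq 1$ is exactly a point of $\cyl_2$; conversely every boundary point falls into one of these two cases. The only thing requiring a line of care is the standard fact that for a convex body given as $\bigcap_k \{g_k \leq c_k\}$ with continuous (quasi-)convex $g_k$ and nonempty interior, the boundary is $\{\x \in \cyl \suchthat g_k(\x) = c_k \text{ for some } k\}$; this is immediate from the fact that if all inequalities were strict at $\x$ then $\x$ is interior, while if some $g_k(\x) = c_k$ then any neighborhood of $\x$ contains points with $g_k > c_k$ (by continuity and the existence of an interior point to push away from), hence points outside $\cyl$. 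I expect the main obstacle, such as it is, to be purely bookkeeping in \ref{lem:proofs:generalset:cylinder:cylboundary} — making sure the ``side'' and ``base'' descriptions genuinely cover the boundary and nothing more — rather than any real difficulty; items \ref{lem:proofs:generalset:cylinder:cylconvex} and \ref{lem:proofs:generalset:cylinder:errorbound} are routine once the $(\nu_{\x}, \orthcompl{\nu}_{\x})$-coordinates are in place.
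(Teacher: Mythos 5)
Your proof is correct and follows essentially the same route as the paper. The paper dismisses items (1) and (3) as ``elementary geometric properties'' and only verifies (2), arriving at the same bound $\lnorm{\grtr - \x/\lnorm{\x}}^2 \leq 2/\sp{\x}{\grtr}^2 \leq 8/\scalfac^2$ via the equivalent inequality $\sqrt{1+a^2}-a \leq 1/a$; your explicit write-ups of (1) and (3) are the only additions, and they are sound for the particular convex functions at hand (a seminorm $\orthcompl{\nu}_{\x}$ and a linear form $-\nu_{\x}$), though the general boundary principle you invoke in (3) would need a little more care — e.g.\ convexity of the $g_k$, not merely quasi-convexity — to be airtight as stated.
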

\begin{proof}
	The statements of \ref{lem:proofs:generalset:cylinder:cylconvex} and \ref{lem:proofs:generalset:cylinder:cylboundary} are elementary geometric properties.
	Let us verify \ref{lem:proofs:generalset:cylinder:errorbound}.
	For every $\x \in \cyl(\grtr, \scalfac)$, the definition of $\cyl(\grtr, \scalfac)$ implies that $\lnorm{\x}^2 \leq 1 + \sp{\x}{\grtr}^2$ as well as $\lnorm{\x} \geq \sp{\x}{\grtr} \geq \scalfac / 2$.
	Hence,
	\begin{align}
		\lnorm[auto]{\grtr - \frac{\x}{\lnorm{\x}}}^2 &= 2 \Big(1 - \frac{\sp{\x}{\grtr}}{\lnorm{\x}}\Big) = 2 \Big( \frac{\lnorm{\x} - \sp{\x}{\grtr}}{\lnorm{\x}}\Big) \\
		&\leq 2 \Big( \frac{\sqrt{1+\sp{\x}{\grtr}^2} - \sp{\x}{\grtr}}{\sp{\x}{\grtr}}\Big) \leq \frac{2}{\sp{\x}{\grtr}^2} \leq \frac{8}{\scalfac^2}.
	\end{align}
% 	\begin{equation}\label{eq:proofs:generalset:cylinder-h}
% 		\cyl(\grtr, \scalfac) = \grtrmu + \{ \h \in \R^n \suchthat \lnorm{\h - \sp{\h}{\grtr}\grtr} \leq 1, \sp{\h}{\grtr} \geq - \tfrac{\scalfac}{2} \}.
% 	\end{equation}
\end{proof}

% \begin{definition}[Directional decomposition]
% 	For $\x \in \R^n$, let $t = t_{\x} \geq 0$ and $\h = \h_{\x} \in \S^{n-1}$ be such that
% 	\begin{equation}
% 		\x = \grtrmu + t \h.
% 	\end{equation}
% 	Then, we set $\theta = \theta_{\x} \coloneqq \sp{\h}{\grtr} \in [-1, 1]$.
% 	Moreover, we introduce the following set\todo{Show a figure an explain that this is a cylindrical set.}
% 	\begin{align}
% 		\cyl(\grtr, \scalfac) \coloneqq {} & \{ \x \in \R^n \suchthat \lnorm{\x - \sp{\x}{\grtr}\grtr} \leq 1, \sp{\x}{\grtr} \geq \tfrac{\scalfac}{2} \}.
% % 		{} & \union \{ \x \in \R^n \suchthat \lnorm{\x - \sp{\x}{\grtr}\grtr} \leq 1, \sp{\x}{\grtr} = \tfrac{\scalfac}{2} \}.
% 	\end{align}
% \end{definition}

According to Lemma~\ref{lem:proofs:generalset:cylinder}\ref{lem:proofs:generalset:cylinder:errorbound}, every minimizer $\solu$ of \eqref{eq:estimatortuned} that is contained in $\cyl(\grtr, \scalfac)$ already satisfies the error bound of Theorem~\ref{thm:results:generalset:global} and Theorem~\ref{thm:results:generalset:local}.
Therefore, it suffices to show that every point $\x \in \ssetmu$ that does not belong to $\cyl(\grtr, \scalfac)$ cannot solve \eqref{eq:estimatortuned}.

The next lemma forms the main technical component of our proof.
It provides a uniform lower bound for the excess risk $\excessloss(\cdot)$ on cylindrical intersections with $\boundary \cyl(\grtr, \scalfac)$.
\begin{lemma}\label{lem:proofs:generalset:excessloss}
	Let $L \subset \R^n$ be a bounded subset. Moreover, we define
	\begin{equation}
		\tilde{\sset} \coloneqq L \intersec \boundary \cyl(\grtr, \scalfac) \quad \text{and} \quad \tilde{L} \coloneqq \tilde{\sset} - \grtrmu \subset t_0 \ball[2][n]
	\end{equation}
	where $t_0 \coloneqq \max\{1 , \rad((L \intersec \cyl(\grtr, \scalfac)) - \grtrmu)\}$.
% 	and let $t_0 \coloneqq \max\{1 , \rad(\tilde{L})\}$, where $\rad(\tilde{L}) \coloneqq \max_{\h \in \tilde{L}} \lnorm{\h}$.
	There exist numerical constants $C_1, C_2 > 0$ such that for every $\eta \in \intvop{0}{\tfrac{1}{2}}$, the following holds true with probability at least $1 - \eta$:
	If $m \gtrsim \log(\eta^{-1})$ and $\scalfac \gtrsim 1$, then the excess risk satisfies
	\begin{align}
		\excessloss(\x) &\gtrsim t_0 \cdot \Big( \frac{1}{t_0 \scalfac} - C_1 \cdot \frac{ \max\{0,\sign(\theta_{\x})\}}{\scalfac^2} - C_2 \cdot \frac{\tfrac{1}{t_0}\meanwidth{\tilde{L}} + \sqrt{\log(\eta^{-1})}}{\sqrt{m}} \Big) \label{eq:proofs:generalset:excesslossbound}
	\end{align}
	for every $\x \in \tilde{\sset}$, where $\theta_{\x} \coloneqq \sp{\x - \grtrmu}{\grtr}$.
\end{lemma}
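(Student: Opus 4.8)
The plan is to mimic the strategy behind Theorem~\ref{thm:euclideanball}: decompose $\excessloss(\x) = \multiplterm{\x}{\grtrmu} + \quadrterm{\x}{\grtrmu}$ via \eqref{eq:proofs:generalset:excesslossdecomp}, bound the multiplier term by splitting it into its mean plus a uniform fluctuation, and bound the quadratic term from below via Mendelson's small ball method (Theorem~\ref{thm:msbm}) applied to the non-negative process $\h \mapsto \tfrac1m\sum_i \empproc(\a_i,\h)$ from \eqref{eq:empproc}--\eqref{eq:quadempproc}. Writing $\h \coloneqq \x - \grtrmu \in \tilde L \subset t_0\ball[2][n]$ and $\theta_\x = \sp{\h}{\grtr}$, the first step is to compute $\mean[\multiplterm{\x}{\grtrmu}]$ exactly. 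Following the computation in the proof of Lemma~\ref{lem:expnoisecor} — but now \emph{without} the unit-ball assumption, so $\orthcompl{\nu}_\x$ need not vanish and $\sp{\x}{\grtr}$ need not be $\le 1$ — the decomposition $\h = \theta_\x \grtr + \proj_{\orthcompl{\grtr}}(\h)$ together with independence of the orthogonal components of a Gaussian should give $\mean[\multiplterm{\x}{\grtrmu}] = -R'(\scalfac)\cdot(\text{something involving }\theta_\x)$; since here $f=\sign$ we can evaluate things explicitly and the only possibly-negative contribution comes when $\theta_\x > 0$, which is exactly why the term $\max\{0,\sign(\theta_\x)\}/\scalfac^2$ appears. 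The key quantitative input will be an estimate of the form $\mean[\multiplterm{\x}{\grtrmu}] \gtrsim - C_1 t_0 \cdot \max\{0,\sign(\theta_\x)\}/\scalfac^2$, obtained by Gaussian tail bounds (the relevant probabilities live near $\sp{\a}{\grtr} \approx \scalfac/2$, contributing the $\scalfac^{-2}$).

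Next I would control the fluctuation $\sup_{\h\in\tilde L}|\multiplterm{\x}{\grtrmu} - \mean[\multiplterm{\x}{\grtrmu}]|$ by Theorem~\ref{thm:multiplierprocess} with $L = \tilde L \subset t_0\ball[2][n]$, $z_i = -y_i\indset{\intvopcl{-\infty}{1}}(y_i\sp{\a_i}{\grtrmu})$ (so $|z_i|\le 1$, $\normsubg{z_i}\lesssim 1$) and $u \asymp \sqrt{\log(\eta^{-1})}$, yielding the bound $\lesssim (\meanwidth{\tilde L} + t_0\sqrt{\log(\eta^{-1})})/\sqrt m$ with probability $\ge 1-\eta/2$, provided $m\gtrsim\log(\eta^{-1})$. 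For the quadratic term, I would apply Theorem~\ref{thm:msbm} with $L = \tilde L$, the contraction $F$ built from $\empproc_\pm$ at an appropriate choice of $\xi$ (scaling like $\xi \asymp 1/\scalfac$, since $\grtrmu$ sits at height $\sp{\grtrmu}{\grtr} = \scalfac$ and the hinge kink is at $1$, so the relevant margin is $\approx 1 - 1 = 0$ shifted by $\Theta(1/\scalfac)$-neighborhoods — this is the delicate scaling point). The small ball function $Q_{2\xi}(\tilde L)$ must be bounded below: for $\h \in \tilde L \subset \boundary\cyl(\grtr,\scalfac)$, either $\lnorm{\proj_{\orthcompl{\grtr}}(\h + \grtrmu)} = 1$ (side) or $\sp{\h+\grtrmu}{\grtr} = \scalfac/2$ (base); in both cases the orthogonal part of $\x = \h+\grtrmu$ has norm $\asymp 1$, which is precisely what forces $\x\notin\cyl$ points to have enough ``wiggle room'' for $\empproc(\a,\h)\ge 2\xi$ to hold with probability $\gtrsim 1$ (independent of $t_0$ after the $t_0$ is factored out). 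Combining: $\quadrterm{\x}{\grtrmu} \gtrsim \xi(Q_{2\xi}(\tilde L) - (\tfrac{2}{\xi}\meanwidth{\tilde L} + u)/\sqrt m) \gtrsim \tfrac{1}{\scalfac} - C_2\cdot(\tfrac{1}{t_0}\meanwidth{\tilde L}+\sqrt{\log(\eta^{-1})})\cdot t_0/\sqrt m$ — absorbing the $t_0$ factor so that, after dividing through by $t_0$, one gets the $\tfrac{1}{t_0\scalfac}$ leading term of \eqref{eq:proofs:generalset:excesslossbound}.

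Finally I would add the three pieces, intersecting the two probability-$\ge 1-\eta/2$ events, to get \eqref{eq:proofs:generalset:excesslossbound} with probability $\ge 1-\eta$. \textbf{The main obstacle} I anticipate is the quadratic-term lower bound: one must check that the small ball function $Q_{2\xi}(\tilde L)$ is bounded below by a \emph{constant} (not decaying in $\scalfac$ or $t_0$) uniformly over $\tilde L$, which requires a careful case split between the ``side'' and ``base'' parts of $\boundary\cyl(\grtr,\scalfac)$ from Lemma~\ref{lem:proofs:generalset:cylinder}\ref{lem:proofs:generalset:cylinder:cylboundary}, and a correct choice of the scale $\xi \asymp 1/\scalfac$ so that the kink of the hinge loss is ``seen'' at the right resolution. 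The bookkeeping of which $t_0$ and $\scalfac$ powers land where — so that the final bound takes exactly the normalized form with $\tfrac{1}{t_0\scalfac}$, $\tfrac{1}{\scalfac^2}$, and $\tfrac{1}{t_0}\meanwidth{\tilde L}$ — is the other place where care is needed, but it is mechanical once the small-ball estimate is in hand.
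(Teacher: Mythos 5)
Your high-level decomposition matches the paper's: split $\excessloss(\x)=\multiplterm{\x}{\grtrmu}+\quadrterm{\x}{\grtrmu}$, compute $\mean[\multiplterm{\x}{\grtrmu}]$ explicitly (here the perfect $1$-bit rule $y=\sign(\sp{\a}{\grtr})$ gives the closed form $\mean[\multiplterm{\x}{\grtrmu}]=\sp{\grtrmu-\x}{\grtr}\cdot\sqrt{2/\pi}\,(1-e^{-1/(2\scalfac^2)})\asymp -\theta_{\x}/\scalfac^2$), control the multiplier fluctuation by Theorem~\ref{thm:multiplierprocess} on $\tilde L\subset t_0\ball[2][n]$, and bound $\quadrterm{\cdot}{\grtrmu}$ from below via Theorem~\ref{thm:msbm} applied to the non-negative process \eqref{eq:empproc}. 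That part is right.

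The genuine gap is in the small-ball estimate, which you yourself flag as the "main obstacle" and then resolve incorrectly. You propose taking $\xi\asymp 1/\scalfac$ and expect $Q_{2\xi}(\tilde L)\gtrsim 1$ to be a constant. Neither is correct. The paper fixes $\xi=1/16$, a numerical constant, and the small-ball function is $Q_{1/8}(\tilde L)\asymp 1/\scalfac$, \emph{not} order one: the event $\empproc_\pm(\a,\h)\geq 2\xi$ forces $|\sp{\a}{\grtrmu}|\in[1-2\xi,1]$ (or $[1,1+2\xi]$), i.e. $|\sp{\a}{\grtr}|$ must fall in an interval of length $\asymp\xi/\scalfac$ near the origin, so the probability is $\asymp\xi/\scalfac$. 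With $\xi$ constant this gives $Q\asymp 1/\scalfac$ and the leading term $\xi Q\asymp 1/\scalfac$, as needed. With your proposed $\xi\asymp 1/\scalfac$ the interval has length $\asymp 1/\scalfac^2$, so $Q_{2\xi}\asymp 1/\scalfac^2$ (not $\gtrsim 1$), and the product $\xi Q_{2\xi}\asymp 1/\scalfac^3$ is far too small — the excess risk bound would not take the form \eqref{eq:proofs:generalset:excesslossbound} and the ensuing sampling rate would be badly suboptimal. The role of the boundary geometry is also slightly different than you describe: $\lnorm{\proj_{\orthcompl{\grtr}}(\h)}\asymp 1$ (side) or $\sp{\h}{\grtr}=-\scalfac/2$ (base) is what makes the constant-probability factor $\prob[\gaussianuniv\geq 1/4]$ or $\prob[|\gaussianuniv|\leq 1/8]$ nontrivial and independent of $t_0$; the $1/\scalfac$ decay of $Q$ comes entirely from the $\grtrmu$-indicator and cannot be removed by tuning $\xi$. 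You would need to redo the case split over $\cyl_1$, $\cyl_2$ with fixed $\xi$ and accept $Q\asymp 1/\scalfac$ to make the argument close.

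A minor additional slip: in your heuristic for the expected multiplier, you say the relevant probabilities "live near $\sp{\a}{\grtr}\approx\scalfac/2$"; in fact the indicator $\indset{(-\infty,1]}(y\sp{\a}{\grtrmu})$ activates on $|\sp{\a}{\grtr}|\lesssim 1/\scalfac$, i.e.\ near the origin, and the $\scalfac^{-2}$ arises as (probability $\asymp 1/\scalfac$) $\times$ (typical magnitude $\asymp 1/\scalfac$). The final order of the term is still right, so this does not affect the conclusion.
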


\begin{proof}
Let us first analyze the multiplier term of the decomposition in \eqref{eq:proofs:generalset:excesslossdecomp}.
Analogously to the proof of Theorem~\ref{thm:euclideanball} in Subsection~\ref{sec:proof:unitball}, we apply Theorem~\ref{thm:multiplierprocess} with $z=-y \cdot  \indset{\intvopcl{-\infty}{1}}(y \sp{\a}{\grtrmu})$ and $L = \tilde{L} \subset t_0 \ball[2][n]$.
Combined with the assumption $m \gtrsim \log(\eta^{-1})$, Theorem~\ref{thm:multiplierprocess} then implies that the following bound holds true with probability at least $1 - \tfrac{\eta}{2}$:
\begin{align}
	\multiplterm{\x}{\grtrmu} &\geq \mean[\multiplterm{\x}{\grtrmu}] - C_2 \cdot \normsubg{z} \cdot \frac{\meanwidth{\tilde{L}} + t_0 \cdot \sqrt{\log(\eta^{-1})}}{\sqrt{m}} \\
	&= t_0 \cdot  \Big( \tfrac{1}{t_0} \mean[\multiplterm{\x}{\grtrmu}] - C_2 \cdot \normsubg{z} \cdot \frac{\tfrac{1}{t_0}\meanwidth{\tilde{L}} + \sqrt{\log(\eta^{-1})}}{\sqrt{m}} \Big)
\end{align}
for every $\x \in \tilde{\sset}$.
Since $y = \sign(\sp{\a}{\grtr})$, the expected multiplier term simplifies as follows (cf. \eqref{eq:meanofmultiplierterm}):
\begin{align}
\mean[\multiplterm{\x}{\grtrmu}] &=\mean[\indset{\intvopcl{-\infty}{1}}(y \sp{\a}{\grtrmu}) \cdot y\sp{\a}{\grtrmu-\x} ]\\
	&=\mean[\indset{\intvopcl{-\infty}{1}}(y \sp{\a}{\grtrmu}) \cdot y\sp{\a}{\grtrmu-\sp{\x}{\grtr}\grtr-P_{\orthcompl{\grtr}}(\x)}]\\
	&=\sp{\grtrmu - \x}{\grtr} \cdot \mean[\indset{\intvopcl{-\infty}{1}}(\abs{\sp{\a}{\grtrmu}}) \cdot \abs{\sp{\a}{\grtr}}] \\
	&=\sp{\grtrmu - \x}{\grtr} \cdot \sqrt{\tfrac{2}{\pi}} \cdot \big(1 - e^{-\tfrac{1}{2\scalfac^2}}\big), \label{eq:proofs:generalset:excessloss:expmultipl} %\\
% 	&= - \sp{\h}{\grtr} \cdot \sqrt{\frac{2}{\pi}} \cdot (1 - \exp(-\tfrac{1}{2\scalfac^2})) %\\
% 	&= - t \theta \cdot \sqrt{\frac{2}{\pi}} \cdot (1 - \exp(-\tfrac{1}{2\scalfac^2})), \label{eq:generalset:expmultipl}
\end{align}
where we have again used the decomposition $\x = \sp{\x}{\grtr}\grtr + \proj_{\orthcompl{\grtr}}(\x)$.
Due to $\grtrmu - \x \in - \tilde{L} \subset t_0 \ball[2][n]$, it holds that $\sp{\x - \grtrmu}{\grtr} \leq t_0 \cdot \max\{0, \sign(\theta_{\x})\}$, so that we finally end up with 
\begin{align}
	\multiplterm{\x}{\grtrmu} &\geq - t_0 \cdot \Big( \frac{\sp{\x - \grtrmu}{\grtr}}{t_0} \cdot \underbrace{\sqrt{\tfrac{2}{\pi}} \cdot \big(1 - e^{-\tfrac{1}{2\scalfac^2}}\big)}_{\lesssim 1/\scalfac^2} + C_2 \cdot \underbrace{\normsubg{z}}_{\lesssim 1} \cdot \frac{\tfrac{1}{t_0}\meanwidth{\tilde{L}} + \sqrt{\log(\eta^{-1})}}{\sqrt{m}} \Big) \\
	&\geq - t_0 \cdot C_{\multipl} \cdot \Big( \frac{\max\{0,\sign(\theta_{\x})\}}{\scalfac^2} + \frac{\tfrac{1}{t_0}\meanwidth{\tilde{L}} + \sqrt{\log(\eta^{-1})}}{\sqrt{m}} \Big)\label{eq:proofs:generalset:excessloss:multiplterm}
\end{align}
for an appropriate numerical constant $C_{\multipl} > 0$.

Next, we derive a lower bound for the quadratic term in \eqref{eq:proofs:generalset:excesslossdecomp}.
For this purpose, we apply Mendelson's small ball method similarly to the proof of Proposition~\ref{prop:quadprocesslowerbound}: For $L = \tilde{L}$ and $u=\sqrt{2\log((\tfrac{\eta}{2})^{-1})}$, Theorem~\ref{thm:msbm} implies that 
\begin{align}
\inf_{\x \in \tilde{\sset}} \quadrterm{\x}{\grtrmu} \stackrel{\eqref{eq:quadempproc}}{\geq}
\inf_{\h \in \tilde{L}} \tfrac{1}{m} \sum_{i = 1}^m \empproc(\a_i, \h)
&\geq \xi \cdot \Big( Q_{2\xi}(\tilde{L}) - \frac{\tfrac{2}{\xi} \cdot \meanwidth{\tilde{L}}+ \sqrt{2\log((\tfrac{\eta}{2})^{-1})}}{\sqrt{m}} \Big) 
\end{align}
with probability at least $1-\tfrac{\eta}{2}$, where the constant $\xi > 0$ is specified later on.

Thus, it remains to find an appropriate lower bound for the small ball functional $Q_{2\xi}(\tilde{L})$.
Let $\h = \x - \grtrmu \in \tilde{L}$ and consider the orthogonal decomposition
\begin{equation}
	\h= \sp{\h}{\grtr}\grtr + \underbrace{\proj_{\orthcompl{\grtr}}(\h)}_{\eqqcolon e \x'}
\end{equation}
with $\x' \in \S^{n-1}$ and $e \in \R$.
First, assume that $\h \in \cyl_1(\grtr,\scalfac) - \grtrmu$ and $\sp{\h}{\grtr} \geq 0$ (see also Lemma~\ref{lem:proofs:generalset:cylinder}\ref{lem:proofs:generalset:cylinder:cylboundary}). Then, $e = 1$ and therefore
\begin{align}
\prob[\empproc_+(\a, \h) \geq 2\xi]&=\prob[y\sp{\a}{\h}\geq 4\xi, \abs{\sp{\a}{\grtrmu}}\in [1-2\xi, 1]]\\
&=\prob[\sp{\h}{\grtr} \cdot \abs{\sp{\a}{\grtr}}+ ey\sp{\a}{\x'}\geq 4\xi, \abs{\sp{\a}{\grtr}} \in [\tfrac{1}{\scalfac}-\tfrac{2\xi}{\scalfac}, \tfrac{1}{\scalfac}]]\\
&\geq \prob[y\sp{\a}{\x'}\geq 4\xi, \abs{\sp{\a}{\grtr}}\in [\tfrac{1}{\scalfac}-\tfrac{2\xi}{\scalfac}, \tfrac{1}{\scalfac}]].
\end{align}
We now set $\xi \coloneqq 1 / 16$ and use the independence of $\sp{\a}{\x'}$ and $\sp{\a}{\grtr}$ to obtain
\begin{align}
\prob[\empproc_+(\a, \h) \geq 2\xi]&\geq \prob[y\sp{\a}{\x'}\geq \tfrac{1}{4}, \abs{\sp{\a}{\grtr}}\in [\tfrac{7}{8\scalfac}, \tfrac{1}{\scalfac}]] \\
&= \prob[y\sp{\a}{\x'}\geq \tfrac{1}{4}] \cdot \prob[\abs{\sp{\a}{\grtr}}\in [\tfrac{7}{8\scalfac}, \tfrac{1}{\scalfac}]] \\
&= \prob[\gaussianuniv \geq \tfrac{1}{4}] \cdot \prob[\abs{\gaussianuniv} \in [\tfrac{7}{8\scalfac}, \tfrac{1}{\scalfac}]] \gtrsim \frac{1}{\scalfac} \ ,
\end{align}
where $\gaussianuniv \distributed \Normdistr{0}{1}$, and the last estimate is due to the assumption $\scalfac \gtrsim 1$. 
Similarly, if $\h \in \cyl_1(\grtr,\scalfac) - \grtrmu$ and $\sp{\h}{\grtr} \leq 0$, we have
% let $t\h = d\grtr+e\x' \in \tilde{L}$ with $-\tfrac{\scalfac}{2} \leq d \leq 0$ and $e = 1$.
\begin{align}
\prob[\empproc_-(\a, \h) \geq 2\xi]&=\prob[-y\sp{\a}{\h}\geq 4\xi, \abs{\sp{\a}{\grtrmu}}\in [1,1+ 2\xi]]\\
&=\prob[-\sp{\h}{\grtr} \cdot \abs{\sp{\a}{\grtr}} - ey\sp{\a}{\x'}\geq 4\xi, \abs{\sp{\a}{\grtr}}\in [\tfrac{1}{\scalfac},\tfrac{1}{\scalfac}+ \tfrac{2\xi}{\scalfac}]]\\
&\geq \prob[-y\sp{\a}{\x'}\geq \tfrac{1}{4}, \abs{\sp{\a}{\grtr}} \in [\tfrac{1}{\scalfac}, \tfrac{9}{8\scalfac}]]\\
&=\prob[\gaussianuniv \geq \tfrac{1}{4}] \cdot \prob[\abs{\gaussianuniv} \in [\tfrac{1}{\scalfac}, \tfrac{9}{8\scalfac}]] \gtrsim \frac{1}{\scalfac} \ , \quad \gaussianuniv \distributed \Normdistr{0}{1}.
\end{align}
Finally, we need to handle the base of $\cyl(\grtr, \scalfac)$: For every $\h \in \cyl_2(\grtr,\scalfac) - \grtrmu$, we have that $\sp{\h}{\grtr} = - \tfrac{\scalfac}{2}$ and $\abs{e} \leq 1$, which implies
\begin{align}
\prob[\empproc_-(\a, \h) \geq 2\xi]&=\prob[-y\sp{\a}{\h}\geq 4\xi, \abs{\sp{\a}{\grtrmu}}\in [1,1+ 2\xi]]\\
&=\prob[\tfrac{\scalfac}{2} \abs{\sp{\a}{\grtr}}- ey\sp{\a}{\x'}\geq 4\xi, \abs{\sp{\a}{\grtr}}\in [\tfrac{1}{\scalfac},\tfrac{1}{\scalfac}+ \tfrac{2\xi}{\scalfac}]]\\
&=\prob[\tfrac{\scalfac}{2} \abs{\sp{\a}{\grtr}}- ey\sp{\a}{\x'}\geq \tfrac{1}{4}, \abs{\sp{\a}{\grtr}}\in [\tfrac{1}{\scalfac}, \tfrac{9}{8\scalfac}]]\\
&\geq \prob[\abs{e y\sp{\a}{\x'}}\leq \tfrac{1}{8}, \tfrac{\scalfac}{2} \abs{\sp{\a}{\grtr}} \geq \tfrac{3}{8} , \abs{\sp{\a}{\grtr}} \in [\tfrac{1}{\scalfac},\tfrac{9}{8\scalfac}]]\\
&=\prob[\abs{ey\sp{\a}{\x'}}\leq \tfrac{1}{8}] \cdot \prob[\abs{\sp{\a}{\grtr}}\in [\tfrac{1}{\scalfac},\tfrac{9}{8\scalfac}]] \\
&\geq\prob[\abs{\gaussianuniv} \leq \tfrac{1}{8}] \cdot \prob[\abs{\gaussianuniv}\in [\tfrac{1}{\scalfac},\tfrac{9}{8\scalfac}]] \gtrsim \frac{1}{\scalfac} \ , \quad \gaussianuniv \distributed \Normdistr{0}{1}.
\end{align}
By a simple union bound argument (similarly to the proof of Proposition~\ref{prop:quadprocesslowerbound}), we conclude that
\begin{equation}
	Q_{1/8}(\tilde{L}) \gtrsim \frac{1}{\scalfac} \ .
\end{equation}
Hence, with probability at least $1 - \tfrac{\eta}{2}$, the quadratic term satisfies the lower bound
\begin{align}
	\quadrterm{\x}{\grtrmu} &\geq \frac{C}{\scalfac} - C_{\quadr} \cdot \frac{\meanwidth{\tilde{L}} + \sqrt{\log(\eta^{-1})}}{\sqrt{m}} \\
	&\geq t_0 \cdot \Big(\frac{C}{t_0 \scalfac} - C_{\quadr} \cdot \frac{\tfrac{1}{t_0}\meanwidth{\tilde{L}} + \sqrt{\log(\eta^{-1})}}{\sqrt{m}} \Big) \label{eq:proofs:generalset:excessloss:quadrterm}
\end{align}
for all $\x \in \tilde{\sset}$ and appropriate numerical constants $C, C_{\quadr} > 0$; note that we have also used that $\eta \leq \tfrac{1}{2}$ and $t_0 \geq 1$ here. % as long as $\scalfac \gtrsim 1$.

Combining our lower bounds from \eqref{eq:proofs:generalset:excessloss:multiplterm} and \eqref{eq:proofs:generalset:excessloss:quadrterm}, the following holds true with probability at least $1 - \eta$ for all $\x \in \tilde{\sset}$:
\begin{align}
\excessloss(\x) &= \multiplterm{\x}{\grtrmu} + \quadrterm{\x}{\grtrmu} \\
&\geq t_0 \cdot \Big( \frac{C}{t_0 \scalfac} - C_{\multipl} \cdot \frac{ \max\{0,\sign(\theta_{\x})\}}{\scalfac^2} - (C_{\multipl} + C_{\quadr}) \cdot \frac{\tfrac{1}{t_0}\meanwidth{\tilde{L}} + \sqrt{\log(\eta^{-1})}}{\sqrt{m}} \Big), %\\
% \geq{} & t_0 \Big( \frac{1}{C_1'\scalfac^2} - (C_{\multipl} + C_{\quadr}) \frac{\sqrt{\effdim[t_0]{\ssetmu - \grtrmu}} + \sqrt{\log(\eta^{-2})}}{\sqrt{m}} \Big) , \label{eq:generalset:medium-theta:excessloss}
\end{align}
which is the claim of Lemma~\ref{lem:proofs:generalset:excessloss}.
% where in the last step we have used that
% \begin{equation}
% 	\tfrac{C_{Q}}{C_1\scalfac^2} - C_{\multipl} ( 1 - \exp(-\tfrac{1}{2\scalfac^2})) \geq \frac{1}{C_1'\scalfac^2}
% \end{equation}
% holds true for some numerical constant $C_1' > 0$ if $C_1$ is chosen sufficiently small. By the assumption of \eqref{eq:generalset:medium-theta:meas}, we can conclude that $\lossemp(\x) - \lossemp(\grtrmu) > 0$.
\end{proof}

We are now ready to prove Theorem~\ref{thm:results:generalset:global}. For this purpose, we will apply Lemma~\ref{lem:proofs:generalset:excessloss} to different subsets of $\ssetmu$ and show that the excess risk is positive on their respective cylindrical intersections with $\boundary \cyl(\grtr,\scalfac)$.
Applying Fact~\ref{fact} separately to each of these subsets then yields the desired error bound.

% \begin{draft}
% \item
% 	We will show that if $m$ is sufficiently large --- depending on $\tilde{L}$ and the associated value of $t_0$ --- the excess risk is indeed positive for all $\x \in \tilde{L} + \grtrmu$. In that way we can conclude that any vector $\x$ whose connecting segment with $\grtrmu$ intersects $\boundary \cyl(\grtr,\scalfac)$ in $L \subset \ssetmu$ cannot be a minimizer of \eqref{eq:estimatortuned}.
% \end{draft}

\begin{proof}[Proof of Theorem~\ref{thm:results:generalset:global}]
	\textbf{Part 1:}
	We first apply Lemma~\ref{lem:proofs:generalset:excessloss} to
	\begin{equation}
		L^+ \coloneqq L \coloneqq \Big\{ \x = \grtrmu + \h \in \ssetmu \suchthat 0 \leq \sp{\grtr}{\tfrac{\h}{\lnorm{\h}}} \leq \sqrt{1 - \tfrac{1}{D^2 \scalfac^2}} \Big\},
	\end{equation}
	where $D > 0$ is a numerical constant that is specified later on. Note that this set is well-defined due to the assumption $\scalfac \gtrsim 1$, or more precisely, $\scalfac \geq D^{-1}$.
	Let us now estimate the radius $t_0 = \max\{1 , \rad((L^+ \intersec \cyl(\grtr, \scalfac)) - \grtrmu)\}$. To this end, let $\h \in(L^+ \intersec \cyl(\grtr, \scalfac)) - \grtrmu$. Since $\x = \grtrmu + \h \in \cyl(\grtr,\scalfac)$, we particularly have
	\begin{align}
		\lnorm{\h}^2 &= \lnorm{\grtrmu - \x}^2 = \lnorm{\underbrace{(\scalfac - \sp{\x}{\grtr})}_{= - \sp{\h}{\grtr}}\grtr - (\x - \sp{\x}{\grtr}\grtr)}^2 \\
		&= \abs{\sp{\h}{\grtr}}^2 + \lnorm{\x - \sp{\x}{\grtr}\grtr}^2 \leq \abs{\sp{\h}{\grtr}}^2 + 1. \label{eq:proofs:generalset:global:cylboundh}
	\end{align}
	And by the definition of $L^+$, it holds that
	\begin{align}
		&\lnorm{\h}^2 \leq \abs{\sp{\h}{\grtr}}^2 + 1 \leq \lnorm{\h}^2 \cdot \Big(1 - \tfrac{1}{D^2 \scalfac^2}\Big) + 1 \\
		&\implies \quad \lnorm{\h}^2 \leq D^2 \scalfac^2 \quad \implies \quad \lnorm{\h} \leq D \scalfac,
	\end{align}
	which implies $t_0 \leq D \scalfac$.
	From \eqref{eq:proofs:generalset:excesslossbound}, we can conclude that, with probability at least $1 - \tfrac{\eta}{2}$, the excess risk satisfies
	\begin{align}
		\excessloss(\x) &\gtrsim t_0 \cdot \Big( \frac{1}{t_0 \scalfac} - \frac{C_1}{\scalfac^2} - C_2 \cdot \frac{\tfrac{1}{t_0}\meanwidth{\tilde{L}} + \sqrt{\log(\eta^{-1})}}{\sqrt{m}} \Big) \\
		&\geq t_0 \cdot \Big( \frac{1}{D \scalfac^2} - \frac{C_1}{\scalfac^2} - C_2 \cdot \frac{\tfrac{1}{t_0}\meanwidth{\tilde{L}} + \sqrt{\log(\eta^{-1})}}{\sqrt{m}} \Big) \label{eq:proofs:generalset:global:excesslosslower}
	\end{align}
	for all $\x \in \tilde{\sset} = L^+ \intersec \boundary \cyl(\grtr, \scalfac)$. Adjusting $D$ (depending on $C_1$) and observing that
	\begin{align}
		\tfrac{1}{t_0}\meanwidth{\tilde{L}} &\leq \tfrac{1}{t_0}\meanwidth{(L^+ - \grtrmu) \intersec t_0\ball[2][n]} \\
		&\leq \meanwidth{\tfrac{1}{t_0}(\ssetmu - \grtrmu) \intersec \ball[2][n]} \stackrel{\eqref{eq:efflessconic}}{\leq} \sqrt{\effdim[\conic]{\sset - \grtr}}, \label{eq:proofs:generalset:global:coniceffdim}
	\end{align}
	the assumption \eqref{eq:results:generalset:global:meas} implies that $\excessloss(\x) > 0$ for all $\x \in \tilde{\sset}$.
	In conclusion, Fact~\ref{fact} states that every minimizer $\solu$ of \eqref{eq:estimatortuned} that belongs to $L^+$ must be also contained in $\cyl(\grtr,\scalfac)$.
	Lemma~\ref{lem:proofs:generalset:cylinder}\ref{lem:proofs:generalset:cylinder:errorbound} finally yields
	\begin{equation}
		\lnorm[auto]{\grtr - \frac{\solu}{\lnorm{\solu}}} \lesssim \frac{1}{\scalfac} \ .
	\end{equation}
	
	\textbf{Part 2:}
	Analogously to Part~1, we now invoke Lemma~\ref{lem:proofs:generalset:excessloss} with
	\begin{equation}
		L^- \coloneqq L \coloneqq \Big\{ \x = \grtrmu + \h \in \ssetmu \suchthat \sp{\grtr}{\tfrac{\h}{\lnorm{\h}}} \leq 0 \Big\}.
	\end{equation}
	For $\h \in (L^- \intersec \cyl(\grtr, \scalfac)) - \grtrmu$, the definition of $\cyl(\grtr,\scalfac)$ implies that $-\tfrac{\scalfac}{2} \leq \sp{\h}{\grtr}$.
	Hence, 
	\begin{equation}
		\lnorm{\h}^2 \stackrel{\eqref{eq:proofs:generalset:global:cylboundh}}{\leq} \abs{\sp{\h}{\grtr}}^2 + 1 \leq \tfrac{\scalfac^2}{4}+1 \lesssim \scalfac^2,
	\end{equation}
	and therefore $t_0 = \max\{ 1 , \rad((L^- \intersec \cyl(\grtr, \scalfac)) - \grtrmu)\} \lesssim \scalfac$.
	By \eqref{eq:proofs:generalset:excesslossbound} and \eqref{eq:proofs:generalset:global:coniceffdim} again, we obtain
	\begin{align}
		\excessloss(\x) &\gtrsim t_0 \cdot \Big( \frac{1}{t_0 \scalfac} - C_2 \cdot \frac{\tfrac{1}{t_0}\meanwidth{\tilde{L}} + \sqrt{\log(\eta^{-1})}}{\sqrt{m}} \Big) \\
		&\gtrsim t_0 \cdot \Big( \frac{1}{\scalfac^2} - C_2 \cdot \frac{\sqrt{\effdim[\conic]{\sset - \grtr}} + \sqrt{\log(\eta^{-1})}}{\sqrt{m}} \Big) \stackrel{\eqref{eq:results:generalset:global:meas}}{>} 0 \label{eq:proofs:generalset:global:excesslosslowerneg}
	\end{align}
	for all $\x \in \tilde{\sset} = L^- \intersec \boundary \cyl(\grtr, \scalfac)$ with probability at least $1 - \tfrac{\eta}{2}$.
	As in the first part, we can conclude that every minimizer $\solu$ of \eqref{eq:estimatortuned} that belongs to $L^-$ satisfies
	\begin{equation}
		\lnorm[auto]{\grtr - \frac{\solu}{\lnorm{\solu}}} \lesssim \frac{1}{\scalfac} \ .
	\end{equation}
	
	\textbf{Part 3:} It remains to deal with those vectors $\x = \grtrmu + \h \in \ssetmu$ with $\sp{\grtr}{\tfrac{\h}{\lnorm{\h}}} > \sqrt{1 - \tfrac{1}{D^2 \scalfac^2}}$.
	In fact, such points satisfy the desired error bound \eqref{eq:results:generalset:global:bound}, no matter if they solve \eqref{eq:estimatortuned} or not.
	To verify this claim, observe that
	\begin{equation}
		\lnorm[auto]{\grtr - \frac{\x}{\lnorm{\x}}} = \left[2 \bigg(1 - \frac{\scalfac + \lnorm{\h} \sp{\grtr}{\vec{v}}}{\sqrt{\scalfac^2 + 2\scalfac\lnorm{\h}\sp{\grtr}{\vec{v}} + \lnorm{\h}^2}} \bigg)\right]^{1/2},
	\end{equation}
	where $\vec{v} \coloneqq \h / \lnorm{\h} \in \S^{n-1}$.
% 	which follows from an elementary calculation. 
	It is not hard to see that the expression on the right-hand side is monotonically increasing in $\lnorm{\h}$.
	%This can be easily verified by computing the derivative with respect to $\lnorm{\h}$.
	Thus, by taking the limit $\lnorm{\h} \to \infty$, we obtain
	\begin{align}
		\lnorm[auto]{\grtr - \frac{\x}{\lnorm{\x}}} &\leq \lim_{\lnorm{\h} \to \infty} \left[2 \bigg(1 - \frac{\scalfac + \lnorm{\h} \sp{\grtr}{\vec{v}}}{\sqrt{\scalfac^2 + 2\scalfac\lnorm{\h}\sp{\grtr}{\vec{v}} + \lnorm{\h}^2}} \bigg)\right]^{1/2} \\
		&= \sqrt{2 (1 - \sp{\grtr}{\vec{v}})} \leq \sqrt{2 (1 - \sp{\grtr}{\vec{v}}^2)} < \frac{\sqrt{2}}{D \scalfac} \lesssim \frac{1}{\scalfac} \ .
	\end{align}
	This proves the claim under the hypothesis of \eqref{eq:results:generalset:global:meas}.
	
	The same argument works out if \eqref{eq:results:generalset:global:meas:global} is satisfied instead.
	Indeed, the only estimate that needs to be changed in Part~1 is \eqref{eq:proofs:generalset:global:excesslosslower}:
	\begin{align}
		\excessloss(\x) &\gtrsim t_0 \cdot \Big( \frac{1}{t_0 \scalfac} - \frac{C_1}{\scalfac^2} - C_2 \cdot \frac{\tfrac{1}{t_0}\meanwidth{\tilde{L}} + \sqrt{\log(\eta^{-1})}}{\sqrt{m}} \Big) \\ 
		&= \frac{1}{\scalfac} - \frac{C_1 t_0}{\scalfac^2} - C_2 \cdot \frac{\meanwidth{\tilde{L}} + t_0 \sqrt{\log(\eta^{-1})}}{\sqrt{m}} \\
		&\geq \frac{1}{\scalfac} - \frac{C_1 D}{\scalfac} - C_2 \cdot \frac{\meanwidth{\scalfac(\sset - \grtr)} + D \scalfac \sqrt{\log(\eta^{-1})}}{\sqrt{m}} \\
		&\gtrsim \frac{1}{\scalfac} - C_2 \cdot \scalfac \cdot \frac{\meanwidth{\sset} + \sqrt{\log(\eta^{-1})}}{\sqrt{m}} \ .
	\end{align}
	Since the same lower bound can be achieved in Part~2, the positivity of $\excessloss(\cdot)$ is a consequence of \eqref{eq:results:generalset:global:meas:global} in both cases.
\end{proof}
\begin{remark}
	A key step of the above proof is to control the radii $t_0$ of the respective cylindrical intersections.
	While this strategy works out nicely in Part~1 and Part~2 with $t_0 \lesssim \scalfac$, the remaining vectors of Part~3 need to be treated very differently.
	In fact, the statistical argument of Lemma~\ref{lem:proofs:generalset:excessloss} would completely fail in this case.
	But fortunately, every vector $\x = \grtrmu + \h \in \ssetmu$ that satisfies $\sp{\grtr}{\tfrac{\h}{\lnorm{\h}}} > \sqrt{1 - \tfrac{1}{D^2 \scalfac^2}}$ already lies in a ``narrow'' conic segment around $\spann\{\grtr\}$, whose projection onto the sphere is sufficiently close to $\grtr$.
	But one should be aware of the fact that these $\x$ do not necessarily belong to $\cyl(\grtr,\scalfac)$.
\end{remark}

The proof of Theorem~\ref{thm:results:generalset:local} is slightly more involved.
A careful study of the proof of Theorem~\ref{thm:results:generalset:global} reveals that the set $(L^- \intersec \cyl(\grtr, \scalfac)) - \grtrmu$ considered in Part~2 is simply too large and its radius $t_0$ may scale in the order of $\scalfac$. In order to ensure positivity of the excess risk, we therefore have to accept a suboptimal factor of $\scalfac^4$ in \eqref{eq:results:generalset:global:meas} .
However, this drawback can be avoided by showing that any minimizer of \eqref{eq:estimatortuned} actually lies on the boundary of $\ssetmu$ with high probability.
For this purpose, we first verify that the empirical risk $\x \mapsto \lossemp(\x)$ does not vanish in a neighborhood of $\grtrmu$, i.e., $\grtrmu$ is not too far behind the classification margin (cf. Subsection~\ref{subsec:results:generalset:geometry}).
This is precisely what is claimed by the following lemma, which is based on a standard concentration argument:
\begin{lemma}\label{lem:proofs:generalset:lossempconcentration}
	Let $L \subset t \ball[2][n]$ be a subset and assume that $\scalfac \gtrsim \max\{1,t\}$. % with $t \geq 1$.
	For every $\eta \in \intvop{0}{\tfrac{1}{2}}$, the following holds true with probability at least $1 - \eta$:
	Supposed that 
	\begin{equation}\label{eq:proofs:generalset:lossempconcentration:meas}
		m \gtrsim \scalfac^{2} \cdot \max\{\meanwidth{L}^2, \log(\eta^{-1})\},
	\end{equation}
	the empirical risk is positive on $L + \grtrmu$, i.e.,
	\begin{equation}
		\lossemp(\grtrmu + \h) > 0 \quad \text{for all $\h \in L$.}
	\end{equation}
\end{lemma}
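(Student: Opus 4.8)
The plan is to exploit that, by convexity of $\lossemp(\cdot)$, the quadratic term in the decomposition \eqref{eq:proofs:generalset:excesslossdecomp} is non-negative, so that discarding it yields the pointwise bound $\lossemp(\grtrmu+\h)=\lossemp(\grtrmu)+\excessloss(\grtrmu+\h)\geq\lossemp(\grtrmu)+\multiplterm{\grtrmu+\h}{\grtrmu}$ for every $\h\in L$. It therefore suffices to prove that, with probability at least $1-\eta$, the anchor value $\lossemp(\grtrmu)$ strictly dominates $\sup_{\h\in L}\bigl(-\multiplterm{\grtrmu+\h}{\grtrmu}\bigr)$. I would bound these two quantities separately: the first by a plain i.i.d.\ concentration inequality, the second by Mendelson's multiplier inequality (Theorem~\ref{thm:multiplierprocess}) after identifying its expectation.

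First I would lower bound the anchor term. Since $y_i=\sign(\sp{\a_i}{\grtr})$, one has $\lossemp(\grtrmu)=\tfrac1m\sum_{i=1}^m X_i$ with $X_i\coloneqq\pospart{1-\scalfac\abs{\sp{\a_i}{\grtr}}}\in[0,1]$ i.i.d., and a direct Gaussian integration shows $\mean[X_1]=\mean[\pospart{1-\scalfac\abs{\gaussianuniv}}]\asymp\scalfac^{-1}$ for $\scalfac\geq1$; in particular $\mean[X_1]\geq c_0\scalfac^{-1}$ for an absolute constant $c_0>0$. Because $\var[X_1]\leq\mean[X_1]\lesssim\scalfac^{-1}$, Bernstein's inequality gives $\lossemp(\grtrmu)\geq\tfrac12\mean[X_1]\geq\tfrac{c_0}{2\scalfac}$ on an event of probability at least $1-\exp(-c\,m/\scalfac)$, which is at least $1-\tfrac\eta2$ under \eqref{eq:proofs:generalset:lossempconcentration:meas} (recall $\scalfac\gtrsim1$).

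For the multiplier term, write $z_i=-y_i\indset{\intvopcl{-\infty}{1}}(y_i\sp{\a_i}{\grtrmu})$ as in \eqref{eq:basicequality:hinge}, so that $\abs{z_i}\leq1$ (hence $\normsubg{z_i}\lesssim1$) and $\multiplterm{\grtrmu+\h}{\grtrmu}=\tfrac1m\sum_i z_i\sp{\a_i}{\h}$. Repeating the computation leading to \eqref{eq:proofs:generalset:excessloss:expmultipl} gives $\mean[\multiplterm{\grtrmu+\h}{\grtrmu}]=-\sp{\h}{\grtr}\sqrt{\tfrac2\pi}\bigl(1-e^{-1/(2\scalfac^2)}\bigr)$, and since $\abs{\sp{\h}{\grtr}}\leq\norm{\h}\leq t$ and $1-e^{-1/(2\scalfac^2)}\leq(2\scalfac^2)^{-1}$, the hypothesis $\scalfac\gtrsim t$ (with a sufficiently large absolute constant) forces $\inf_{\h\in L}\mean[\multiplterm{\grtrmu+\h}{\grtrmu}]\geq-\tfrac{c_0}{8\scalfac}$. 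The fluctuation $\sup_{\h\in L}\bigl|\multiplterm{\grtrmu+\h}{\grtrmu}-\mean[\multiplterm{\grtrmu+\h}{\grtrmu}]\bigr|$ is then controlled by Theorem~\ref{thm:multiplierprocess} applied with $L\subset t\ball[2][n]$ and a suitable choice of the parameter $u$: under \eqref{eq:proofs:generalset:lossempconcentration:meas}, which is calibrated so that both $\meanwidth{L}/\sqrt m$ and the $u$-dependent tail contribution are at most a small multiple of $\scalfac^{-1}$, this supremum is at most $\tfrac{c_0}{8\scalfac}$ with probability at least $1-\tfrac\eta2$. Intersecting the two events and combining the three estimates yields $\lossemp(\grtrmu+\h)\geq\tfrac{c_0}{2\scalfac}-\tfrac{c_0}{8\scalfac}-\tfrac{c_0}{8\scalfac}=\tfrac{c_0}{4\scalfac}>0$ for all $\h\in L$, which is the claim.

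The main obstacle — and the reason the two hypotheses $\scalfac\gtrsim\max\{1,t\}$ and $m\gtrsim\scalfac^2\max\{\meanwidth{L}^2,\log(\eta^{-1})\}$ are both needed — is that, having deliberately discarded the non-negative quadratic term, positivity of $\lossemp$ must be extracted entirely from the anchor value $\lossemp(\grtrmu)$, which is only of order $\scalfac^{-1}$. Consequently one has to verify that the expected multiplier term (of size $\asymp t\scalfac^{-2}$) and its empirical fluctuation are each genuinely smaller than a fixed fraction of $\scalfac^{-1}$; this is precisely where the scaling $\scalfac\gtrsim t$ and the $\scalfac^2$-factor in the sample count enter, and the comparison of the corresponding absolute constants has to be tracked carefully rather than hidden in $\lesssim$-notation.
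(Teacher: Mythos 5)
Your approach — discard the nonnegative quadratic term, lower bound the anchor $\lossemp(\grtrmu)$ by Bernstein, and control the multiplier term via Theorem~\ref{thm:multiplierprocess} — is genuinely different from the paper's, but it does not prove the lemma under its stated hypotheses. The problem is the $u\cdot t$ term in the conclusion of Theorem~\ref{thm:multiplierprocess}: with $u\asymp\sqrt{\log(\eta^{-1})}$ and $m\gtrsim\scalfac^2\log(\eta^{-1})$, the fluctuation of the multiplier process is only controlled at scale
\begin{equation}
	\frac{\meanwidth{L}+u\,t}{\sqrt m}\;\lesssim\;\frac{\meanwidth{L}}{\sqrt m}\;+\;\frac{t}{\scalfac}\,,
\end{equation}
and the second summand is $\lesssim t/\scalfac$, not $\lesssim1/\scalfac$. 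Since the lemma must hold for any $L\subset t\ball[2][n]$ — and in its application inside the proof of Theorem~\ref{thm:results:generalset:local} one indeed takes $t=t_0$ which may be large — the hypothesis $m\gtrsim\scalfac^2\log(\eta^{-1})$ is strictly too weak to make this term a fixed small fraction of $\scalfac^{-1}$. You would need $m\gtrsim\scalfac^2 t^2\log(\eta^{-1})$, which is not what the lemma assumes. Appealing to $\scalfac\gtrsim t$ does not rescue the step: it only yields $t/\scalfac\lesssim1$, not $t/\scalfac\lesssim1/\scalfac$.

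The reason the paper's proof does not run into this is that it never splits off the linear multiplier term. It lower bounds $\lossemp(\grtrmu+\h)\geq\tfrac1m\sum_i Z_i(\h)$ with $Z_i(\h)=\pospart{1-\abs{\sp{\a_i}{\grtrmu+\h}}}\in[0,1]$, so each summand is \emph{uniformly} bounded in $[0,1]$ for every $\h$ — a property your decomposition destroys, since $z_i\sp{\a_i}{\h}$ is unbounded and its scale grows with $\lnorm{\h}\leq t$. The bounded difference inequality then gives a deviation $\sqrt{\log(\eta^{-1})/m}$ with no $t$-dependence at all, and symmetrization plus the contraction principle give $\mean[\sup_{\h\in L}\cdots]\lesssim\meanwidth{L}/\sqrt m$. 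Combined with $\mean[Z_1(\h)]\gtrsim1/\lnorm{\grtrmu+\h}\gtrsim1/\scalfac$, this yields positivity under precisely the stated hypothesis $m\gtrsim\scalfac^2\max\{\meanwidth{L}^2,\log(\eta^{-1})\}$. To salvage your route you would have to either strengthen the sampling condition to include a factor $t^2$ (which would then propagate into Theorem~\ref{thm:results:generalset:local}) or replace the appeal to Theorem~\ref{thm:multiplierprocess} with an argument that exploits boundedness rather than sub-Gaussianity of the multipliers, which effectively brings you back to the paper's method.
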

\begin{proof}
	The monotonicity of the hinge loss yields the following lower bound for the empirical risk function:
	\begin{align}
		\lossemp(\grtrmu + \h) &= \tfrac{1}{m} \sum_{i = 1}^m \pospart{1 - y_i\sp{\a_i}{\grtrmu + \h}} \geq \tfrac{1}{m} \sum_{i = 1}^m \underbrace{\pospart{1 - \abs{\sp{\a_i}{\grtrmu + \h}}}}_{\eqqcolon Z_i(\h)}
	\end{align}
	for all $\h \in \R^n$. Let us establish a concentration inequality for the empirical process on the right-hand side of this estimate.
	Since $Z_i(\cdot) \in \intvcl{0}{1}$, the bounded difference inequality \cite[Thm.~6.2]{boucheron2013concentration} implies that
	\begin{equation}
		\sup_{\h \in L} \tfrac{1}{m}\sum_{i = 1}^m \Big(\mean[Z_i(\h)] - Z_i(\h)\Big) \leq \underbrace{\mean{} \Big[ \sup_{\h \in L} \tfrac{1}{m}\sum_{i = 1}^m \Big(\mean[Z_i(\h)] - Z_i(\h)\Big) \Big]}_{\eqqcolon E} + \frac{\sqrt{2\log(\eta^{-1})}}{\sqrt{m}} \label{eq:proofs:generalset:lossempconcentration:bnddiff}
	\end{equation}
	with probability at least $1 - \eta$.
	
	In order to bound $E$ from above, we first apply a standard symmetrization argument (cf. \cite[Pf.~of~Lem.~6.3]{ledoux1991pbs}):
	\begin{align}
		E &= \mean{} \Big[ \sup_{\h \in L} \tfrac{1}{m}\sum_{i = 1}^m \Big((1 - Z_i(\h)) - \mean[1 - Z_i(\h)]\Big) \Big] \leq 2 \mean{} \Big[ \sup_{\h \in L} \tfrac{1}{m}\sum_{i = 1}^m \epsilon_i (1 - Z_i(\h)) \Big],
	\end{align}
	where $\epsilon_i$ are independent Rademacher variables. 
	Next, we define $\psi(v) \coloneqq 1 - \pospart{1 - \abs{v}}$ for $v \in \R$ and observe that $\psi(\sp{\a_i}{\grtrmu + \h}) = 1 - \pospart{1 - \abs{\sp{\a_i}{\grtrmu + \h}}} = 1 - Z_i(\h)$. Since $\psi$ is a contraction that fixes the origin, the contraction principle \cite[Eq.~(4.20)]{ledoux1991pbs} finally leads to
	\begin{align}
		E &\leq 2 \mean{} \Big[ \sup_{\h \in L} \tfrac{1}{m}\sum_{i = 1}^m \epsilon_i \sp{\a_i}{\grtrmu + \h} \Big] = 2 \mean{} [ \sup_{\h \in L} \tfrac{1}{\sqrt{m}}\sp{\a}{\grtrmu + \h} ] = \tfrac{2}{\sqrt{m}} \mean{} [ \sup_{\h \in L} \sp{\gaussian}{\h} ] = \tfrac{2\meanwidth{L}}{\sqrt{m}},
	\end{align}
	where $\gaussian \distributed \Normdistr{\vnull}{\I{n}}$.
	Therefore, under the hypothesis of \eqref{eq:proofs:generalset:lossempconcentration:bnddiff}, we have
	\begin{align}
		\lossemp(\grtrmu + \h) \geq \tfrac{1}{m}\sum_{i = 1}^m Z_i(\h) \gtrsim \mean[Z_1(\h)] - \frac{\meanwidth{L} + \sqrt{\log(\eta^{-1})}}{\sqrt{m}}
		\label{eq:proofs:generalset:lossempconcentration:lowerbnd}
	\end{align}
	for all $\h \in L$. To conclude the proof, it remains to estimate $\mean[Z_1(\h)]$ from below: 
	\begin{align}
		\mean[Z_1(\h)] = \mean{} [\pospart[]{ 1 - \abs{\underbrace{\sp{\a}{\grtrmu + \h}}_{\distributed \Normdistr{0}{\lnorm{\grtrmu + \h}^2}}} }] \gtrsim \frac{1}{\lnorm{\grtrmu + \h}} \geq \frac{1}{\scalfac + t} \gtrsim \frac{1}{\scalfac} \ ,
	\end{align}
	where we have also used that $\scalfac \gtrsim \max\{1, t\}$.
	The claim now follows from \eqref{eq:proofs:generalset:lossempconcentration:lowerbnd} and the assumption of \eqref{eq:proofs:generalset:lossempconcentration:meas}.
\end{proof}

We are now ready to prove Theorem~\ref{thm:results:generalset:local}.
\begin{proof}[Proof of Theorem~\ref{thm:results:generalset:local}]
	\textbf{Part 1:} For $\x \in \R^n$, we denote by $\x^{\uparrow}$ the ray that starts at $\x$ and proceeds in the direction of $\grtr$ (parallel to $\spann\{\grtr\}$), i.e.,
	\begin{equation}
		\x^{\uparrow} \coloneqq \{ \x + \tau\grtr \suchthat \tau \geq 0 \} \subset \R^n.
	\end{equation}
	If $\x \in \ssetmu$, then $\x^{\uparrow}$ intersects the boundary of $\ssetmu$ in a point $\boundary_0 \x \coloneqq \x + \tau_0 \grtr \in \x^\uparrow \intersec \boundary(\ssetmu)$ with\footnote{While the intersection of $\x^\uparrow$ and $\boundary(\ssetmu)$ is not necessarily a single point, the boundary point $\boundary_0 \x$ is uniquely defined due to the definition of $\tau_0$.}
	\begin{equation}
		\tau_0 = \sup \{ \tau \geq 0 \suchthat \x + \tau \grtr \in \ssetmu \}.
	\end{equation}
	
	Since we consider perfect $1$-bit observations, i.e., $y_i = \sign(\sp{\a_i}{\grtr})$, it is not hard to see that for every $\x \in \R^n$, there exists $\x^\natural \in \x^\uparrow \intersec \setcompl{(\ssetmu)}$ such that $\lossemp(\x^\natural) = 0$; note that the choice of $\x^\natural$ is highly non-unique and may strongly depend on the measurement vectors $\a_1, \dots, \a_m$.
	Hence, similarly to the assertion of Fact~\ref{fact}, the convexity of $\lossemp(\cdot)$ and $\ssetmu$ implies that every minimizer $\solu$ of \eqref{eq:estimatortuned} satisfies the following property:
	\begin{fact}\label{fact:proofs:generalset:local}
		If $\solu \neq \boundary_0\solu$, then we have $\lossemp(\solu^\uparrow) = \{0\}$. 
		Or equivalently, $\lossemp(\solu) > 0$ implies that $\solu = \boundary_0\solu$.
	\end{fact}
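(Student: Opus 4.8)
The plan is to argue entirely along the ray $\solu^{\uparrow} = \{\solu + \tau\grtr \suchthat \tau \ge 0\}$, exploiting convexity of the empirical risk exactly as in the proof of Fact~\ref{fact}. Set $\phi(\tau) \coloneqq \lossemp(\solu + \tau\grtr)$ for $\tau \ge 0$; since $\lossemp$ is a non-negative convex function (a sum of hinge losses precomposed with linear maps) and $\tau \mapsto \solu + \tau\grtr$ is affine, $\phi$ is convex and non-negative on $[0,\infty)$. I will combine two elementary facts about $\phi$. First, because we are in the perfect $1$-bit model $y_i = \sign(\sp{\a_i}{\grtr})$, and because $\sp{\a_i}{\grtr} \distributed \Normdistr{0}{1}$ is nonzero almost surely for all $i \in [m]$, the quantity $y_i\sp{\a_i}{\solu + \tau\grtr} = y_i\sp{\a_i}{\solu} + \tau\abs{\sp{\a_i}{\grtr}}$ is non-decreasing in $\tau$ and tends to $+\infty$; hence there is a finite $\tau^{\natural}$ with $\phi(\tau) = 0$ for all $\tau \ge \tau^{\natural}$. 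This $\tau^{\natural}$ realizes the vector $\solu^{\natural} = \solu + \tau^{\natural}\grtr \in \solu^{\uparrow} \intersec \setcompl{(\ssetmu)}$ mentioned in the text, and since $\ssetmu$ is convex and bounded with $\solu \in \ssetmu$ but $\solu^{\natural} \notin \ssetmu$, necessarily $\tau^{\natural} > \tau_0$. Second, since $\ssetmu = \scalfac\sset$ is closed and bounded, $\tau_0 = \sup\{\tau \ge 0 \suchthat \solu + \tau\grtr \in \ssetmu\}$ is finite and attained, and the whole segment $\{\solu + \tau\grtr \suchthat 0 \le \tau \le \tau_0\}$ lies in $\ssetmu$; minimality of $\solu$ over $\ssetmu$ then gives $\phi(0) \le \phi(\tau)$ for every $\tau \in [0,\tau_0]$.

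Now assume $\solu \ne \boundary_0\solu$, i.e.\ $\tau_0 > 0$. If $\phi(0) > 0$, the chord bound for the convex function $\phi$ on the interval $[0,\tau^{\natural}]$, together with $\phi(\tau^{\natural}) = 0$, yields $\phi(\tau_0) \le (1 - \tfrac{\tau_0}{\tau^{\natural}})\,\phi(0) < \phi(0)$, which contradicts $\phi(0) \le \phi(\tau_0)$. Hence $\phi(0) = \lossemp(\solu) = 0$. Finally, the zero set of the convex function $\phi$ is convex (a sublevel set), and it contains both $0$ and the half-line $[\tau^{\natural},\infty)$, so it is all of $[0,\infty)$; that is, $\lossemp \equiv 0$ on $\solu^{\uparrow}$, i.e.\ $\lossemp(\solu^{\uparrow}) = \{0\}$. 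The equivalent contrapositive, $\lossemp(\solu) > 0 \implies \solu = \boundary_0\solu$, is immediate.

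Everything here is routine convexity bookkeeping, so I do not expect a genuine obstacle in this Fact itself; the only points deserving a word of care are the almost-sure nonvanishing of $\sp{\a_i}{\grtr}$ (which is exactly what forces the empirical risk to vanish eventually along the direction $\grtr$, and hence locates $\solu^{\natural}$) and the closedness and boundedness of $\ssetmu$ (which make $\boundary_0\solu$ and $\tau_0$ well defined). The real work lies downstream: this Fact is the key tool allowing the proof of Theorem~\ref{thm:results:generalset:local} to restrict attention to minimizers on $\boundary(\ssetmu)$ and thereby keep the radius $t_0$ of the relevant cylindrical intersection under control.
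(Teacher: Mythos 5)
Your proof is correct and follows essentially the same route the paper sketches: construct a point $\solu^\natural \in \solu^\uparrow \setminus \ssetmu$ with $\lossemp(\solu^\natural)=0$ (using $y_i=\sign(\sp{\a_i}{\grtr})$ and the a.s.\ nonvanishing of $\sp{\a_i}{\grtr}$), then combine convexity of $\lossemp$ along the ray with the minimality of $\solu$ on the segment $[\solu,\boundary_0\solu]\subset\ssetmu$ to force $\lossemp\equiv 0$ on $\solu^\uparrow$. The paper leaves this as a one-line remark (``similarly to Fact~\ref{fact}, the convexity of $\lossemp(\cdot)$ and $\ssetmu$ implies\dots''); your chord-bound argument fills in exactly those details.
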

	A particular consequence of Fact~\ref{fact:proofs:generalset:local} is that every point in $\solu^\uparrow \intersec \ssetmu$ is also a minimizer of \eqref{eq:estimatortuned}.
% 	The contraposition of (P) implies that if $\lossemp(\solu) > 0$ it holds $\solu = \boundary_0\solu$.
	The proof strategy of the following two parts is visualized in Figure~\ref{fig:proofs:generalset:proof-generalset-local}.
	\begin{figure}
	\centering
	\begin{subfigure}[t]{0.4\textwidth}
		\centering
		\tikzstyle{blackdot}=[shape=circle,fill=black,minimum size=1mm,inner sep=0pt,outer sep=0pt]
		\begin{tikzpicture}[scale=2,extended line/.style={shorten >=-#1,shorten <=-#1},extended line/.default=1cm]]
			\coordinate (K1) at (0,0);
			\coordinate (K2) at (.85,-1.13333);
			\coordinate (K3) at (.5,-1.75);
			\coordinate (K4) at (-.5,-1.5);
			\coordinate (K5) at (-.5,-.75);
			\coordinate (P0) at (0,-1);
			\path[name path=K1--K2] (K1) -- (K2);
			\path[name path=P0--X0] (P0) -- ++(65:1);
			\path[name intersections={of=P0--X0 and K1--K2,by=X0}];
% 			\coordinate (X0) at (.25,-.33333);
			\coordinate (X0muhalf) at ($(X0)!.5!(P0)$);
			\coordinate (SpanEnd) at ($(X0)!-1.5!(P0)$);
% 			\coordinate (tempCyl) at ($(X0muhalf) + (.3,.2)$);
			
			\path (X0muhalf) -- ++(-25:.2) coordinate (CylR);
			\coordinate (CylL) at ($(X0muhalf)!-1!(CylR)$);
			\coordinate (CylLtop) at ($(CylL)!(SpanEnd)!90:(CylR)$);
			\coordinate (CylRtop) at ($(CylR)!(SpanEnd)!90:(CylL)$);
			
			\path[name path=SpanEndCompl] (CylLtop) -- ++(-25:1);

			\coordinate (XhatP2) at ($(P0) + (0,.2)$);
			\path[name path=XhatP2ray] (XhatP2) -- ++(65:2);
% 			\path[name path=SpanEndCompl] (CylLtop) -- ++(-25,1);
						
			\draw[fill=gray!20!white,thick] (K1) -- (K2) -- (K3) -- (K4) -- (K5) -- cycle;
			\draw[fill=gray!20!white] (CylLtop) -- (CylL) -- (CylR) -- (CylRtop);
			\begin{scope}
				\clip (K1) -- (K2) -- (K3) -- (K4) -- (K5) -- cycle;
				\draw[fill=gray!60!white] (CylLtop) -- (CylL) -- (CylR) -- (CylRtop);
			\end{scope}
			\draw[thick] (K1) -- (K2) -- (K3) -- (K4) -- (K5) -- cycle;
% 			\begin{scope}
% 				\clip (CylLtop) -- (CylL) -- (CylR) -- (CylRtop) -- cycle;
% 				\draw[red,ultra thick] (K1) -- (K2) -- (K3) -- (K4) -- (K5) -- cycle;
% 			\end{scope}

			\draw[thick,name intersections={of=SpanEndCompl and XhatP2ray}] (XhatP2) -- (intersection-1) coordinate [pos=.8] (XhatP2rayanchor);
			\path[name intersections={of=XhatP2ray and K1--K2,by=XhatP2b}];
			
			\node[left=.7 of XhatP2rayanchor] (XhatP2braylabel) {$\solu^\uparrow$};
			\path[<-,shorten <=3pt,>=stealth] (XhatP2rayanchor) edge (XhatP2braylabel) ;

			\node[blackdot,fill=red] at (XhatP2b) {};
			\node[left=1.2 of XhatP2b] (XhatP2blabel) {$\boundary_0 \solu$};
			\path[<-,shorten <=3pt,>=stealth] (XhatP2b) edge (XhatP2blabel) ;

			\draw[dashed] (SpanEnd) -- ($(X0)!2.75!(P0)$);
			\node at (barycentric cs:K1=1,K2=1,K3=8,K4=1,K5=1) {$\scalfac\sset$};
			\node[blackdot, label=0:$\vnull$] at (P0) {};
			\node[blackdot, label=right:$\grtrmu$] at (X0) {};
			\node[label={[label distance=-3pt]left:$\spann\{\grtr\}$}] at ($(X0)!2.35!(P0)$) {};
			\node[below right=.2 and .0 of CylRtop] {$\cyl(\grtr,\scalfac)$};
			\node[blackdot, label=left:$\solu$] at (XhatP2) {};
		\end{tikzpicture}
		\caption{Situation of Part~2}
		\label{fig:proofs:generalset:proof-generalset-local:part2}
	\end{subfigure}%
	\qquad\qquad
	\begin{subfigure}[t]{0.4\textwidth}
		\centering
		\tikzstyle{blackdot}=[shape=circle,fill=black,minimum size=1mm,inner sep=0pt,outer sep=0pt]
		\begin{tikzpicture}[scale=2,extended line/.style={shorten >=-#1,shorten <=-#1},extended line/.default=1cm]]
			\coordinate (K1) at (0,0);
			\coordinate (K2) at (.85,-1.13333);
			\coordinate (K3) at (.5,-1.75);
			\coordinate (K4) at (-.5,-1.5);
			\coordinate (K5) at (-.5,-.75);
			\coordinate (P0) at (0,-1);
			\path[name path=K1--K2] (K1) -- (K2);
			\path[name path=P0--X0] (P0) -- ++(65:1);
			\path[name intersections={of=P0--X0 and K1--K2,by=X0}];
% 			\coordinate (X0) at (.25,-.33333);
			\coordinate (X0muhalf) at ($(X0)!.5!(P0)$);
			\coordinate (SpanEnd) at ($(X0)!-1.5!(P0)$);
% 			\coordinate (tempCyl) at ($(X0muhalf) + (.3,.2)$);
			
			\path (X0muhalf) -- ++(-25:.2) coordinate (CylR);
			\coordinate (CylL) at ($(X0muhalf)!-1!(CylR)$);
			\coordinate (CylLtop) at ($(CylL)!(SpanEnd)!90:(CylR)$);
			\coordinate (CylRtop) at ($(CylR)!(SpanEnd)!90:(CylL)$);
			
			\path[name path=SpanEndCompl] (CylLtop) -- ++(-25:1);

			\coordinate (XhatP3) at ($(P0) + (0.5,-.15)$);
			\path[name path=XhatP3ray] (XhatP3) -- ++(65:2);
% 			\path[name path=SpanEndCompl] (CylLtop) -- ++(-25,1);
						
			\draw[fill=gray!20!white,thick] (K1) -- (K2) -- (K3) -- (K4) -- (K5) -- cycle;
			\draw[fill=gray!20!white] (CylLtop) -- (CylL) -- (CylR) -- (CylRtop);
			\begin{scope}
				\clip (K1) -- (K2) -- (K3) -- (K4) -- (K5) -- cycle;
				\draw[fill=gray!60!white] (CylLtop) -- (CylL) -- (CylR) -- (CylRtop);
			\end{scope}
			\draw[thick] (K1) -- (K2) -- (K3) -- (K4) -- (K5) -- cycle;
% 			\begin{scope}
% 				\clip (CylLtop) -- (CylL) -- (CylR) -- (CylRtop) -- cycle;
% 				\draw[red,ultra thick] (K1) -- (K2) -- (K3) -- (K4) -- (K5) -- cycle;
% 			\end{scope}

			\draw[thick, name intersections={of=SpanEndCompl and XhatP3ray}] (XhatP3) -- (intersection-1) node [pos=.7,right] {$\solu^\uparrow$};
			\path[name intersections={of=XhatP3ray and K1--K2,by=XhatP3b}];
			
			\node[blackdot,fill=red] at (XhatP3b) {};
			\node[right=.7 of XhatP3b] (XhatP3blabel) {$\boundary_0 \solu$};
			\path[<-,shorten <=3pt,>=stealth] (XhatP3b) edge (XhatP3blabel) ;

			\draw[dashed] (SpanEnd) -- ($(X0)!2.75!(P0)$);
			\node at (barycentric cs:K1=1,K2=1,K3=8,K4=1,K5=1) {$\scalfac\sset$};
% 			\node[draw,dashed,label={[label distance=-3pt]above left:$\S^{n-1}$}] at (P0) [circle through={(X0)}] {};
% 			\draw[dashed] (P0) circle (.25) ;
% 			\node[blackdot,label=below :$\vnull$] (P0) at (barycentric cs:K1=0.9,K2=0.5,K3=0.5,K4=0.5,K5=0.8) {};
			\node[blackdot, label=0:$\vnull$] at (P0) {};
			\node[blackdot, label=right:$\grtrmu$] at (X0) {};
			\node[label={[label distance=-3pt]left:$\spann\{\grtr\}$}] at ($(X0)!2.35!(P0)$) {};
% 			\node[blackdot,label={[xshift=2pt,yshift=-2pt]180:\smaller{$\tfrac{\scalfac}{2}\grtr$}}] at (X0muhalf) {};
			\node[below left=0 and .2 of CylLtop] {$\cyl(\grtr,\scalfac)$};
			\node[blackdot, label=right:$\solu$] at (XhatP3) {};
		\end{tikzpicture}
		\caption{Situation of Part~3}
		\label{fig:proofs:generalset:proof-generalset-local:part3}
	\end{subfigure}%	
	\caption{The geometric situations of Part~2 and Part~3 in the proof of Theorem~\protect{\ref{thm:results:generalset:local}}. In Part~2, we show that $\solu \neq \boundary_0\solu$ leads to a contradiction (see \subref{fig:proofs:generalset:proof-generalset-local:part2}), whereas the event of Part~3 cannot occur because $\solu^\uparrow$ always intersects a region on which the excess risk is positive.}
	\label{fig:proofs:generalset:proof-generalset-local}
	\end{figure}
	
	\textbf{Part 2:} Let us assume that $\solu$ is a minimizer of \eqref{eq:estimatortuned} such that $\boundary_0 \solu \in \cyl(\grtr,\scalfac)$.
	If we could show that $\solu = \boundary_0\solu$, the desired error bound would follow again from Lemma~\ref{lem:proofs:generalset:cylinder}\ref{lem:proofs:generalset:cylinder:errorbound}.
	Towards a contraction, let us therefore assume that $\solu \neq \boundary_0\solu$. Then Fact~\ref{fact:proofs:generalset:local} yields $\lossemp(\boundary_0\solu) = 0$.
	In order to show that this event does not occur with high probability, we apply Lemma~\ref{lem:proofs:generalset:lossempconcentration} with
	\begin{equation}
		L = (\boundary(\ssetmu) \intersec \cyl(\grtr,\scalfac)) - \grtrmu \subset t \ball[2][n],
	\end{equation}
	where $t = t_0 = \max\{1 , \rad(L)\}$; see also \eqref{eq:results:generalset:t0}.
	Note that the condition \eqref{eq:proofs:generalset:lossempconcentration:meas} is fulfilled by \eqref{eq:results:generalset:local:meas} and
	\begin{align}
		\max\{\meanwidth{L}^2, \log(\eta^{-1})\} &\stackrel{t_0 \gtrsim 1}{\lesssim} t_0^2 \cdot \max\{\tfrac{1}{t_0^2}\meanwidth{L}^2, \log(\eta^{-1})\} \\
		&\stackrel{\eqref{eq:efflessconic}}{\leq} t_0^2 \cdot \max\{\effdim[\conic]{\sset - \grtr}, \log(\eta^{-1})\}.
	\end{align}
	Lemma~\ref{lem:proofs:generalset:lossempconcentration} now states that, with probability at least $1- \tfrac{\eta}{2}$, we have $\lossemp(\grtrmu + \h) > 0$ for all $\h \in L$, or equivalently, $\lossemp(\x) > 0$ for all $\x \in \boundary(\ssetmu) \intersec \cyl(\grtr,\scalfac)$.
	In particular, it holds that $\lossemp(\boundary_0\solu) > 0$, which is a contradiction.
		
	\textbf{Part 3:} It remains to verify that the event $\boundary_0 \solu \not\in \cyl(\grtr,\scalfac)$ cannot occur with high probability.
	Towards a contradiction, let us assume that there exists a minimizer $\solu$ of \eqref{eq:estimatortuned} such that $\boundary_0 \solu \not\in \cyl(\grtr,\scalfac)$.
	
	Next, we apply Lemma~\ref{lem:proofs:generalset:excessloss} with $L \coloneqq \boundary(\ssetmu)$ and proceed analogously to the proof of Theorem~\ref{thm:results:generalset:global}:
	Using \eqref{eq:results:generalset:local:meas} and the assumption $\scalfac \gtrsim t_0$, we can conclude from \eqref{eq:proofs:generalset:excesslossbound} that with probability at least $1- \tfrac{\eta}{2}$, it holds that $\excessloss(\x) > 0$ for all $\x \in \boundary(\ssetmu) \intersec \boundary \cyl(\grtr, \scalfac)$. 
	For the remainder of the proof, we assume that this event has indeed occurred.
	
	Our hypothesis $\boundary_0 \solu \not\in \cyl(\grtr,\scalfac)$ implies that there exists a directional vector $\h' \in ( \boundary(\ssetmu) \intersec \boundary \cyl(\grtr, \scalfac)) - \grtrmu$ such that the ray $\{ \grtrmu + \tau \h' \suchthat \tau \geq 1 \}$ intersects $\solu^\uparrow$, say in $\x' = \grtrmu + \tau' \h'$. Note that $\h'$ lies in the plane spanned by $\grtr$ and $\solu$; see also Figure~\ref{fig:proofs:generalset:intersect-part3} for an illustration of this planar-geometric argument.
	According to the above event, we know that $\excessloss(\grtrmu + \h') > 0$, and by the convexity of $\excessloss(\cdot)$, also that $\excessloss(\x') = \excessloss(\grtrmu + \tau'\h') > 0$. 
	Moreover, there exists a point $\solu^\natural \in \solu^\uparrow$ such that $\lossemp(\solu^\natural) = 0$ and $\x' \in \convhull\{\solu, \solu^\natural\}$.
	This eventually contradicts the convexity of the excess risk, since $\excessloss(\solu) \leq 0$ and $\excessloss(\solu^\natural) = \lossemp(\solu^\natural) - \lossemp(\grtrmu) = 0 - \lossemp(\grtrmu) \leq 0$.
	\begin{figure}
	\centering
	\tikzstyle{blackdot}=[shape=circle,fill=black,minimum size=1mm,inner sep=0pt,outer sep=0pt]
		\begin{tikzpicture}[scale=2,extended line/.style={shorten >=-#1,shorten <=-#1},extended line/.default=1cm]]
			\coordinate (K1) at (0,.1);
			\coordinate (K2) at (.6,-.2);
			\coordinate (K3) at (.85,-1.13333);
			\coordinate (K4) at (.5,-1.75);
			\coordinate (K5) at (-.5,-1.5);
			\coordinate (K6) at (-.5,-.75);
			\coordinate (P0) at (0,-1);
			\path[name path=K1--K2] (K1) -- (K2);
			\path[name path=K2--K3] (K2) -- (K3);
			\path[name path=P0--X0] (P0) -- ++(70:1);
			\path[name intersections={of=P0--X0 and K1--K2,by=X0}];
% 			\coordinate (X0) at (.25,-.33333);
			\coordinate (X0muhalf) at ($(X0)!.5!(P0)$);
			\coordinate (SpanEnd) at ($(X0)!-.9!(P0)$);
% 			\coordinate (tempCyl) at ($(X0muhalf) + (.3,.2)$);
			
			\path (X0muhalf) -- ++(-20:.22) coordinate (CylR);
			\coordinate (CylL) at ($(X0muhalf)!-1!(CylR)$);
			\coordinate (CylLtop) at ($(CylL)!(SpanEnd)!90:(CylR)$);
			\coordinate (CylRtop) at ($(CylR)!(SpanEnd)!90:(CylL)$);
			
			\path[name path=CylRbarrel] (CylR) -- (CylRtop);
			\path[name path=SpanEndCompl] (CylLtop) -- ++(-20:1);

			\coordinate (XhatP3) at ($(P0) + (0.55,-.15)$);
			\path[name path=XhatP3ray] (XhatP3) -- ++(70:2.5);
% 			\path[name path=SpanEndCompl] (CylLtop) -- ++(-25,1);
						
			\draw[fill=gray!20!white,thick] (K1) -- (K2) -- (K3) -- (K4) -- (K5) -- (K6) -- cycle;
			\draw[fill=gray!20!white] (CylLtop) -- (CylL) -- (CylR) -- (CylRtop);
			\begin{scope}
				\clip (K1) -- (K2) -- (K3) -- (K4) -- (K5) -- (K6) -- cycle;
				\draw[fill=gray!60!white] (CylLtop) -- (CylL) -- (CylR) -- (CylRtop);
			\end{scope}
			\draw[thick] (K1) -- (K2) -- (K3) -- (K4) -- (K5) -- (K6) -- cycle;
% 			\begin{scope}
% 				\clip (CylLtop) -- (CylL) -- (CylR) -- (CylRtop) -- cycle;
% 				\draw[red,ultra thick] (K1) -- (K2) -- (K3) -- (K4) -- (K5) -- cycle;
% 			\end{scope}

			\draw[thick, name intersections={of=SpanEndCompl and XhatP3ray}] (XhatP3) -- (intersection-1) node [pos=.9,right] {$\solu^\uparrow$};
			\path[name intersections={of=XhatP3ray and K2--K3,by=XhatP3b}];
			
			\node[blackdot] at (XhatP3b) {};
			\node[right=.7 of XhatP3b,yshift=-10pt] (XhatP3blabel) {$\boundary_0 \solu$};
			\path[<-,shorten <=3pt,>=stealth] (XhatP3b) edge (XhatP3blabel) ;
			
			\draw[dashed,name path=Hray] (X0) -- ($(X0)!4!(K2)$);
			
			\draw[red,thick,->,>=stealth,name intersections={of=Hray and CylRbarrel}] (X0) -- (intersection-1) node [pos=.7,above] {{\smaller$\h'$}};
			
			\path[blackdot,name intersections={of=Hray and XhatP3ray, by=I0}];

			\draw[dashed] (SpanEnd) -- ($(X0)!2.1!(P0)$);
			\node at (barycentric cs:K1=1,K2=1,K3=1,K4=8,K5=3,K6=1) {$\scalfac\sset$};
% 			\node[draw,dashed,label={[label distance=-3pt]above left:$\S^{n-1}$}] at (P0) [circle through={(X0)}] {};
% 			\draw[dashed] (P0) circle (.25) ;
% 			\node[blackdot,label=below :$\vnull$] (P0) at (barycentric cs:K1=0.9,K2=0.5,K3=0.5,K4=0.5,K5=0.8) {};
			\node[blackdot, label=0:$\vnull$] at (P0) {};
			\node[blackdot, label={[yshift=-2pt]left:$\grtrmu$}] at (X0) {};
			\node[label={[label distance=-3pt]left:$\spann\{\grtr\}$}] at ($(X0)!1.85!(P0)$) {};
% 			\node[blackdot,label={[xshift=2pt,yshift=-2pt]180:\smaller{$\tfrac{\scalfac}{2}\grtr$}}] at (X0muhalf) {};
			\node[below left=0 and .2 of CylLtop] {$\cyl(\grtr,\scalfac)$};
			\node[blackdot, label=right:$\solu$] at (XhatP3) {};
			\node[blackdot,fill=red] at (I0) {};
			
			\node[above right=.1 and 1 of I0] (I0label) {$\excessloss(\grtrmu + \tau'\h') > 0$};
			\path[<-,shorten <=3pt,>=stealth] (I0) edge (I0label) ;
		\end{tikzpicture}
	\caption{Illustration of the argument in Part~3 of the proof of Theorem~\protect{\ref{thm:results:generalset:local}}. The ray $\solu^\uparrow$ always intersects with another ray that starts at $\grtrmu$ and passes through a point $\x = \grtrmu + \h' \in \boundary(\ssetmu) \intersec \boundary \cyl(\grtr, \scalfac)$. The excess risk is positive at the intersection point $\x' = \grtrmu + \tau'\h'$, which leads to a contradiction. The figure shows the projection of all objects onto the plane spanned by $\grtr$ and $\solu$.}
	\label{fig:proofs:generalset:intersect-part3}
	\end{figure}
\end{proof}

\begin{remark}\label{rmk:proofs:generalset:complexity}
	Carefully reviewing the estimates on the Gaussian width in the above proofs indicates that there is certain room for improvements, e.g., see the rough bound in \eqref{eq:proofs:generalset:global:coniceffdim}. In principle, we could replace the conic effective dimension in \eqref{eq:results:generalset:global:meas} and \eqref{eq:results:generalset:local:meas} by a non-standard version of the Gaussian width that is based on cylindrical instead of spherical localization.
	While this might lead to better (maybe even optimal) sampling rates, it is by far not clear how to compute these quantities for our examples of interest in Subsection~\ref{subsec:appl:signalsets}.
	Therefore, we have decided to state our main results by means of the convenient and well-known notion of conic effective dimension.
\end{remark}

\subsection{Proof of Proposition~\ref{prop:asympexprisk}}
% \subsection{Dependence of the Expected Risk Minimizer on \texorpdfstring{$\scalfac$}{mu}}
\label{subsec:proofs:asympexprisk}

\begin{proof}[Proof of Proposition~\ref{prop:asympexprisk}]
	Let $\x^\ast$ be an expected risk minimizer on $\scalfac\sset$. Since $\grtrmu\in \scalfac\sset$, we have that
	\begin{equation}\label{eq:upperbound_exprisk}
		\lossexp(\x^\ast)\leq \lossexp(\grtrmu)=R(\scalfac),
	\end{equation}
	where $R(\scalfac) \coloneqq \mean[\losshng(\scalfac \cdot \abs{\gaussianuniv})]$ and $\gaussianuniv \coloneqq \sp{\a}{\grtr} \distributed \Normdistr{0}{1}$; note that this is actually the same function $R$ as in \eqref{eq:L} with $\fobs = \sign$. By elementary calculations, it is not hard to see that the mapping $\scalfac \mapsto R(\scalfac)$ is monotonically decreasing and satisfies
	\begin{equation}
		R(\scalfac) \leq  \sqrt{\frac{2}{\pi}} \cdot \frac{1}{\mu} \quad \text{for all $\scalfac > 0$.}
	\end{equation}
	Next, using the orthogonal decomposition $\x^\ast=\sp{\x^\ast}{\grtr}\grtr + \proj_{\orthcompl{\grtr}}(\x^\ast)$, we observe that
	\begin{align}
		\lossexp(\x^\ast)=\mean[\losshng(y \sp{\a}{\x^\ast})] &= \mean[\losshng(y \sp{\x^\ast}{\grtr}\sp{\a}{\grtr}+ y\sp{\a}{\proj_{\orthcompl{\grtr}}(\x^\ast)})]\\
		&=\mean{} [\losshng( \underbrace{\sp{\x^\ast}{\grtr}}_{\eqqcolon \alpha} \cdot \underbrace{\abs{\sp{\a}{\grtr}}}_{= \abs{\gaussianuniv}}+ \underbrace{\lnorm{\proj_{\orthcompl{\grtr}}(\x^\ast)}}_{\eqqcolon \beta} \cdot \underbrace{y\sp{\a}{\tfrac{\proj_{\orthcompl{\grtr}}(\x^\ast)}{\lnorm{\proj_{\orthcompl{\grtr}}(\x^\ast)}}}}_{\eqqcolon \orthcompl{\gaussianuniv}})] \\
		&= \mean{} [\losshng( \alpha \cdot \abs{\gaussianuniv}+ \beta \cdot \orthcompl{\gaussianuniv})],
	\end{align}
	where $\gaussianuniv, \orthcompl{\gaussianuniv} \distributed \Normdistr{0}{1}$ are independent. % and assumed without loss of generality that $\beta>0$ (otherwise one would have $\x^\ast = \grtrmu$).
	Since $\losshng(\cdot)$ is non-negative and $\sign(\orthcompl{\gaussianuniv})$ is a symmetric random variable that is independent of $\abs{\orthcompl{\gaussianuniv}}$, it holds that
	\begin{equation}
		\mean{} [\losshng( \alpha \cdot \abs{\gaussianuniv}+ \beta \cdot \orthcompl{\gaussianuniv})]\geq
		\frac{1}{2} \cdot \mean[\losshng( \alpha \cdot \abs{\gaussianuniv} - \beta \cdot \abs{\orthcompl{\gaussianuniv}})],
	\end{equation}
	and altogether
	\begin{equation}\label{eq:proofs:asympexprisk:key_inequality}
		\mean[\losshng( \alpha \cdot \abs{\gaussianuniv} - \beta \cdot \abs{\orthcompl{\gaussianuniv}})]\leq 2 \cdot R(\scalfac) \lesssim \frac{1}{\mu} \ .
	\end{equation}
	Let us now assume that $\alpha < 1$. Then, the left-hand side of \eqref{eq:proofs:asympexprisk:key_inequality} is bounded from below by an absolute constant:
	\begin{align}
	\mean[\losshng( \alpha \cdot \abs{\gaussianuniv} - \beta \cdot \abs{\orthcompl{\gaussianuniv}})] \geq \mean[\losshng(  \abs{\gaussianuniv} - \beta \cdot \abs{\orthcompl{\gaussianuniv}})] \geq \mean[\losshng( \abs{\gaussianuniv}) ].
	\end{align}
	But this contradicts the assumption of $\scalfac \gtrsim 1$. Indeed, if $\scalfac \geq C$ for a sufficiently large numerical constant $C > 0$, the upper bound in \eqref{eq:proofs:asympexprisk:key_inequality} becomes smaller than $\mean[\losshng( \abs{\gaussianuniv}) ]$.
	Hence, we can assume that $\alpha \geq 1$ in the following. Our next step is to show that
	\begin{equation}
		\frac{\beta}{\alpha}\lesssim \mean[\losshng( \alpha \cdot \abs{\gaussianuniv} - \beta \cdot \abs{\orthcompl{\gaussianuniv}})].
	\end{equation}
	This lower bound follows from the independence of $\gaussianuniv$ and $\orthcompl{\gaussianuniv}$:
	\begin{align}
	\mean[\losshng( \alpha \cdot \abs{\gaussianuniv} - \beta \cdot \abs{\orthcompl{\gaussianuniv}})] &\asymp \int_0^\infty e^{-y^2/2} \int_0^\infty \pospart{1- \alpha x + \beta y} e^{-x^2/2} dx\, dy\\
	&=\frac{1}{\alpha} \int_0^\infty e^{-y^2/2}  \int_0^\infty \pospart{1- s + \beta y} e^{-s^2/2\alpha^2} ds\, dy\\
	&=\frac{1}{\alpha} \int_0^\infty e^{-y^2/2}\int_0^{1+\beta y} (1- s + \beta y) e^{-s^2/2\alpha^2} ds\, dy\\
	&\geq \frac{1}{\alpha} \int_0^\infty e^{-y^2/2}\int_0^{1} (1- s + \beta y) e^{-s^2/2\alpha^2} ds\, dy\\
	&\geq \frac{1}{\alpha} \int_0^\infty e^{-y^2/2}\int_0^{1} \beta y e^{-s^2/2} ds\, dy\asymp \frac{\beta}{\alpha} \ ,
	\end{align}
	where we have substituted $x=\frac{s}{\alpha}$ and used that $\alpha\geq 1$. 
	Combining this inequality with \eqref{eq:proofs:asympexprisk:key_inequality} it follows that
	\begin{equation}
		\frac{\lnorm{\proj_{\orthcompl{\grtr}}(\x^\ast)}}{\sp{\x^\ast}{\grtr}}=\frac{\beta}{\alpha}\lesssim \frac{1}{\mu} \ ,
	\end{equation}
	and therefore
	\begin{equation}
		\lnorm[auto]{\grtr - \frac{\x^\ast}{\lnorm{\x^\ast}}}\leq \lnorm[auto]{\grtr - \frac{\x^\ast}{\sp{\x^\ast}{\grtr}}} + \lnorm[auto]{\frac{\x^\ast}{\sp{\x^\ast}{\grtr}} - \frac{\x^\ast}{\lnorm{\x^\ast}}}\leq 2 \cdot \frac{\lnorm{\proj_{\orthcompl{\grtr}}(\x^\ast)}}{\sp{\x^\ast}{\grtr}}\lesssim \frac{1}{\scalfac} \ .
	\end{equation}
\end{proof}

\subsection{Proof of Proposition~\ref{prop:bitflipaddnoisecond}}
\label{subsec:proofs:noisemodel}

\begin{proof}[Proof of Proposition \ref{prop:bitflipaddnoisecond}]
Let us first consider the additive Gaussian noise model introduced in Example~\ref{ex:appl:noisepatterns}\ref{ex:addGauss}, i.e., $f(v) = f_{\sigma}(v)= \sign(v+ \tau)$ with $\tau\distributed \Normdistr{0}{\sigma^2}$.
Using the symmetry of $\tau$ and the independence from $g = \sp{\a}{\grtr} \distributed \Normdistr{0}{1}$, we observe that
\begin{align}
\prob[\sign(\gaussianuniv + \tau)=\sign(\gaussianuniv)\suchthat \abs{\gaussianuniv}]&=
\prob[\gaussianuniv\geq 0, \tau\geq -\gaussianuniv\suchthat \abs{\gaussianuniv}]+
\prob[\gaussianuniv\leq 0, \tau\leq -\gaussianuniv\suchthat \abs{\gaussianuniv}]\\
&=\prob[\gaussianuniv\geq 0, -\tau\geq -\gaussianuniv\suchthat \abs{\gaussianuniv}]+
\prob[\gaussianuniv\leq 0, -\tau\leq -\gaussianuniv\suchthat \abs{\gaussianuniv}]\\
&=\prob[\gaussianuniv\geq 0, \tau\leq \gaussianuniv\suchthat \abs{\gaussianuniv}]+
\prob[\gaussianuniv\leq 0, \tau\geq \gaussianuniv\suchthat \abs{\gaussianuniv}]\\
&\geq\prob[\gaussianuniv\geq 0, \tau\leq -\gaussianuniv\suchthat \abs{\gaussianuniv}]+
\prob[\gaussianuniv\leq 0, \tau\geq -\gaussianuniv\suchthat \abs{\gaussianuniv}]\\
&=\prob[\sign(\gaussianuniv + \tau)\neq\sign(\gaussianuniv)\suchthat \abs{\gaussianuniv}]\quad \text{(a.s.)}.
\end{align}
Consequently,
\begin{align}
0&\leq \prob[\sign(\gaussianuniv + \tau)=\sign(\gaussianuniv)\suchthat \abs{\gaussianuniv}]-
\prob[\sign(\gaussianuniv + \tau)\neq\sign(\gaussianuniv)\suchthat \abs{\gaussianuniv}]\\
&=\mean[\sign(\gaussianuniv + \tau)\sign(\gaussianuniv)\suchthat \abs{\gaussianuniv}]\\
&=\mean[f_{\sigma}(\gaussianuniv)\sign(\gaussianuniv)\suchthat \abs{\gaussianuniv}]\quad \text{(a.s.)},
\end{align}
which implies that $f_{\sigma}$ indeed satisfies \ref{enum:cond_c2} in Assumption~\ref{model:correlation}.

Next, we calculate the correlation parameter $\lambda_{f_{\sigma}}$.
% Note that, by the independence of $\tau$ and $\gaussianuniv$, we can write $\tau=\sigma g'$ with $g'\distributed \Normdistr{0}{1}$ being independent from $\gaussianuniv$. 
By conditioning on $\gaussianuniv$ and using the symmetry of $\tau$ again, we observe that
\begin{align}
\mean_{\tau}[\indprob{\sign(\gaussianuniv)=\sign(\gaussianuniv+\tau)}] = \mean_{\tau}[\indprob{\tau\leq \abs{\gaussianuniv}}],
\end{align}
which leads to
\begin{align}
\mean_{\tau}[\indprob{\sign(\gaussianuniv)=\sign(\gaussianuniv+\tau)}
-\indprob{\sign(\gaussianuniv)\neq\sign(\gaussianuniv+\tau)}]&=\mean_{\tau}[\indprob{\tau\leq \abs{\gaussianuniv}}]-
\mean_{\tau}[\indprob{\tau\geq \abs{\gaussianuniv}}]\\
&=\mean_{\tau}[\indprob{\tau\leq \abs{\gaussianuniv}}]-
\mean_{\tau}[\indprob{\tau\leq -\abs{\gaussianuniv}}]\\
&=\mean_{\tau}[\indprob{\abs{\tau}\leq \abs{\gaussianuniv}}]\label{eq:condg}.
\end{align}
In this way, we can compute the correlation parameter:
\begin{align}
\lambda_{f_{\sigma}}=\mean[\sign(\gaussianuniv + \tau)\gaussianuniv]
	&=\mean[\abs{\gaussianuniv} \cdot (\indprob{\sign(\gaussianuniv)=\sign(\gaussianuniv+\tau)}
	-\indprob{\sign(\gaussianuniv)\neq\sign(\gaussianuniv+\tau)})]\\
	&=\mean_{\gaussianuniv}[\abs{\gaussianuniv}\cdot\mean_{\tau}[\indprob{\sign(\gaussianuniv)=\sign(\gaussianuniv+\tau)}
	-\indprob{\sign(\gaussianuniv)\neq\sign(\gaussianuniv+\tau)}]]\\
	&\stackrel{{\eqref{eq:condg}}}{=}\mean_{\gaussianuniv}[\abs{\gaussianuniv} \cdot \mean_{\tau}[\indprob{\abs{\tau}\leq \abs{\gaussianuniv}}]] \\
	% =\mean_{\gaussianuniv}[\abs{\gaussianuniv}\mean_{g'}[\indprob{\abs{g'}\leq \abs{\gaussianuniv}/\sigma}]]\\
	&=\tfrac{2}{\sqrt{2\pi}} \integ[0][\infty]{x \cdot \Big(\tfrac{2}{\sqrt{2\pi \sigma^2}}\integ[0][x]{e^{-y^2/(2\sigma^2)}}{dy} \Big) \cdot e^{-x^2/2}}{dx} \\
	&=\tfrac{2}{\sqrt{2\pi}} \integ[0][\infty]{x \cdot \erf(\tfrac{x}{\sqrt{2}\sigma}) \cdot e^{-x^2/2}}{dx},
\end{align}
where $\erf(\cdot)$ denotes the error function. From the asymptotic equivalence
\begin{equation}
	\erf(\tfrac{x}{\sqrt{2}\sigma}) \asymp \min\{ \tfrac{x}{\sigma} , 1 \}, \quad x \geq 0,
\end{equation}
we can conclude that
\begin{align}
	\lambda_{f_{\sigma}} \asymp \integ[0][\infty]{x \cdot \min\{ \tfrac{x}{\sigma} , 1 \} \cdot e^{-x^2/2}}{dx}.
\end{align}
This immediately implies the desired upper bound:
\begin{align}
	\lambda_{f_{\sigma}} \lesssim \min\Big\{ \tfrac{1}{\sigma} \cdot \integ[0][\infty]{x^2 e^{-x^2/2}}{dx}, \integ[0][\infty]{x e^{-x^2/2}}{dx} \Big\} \lesssim \min\{ \tfrac{1}{\sigma}, 1\} \asymp \tfrac{1}{1 + \sigma}.
\end{align}
In order to see that this bound is tight, we finally make a case distinction in $\sigma$:
\begin{alignat}{2}
	\sigma \geq 1 &\quad \implies \quad \min\{ \tfrac{x}{\sigma} , 1 \} \geq \tfrac{1}{\sigma} \cdot \min\{ x , 1 \} &&\quad \implies \quad \lambda_{f_{\sigma}} \gtrsim \tfrac{1}{\sigma}, \\
	\sigma < 1 &\quad \implies \quad \min\{ \tfrac{x}{\sigma} , 1 \} \geq \min\{ x , 1 \} &&\quad \implies \quad \lambda_{f_{\sigma}} \gtrsim 1,
\end{alignat}
so that $\lambda_{f_{\sigma}} \gtrsim \min\{ \tfrac{1}{\sigma}, 1\} \asymp \tfrac{1}{1 + \sigma}$ for every $\sigma > 0$.
In particular, the correlation condition of \ref{enum:cond_c1} is fulfilled.

Let us now consider the bit flip model from Example~\ref{ex:appl:noisepatterns}\ref{ex:bitflip}, i.e., $f(v) = f_p(v)= \eps \cdot\sign(v)$ where $\eps\in \{-1,+1\}$ is a Bernoulli random variable with $\prob[\eps = 1] = p>\tfrac{1}{2}$. 
The condition \ref{enum:cond_c2} directly follows from the independence of $\eps$ and $\gaussianuniv$:
\begin{align}
\mean[f(\gaussianuniv)\sign(\gaussianuniv)\suchthat \abs{\gaussianuniv}]=\mean[\eps\suchthat \abs{\gaussianuniv}]=\mean[\eps]
=p-(1-p)=2p-1>0\quad  \text{(a.s.)}.
\end{align}
Moreover, we observe that
\begin{align}
\lambda_{f_p}=\mean[f_p(\gaussianuniv)\gaussianuniv]=\mean[\eps \cdot \abs{\gaussianuniv}]=\mean[\eps] \cdot \mean[\abs{\gaussianuniv}]=(2p-1)\sqrt{\tfrac{2}{\pi}}>0,
\end{align}
which shows that the condition \ref{enum:cond_c1} is satisfied as well.
\end{proof}

% \input{content/1-intro}
% \input{content/1-2-notation}
% \input{content/2-1-model-unitball}
% \input{content/2-2-generalset}
% \input{content/3-application}
% \input{content/4-literature}
% \input{content/5-conclusion}
% \input{content/6-1-tools-unitball}
% \input{content/6-2-generalset}
% \input{content/6-3-noisemodel}

% %%% --> Appendix <--
% \appendix
% % \cleardoublepage
% % \addappheadtotoc
% \input{appendix}

%%% acknowledgements
\section*{Acknowledgements}
The authors would like to thank Sjoerd Dirksen for initiating this project and for many fruitful discussions.
M.G. is supported by the European Commission Project DEDALE (contract no. 665044) within the H2020 Framework Program. A.S acknowledges funding by the Deutsche Forschungsgemeinschaft (DFG) within the priority program SPP 1798 Compressed Sensing in Information Processing through the project Quantized Compressive Spectrum Sensing (QuaCoSS). Finally, the authors would like to thank the anonymous referees
for their useful comments and suggestions which have helped to improve the original manuscript.

%%% Bibliography
\renewcommand*{\bibfont}{\small}
% \nocite{*} % auch nichtzitierte Werke im Verzeichnis
\begin{refcontext}[sorting=nyt]
	\printbibliography
\end{refcontext}
\addcontentsline{toc}{section}{\refname}
% \nocite{*}
% \bibliographystyle{abbrv}
% \bibliography{references.bib}

% \newpage
% \listoftodos

\end{document}